\theoremstyle{plain}
\newtheorem{thm}{Theorem}[section]
\newtheorem{prop}[thm]{Proposition}
\newtheorem{lem}[thm]{Lemma}
\newtheorem{cor}[thm]{Corollary}
\theoremstyle{definition}
\theoremstyle{remark}
\newtheorem{rem}[thm]{Remark}
\newcommand{\C}{\mathbbm{C}}
\newcommand{\R}{\mathbbm{R}}
\newcommand{\Q}{\mathbbm{Q}}
\newcommand{\Z}{\mathbbm{Z}}
\newcommand{\N}{\mathbbm{N}}
\renewcommand{\P}{\mathbf{P}}
\newcommand{\E}{\mathbf{E}}
\newcommand{\V}{\mathrm{Var}}
\newcommand{\Cov}{\mathrm{Cov}}
\newcommand{\1}{\mathbf{1}}
\renewcommand{\O}{\mathcal O}
\newcommand{\e}{\mathrm{e}}
\renewcommand{\i}{\mathrm{i}}
\newcommand{\eps}{\varepsilon}
\newcommand{\vt}{\vartheta}
\DeclareMathOperator*{\argmin}{arg\,min} 
\begin{document}

\newcommand{\si}[1]{\Delta^{\mathrm{sp}}_{#1}}
\newcommand{\ti}[1]{\Delta^{\mathrm{t}}_{#1}}
\newcommand{\sti}[1]{D_{#1}}

\newcommand{\rsi}[1]{\tilde S_{#1}}
\newcommand{\rti}[1]{\tilde D_{#1}}
\newcommand{\rsti}[1]{\tilde D_{#1}}

\begin{frontmatter}
\title{Parameter estimation for SPDEs based on discrete observations in time and space}
\runtitle{Parameter estimation for SPDEs based on discrete observations}

\begin{aug}
\author{\fnms{Florian} \snm{Hildebrandt}\ead[label=e1]{florian.hildebrandt@uni-hamburg.de}} \and
\author{\fnms{Mathias} \snm{Trabs}\ead[label=e2]{mathias.trabs@uni-hamburg.de}}


\address{Florian Hildebrandt,\\ Fachbereich Mathematik,\\ Universit\"at Hamburg,\\ Bundesstraße 55, 20146 Hamburg, Germany,\\
\printead{e1}}
\address{Mathias Trabs,\\ Fachbereich Mathematik,\\ Universit\"at Hamburg,\\ Bundesstraße 55, 20146 Hamburg, Germany,\\
\printead{e2}}

\runauthor{Hildebrandt and Trabs}

\affiliation{Universit\"at Hamburg}

\end{aug}

\begin{abstract}
\,Parameter estimation for a parabolic linear stochastic partial differential equation in one space dimension is studied observing the solution field on a  discrete grid in a fixed bounded domain. Considering an infill asymptotic regime in both coordinates, we prove central limit theorems for realized quadratic variations based on temporal and spatial increments as well as on double increments in time and space. Resulting method of moments estimators for the diffusivity and the volatility parameter inherit the asymptotic normality and can be constructed robustly with respect to the sampling frequencies in time and space. Upper and lower bounds reveal that in general the optimal convergence rate for joint estimation of the parameters is slower than the usual parametric rate. The theoretical results are illustrated in a numerical example.
\end{abstract}

\begin{keyword}[class=AMS]
\kwd[Primary ]{60H15, 60F05}
\kwd[; secondary ]{62F12, 62M09}
\end{keyword}
\begin{keyword}
\kwd{Central limit theorems}
\kwd{infill asymptotics}
\kwd{optimal rate of convergence}
\kwd{realized quadratic variation}
\kwd{stochastic partial differential equations}
\end{keyword}

\end{frontmatter}

\section{Introduction}

Stochastic partial differential equations (SPDEs) combine the ability of deterministic PDE models to describe complex mechanisms with the key feature of  diffusion models, namely a stochastic signal which evolves within the system. While SPDEs have been intensively studied in stochastic analysis, their statistical theory is only at its beginnings. Since we first need to have a thorough statistical understanding for basic SPDEs before more complex models can be studied, let us consider the prototype for the large class of parabolic SPDEs given by the stochastic heat equation on $[0,1]$:
\begin{equation}\label{eq:stochHeat}
\begin{cases}
dX_t(x) = \vt_2\frac{\partial^2}{\partial x^2} X_t(x) \,dt+\sigma\,dW_t(x),\\
X_t(0)=X_t(1)=0, \\
X_0 = \xi,
\end{cases}
\end{equation}
where $dW$ denotes white noise in space and time, $\xi$ is some independent initial condition and we impose Dirichlet boundary conditions. More general, we will later incorporate also a first and zero order term in the differential operator. The statistical aim is to infer on the diffusivity parameter $\vt_2>0$ and the diffusion or volatility parameter $\sigma^2>0$. 

 In the seminal works by \citet{huebnerEtAl1993} as well as \citet{HuebnerRozovskii1995} a spectral approach has been considered where the processes $t\mapsto u_\ell(t):=\langle X_{t},e_{\ell}\rangle_{L^{2}}$ are observable for the eigenfunctions $e_{\ell}$ of the underlying differential operator. These so called Fourier modes $u_\ell$ are independent and satisfy Ornstein-Uhlenbeck dynamics. Consequently, classical results from statistics for stochastic processes can be applied directly. While the spectral approach is studied in numerous papers, see \citet{lototsky2009} or \citet{Cialenco18} for a review, this specific observation scheme is limiting and too restrictive in potential applications. Especially, for more general equations the eigenfunctions will depend on unknown parameters, which is already the case if we add a first order term $\vt_1\frac{\partial}{\partial x} X_t(x)dt$ with unknown $\vt_1\in\R$ in \eqref{eq:stochHeat}.

Complementary to this spectral approach, the canonical problem of parameter estimation based on discrete observations of the solution field of the SPDE recently attracted an increased research activity. Assuming $X$ is observed on a discrete grid $(t_{i},y_{k})_{i=0,\dots,N,k=0,\dots,M}\subset[0,T]\times[0,1]$, approximate maximum likelihood estimators have been first investigated by \citet{Markussen03} for $T\to\infty$. For various linear SPDEs central limit theorems for method of moment type estimators based on realized quadratic variations have been studied by  \citet{torresEtAl2014}, \citet{Cialenco17}, \citet{Bibinger18,bibingerTrabs2019}, \citet{Chong18,chong2019}, \citet{shevchenkoEtAl2019}, as well as \citet{Uchida19}. However, all these works only give partial answers to the estimation problem. Even for the stochastic heat equation there neither is a  sharp analysis for joint estimation of $\vt_2$ and $\sigma^2$ nor the case where the number of spatial observations $M$ dominates the number of temporal observations $N$ has been explored in general. 

Therefore, in this relatively young research field basic and elementary questions even for simple (linear, parabolic) SPDEs still need to be answered. This becomes most important with regard to an increasing number of SPDE models in applications, e.g., in neurobiology \cite{tuckwell2013}, for the description of oceans \cite{piterbargOstrovskii1997, dostal2019}, climate modelling \cite{hottovyStechmann2015} or the description of interest rates \cite{Cont2005, santaclaraSornette2000}.

\smallskip

In order to provide a complete statistical analysis of parametric estimation for linear parabolic SPDEs in dimension one based on discrete observations on a finite time horizon $T>0$, our main contributions reveal that:
\begin{enumerate}
 \item[(i)] $\vt_2$ and $\sigma^2$ cannot be jointly estimated if $N$ or $M$ is fixed.
 \item[(ii)] The optimal convergence rate for estimating $(\vt_2,\sigma^2)$ is $1/\sqrt{M^3\wedge N^{3/2}}$ which generally is  slower than the parametric rate $1/\sqrt{MN}$.
 \item[(iii)] Realized space-time quadratic variations can be used to construct estimators which are robust with respect to the sampling frequencies $N$ and $M$ in time and space, respectively.
\end{enumerate}
In view of (i), we will consider the double asymptotic regime $M,N \to \infty$ in our analysis which results in infill asymptotics in time and space. Since the vector of observations $(X_{t_i}(y_k))_{i=0,\dots,N,k=0,\dots,M}$ is normally distributed with only two unknown parameters in equation~\eqref{eq:stochHeat}, it might surprise that there is no estimator with parametric rate for $(\vt_2,\sigma^2)$. Indeed, our lower bound verifies that the parametric rate can only be achieved if $N$ and $M^2$ are of the same order of magnitude. In view of the scaling invariance of the stochastic heat equation, this particular asymptotic regime $N\eqsim M^2$ implies that we add the same amount of information in time and space as $N$ and $M$ increase. In this sense we have a balanced design. An unbalanced regime $N=o( M^2)$ or $M=o(\sqrt{N})$ causes a deterioration of the convergence rate. 

Our statistical analysis also gives insights into the relation between the spectral and the discrete observation scheme. While both are heuristically comparable in view of the discrete Fourier transform, it turns out that there are important differences. In particular, the fully discrete observation scheme is not statistically equivalent (in the sense of Le Cam) to time discrete observations of the first $M$ Fourier modes in general.

\smallskip

Our estimators rely on realized quadratic variations, taking into account time and space increments
\begin{equation} \label{eq:increments_def}
\begin{aligned}
(\Delta_i^NX)(y_k) &:= X_{t_{i+1}}(y_k) -X_{t_{i}}(y_k),\qquad
(\delta_k^M X)(t_i)&:= X_{t_i}(y_{k+1})-X_{t_i}(y_k),
\end{aligned}
\end{equation} 
respectively, as well as space-time increments or double increments
\begin{equation} \label{eq:double_def}
D_{ik}:=(\delta_k^M \circ\Delta_i^N) X =(\Delta_i^N\circ \delta_k^M ) X =X_{t_{i+1}}(y_{k+1})-X_{t_{i+1}}(y_{k})-X_{t_i}(y_{k+1})+X_{t_i}(y_{k}).
\end{equation}
In contrast to the maximum likelihood approach which requires inversion of the large $MN\times MN$ covariance matrix,  method of moments type estimators based on \eqref{eq:increments_def} and \eqref{eq:double_def} are easy to implement.
As observed in \cite{Bibinger18}, a central limit theorem for realized temporal quadratic variations requires that the observation frequency in time dominates the observation frequency in space, more precisely, $M=o(\sqrt N)$ is necessary. Complementarily, we show that the realized spatial quadratic variation satisfies a central limit theorem if $N=o(M)$. The remaining gap can be filled by double increments and the corresponding realized space-time quadratic variation turns out to be robust with respect to the sampling frequencies $M$ and $N$. Based on these statistics, we construct method of moments estimators for $\vt_2$ and $\sigma^2$ (as well as $\vt_1$ from a first order term). Hereby, the rate optimal method for joint estimation of all identifiable parameters is an M-estimator relying on double increments. Our proofs employ directly the Gaussian distribution of $X$ which allows for an explicit covariance condition for asymptotic normality of quadratic forms of Gaussian triangular schemes. Let us remark that our estimators could be directly generalized to a nonparametric model with time dependent coefficients, as indicated in \cite{Bibinger18,Chong18}.

Note that the solution process $X$ to the SPDE~\eqref{eq:stochHeat} admits continuous trajectories only in one spatial dimension. In the multi-dimensional case one could consider noise processes which are more regular in space as studied by \citet{Chong18}. Alternatively, \citet{krizMaslowski2019} as well as \citet{altmeyerReiss2019} generalize the spectral approach to the observation of functionals $\langle X_{t},K\rangle$ for some (localizing) kernel $K$.


\smallskip
  
This work is organized as follows: In Section~\ref{sec:prop} we give a precise definition of the model and study probabilistic properties of the solution field. In Section~\ref{sec:CLTs} we present the central limit theorems for realized quadratic variations based on space and double increments. The resulting method of moments estimators are constructed in Section~\ref{sec:ParameterEstimation}. Lower bounds are derived in Section~\ref{sec:lowerBound}. In Section~\ref{sec:sim} we illustrate our results with a numerical example. The proofs of the main results are collected in Section~\ref{sec:mainProofs} while auxiliary results are postponed to the appendix.

\section{Properties of the solution process}\label{sec:prop}
For parameters $\sigma^2>0$ and $\vt=(\vt_2,\vt_1,\vt_0)\in \R_+\times\R^2$ we consider the linear parabolic SPDE
\begin{equation} \label{eq:SPDE}
\begin{cases}
dX_t(x) = \left(\vt_2\frac{\partial^2}{\partial x ^2} X_t(x) + \vt_1\frac{\partial}{\partial x } X_t(x) +\vt_0 X_t(x)\right)\,dt+\sigma\,dW_t(x),\quad x\in[0,1],t\ge0,\\
X_t(0)=X_t(1)=0, \\
X_0 = \xi
\end{cases}
\end{equation}
driven by a cylindrical Brownian motion $W$ and where  $\xi \in L^2([0,1])$ is some {independent} initial condition.
More  precisely, we study the weak solution $X = (X_t(x), \,t\geq 0 ,\, x \in [0,1])$ to $dX_t=A_\vt X_t\,dt+\sigma dW_t$
associated with the differential operator $A_\vt=\vt_2\frac{\partial^2}{\partial x ^2}+ \vt_1\frac{\partial}{\partial x }  +\vt_0 $. As usual, the Dirichlet boundary condition in \eqref{eq:SPDE} is implemented in the domain 
$\mathcal D (A_\vt) = H^2((0,1))\cap H_0^1((0,1))$ of $A_\vt$ where $H^k((0,1))$  denotes the $L^2$-Sobolev spaces of order $k\in\N$ and with $H_0^1((0,1))$ being the closure of $C_c^\infty((0,1))$ in $H^1((0,1))$. The cylindrical Brownian motion $W$ is defined as a linear mapping $L^2((0,1)) \ni u \mapsto W_\cdot(u)$ such that 
 $t\mapsto  W_t(u)$ is a one-dimensional standard Brownian motion for all normalized $u\in L^2([0,1])$
and such that the covariance structure is 
$\Cov\left( W_t(u) , W_s(v) \right)=(s\wedge t)\, \langle u,v \rangle,$ for $u,v\in L^2([0,1]),\,s,t\geq 0$. $W$ can thus be understood as the anti-derivative in time of space-time white noise.

The differential operator $A_\vt$ has a complete orthonormal system of eigenvectors. Indeed, the eigenpairs $(-\lambda_\ell,e_\ell)_{\ell \geq 1}$ associated with $A_\vt$ are given by 
\begin{gather*} e_\ell (y) = \sqrt{2} \sin(\pi \ell y )\e^{-\kappa y/2},\quad
\lambda_\ell = \vartheta_2 (\pi^2 \ell^2+\Gamma), \qquad  y\in [0,1],\,\ell\in \N,
\end{gather*}
denoting
\begin{gather*}
\kappa := \frac{\vartheta_1}{\vartheta_2}\qquad\text{and}\qquad\Gamma := \frac{\vartheta_1^2}{4\vartheta_2 ^2}-\frac{\vartheta_0}{\vartheta_2}.
\end{gather*} 
The functions $(e_\ell)_{\ell \geq 1}$ are orthonormal with respect to the weighted $L^2$-inner product 
$$\langle u, v \rangle:=\langle u, v \rangle_\vt:= \int_0^1 u(x)v(x)\e^{\kappa x}\,dx,\qquad u,v \in L^2([0,1]).$$
Note that in absence of the first derivative in $A_\vt$, i.e~$\vt_1=0$, the system $(e_\ell)_{\ell\ge 1}$ reduces to the usual sine-base and $\langle\cdot,\cdot\rangle$ to the standard inner product on $L^2([0,1])$. In general, both the eigenpairs and the inner product depend on {the model} parameters. Hence, they are not accessible from a statistical point of view. \\

Throughout, we restrict the parameter space to 
$$\Theta = \left\{(\sigma^2,\vt_2,\vt_1,\vt_0)\in \R^4:\,\sigma^2, \vt_2,\frac{\vartheta_1^2}{4\vartheta_2 ^2}-\frac{\vartheta_0}{\vartheta_2}+\pi^2>0\right\}$$ such that all the eigenvalues are negative and $A_\vt$ is a negative self-adjoint operator. Consequently, the weak solution to the SPDE~\eqref{eq:SPDE} exists and is given by the variation of constants formula 
$
X_t  =\e^{tA_\vartheta}\xi + \sigma\int_0^t \e^{(t-s)A_\vartheta}\,dW_s,\, t\geq 0,
$ 
where $(\e^{tA_\vt})_{t\geq 0}$ denotes the strongly continuous semigroup generated by $A_\vt$, see \cite[Theorem 5.4]{DaPrato14}.

Since $(e_\ell)_{\ell \geq 1}$ is a complete orthonormal system, the cylindrical Brownian motion $W$ can be realized via $W_t=\sum_{\ell\geq 1} \beta_\ell(t)e_\ell$ in the sense of 
$W_t(\cdot)= \sum_{\ell\geq 1} \beta_\ell(t)\langle \cdot,e_k\rangle$ for a sequence of independent standard Brownian motions $(\beta_\ell)_{\ell \geq 1}$. In terms of the projections or Fourier modes $u_\ell(t):=\langle X_t,e_\ell\rangle,\,t\ge0,\ell \in \N$, we obtain the representation
\begin{equation} \label{eq:SPDE_solution}
{X_t (x)}  {=\sum _{\ell\geq 1} u_\ell(t)e_\ell(x)}, \quad t \geq 0,\,x\in [0,1], 
\end{equation}
where $(u_\ell)_{\ell \ge1}$ are one dimensional independent processes satisfying the Ornstein-Uhlenbeck dynamics 
$
du_\ell(t)= -\lambda_\ell u_\ell(t)\,dt +\sigma\, d\beta_\ell(t)
$
or equivalently
$$u_\ell(t)= u_\ell(0)\e^{-\lambda_\ell t}+\sigma\int_0^t \e^{-\lambda_\ell (t-s)}\,d\beta_\ell(s),\qquad u_\ell(0)=\langle\xi,e_\ell \rangle$$
in the sense of the usual finite dimensional stochastic integral.
For simplicity, we will assume throughout that $\{\beta_\ell,u_\ell(0),\,\ell\in \mathbb N\}$ is an independent family and $u_\ell(0)\sim \mathcal N(0,{\sigma^2}/({2 \lambda_\ell}))$ such that each coefficient process $u_\ell$ is stationary with covariance  
$
\Cov(u_\ell(s),u_\ell(t))
=\frac{\sigma^2}{2\lambda_\ell}\e^{-\lambda_\ell|t-s|}$, $s,t\geq 0$.
The reduction of more general conditions on $X_0$ to the stationary case is discussed in \cite{Bibinger18}.

From representation~\eqref{eq:SPDE_solution} it is evident that $X$ is a two parameter centered Gaussian field. Therefore, the model is completely specified by its covariance structure 
\begin{equation} \label{eq:cov}
\Cov\left(X_s(x),X_t(y)\right)= \sigma^2 \sum_{\ell\geq 1}\frac{\e^{-\lambda_\ell|t-s|}}{2\lambda_\ell}e_\ell(x)e_\ell(y), \quad s,t\geq 0,\,x,y\in [0,1]. 
\end{equation}
While $\sigma^2$ is only a multiplicative factor, the covariance structure depends on $\vt$ through $\lambda_\ell$ and $e_\ell$. 
By Kolmogorov's criterion there is a continuous version of the process $( X_t(x),\,t\geq 0, x\in [0,1])$, cf.~\cite[Chapter 5.5]{DaPrato14}. In particular, point evaluations $X_t(x)$ for fixed values of $t$ and $x$ are well defined. \\

For a fixed spatial location $x$ the sample paths of the process $ X_\cdot(x)$ are no semi-martingales. In fact, $t\mapsto X_t(x)$ is only H\"older continuous of order almost 1/4  \cite[Theorem 5.22]{DaPrato14} and thus has infinite quadratic variation over any time interval. On the other hand, regarding $X$ as a function of space at a fixed point in time substantially simplifies the probabilistic structure of the process:

\begin{prop} \label{prop:Ito_representation}
Fix $t\geq 0$ and define $\Gamma_0 = \sqrt{\left|\Gamma\right|}$.
\begin{enumerate}[(i)]
\item For $x\leq y$,
$$
\Cov\left(X_t(x),X_t(y) \right)=\frac{\sigma^2}{2\vartheta_2}\e^{-\frac{\kappa}{2}(x+y)}\cdot 
\begin{cases} 
\frac{\sin(\Gamma_0 (1-y))\sin(\Gamma_0 x)}{\Gamma_0\sin(\Gamma_0)}, & \Gamma <0,\\
x(1-y), & \Gamma=0,\\
\frac{\sinh(\Gamma_0(1-y))\sinh(\Gamma_0 x)}{\Gamma_0 \sinh(\Gamma_0)}, & \Gamma >0.
\end{cases}
$$
\item The process $[0,1]\ni x\mapsto Z(x):=X_t(x)$ is an It\^o diffusion. In particular,
$$
dZ(x)=\sqrt{\frac{\sigma^2}{2\vartheta_2}}\e^{-\frac{\kappa}{2}x}\,dB(x) - 
\begin{cases}
\left(\frac{\Gamma_0 \cos(\Gamma_0 (1-x))}{\sin(\Gamma_0 (1-x))}+\frac{\kappa}{2} \right)Z(x)\,dx, & \Gamma<0, \\
\left(\frac{1}{1-x}+\frac{\kappa}{2} \right)Z(x)\,dx, & \Gamma=0, \\
\left(\frac{\Gamma_0 \cosh(\Gamma_0(1-x))}{\sinh(\Gamma_0 (1-x))}+\frac{\kappa}{2} \right)Z(x)\,dx, & \Gamma>0,
\end{cases}
$$
where $B(\cdot) = B_t(\cdot)$ is a standard Brownian motion.
\end{enumerate}
\end{prop}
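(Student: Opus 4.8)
The plan is to specialize the spectral covariance~\eqref{eq:cov} to $s=t$ and recognize the resulting series as a Green's function. Putting $s=t$ removes the exponential factor, and inserting $e_\ell(x)e_\ell(y)=2\sin(\pi\ell x)\sin(\pi\ell y)\e^{-\kappa(x+y)/2}$ together with $\lambda_\ell=\vartheta_2(\pi^2\ell^2+\Gamma)$ gives
\[
\Cov\left(X_t(x),X_t(y)\right)=\frac{\sigma^2}{\vartheta_2}\,\e^{-\kappa(x+y)/2}\sum_{\ell\geq1}\frac{\sin(\pi\ell x)\sin(\pi\ell y)}{\pi^2\ell^2+\Gamma}.
\]
The remaining series is exactly the spectral representation of one half of the Dirichlet Green's function $G(x,y)$ of the Sturm--Liouville operator $-\tfrac{d^2}{dx^2}+\Gamma$ on $[0,1]$, since $\sqrt2\sin(\pi\ell\,\cdot)$ are its normalized eigenfunctions with eigenvalues $\pi^2\ell^2+\Gamma$. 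I would then compute $G$ by the textbook construction: take the homogeneous solution vanishing at $0$ and the one vanishing at $1$, and glue them along the diagonal via continuity and the unit jump of the derivative (equivalently the Wronskian). This produces $\sinh$-, linear-, and $\sin$-type profiles in the three regimes $\Gamma>0$, $\Gamma=0$, $\Gamma<0$; absorbing the factor $\tfrac12$ relating the series to $G$ into the prefactor $\sigma^2/\vartheta_2$ yields precisely the three bracketed cases in the statement. Here the restriction defining $\Theta$, namely $\Gamma>-\pi^2$, is essential: it guarantees $\lambda_\ell>0$ for all $\ell$ so that the operator is invertible, and in the oscillatory case $\Gamma<0$ it forces $0<\Gamma_0<\pi$, so that $\sin(\Gamma_0)\neq0$ and the formula is well defined.

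\textbf{Part (ii), structure.} The key observation is that the covariance from (i) has the factorized form $\Cov(Z(x),Z(y))=g(x\wedge y)\,h(x\vee y)$ characteristic of a Gauss--Markov process, where in the case $\Gamma>0$ one may take $h(y)=\e^{-\kappa y/2}\sinh(\Gamma_0(1-y))$ and let $g$ carry the complementary factor $\e^{-\kappa x/2}\sinh(\Gamma_0 x)$ up to the constant $\sigma^2/(2\vartheta_2\Gamma_0\sinh\Gamma_0)$. Setting $r:=g/h$, which is smooth and strictly increasing from $r(0)=0$ to $r(1^-)=\infty$, the normalized process $M(x):=Z(x)/h(x)$ is a continuous centered Gaussian process with covariance $r(x\wedge y)$; hence $M$ has independent increments and is a continuous Gaussian martingale with deterministic quadratic variation $\langle M\rangle_x=r(x)$. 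I would then write $dM(x)=\sqrt{r'(x)}\,dB(x)$ for a standard Brownian motion $B$ constructed from $M$ (by Lévy's characterization). Applying the Itô product rule to $Z=hM$, where $h$ is smooth and deterministic so that no covariation term appears, gives
\[
dZ(x)=\frac{h'(x)}{h(x)}\,Z(x)\,dx+h(x)\sqrt{r'(x)}\,dB(x).
\]

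\textbf{Part (ii), matching and the main obstacle.} It remains to check that $h'/h$ and $h\sqrt{r'}$ match the stated coefficients. The logarithmic derivative $h'/h$ reproduces the drift $-\bigl(\frac{\Gamma_0\cosh(\Gamma_0(1-x))}{\sinh(\Gamma_0(1-x))}+\frac{\kappa}{2}\bigr)$ at once, and the analogous computations with $\sin$/$\cos$ and with the linear profile give the $\Gamma<0$ and $\Gamma=0$ drifts. For the diffusion coefficient, differentiating $r=g/h$ and using the addition formula $\sinh(\Gamma_0 x)\cosh(\Gamma_0(1-x))+\cosh(\Gamma_0 x)\sinh(\Gamma_0(1-x))=\sinh(\Gamma_0)$ collapses $h(x)^2r'(x)$ to the constant $\frac{\sigma^2}{2\vartheta_2}\e^{-\kappa x}$, so that $h\sqrt{r'}=\sqrt{\frac{\sigma^2}{2\vartheta_2}}\,\e^{-\kappa x/2}$ exactly as claimed (the same cancellation occurs with $\sin$/$\cos$ and in the linear case). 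The main obstacle I anticipate is not the algebra but the analytic care near the endpoint $x=1$: there $h(x)\to0$, $r(x)\to\infty$, and the drift explodes like a bridge singularity, so the SDE must be read on $[0,1)$ with $Z$ extending continuously to the pinned value $Z(1)=0$. I would also take care to justify the continuous modification of $Z$ and the construction of $B$ from $Z$ itself (rather than merely exhibiting another process with the same law), so that the displayed equation holds for the given field $Z$.
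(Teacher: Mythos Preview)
Your proof is correct. For part~(ii) your argument is essentially the same as the paper's: the paper uses the ansatz $Z(x)=u(x)B(v(x))$ for a Gaussian Markov process (citing Neveu), identifies $u,v$ by matching covariances, and applies It\^o's formula---this is exactly your $h$ and $r=g/h$ construction, only phrased more tersely and without the endpoint discussion you supply.

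For part~(i) you take a genuinely different route. The paper computes the series directly: it applies the product-to-sum identity $\sin\alpha\sin\beta=\tfrac12(\cos(\alpha-\beta)-\cos(\alpha+\beta))$ to reduce to cosine series of the form $\sum_{\ell\ge1}\frac{\cos(\pi\ell z)}{\ell^2+\beta}$, inserts their closed-form summations (given in the paper as~\eqref{eq:Fourier_expl}), and then reassembles using the inverse identities for $\sin$ and $\sinh$. Your Green's function argument is more conceptual: it identifies the series at once as $\tfrac12 G(x,y)$ for the Sturm--Liouville operator $-\tfrac{d^2}{dx^2}+\Gamma$ with Dirichlet boundary, and obtains the three cases from the standard Wronskian construction. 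The paper's approach has the advantage of being self-contained (the cosine-series identities are stated explicitly and reused elsewhere in the paper); yours has the advantage of explaining \emph{why} the answer factorizes as $g(x\wedge y)h(x\vee y)$ and of making the role of the constraint $\Gamma>-\pi^2$ transparent as an invertibility condition.
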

Note the similarity of the covariance structures of $X_t(\cdot)$ and of the Brownian  bridge, especially  in the case $\Gamma =0$. This resemblance is in line with the Dirichlet boundary conditions $X_t(0) = X_t(1)=0$ in our model.

\begin{rem}
For $N\geq 2$ and fixed $0\leq t_1 <t_2<\ldots<t_N$ the multi-dimensional process $x\mapsto (X_{t_1}(x),\ldots,X_{t_N}(x))$ is not an It\^o diffusion. Indeed, it is not even a Markov process: Take $N=2$ and let $s<t$. It is a well known fact that for Markov processes past and future are independent, given the present state. For $x<y<z$ on the other hand, using the Gaussianity of $X$, the (Gaussian) conditional distribution of  $(X_s(x), X_t(z))$ given $(X_s(y),X_t(y))$ can be computed explicitly. From here, independence is easily disproved by checking the non-diagonal entries of the conditional covariance matrix.
\end{rem}

We conclude this section by studying absolute continuity properties for different parameter values $(\sigma^2, \vt)$ which in particular has implications for  their identifyability. To that aim we introduce the notations 
\begin{align*}
(X_t(\cdot),t \in [0,T]) &\sim P_{(\sigma^2, \vt)} \text{ on } C([0,T],L^2[0,1]),\\
(X_{t_0}(x),\,x\in[0,1]) &\sim P^{(t_0,\cdot)}_{(\sigma^2, \vt)}\text{ on } L^2[0,1],\\
(X_t(x_0),\,t \in [0,T]) &\sim P^{(\cdot,x_0)}_{(\sigma^2, \vt)}\text{ on } L^2[0,T]
\end{align*}
for fixed values $t_0\geq 0$, $x_0\in (0,1)$ and a finite time horizon $T>0$. Further, for probability measures $Q$ and $P$ we write $Q \sim P$ if they are equivalent. 

\begin{prop} \label{prop:abs_cont}
Let $t_0\geq 0$, $x_0\in (0,1)$ be fixed and consider a finite time horizon $T>0$. For any two sets of parameters  $(\sigma^2, \vt),$ $( \tilde\sigma^2, \tilde\vt) \in \Theta$ we have 
\begin{enumerate}[(i)]
\item $P_{(\sigma^2, \vt)} \sim P_{(\tilde \sigma^2, \tilde \vt)}$ if and only if ${\displaystyle(\sigma^2, \vt_2,\vt_1)=(\tilde \sigma^2, \tilde \vt_2, \tilde \vt_1)}$,
\item $P^{(t_0,\cdot)}_{(\sigma^2, \vt)} \sim P^{(t_0,\cdot)}_{(\tilde \sigma^2, \tilde \vt)}$ if and only if ${\displaystyle\left(\frac{\sigma^2}{{\vt_2}},\kappa \right)= \left(\frac{\tilde \sigma^2}{{ \tilde \vt_2}},\tilde \kappa \right)}$,
\item $ P^{(\cdot,x_0)}_{(\sigma^2, \vt)} \sim P^{(\cdot,x_0)}_{(\tilde \sigma^2, \tilde \vt)}$ if and only if ${\displaystyle\frac{\sigma^2}{\sqrt{\vt_2}}\e^{-\kappa x_0}= \frac{\tilde \sigma^2}{\sqrt{\tilde\vt_2}}\e^{-\tilde\kappa x_0}}$,
\end{enumerate}
where $\kappa = \vt_1/\vt_2,\,\tilde \kappa = \tilde\vt_1/\tilde\vt_2$.
\end{prop}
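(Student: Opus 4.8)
The plan is to treat all three statements through the dichotomy for centered Gaussian measures (Feldman--H\'ajek), exploiting the two complementary ``easy directions'' already prepared by Proposition~\ref{prop:Ito_representation} and by the spectral representation~\eqref{eq:SPDE_solution}--\eqref{eq:cov}. I would begin with part~(ii), which is the most transparent. By Proposition~\ref{prop:Ito_representation}(ii) the spatial marginal $Z=X_{t_0}(\cdot)$ is an It\^o diffusion on $[0,1]$ with diffusion coefficient $\sqrt{\sigma^2/(2\vt_2)}\,\e^{-\kappa x/2}$ and a drift depending only on $\kappa$ and $\Gamma_0$. Since the diffusion coefficient is a pathwise invariant (recovered from the quadratic variation of $Z$), equivalence forces $\frac{\sigma^2}{2\vt_2}\e^{-\kappa x}=\frac{\tilde\sigma^2}{2\tilde\vt_2}\e^{-\tilde\kappa x}$ for all $x$, i.e.\ $\sigma^2/\vt_2=\tilde\sigma^2/\tilde\vt_2$ and $\kappa=\tilde\kappa$; this is the necessity. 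For sufficiency, once these two quantities agree the two diffusions share the same diffusion coefficient and differ only in the $\Gamma_0$-dependent part of the drift. The key observation is that $\Gamma_0\cos(\Gamma_0(1-x))/\sin(\Gamma_0(1-x))\to (1-x)^{-1}$ as $x\to1$ independently of $\Gamma_0$ (and analogously in the other two cases), so the difference of the two drift coefficients stays bounded up to the boundary. Girsanov's theorem then yields equivalence, the Novikov condition being immediate from boundedness on the finite interval.

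For part~(iii) the temporal marginal $t\mapsto X_t(x_0)$ is a stationary centered Gaussian process which is only H\"older-$1/4$ and in particular no semimartingale, so the Girsanov route is unavailable. Here I would pass to the spectral density. From~\eqref{eq:cov} its covariance is $r(\tau)=\sigma^2\sum_{\ell\ge1}\frac{\e^{-\lambda_\ell|\tau|}}{2\lambda_\ell}e_\ell(x_0)^2$, with Fourier transform $f(\omega)=\sigma^2\sum_{\ell\ge1}\frac{e_\ell(x_0)^2}{\lambda_\ell^2+\omega^2}$. Inserting $\lambda_\ell\sim\vt_2\pi^2\ell^2$ and $e_\ell(x_0)^2\sim 2\sin^2(\pi\ell x_0)\e^{-\kappa x_0}$ and comparing the sum with an integral (the oscillatory $\sin^2$ contribution being of lower order) gives the high-frequency asymptotics $f(\omega)\sim c_0\,\frac{\sigma^2}{\sqrt{\vt_2}}\e^{-\kappa x_0}\,\omega^{-3/2}$ with a universal constant $c_0$. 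The equivalence class of a stationary Gaussian measure on a finite interval is governed by the high-frequency behaviour of its spectral density: the Feldman--H\'ajek (Hilbert--Schmidt) condition on the covariance operators reduces to square-integrability of a ratio built from $f$ and $\tilde f$, which holds if and only if the leading constants coincide. This yields exactly the stated condition $\frac{\sigma^2}{\sqrt{\vt_2}}\e^{-\kappa x_0}=\frac{\tilde\sigma^2}{\sqrt{\tilde\vt_2}}\e^{-\tilde\kappa x_0}$, and singularity otherwise.

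Part~(i) I would split into necessity and sufficiency. For necessity, the coordinate projections $X_\cdot\mapsto X_{t_0}(\cdot)$ and $X_\cdot\mapsto X_\cdot(x_0)$ are measurable, so equivalence of $P_{(\sigma^2,\vt)}$ and $P_{(\tilde\sigma^2,\tilde\vt)}$ forces equivalence of both marginals and hence the conditions of~(ii) and~(iii) simultaneously. Combining $\sigma^2/\vt_2=\tilde\sigma^2/\tilde\vt_2$ and $\kappa=\tilde\kappa$ from~(ii) with $\sigma^2/\sqrt{\vt_2}=\tilde\sigma^2/\sqrt{\tilde\vt_2}$ (the specialisation of~(iii) under $\kappa=\tilde\kappa$), dividing the last two relations gives $\vt_2=\tilde\vt_2$, whence $\sigma^2=\tilde\sigma^2$ and $\vt_1=\kappa\vt_2=\tilde\vt_1$, while $\vt_0$ stays unconstrained. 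For sufficiency assume $(\sigma^2,\vt_2,\vt_1)=(\tilde\sigma^2,\tilde\vt_2,\tilde\vt_1)$, so only $\vt_0$ (equivalently $\Gamma$) differs. Because $\kappa$ is unchanged, both models share the \emph{same} orthonormal system $(e_\ell)$, so $u_\ell(t)=\langle X_t,e_\ell\rangle$ are the same coordinates under both measures and the two laws factorise as product measures over the independent stationary Ornstein--Uhlenbeck laws of the $u_\ell$. These have identical diffusion coefficient $\sigma$ and mean-reversion rates differing by a constant ($\tilde\lambda_\ell-\lambda_\ell=\vt_0-\tilde\vt_0$, independent of $\ell$); each single-mode pair is equivalent by finite-dimensional Girsanov together with equivalence of the Gaussian initial laws. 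By Kakutani's theorem it remains to check $\prod_\ell\rho_\ell>0$ for the Hellinger affinities $\rho_\ell$, and a Fisher-information estimate gives $1-\rho_\ell=O(1/\lambda_\ell)=O(\ell^{-2})$, which is summable.

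The main obstacle is part~(iii): since the temporal process is not a semimartingale, neither a quadratic-variation invariant nor Girsanov is available, and one must make rigorous the principle that the equivalence class is decided by the single leading constant in the high-frequency expansion of $f$. Pinning down the precise constant $c_0\frac{\sigma^2}{\sqrt{\vt_2}}\e^{-\kappa x_0}$ and verifying the full Feldman--H\'ajek Hilbert--Schmidt condition (not merely coincidence of the two constants) is the delicate point; the remaining parts reduce to Girsanov, a short algebraic elimination, and a summable Kakutani estimate.
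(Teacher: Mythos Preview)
Your argument is essentially correct, and for part~(iii) it coincides with the paper's: both compute the spectral density $f(\omega)=\sigma^2\sum_{\ell}\frac{e_\ell(x_0)^2}{\lambda_\ell^2+\omega^2}$, extract the asymptotics $f(\omega)\sim c\,\frac{\sigma^2}{\sqrt{\vt_2}}\e^{-\kappa x_0}\,\omega^{-3/2}$ by a Riemann-sum approximation, and invoke a spectral criterion for equivalence of stationary Gaussian measures. The paper makes the ``delicate point'' you flag precise by citing Theorem~17 of Ibragimov--Rozanov (1978), which requires exactly that $\lim_{u\to\infty}u^r f(u)\in(0,\infty)$ for some $r>1$ and $(f-\tilde f)/f\in L^2(\R)$.

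The main structural difference is the logical order. The paper proves sufficiency in~(i) first (via the Koski--Loges criterion $\sum_\ell(\lambda_\ell-\tilde\lambda_\ell)^2/\lambda_\ell<\infty$, which is your Kakutani estimate) and then uses it to reduce \emph{all remaining sufficiency proofs} to the case $\Gamma=\tilde\Gamma=0$. After this reduction, (ii) becomes trivial: with $\Gamma=0$ the covariance of Proposition~\ref{prop:Ito_representation}(i) reads $\frac{\sigma^2}{2\vt_2}\e^{-\kappa(x+y)/2}x(1-y)$ and depends only on $(\sigma^2/\vt_2,\kappa)$, so the two Gaussian laws are \emph{identical}. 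Your direct Girsanov argument for~(ii) also works (the key observation that the $\Gamma_0$-dependent drift singularities at $x=1$ cancel in the difference is correct), but it is more labour than the paper's one-line reduction. Conversely, your necessity argument for~(i) via measurable projection to the spatial and temporal marginals is somewhat cleaner than the paper's, which phrases the same idea as ``these quantities can be consistently estimated''; the only small technicality is that the map $X\mapsto X_\cdot(x_0)$ involves point evaluation in space and needs the continuous version to be well defined almost surely, but this is unproblematic.
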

Firstly, $(i)$ shows that it is impossible to estimate $\vt_0$ consistently on a finite time horizon. Secondly, $(ii)$ and $(iii)$ reveal that an estimator that only exploits the temporal or spatial covariance structure cannot consistently estimate any other parameters than $\left({\sigma^2}/{\sqrt{\vt_2}},\kappa \right)$ or $\left({\sigma^2}/{{\vt_2}},\kappa \right)$, respectively. On the other hand, such estimators can be constructed by using squared time increments at least at two different spatial positions (cf.~\cite[Theorem 4.2]{Bibinger18}) or squared space increments (cf.~Section \ref{sec:ParameterEstimation}),  respectively.  

\section{Central limit theorems for realized quadratic variations}\label{sec:CLTs}

We will now study central limit theorems for realized quadratic variations based on the  space and double increments from \eqref{eq:increments_def} and \eqref{eq:double_def}, respectively. To fix assumptions and notation, let $X$ be given by \eqref{eq:SPDE_solution} and  suppose we have $(M+1)(N+1)$ time and space discrete observations 
$$X_{t_i} (y_k ),\qquad i=0,\ldots, N,\,k=0,\ldots, M, $$
at a regular grid $(t_i,y_k)\subset [0,T] \times[0,1]$  with a fixed time horizon $T>0$ and $M,N\in \N_0$.
More precisely,  assume that 
\begin{align*}
y_k=b+k\delta\quad\text{and}\quad t_i=i\Delta \qquad\text{where}\qquad\delta =\frac{1-2b}{M},\quad \Delta=\frac TN
\end{align*}
for some fixed $b\in[0,1/2)$. The spatial locations $y_k$ are thus equidistant inside a (possibly proper) sub-interval $[b,1-b]\subset[0,1]$. {Note that whenever $M\to\infty$ or/and $N\to\infty$, we obtain infill asymptotics in space $\delta\to0$ or/and time $\Delta\to0$, respectively. }

Throughout, $M,N \to \infty$ should be understood in the sense of $\min(M,N)\to \infty$. For two sequences $(a_n),(b_n)$, we write $a_n \lesssim b_n$ to indicate that there exist some $c>0$ such that $|a_n|\leq c\cdot |b_n|$ for all $n\in \N$ and we write {$a_n \eqsim b_n$ if $a_n\lesssim b_n\lesssim a_n$}. If  $a_n=a$ for some $a\in \R$ and  all $n\in\N$, we write $(a_n)\equiv a$. Moreover, $\Vert\cdot\Vert_2$ denotes the spectral norm  and $\Vert\cdot\Vert_F$ denotes the Frobenius norm for matrices.\\

The  realized quadratic variations can be regarded as sums of squares of certain Gaussian random vectors. Hence, our central limit theorems embed into the literature on quadratic forms in random variables and their asymptotic properties, see e.g.~\cite{Mathai92}. Our key tool for proving asymptotic normality is the following proposition which is tailor made for the situation present in this work and which gives an explicit covariance condition that ensures convergence to the normal distribution.

\begin{prop} \label{prop:CLT_fund}
Let $(Z_{i,n},\,1\leq i\leq d_{n},\,n\in\N)$ be a triangular array which satisfies
$(Z_{1,n}\ldots,Z_{d_{n},n})\sim\mathcal{N}\left(0,\Sigma_{n}\right)$
for a covariance matrix $\Sigma_{n}\in \R^{d_n\times d_n}$, $n\in \N$, and let $(\alpha_{i,n},\,1\leq i\leq d_{n},\,n\in\N)$ be a deterministic triangular array with values in $\{-1,1\}$. Define $
S_{n}:=\sum_{i=1}^{d_{n}}\alpha_{i,n}Z_{i,n}^{2}
$
for $n\ge1$. 
If $\Vert\Sigma_{n}\Vert_{2}^2/\V(S_n)\to 0$ as $n\to \infty$, then
we have
\[
\frac{S_{n}-\E(S_{n})}{\sqrt{\V \,S_{n}}}\overset{\mathcal D}{\longrightarrow}\mathcal{N}(0,1)\quad\text{for}\quad n\to \infty.
\]
\end{prop}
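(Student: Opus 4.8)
The plan is to reduce $S_n$ to a weighted sum of independent $\chi^2_1$ random variables by simultaneous diagonalization, and then to verify a Lyapunov condition for triangular arrays of row-independent summands. First I would write $(Z_{1,n},\ldots,Z_{d_n,n})^\top = \Sigma_n^{1/2}\xi_n$ with $\xi_n\sim\mathcal N(0,I_{d_n})$ and collect the signs into $A_n := \mathrm{diag}(\alpha_{1,n},\ldots,\alpha_{d_n,n})$, so that
\[
S_n = \xi_n^\top B_n \xi_n, \qquad B_n := \Sigma_n^{1/2} A_n \Sigma_n^{1/2}.
\]
Diagonalizing the symmetric matrix $B_n = O_n \Lambda_n O_n^\top$ with $O_n$ orthogonal and $\Lambda_n = \mathrm{diag}(\mu_{1,n},\ldots,\mu_{d_n,n})$, and using that $\eta_n := O_n^\top \xi_n$ is again standard normal, one obtains the representation $S_n = \sum_{j=1}^{d_n} \mu_{j,n}\eta_{j,n}^2$ as a sum of independent terms, with $\E(S_n)=\sum_j \mu_{j,n}$ and $\V(S_n) = 2\sum_j \mu_{j,n}^2 = 2\Vert B_n\Vert_F^2$.

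The next step is to translate the hypothesis into negligibility of the largest eigenvalue. Since $A_n$ carries only $\pm 1$ on its diagonal, $\Vert A_n\Vert_2 = 1$, whence $\Vert B_n\Vert_2 \leq \Vert \Sigma_n^{1/2}\Vert_2^2\,\Vert A_n\Vert_2 = \Vert \Sigma_n\Vert_2$, while at the same time $\Vert B_n\Vert_2 = \max_j |\mu_{j,n}|$. Combining this with $\V(S_n) = 2\sum_j \mu_{j,n}^2$ gives
\[
\frac{\max_j \mu_{j,n}^2}{\sum_j \mu_{j,n}^2} = \frac{\Vert B_n\Vert_2^2}{\tfrac12 \V(S_n)} \leq \frac{2\,\Vert\Sigma_n\Vert_2^2}{\V(S_n)} \longrightarrow 0,
\]
so the stated spectral-norm condition is exactly what forces the maximal squared eigenvalue to become negligible relative to the sum of squared eigenvalues.

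Finally I would invoke Lyapunov's theorem. Setting $X_{j,n} := \mu_{j,n}(\eta_{j,n}^2 - 1)$, the row sum $S_n - \E(S_n) = \sum_j X_{j,n}$ consists of independent centered terms with $\sum_j \V(X_{j,n}) = 2\sum_j\mu_{j,n}^2 = \V(S_n)$. Using $\E[(\eta^2-1)^4] = 60$ for $\eta\sim\mathcal N(0,1)$ together with $\sum_j \mu_{j,n}^4 \leq \max_j \mu_{j,n}^2 \cdot \sum_j \mu_{j,n}^2$, the Lyapunov ratio with exponent $4$ satisfies
\[
\frac{\sum_j \E|X_{j,n}|^4}{\big(\sum_j \V(X_{j,n})\big)^2} = \frac{60\sum_j \mu_{j,n}^4}{4\big(\sum_j \mu_{j,n}^2\big)^2} \leq 15\,\frac{\max_j \mu_{j,n}^2}{\sum_j \mu_{j,n}^2} \longrightarrow 0,
\]
and the Lyapunov central limit theorem for triangular arrays with independent rows yields the claimed convergence $(S_n-\E S_n)/\sqrt{\V S_n}\to\mathcal N(0,1)$.

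The conceptual crux, and the step I expect to carry the weight of the argument, is the diagonalization: it is what converts the abstract object $S_n$ into a genuine sum of independent variables and makes the spectral-norm hypothesis usable. The one point deserving care is that the relevant eigenvalues are those of $B_n$ (equivalently of $A_n\Sigma_n$), not of $\Sigma_n$ itself; the elementary submultiplicative bound $\Vert B_n\Vert_2 \leq \Vert\Sigma_n\Vert_2$ is precisely the bridge that turns the given condition into the Lindeberg–Lyapunov negligibility of the dominant eigenvalue. The remaining computations, including the fourth-moment constant, are routine.
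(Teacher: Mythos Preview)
Your proof is correct and follows essentially the same approach as the paper: represent $S_n$ as a quadratic form in a standard Gaussian vector, diagonalize the symmetric matrix $\Sigma_n^{1/2}A_n\Sigma_n^{1/2}$ (the paper uses an equivalent square-root factorization), and then verify Lyapunov's condition with exponent~$4$ via the bound $\Vert B_n\Vert_2\le\Vert\Sigma_n\Vert_2$. The only cosmetic difference is that you track the explicit constant $\E[(\eta^2-1)^4]=60$, whereas the paper absorbs it into a $\eqsim$.
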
 
The proof relies on the fact that $S_n$ can be represented as a linear combination of independent $\chi^2(1)$-distributed random variables. $\Vert\Sigma_{n}\Vert_{2}^2/\V(S_n)\to 0$ then implies that the corresponding Lyapunov condition is fulfilled. In this section we only require $\alpha_{i,n}= 1$ for all $i$ and $n$, i.e.~$S_n=\Vert Z_{\bullet,n} \Vert^2_2$. The general case will be necessary to verify asymptotic normality of the M-estimator in Section~\ref{sec:ParameterEstimation}. It is worth noting that Proposition~\ref{prop:CLT_fund} reveals a quite elementary proof strategy to verify several central limit theorems in \cite{Bibinger18,Cialenco17,shevchenkoEtAl2019,torresEtAl2014} instead of advanced techniques from Malliavin calculus or mixing theory.
\begin{rem}\label{rem:clt}$\,$
\begin{enumerate}
\item  If $\alpha_{i,n}=1$ for all $i,n$, it follows from Isserlis' theorem \cite{Isserlis18} that $\V(S_n)=2 \Vert \Sigma_n \Vert_F^2$ and thus, the condition for asymptotic normality may be written as $\Vert\Sigma_{n}\Vert_{2}/\Vert\Sigma_{n}\Vert_{F}\to 0$. This condition is essentially optimal: In case of independent observations it is in fact equivalent to asymptotic negligibility of the individual normalized and centered summands and hence equivalent to Lindeberg's condition. 
\item The spectral norm is bounded by the maximum absolute row sum. Writing $\Sigma_{n}=\big(\sigma_{ij}^{(n)}\big)_{i,j}$, asymptotic normality thus holds under the sufficient condition
\begin{equation} \label{eq:lyapunov}
{\displaystyle \frac{\left(\max_{i\leq d_{n}}\sum_{j=1}^{d_{n}}\Big|\sigma_{ij}^{(n)}\Big|\right)^{2}}{\V\, S_{n}}}\longrightarrow 0,\quad n\to \infty.
\end{equation}
\end{enumerate}
\end{rem}

So far, the double asymptotic regime $M,N\to \infty$ has only been studied for time increments 
$(\Delta_i^NX)(y_k) = X_{t_{i+1}}(y_k) -X_{t_{i}}(y_k)$: If $b>0$ and if there exists $\rho\in(0,1/2)$ such that $M =\O(N^\rho)$, then the \emph{rescaled realized temporal quadratic variation}
\begin{equation}
  V_\mathrm{t}:=\frac{1}{MN\sqrt{\Delta}}\sum_{i=0}^{N-1}\sum_{k=0}^{M-1}\e^{\kappa y_k}(\Delta_i^N X)^2(y_k)\label{eq:Vt} 
\end{equation}
satisfies
\begin{equation} \label{eq:CLT_time}
\sqrt{MN}\left(V_{\mathrm{t}}-\frac{\sigma^2}{\sqrt{\pi \vt_2}}\right)\overset{\mathcal D}{\longrightarrow}\mathcal N \left(0,  \frac{B\sigma^4}{\pi \vt_2} \right),\quad N,M \to \infty,
\end{equation}
where
\begin{equation} \label{eq:CLT_time_B}
B= 2+\sum_{J=1}^\infty \left(2\sqrt{J}  -\sqrt{J+1}-\sqrt{J-1}\right)^2, 
\end{equation}
cf. \cite[Thm. 3.4]{Bibinger18}.
Note that this result is only valid under the condition $M=o(\sqrt{N})$, i.e., the observation frequency in time is much higher than in space. This constraint is due to a {non-}negligible correlation of realized temporal quadratic variations at two neighboring points in space if the distance $\delta$ of these points is small compared to $\Delta$ or, equivalently, if $M$ is large compared to $N$.  
\smallskip


In the situation where the number of spatial observations dominates the number of temporal observations the above result is not applicable. In this case, spatial increments $(\delta_k^M X)(t_i)= X_{t_i}(y_{k+1})-X_{t_i}(y_k)$ and the corresponding rescaled realized spatial quadratic variations
$$V_{\mathrm{sp}}(t_i):=\frac{1}{M\delta}\sum _{k=0}^{M-1} \e^{\kappa y_k}(\delta_k^MX)^2(t_i)$$
at time $t_i$ turn out to be useful. In contrast to squared time increments, which have to be renormalized by $\sqrt \Delta$ due to the roughness of $t\mapsto X_t(y)$, squared space increments have to be renormalized by $\delta$ due to the semi-martingale nature of  $y\mapsto X_t(y)$.

In the extreme case where observations are only available at one point $t$ in time (and assuming $\vt_1=\vt_0=0$ as well as $X_0=0$) \citet{Cialenco17} showed that $V_{\mathrm{sp}}(t)$ is asymptotically normal with $1/\sqrt M$-rate of convergence. An analogous result has been proved by \citet{shevchenkoEtAl2019} for the wave equation. 
Proposition \ref{prop:Ito_representation} reveals that $V_{\mathrm{sp}}(t)$ is in fact a rescaled realized quadratic variation of the It\^o diffusion $y\mapsto X_t(y)$. Hence,
$$\sqrt M \left (V_{\mathrm{sp}}(t)-\frac{\sigma^2}{2\vt_2}\right)\overset{\mathcal D}\longrightarrow \mathcal{N}\left(0,\frac{\sigma^4}{2\vt_2^2}\right),\qquad M\to \infty,$$  
follows from standard theory on quadratic variation for semi-martingales.
In order to generalize this central limit theorem to the double asymptotic regime $M,N\to \infty$, we define the time average of the \emph{rescaled realized spatial quadratic variations}:
\begin{equation}
  V_{\mathrm{sp}}:=\frac{1}{N}\sum_{i=0}^{N-1}V_{\mathrm{sp}}(t_i)=\frac{1}{NM\delta}\sum_{i=0}^{N-1}\sum _{k=0}^{M-1} \e^{\kappa y_k}(\delta_k^MX)^2(t_i).\label{eq:Vsp} 
\end{equation}

\begin{thm} \label{thm:CLT_space}
Let $b\in[0,1/2)$. If
$
N/M\to 0 
$
then
$$\sqrt{MN}\left(V_\mathrm{sp}-\frac{\sigma^2}{2\vt_2}\right)\overset{\mathcal D}{\longrightarrow} \mathcal{N}\left(0,\frac{\sigma^4}{2\vt_2^2}\right),\quad M,N \to \infty.$$
\end{thm}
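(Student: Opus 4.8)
The plan is to recognize the doubly-indexed family of normalized spatial increments as a single Gaussian vector and to apply Proposition~\ref{prop:CLT_fund} with all signs $\alpha_{i,n}=1$. Concretely, set
\[
Z_{ik}:=\frac{\e^{\kappa y_k/2}}{\sqrt\delta}\,(\delta_k^M X)(t_i),\qquad i=0,\dots,N-1,\ k=0,\dots,M-1,
\]
so that $V_{\mathrm{sp}}=(MN)^{-1}S_n$ with $S_n:=\sum_{i,k}Z_{ik}^2$, and write $\Sigma_n$ for the covariance matrix of $(Z_{ik})_{i,k}$. Since $\sqrt{MN}(V_{\mathrm{sp}}-\sigma^2/(2\vt_2))=(MN)^{-1/2}(S_n-\E S_n)+\sqrt{MN}(\E V_{\mathrm{sp}}-\sigma^2/(2\vt_2))$, Proposition~\ref{prop:CLT_fund} together with two applications of Slutsky's lemma reduces the statement to three claims: (a) the bias satisfies $\sqrt{MN}(\E V_{\mathrm{sp}}-\sigma^2/(2\vt_2))\to0$; (b) the variance satisfies $\V S_n/(MN)\to\sigma^4/(2\vt_2^2)$; and (c) the covariance condition $\Vert\Sigma_n\Vert_2^2/\V S_n\to0$, for which I will use the sufficient row-sum version \eqref{eq:lyapunov} from Remark~\ref{rem:clt}.

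For the bias (a) I would exploit that, by temporal stationarity, $\E V_{\mathrm{sp}}=(M\delta)^{-1}\sum_k\e^{\kappa y_k}\V((\delta_k^M X)(t_0))$ does not depend on the temporal grid. Proposition~\ref{prop:Ito_representation} identifies $y\mapsto X_{t_0}(y)$ as an Itô diffusion with diffusion coefficient $\sqrt{\sigma^2/(2\vt_2)}\,\e^{-\kappa y/2}$, so expanding $\V((\delta_k^M X)(t_0))$ either from the explicit covariance in part~(i) or from the Itô representation in part~(ii) yields the leading term $\tfrac{\sigma^2}{2\vt_2}\e^{-\kappa y_k}\delta$ with a remainder of order $\delta^2$ uniformly in $k$; the drift-induced corrections, including those near the Dirichlet boundary when $b=0$, affect only this remainder. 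Summing over $k$ gives $\E V_{\mathrm{sp}}=\sigma^2/(2\vt_2)+O(\delta)$, whence $\sqrt{MN}\,O(\delta)=O(\sqrt{N/M})\to0$ under $N/M\to0$. This is the first place the hypothesis enters.

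The heart of the argument is the analysis of $\Sigma_n$, which I would organize through the Fourier representation $(\delta_k^M X)(t_i)=\sum_\ell u_\ell(t_i)(e_\ell(y_{k+1})-e_\ell(y_k))$ and the covariance \eqref{eq:cov}, splitting the entries into the diagonal, the same-time ($i=j$, $k\ne l$) spatial block, and the cross-time ($i\ne j$) block. The diagonal entries converge to $\sigma^2/(2\vt_2)$, so by Isserlis' theorem (Remark~\ref{rem:clt}) $\V S_n=2\Vert\Sigma_n\Vert_F^2$ with diagonal contribution $\sim\tfrac{\sigma^4}{2\vt_2^2}MN$; the spatial off-diagonal entries are of order $\delta$ (as for a Brownian bridge, where disjoint increments have covariance $-\delta^2$ before normalization) and contribute only $O(N)$ to $\Vert\Sigma_n\Vert_F^2$. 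The decisive and most delicate estimates are the cross-time ones: using $\lambda_\ell\sim\vt_2\pi^2\ell^2$, the theta-type bound $\sum_\ell\e^{-\lambda_\ell s}\lesssim s^{-1/2}$, and $\sum_l|e_\ell(y_{l+1})-e_\ell(y_l)|\lesssim\ell$, I expect each cross-time entry to carry a factor $\e^{-\lambda_\ell|t_i-t_j|}$ whose summation produces a temporal row sum of order $\sum_{j\ne i}(|i-j|\Delta)^{-1/2}=O(N)$ and a cross-time Frobenius contribution of order $N^2$ up to logarithmic factors. Consequently $\V S_n/(MN)\to\sigma^4/(2\vt_2^2)$, giving (b), and the maximal absolute row sum of $\Sigma_n$ is $O(N)$, so the left-hand side of \eqref{eq:lyapunov} is of order $N^2/(MN)=N/M\to0$, giving (c).

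The main obstacle is thus the cross-time block: unlike the temporal quadratic variation, where neighbouring-in-space correlations are the issue, here one must show that spatial increments at different times decorrelate fast enough. This rests entirely on controlling the slowly decaying low-frequency part of $\sum_\ell\lambda_\ell^{-1}\e^{-\lambda_\ell|t_i-t_j|}(e_\ell(y_{k+1})-e_\ell(y_k))(e_\ell(y_{l+1})-e_\ell(y_l))$; it is precisely the resulting $O(N)$ row sum, rather than an exponentially small one, that makes $N/M\to0$ both sufficient and, in view of the lower bounds, essentially sharp. I would isolate these spectral sum estimates as a separate auxiliary lemma.
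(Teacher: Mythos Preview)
Your proposal is correct and follows essentially the same three-step scheme as the paper: reduce to Proposition~\ref{prop:CLT_fund}, control the bias via $\E V_{\mathrm{sp}}=\sigma^2/(2\vt_2)+O(\delta)$, compute the asymptotic variance via the Frobenius norm, and verify the row-sum criterion~\eqref{eq:lyapunov}. Two minor deviations are worth noting. For the bias, you invoke the It\^o representation of Proposition~\ref{prop:Ito_representation}, whereas the paper works directly with the Fourier series and the closed form~\eqref{eq:Fourier_expl}; both give the same $O(\delta)$ remainder. For the row sum, your idea of summing over the spatial index first via the total-variation bound $\sum_k|e_\ell(y_{k+1})-e_\ell(y_k)|\lesssim\ell$ and then applying the theta bound $\sum_\ell\e^{-\lambda_\ell J\Delta}\lesssim(J\Delta)^{-1/2}$ yields the row sum $O(N)$ directly; the paper instead bounds individual cross-time covariances pointwise through $f_t''(y)\lesssim(y\wedge(2-y))^{-1}\wedge t^{-1/2}$ and then uses Cauchy--Schwarz to pass from the Frobenius bound on $v_4$ to the row sum. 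Your route is slightly quicker for~(c), while the paper's pointwise bounds are what make the precise Frobenius decomposition $v_1+v_2+v_3+v_4$ in~(b) go through cleanly---your ``$O(N^2)$ up to logs'' for the cross-time Frobenius contribution is correct, but filling it in will require exactly those pointwise estimates.
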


\begin{rem}
The condition $N/M\to 0$ is necessary in order to to neglect the bias: The proof of the theorem reveals that $\delta^{-1}\E\left( \e^{-\kappa y_k}(\delta_k^MX)^2(t_i)\right)-\frac{\sigma^2}{2\vt_2}\eqsim \delta$ and consequently, the overall bias is of the order
$$\E\left(\sqrt{MN}\left(V_\mathrm{sp}-\frac{\sigma^2}{2\vt_2}\right) \right)\eqsim {\sqrt{MN}}\cdot{\delta}\eqsim \sqrt{\frac{N}{M}}.$$
\end{rem}


We conclude that the central limit theorem for realized temporal quadratic variations $V_{\mathrm t}$ holds when (roughly) $M=o(\sqrt N)$, whereas the central limit theorem for realized spatial quadratic variations $V_\mathrm{sp}$ is fulfilled if $N=o(M)$. To close the remaining gap, we finally study the space-time  increments $D_{ik}$ from \eqref{eq:double_def}. The corresponding rescaled realized quadratic variations are robust with respect to the sampling regime, as indicated by the representation
$$D_{ik}=\sum_{\ell \geq 1}\big(u_\ell(t_{i+1})-u_\ell(t_{i})\big)\big(e_\ell(y_{k+1})-e_\ell(y_{k})\big)$$
in terms of the series expansion \eqref{eq:SPDE_solution}.

In contrast to the case of space increments (and in line with the result for time increments), we impose $b>0$ for the remainder of this section. Inspection of the proofs suggests that this condition may be relaxed to $b\to0$ as long as the decay is sufficiently slow. As a first step, we calculate the expectation of the double increments

\begin{prop} \label{prop:mean_double}
Let $b\in(0,1/2)$. Then:
\begin{enumerate}[(i)]
\item It holds uniformly in $0\leq k\leq M-1$ and $1\leq i \leq N-1$ that
\[
\E\left(D_{ik}^2\right)=\sigma^{2}\e^{-\kappa y_k}\,\Phi_{\vt}(\delta,\Delta)+\O\left(\delta\sqrt{\Delta}\left(\delta\wedge\sqrt{\Delta}\right)\right), \quad \max(\delta, \Delta)\to 0, 
\]
where
\begin{align*}
 \Phi_{\vt}(\delta,\Delta)&:=F_{\vt_2}(0,\Delta)\left(1+\e^{-\kappa \delta}\right)-2F_{\vt_2}(\delta,\Delta)\e^{-\kappa\delta/2}\qquad\text{and}\\
 F_{\vt_2}(\delta,\Delta)&:=\sum_{\ell\geq1}\frac{1-\e^{-\pi^{2}\vt_{2}\ell^{2}\Delta}}{\pi^{2}\vt_{2}\ell^{2}}\cos(\pi \ell \delta).
\end{align*}
\item Assuming that $r=\lim \delta /\sqrt \Delta\in [0,\infty]$ exists, $\Phi_{\vt}$ admits three different asymptotic regimes:
\begin{align}
\Phi_{\vt}(\delta,\Delta)&=\begin{cases}
\frac{1}{\vt_{2}}\cdot\delta+o\left(\delta\right),&r= 0,\\
{\psi_{\vt_2}(r)\cdot \sqrt \Delta +o(\sqrt \Delta),}&r \in (0,\infty),\\
\frac{2}{\sqrt{\vt_{2}\pi}}\cdot\sqrt{\Delta}+o (\sqrt{\Delta}),& r=\infty,\\
\end{cases}\qquad\text{where}\notag\\
\label{eq:psi_r_def}
\psi_{\vt_2}(r) 
&:=\frac{2}{\sqrt{\pi\vt_2}}\left(1-\e^{-\frac{r^2}{4\vt_2}}+\frac{r}{\sqrt{\vt_2}}\int_{\frac{r}{2\sqrt{\vt_2}}}^\infty\e^{-z^2}\,dz \right).
\end{align}
If moreover ${\delta}/{\sqrt{\Delta}} \equiv r \in (0,\infty)$, we have
\begin{equation} \label{eq:double_mean_r}
\Phi_{\vt}(\delta,\Delta)=\e^{-\kappa \delta/2}\psi_{\vt_2}(r)\cdot \sqrt \Delta +\mathcal O(\Delta^{3/2}).
\end{equation}

\end{enumerate}
\end{prop}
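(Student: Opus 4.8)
The plan is to compute $\E(D_{ik}^2)$ exactly from the series representation and then peel off the main term. Since $D_{ik}=\sum_{\ell\ge1}\big(u_\ell(t_{i+1})-u_\ell(t_i)\big)\big(e_\ell(y_{k+1})-e_\ell(y_k)\big)$ and the Fourier modes $u_\ell$ are independent and centered, squaring and taking expectations annihilates the cross terms, leaving $\E(D_{ik}^2)=\sum_{\ell\ge1}\E\big[(u_\ell(t_{i+1})-u_\ell(t_i))^2\big]\,(e_\ell(y_{k+1})-e_\ell(y_k))^2$. Stationarity of the Ornstein--Uhlenbeck modes gives $\E[(u_\ell(t_{i+1})-u_\ell(t_i))^2]=\sigma^2 h(\lambda_\ell)$ with $h(\lambda):=(1-\e^{-\lambda\Delta})/\lambda$, independently of $i$, which already yields uniformity in $i$. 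For the spatial factor I would write $e_\ell(y_{k+1})-e_\ell(y_k)=\sqrt2\,\e^{-\kappa y_k/2}\,\mathrm{Im}(\e^{\i\pi\ell y_k}\zeta_\ell)$ with $\zeta_\ell:=\e^{(\i\pi\ell-\kappa/2)\delta}-1$, and then use $[\mathrm{Im}(w)]^2=\tfrac12(|w|^2-\mathrm{Re}(w^2))$ to arrive at the clean splitting
\[
\E(D_{ik}^2)=\sigma^2\e^{-\kappa y_k}\sum_{\ell\ge1}h(\lambda_\ell)\Big(|\zeta_\ell|^2-\mathrm{Re}\big(\e^{2\i\pi\ell y_k}\zeta_\ell^2\big)\Big),
\]
which separates a non-oscillatory part from a $y_k$-oscillating part.

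\textbf{Main term and the $\Gamma$-correction.} The non-oscillatory sum is the source of the main term. Since $|\zeta_\ell|^2=(1+\e^{-\kappa\delta})-2\e^{-\kappa\delta/2}\cos(\pi\ell\delta)$, replacing $\lambda_\ell=\pi^2\vt_2\ell^2+\vt_2\Gamma$ by $\pi^2\vt_2\ell^2$ inside $h$ turns $\sum_\ell h(\pi^2\vt_2\ell^2)|\zeta_\ell|^2$ into exactly $(1+\e^{-\kappa\delta})F_{\vt_2}(0,\Delta)-2\e^{-\kappa\delta/2}F_{\vt_2}(\delta,\Delta)=\Phi_\vt(\delta,\Delta)$, because $h(\pi^2\vt_2\ell^2)$ is precisely the summand of $F_{\vt_2}$. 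It then remains to bound, uniformly in $k$ and $i$, the $\Gamma$-correction $\sum_\ell[h(\lambda_\ell)-h(\pi^2\vt_2\ell^2)]|\zeta_\ell|^2$ and the oscillatory remainder. For the former I would use the mean value theorem together with $|h'(\lambda)|\lesssim\min(\Delta^2,\lambda^{-2})$ and $|\zeta_\ell|^2\lesssim(\ell\delta\wedge1)^2$, splitting the sum at $\ell\eqsim\delta^{-1}$ and $\ell\eqsim\Delta^{-1/2}$; a short dyadic estimate then gives precisely the order $\delta\sqrt\Delta(\delta\wedge\sqrt\Delta)$ in both regimes $\delta\le\sqrt\Delta$ and $\delta>\sqrt\Delta$.

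\textbf{The oscillatory sum (main obstacle).} Controlling $\sum_\ell h(\lambda_\ell)\mathrm{Re}(\e^{2\i\pi\ell y_k}\zeta_\ell^2)$ is the crux, and the only place where the hypothesis $b>0$ enters. Expanding $\zeta_\ell^2$ writes $\e^{2\i\pi\ell y_k}\zeta_\ell^2$ as a combination of the three pure exponentials $\e^{2\i\pi\ell(y_k+c\delta)}$, $c\in\{0,\tfrac12,1\}$, with coefficients $1,\,-2\e^{-\kappa\delta/2},\,\e^{-\kappa\delta}$, so the task reduces to estimating $\sum_\ell h(\lambda_\ell)\e^{2\i\pi\ell z}$ for $z=y_k+c\delta$. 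Two tools are available. First, summation by parts against the Dirichlet sums $\sum_{\ell\le L}\e^{2\i\pi\ell z}$, which are bounded by $|\sin(\pi z)|^{-1}\le\sin(\pi b)^{-1}$ uniformly in $L$ exactly because $b>0$ keeps $z$ away from the integers. Second, for the slowly decaying part where $h(\lambda_\ell)\approx(\pi^2\vt_2\ell^2)^{-1}$, the explicit Fourier series $\sum_\ell\ell^{-2}\cos(2\pi\ell z)=\pi^2(z^2-z+\tfrac16)$ identifies the non-negligible (Bernoulli-type, second-order-in-$\delta$) contribution, while the complementary theta-type piece $\sum_\ell\e^{-\pi^2\vt_2\ell^2\Delta}\e^{2\i\pi\ell z}$ is shown to be exponentially small in $1/\Delta$, uniformly for $z$ bounded away from $\{0,1\}$, via Poisson summation. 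I expect the genuine difficulty here to be the careful bookkeeping of the second-order terms in this combination, so that the full oscillatory contribution is controlled uniformly in $k$; this is the step where the real work of part (i) concentrates.

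\textbf{The asymptotic regimes (ii).} Here I would first record the exact algebraic identity $(1+\e^{-\kappa\delta})-2\e^{-\kappa\delta/2}\cos(\pi\ell\delta)=4\e^{-\kappa\delta/2}\sin^2(\tfrac{\pi\ell\delta}{2})+(\e^{-\kappa\delta/2}-1)^2$, which turns $\Phi_\vt$ into $4\e^{-\kappa\delta/2}\sum_\ell h(\pi^2\vt_2\ell^2)\sin^2(\tfrac{\pi\ell\delta}{2})+(\e^{-\kappa\delta/2}-1)^2F_{\vt_2}(0,\Delta)$. Since $\delta\to0$ in every regime and $F_{\vt_2}(0,\Delta)=\O(\sqrt\Delta)$, the second summand is $\O(\delta^2\sqrt\Delta)$, hence $\O(\Delta^{3/2})$ when $\delta/\sqrt\Delta\equiv r$. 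For the first summand I would treat $\sum_\ell h(\pi^2\vt_2\ell^2)\sin^2(\tfrac{\pi\ell\delta}{2})$ as a Riemann sum with mesh $\sqrt\Delta$ through $u=\ell\sqrt\Delta$, converging to $\tfrac{\sqrt\Delta}{\pi^2\vt_2}\int_0^\infty u^{-2}(1-\e^{-\pi^2\vt_2 u^2})\sin^2(\tfrac{\pi ru}{2})\,du$; as the summand is even, smooth and decaying, the Euler--Maclaurin boundary terms vanish and the discretization error is negligible to all polynomial orders. Evaluating the integral --- most cleanly by writing $u^{-2}(1-\e^{-\pi^2\vt_2u^2})=\int_0^{\pi^2\vt_2}\e^{-su^2}\,ds$, interchanging the order of integration, using $\int_0^\infty\e^{-su^2}\sin^2(bu)\,du=\tfrac14\sqrt{\pi/s}\,(1-\e^{-b^2/s})$, and one integration by parts --- reproduces $\tfrac14\psi_{\vt_2}(r)$, and letting $r\to0$, $r\in(0,\infty)$, $r\to\infty$ gives the three stated regimes. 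Keeping the prefactor $\e^{-\kappa\delta/2}$ rather than approximating it by $1$ then yields the refined expansion \eqref{eq:double_mean_r} with the $\O(\Delta^{3/2})$ remainder coming entirely from the $(\e^{-\kappa\delta/2}-1)^2F_{\vt_2}(0,\Delta)$ term.
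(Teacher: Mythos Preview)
Your proposal is correct in substance and follows the same overall architecture as the paper (main term via $|\zeta_\ell|^2$, $\Gamma$-correction, oscillatory remainder using $b>0$), but the execution differs in two places worth noting.

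\textbf{Oscillatory remainder.} You attack $\sum_\ell h(\lambda_\ell)\e^{2\i\pi\ell z}$ via Abel summation plus Poisson summation for the theta piece. The paper instead writes $F_{\vt_2}(\cdot,\Delta)=H_\Delta+G_\Delta$, where $H_\Delta(x)=\sum_\ell(1+\lambda_\ell\Delta)^{-1}\cos(\pi\ell x)$ has the closed form $\frac{\sqrt\Delta}{2\sqrt{\vt_2}}\cosh\!\big(\tfrac{x-1}{\sqrt{\vt_2\Delta}}\big)/\sinh\!\big(\tfrac{1}{\sqrt{\vt_2\Delta}}\big)-\tfrac\Delta2$ from \eqref{eq:Fourier_expl}; this makes the exponential smallness for $x\in[2b,2(1-b)]$ immediate, and $G_\Delta$ is handled by elementary Riemann-sum bounds. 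The paper then reads off $F_{\vt_2},F_{\vt_2}',F_{\vt_2}''\lesssim\Delta,\Delta,\sqrt\Delta$ on $[2b,2(1-b)]$, so that the second-order difference $D_\delta^2\big(g(\cdot)F_{\vt_2}(2y_k+\cdot,\Delta)\big)(0)$ is bounded by $\min(\delta^2\|F''\|_\infty,\delta\|F'\|_\infty)=\O(\delta\sqrt\Delta(\delta\wedge\sqrt\Delta))$. Your route reaches the same bounds, but the $H+G$ split avoids the ``careful bookkeeping'' you anticipate.

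\textbf{Asymptotics of $\Phi_\vt$.} Your unified treatment via the identity $|\zeta_\ell|^2=4\e^{-\kappa\delta/2}\sin^2(\tfrac{\pi\ell\delta}{2})+(\e^{-\kappa\delta/2}-1)^2$ and a single Riemann sum is genuinely different from, and arguably cleaner than, the paper's three separate arguments (Taylor expansion of $F_{\vt_2}(\cdot,\Delta)$ at $0$ for $r=0$; Lemma~\ref{lem:trig_integral} for $r=\infty$; Lemma~\ref{lem:Riemann} for $r\in(0,\infty)$). Your integral evaluation via $u^{-2}(1-\e^{-\pi^2\vt_2u^2})=\int_0^{\pi^2\vt_2}\e^{-su^2}\,ds$ is also different from the paper's computation through $h_1-h_2$ with the sine integral. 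One caution: the claim that the Euler--Maclaurin error is ``negligible to all polynomial orders'' is correct only for \emph{fixed} $r$ (since $g_r$ is even and Schwartz-like). For $r\to0$ or $r\to\infty$ you need a uniform bound; the second-order remainder $\O(\Delta\|g_r''\|_{L^1})$ together with $\|g_r''\|_{L^1}\lesssim r^2\wedge(1+r^2)$ suffices, but you should state this rather than appeal to arbitrary polynomial order.
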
 

\begin{rem}
The  first order constants appearing in the asymptotic expressions in $(ii)$ stem from a first derivative of $F_{\vt_2}(\cdot,\Delta)$ in 0 in case $r=0$ and a Riemann sum approximation of $F_{\vt_2}(\delta,\Delta)$ in case $r\neq 0$, respectively. 
Assuming for simplicity that $\kappa =0$, the proof of Proposition~\ref{prop:mean_double} shows a more precise expression for the remainder terms in case $r\in \{0,\infty\}$:
$$
\E\left(D_{ik}^2\right)=
\begin{cases}
\frac{1}{\vt_2}\cdot\delta +\O(\delta^2 /\sqrt{\Delta}), &r=0,\\
\frac{2}{\sqrt{\pi\vt_2}}\cdot \sqrt{\Delta} +\O(\Delta^{3/2}/\delta^2), &r=\infty.
\end{cases}
$$
 Thus, if our analysis of the remainder terms is sharp (which we believe is the case),  the first order approximations have a poor quality if $\delta/\sqrt \Delta$ converges slowly.
\end{rem}

Proposition \ref{prop:mean_double} suggests to renormalize double increments with $\delta$ if $\delta /\sqrt{\Delta}\to 0$ and with $\sqrt{\Delta}$ {otherwise}, which is in line with the renormalization of $V_\mathrm{sp}$ and $V_\mathrm{sp}$, respectively. However, this approach might not be feasible:
Firstly, it requires the knowledge which asymptotic regime is present, i.e., whether or not $\delta/\sqrt{\Delta} \to 0$. Especially for one given set of observations this information may be inaccessible. In this case renormalizing with $\Phi_{\vt}(\delta,\Delta)$ automatically captures the correct asymptotic regime. 
Secondly, if $r\in \{0,\infty\}$, the previous remark shows that the asymptotic expressions for $\Phi_{\vt}(\delta,\Delta)$ may lead to an undesirably large bias. In fact, in order to obtain a central limit theorem with $1/\sqrt{MN}$-rate of convergence, we would have to impose the assumptions $N^2/M \to 0$ and $M^5/N \to 0$,  respectively. These constraints are even more restrictive than the ones required for time or space increments.
\smallskip

Therefore, we renormalize with $\Phi_{\vt}(\delta,\Delta)$ and introduce the \emph{rescaled realized quadratic space-time variation}
$$\mathbb V := \frac{1}{MN\Phi_{\vt}(\delta,\Delta)}\sum_{k=0}^{M-1} \sum_{i=0}^{N-1}\e^{\kappa y_k}D_{ik}^2.$$ 

\begin{thm} \label{thm:CLT_double}
Let $b>0$.
If either ${\delta}/{\sqrt \Delta} \to r \in \{0, \infty\}$ or ${\delta}/{\sqrt \Delta} \equiv r \in(0, \infty)$, then 
$$\sqrt{MN}(\mathbb V-\sigma^2)\overset{\mathcal D}{\longrightarrow}\mathcal N \big(0, C\big({r/\sqrt{\vt_2}}\big)\sigma^4 \big),\quad N,M \to \infty,$$
where $ C(\cdot)$ is a bounded continuous function on $[0,\infty]$, given by \eqref{eq:C.r}, satisfying 
\[
C(0)=3 \qquad\text{and}\qquad
C(\infty)=3+\frac{3}{2}\sum_{J=1}^{\infty}\left(\sqrt{J-1}-\sqrt{J+1}-2\sqrt{J}\right)^{2}.
\]
\end{thm}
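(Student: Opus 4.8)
The plan is to realize $\mathbb V$ as a quadratic form in a Gaussian vector and to apply Proposition~\ref{prop:CLT_fund} with all signs equal to $1$. Indexing the $MN$ double increments by the pair $(i,k)$, set
$$
Z_{ik}:=\frac{\e^{\kappa y_k/2}}{\sqrt{\Phi_{\vt}(\delta,\Delta)}}\,D_{ik},\qquad 0\le i\le N-1,\ 0\le k\le M-1,
$$
so that $(Z_{ik})$ is a centered Gaussian family and $S_n:=\sum_{i,k}Z_{ik}^2=MN\,\mathbb V$. Because $\sqrt{MN}(\mathbb V-\sigma^2)=(S_n-MN\sigma^2)/\sqrt{MN}$, the claim reduces to three points: that the bias $\sqrt{MN}(\E\mathbb V-\sigma^2)$ vanishes, that $\V(S_n)/(MN)\to C(r/\sqrt{\vt_2})\sigma^4$, and that the covariance condition of Proposition~\ref{prop:CLT_fund} holds. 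The bias is the easiest step: Proposition~\ref{prop:mean_double} yields $\E Z_{ik}^2=\sigma^2+\e^{\kappa y_k}R_{ik}/\Phi_{\vt}(\delta,\Delta)$ with a remainder $R_{ik}=\O(\delta\sqrt\Delta(\delta\wedge\sqrt\Delta))$ uniform in $(i,k)$; since $R_{ik}/\Phi_{\vt}(\delta,\Delta)\eqsim\delta\sqrt\Delta$ in each of the three admissible regimes, the total bias obeys $\sqrt{MN}\,|\E\mathbb V-\sigma^2|\lesssim M^{-1/2}\to0$. This also explains why we renormalize by $\Phi_{\vt}$ itself rather than by its leading term, which would reintroduce the larger bias discussed before the theorem.

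The heart of the argument is the covariance matrix $\Sigma_n=\big(\Cov(Z_{ik},Z_{i'k'})\big)$. Using the expansion $D_{ik}=\sum_{\ell\ge1}\big(u_\ell(t_{i+1})-u_\ell(t_i)\big)\big(e_\ell(y_{k+1})-e_\ell(y_k)\big)$ and the independence of the Ornstein--Uhlenbeck modes, the covariance factorizes over $\ell$ into a temporal and a spatial part,
$$
\Cov(D_{ik},D_{i'k'})=\sum_{\ell\ge1} g_\ell(i,i')\,h_\ell(k)\,h_\ell(k'),
$$
where $h_\ell(k):=e_\ell(y_{k+1})-e_\ell(y_k)$ and $g_\ell(i,i'):=\Cov\big(u_\ell(t_{i+1})-u_\ell(t_i),\,u_\ell(t_{i'+1})-u_\ell(t_{i'})\big)$ is an explicit combination of the exponentials $\e^{-\lambda_\ell\Delta|i-i'|}$ arising from $\Cov(u_\ell(s),u_\ell(t))=\frac{\sigma^2}{2\lambda_\ell}\e^{-\lambda_\ell|t-s|}$. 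The temporal decay localizes the effective modes to $\ell\lesssim\Delta^{-1/2}$, whereas the spatial increments $h_\ell$ resolve scale $\delta$; the ratio $r=\lim\delta/\sqrt\Delta$ governs whether the spatial ($r=0$) or the temporal ($r=\infty$) factor dominates, with a genuine mixing in the intermediate constant regime. I would isolate the near-diagonal blocks in which $|i-i'|$ and $|k-k'|$ are bounded, where the bulk of the correlation sits, and quantify the decay of $\Cov(D_{ik},D_{i'k'})$ in $|i-i'|$ and $|k-k'|$ by Riemann-sum and Gaussian-tail estimates of the type used in the proof of Proposition~\ref{prop:mean_double}, treating the three regimes separately.

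By Isserlis' theorem (Remark~\ref{rem:clt}(1)) one has $\V(S_n)=2\Vert\Sigma_n\Vert_F^2$, so the limiting variance is obtained by evaluating $\frac{2}{MN}\sum_{i,k}\sum_{i',k'}\Cov(Z_{ik},Z_{i'k'})^2$. The diagonal terms contribute $2\sigma^4$, while the summable near-diagonal contributions, after passing to the limit in $M,N$, produce the additional correlation series and identify the continuous function $C(r/\sqrt{\vt_2})$ of \eqref{eq:C.r}, with the boundary values $C(0)=3$ and the stated $C(\infty)$ emerging from the two extreme regimes. For the hypothesis of Proposition~\ref{prop:CLT_fund} I would invoke the sufficient criterion \eqref{eq:lyapunov}: the decay estimates of the previous step show that each row sum $\sum_{i',k'}|\Cov(Z_{ik},Z_{i'k'})|$ is bounded uniformly in $n$, hence $\Vert\Sigma_n\Vert_2=\O(1)$, whereas $\Vert\Sigma_n\Vert_F^2\eqsim MN\to\infty$; consequently $\Vert\Sigma_n\Vert_2^2/\V(S_n)=\O\big(1/(MN)\big)\to0$ and Proposition~\ref{prop:CLT_fund} applies.

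The \emph{main obstacle} is precisely the asymptotic evaluation of $\Vert\Sigma_n\Vert_F^2$ together with the uniform row-sum bound, both of which hinge on sharp control of $\Cov(D_{ik},D_{i'k'})$ across the three regimes $r\in\{0,\infty\}$ and $r$ constant. The delicate point is the interplay between the oscillatory spatial increments $h_\ell$ — for which the naive linearization $h_\ell(k)\approx\sqrt2\,\pi\ell\cos(\pi\ell y_k)\e^{-\kappa y_k/2}\delta$ breaks down exactly for the high modes $\ell\sim\delta^{-1}$ that are not suppressed by the temporal factor when $r$ is finite — and the temporal decay, which must be handled so that the off-diagonal sums converge to the correct series while remaining uniformly summable. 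Keeping the error terms uniform in $(i,k)$ and controlling the boundary effects near $k=0$ and $k=M-1$, where the assumption $b>0$ enters, is the most technical part; once these estimates are in place, the three ingredients assemble into the claimed central limit theorem.
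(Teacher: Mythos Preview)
Your overall plan coincides with the paper's: apply Proposition~\ref{prop:CLT_fund} with unit signs, kill the bias via Proposition~\ref{prop:mean_double} (your bound $\sqrt{MN}\,|\E\mathbb V-\sigma^2|\lesssim M^{-1/2}$ is correct), identify the asymptotic variance as $2\Vert\Sigma_n\Vert_F^2/(MN)$, and verify the spectral-norm condition through the row-sum criterion~\eqref{eq:lyapunov}.

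Where your sketch stops is precisely the point you flag as the main obstacle, and here the paper supplies a device you do not mention. Instead of attacking $\sum_\ell g_\ell(i,i')h_\ell(k)h_\ell(k')$ by Riemann-sum or tail bounds, the paper applies the product-to-sum identity $\sin\alpha\sin\beta=\tfrac12(\cos(\alpha-\beta)-\cos(\alpha+\beta))$ to rewrite $\e^{\kappa(y_k+y_{k'})/2}h_\ell(k)h_\ell(k')$ as a linear combination of $\cos(\pi\ell z)$ with $z$ running over $|y_{k'}-y_k|,\,|y_{k'}-y_k|\pm\delta$ plus ``far'' values $z\in[2b,2(1-b)]$; this is \eqref{eq:eig_mult}. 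Combined with the temporal factor, the covariance $\Cov(\rsti{ik},\rsti{jl})$ becomes a first or second spatial difference of the explicit function
\[
F_{J,\Delta}(z)=\sum_{\ell\ge1}\frac{2\e^{-\pi^2\vt_2 J\ell^2\Delta}-\e^{-\pi^2\vt_2(J+1)\ell^2\Delta}-\e^{-\pi^2\vt_2(J-1)\ell^2\Delta}}{2\pi^2\vt_2\ell^2}\cos(\pi\ell z),
\]
up to a remainder $\O\big(\sqrt\Delta\,\delta^2(J+1)^{-3/2}\big)$ uniform thanks to $b>0$ (Lemma~\ref{lem:var_basic}). This completely bypasses the breakdown of the linearization $h_\ell(k)\approx c\,\ell\delta$ that you worry about: no Taylor expansion of the eigenfunctions is needed, and the oscillations are absorbed into the cosine series, whose behavior is then controlled by Lemmas~\ref{lem:trig_series_fundamental} and~\ref{lem:trig_integral}. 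The regime-dependent asymptotics of $D_\delta F_{J,\Delta}(0)$ and $D_\delta^2 F_{J,\Delta}(z)$ (Lemmas~\ref{lem:var_dbyD0} and~\ref{lem:var_dbyDinf}) then yield both the limiting variance---via Ces\`aro summation over $J$ in the cases $r=\infty$ and $r\in(0,\infty)$, and via dominated convergence for $r\in(0,\infty)$---and the uniform row-sum bound needed for \eqref{eq:lyapunov}. Without this reduction to $F_{J,\Delta}$, obtaining covariance estimates sharp enough to pin down $C(r/\sqrt{\vt_2})$ exactly, while simultaneously summable in both indices, is the real difficulty your outline leaves open.
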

The condition $\delta/\sqrt \Delta\equiv r \in (0,\infty)$ can be relaxed to $\delta/\sqrt \Delta \to r\in (0,\infty)$ as long as the convergence is fast enough which we omit for the sake of simplicity. If $\delta/\sqrt \Delta\equiv r \in (0,\infty)$ holds,  \eqref{eq:double_mean_r} shows that the renormalization $\Phi_{\vt}(\delta,\Delta)$  and its first order approximation are close enough to be exchanged in the previous theorem. In this case we obtain a central limit theorem with a simpler renormalization which particularly does not depend on the model parameters:

\begin{cor} \label{cor:CLTr}
If $b>0$ and ${\delta}/{\sqrt{\Delta}}\equiv r \in (0,\infty)$,
then
\begin{equation}
  \mathbb V_r :=\frac{1}{MN\sqrt \Delta }\sum_{k=0}^{M-1} \sum_{i=0}^{N-1}\exp\left(\frac{\kappa}{2}(y_k+y_{k+1}) \right)D_{ik}^2\label{eq:Vr} 
\end{equation}
satisfies with $\psi_{\vt_2}(r)$ from \eqref{eq:psi_r_def} and $C(\cdot)$  from \eqref{eq:C.r}:
$$\sqrt{MN}\Big(\mathbb V_r-\psi_{\vt_2}(r)\sigma^2\Big)\overset{\mathcal D}{\longrightarrow}\mathcal N \Big(0, C(r/\sqrt{\vt_2})\psi_{\vt_2}^2(r)\sigma^4 \Big),\quad N,M \to \infty.$$
\end{cor}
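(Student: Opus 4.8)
The plan is to derive the corollary from Theorem~\ref{thm:CLT_double} by a Slutsky argument, exploiting that $\mathbb V_r$ and $\mathbb V$ differ only by a deterministic scalar factor. First I would rewrite the weight appearing in $\mathbb V_r$: since $y_{k+1}=y_k+\delta$, we have $\exp\big(\tfrac{\kappa}{2}(y_k+y_{k+1})\big)=\e^{\kappa y_k}\e^{\kappa\delta/2}$, so the constant $\e^{\kappa\delta/2}$ factors out of the double sum. Comparing with the definition of $\mathbb V$ then yields the exact identity $\mathbb V_r=c_{M,N}\,\mathbb V$ with the deterministic constant $c_{M,N}:=\e^{\kappa\delta/2}\,\Phi_{\vt}(\delta,\Delta)/\sqrt\Delta$.

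Next I would identify the limit of $c_{M,N}$. Under the standing assumption $\delta/\sqrt\Delta\equiv r\in(0,\infty)$, the refined expansion \eqref{eq:double_mean_r} from Proposition~\ref{prop:mean_double} gives $\Phi_{\vt}(\delta,\Delta)=\e^{-\kappa\delta/2}\psi_{\vt_2}(r)\sqrt\Delta+\O(\Delta^{3/2})$. Substituting this and using $\delta=r\sqrt\Delta\to0$ (so that $\e^{\kappa\delta/2}\to1$), I obtain $c_{M,N}=\psi_{\vt_2}(r)+\O(\Delta)$; in particular $c_{M,N}\to\psi_{\vt_2}(r)$.

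It then remains to control the two contributions in the decomposition
$$\sqrt{MN}\big(\mathbb V_r-\psi_{\vt_2}(r)\sigma^2\big)=c_{M,N}\,\sqrt{MN}\big(\mathbb V-\sigma^2\big)+\sigma^2\sqrt{MN}\big(c_{M,N}-\psi_{\vt_2}(r)\big).$$
For the first term, Theorem~\ref{thm:CLT_double} yields $\sqrt{MN}(\mathbb V-\sigma^2)\overset{\mathcal D}{\longrightarrow}\mathcal N(0,C(r/\sqrt{\vt_2})\sigma^4)$, and since $c_{M,N}\to\psi_{\vt_2}(r)$, Slutsky's theorem shows this term converges in distribution to $\mathcal N(0,C(r/\sqrt{\vt_2})\psi_{\vt_2}^2(r)\sigma^4)$. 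For the second (deterministic) term I would note that the constraint $\delta/\sqrt\Delta\equiv r$ together with $\delta=(1-2b)/M$ and $\Delta=T/N$ forces $N\eqsim M^2$, whence $\sqrt{MN}\,\Delta\eqsim M^{3/2}\cdot M^{-2}=M^{-1/2}\to0$; combined with $c_{M,N}-\psi_{\vt_2}(r)=\O(\Delta)$ this shows the bias term vanishes. A final application of Slutsky's theorem gives the claim.

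The argument is genuinely routine once \eqref{eq:double_mean_r} and Theorem~\ref{thm:CLT_double} are in hand; the only point requiring care is the bias term, i.e.~verifying that replacing the exact normalization $\Phi_{\vt}(\delta,\Delta)$ by its model-free surrogate $\psi_{\vt_2}(r)\sqrt\Delta$ introduces an error that is negligible at the $\sqrt{MN}$ scale. This is precisely why \eqref{eq:double_mean_r} is stated with the sharp $\O(\Delta^{3/2})$ remainder rather than merely $o(\sqrt\Delta)$: the coarser estimate would not suffice to annihilate the bias after multiplication by $\sqrt{MN}\eqsim M^{3/2}$.
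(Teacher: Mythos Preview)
Your argument is correct and matches the paper's intended approach: the paper does not spell out a proof but explicitly states just before the corollary that \eqref{eq:double_mean_r} shows the renormalization $\Phi_{\vt}(\delta,\Delta)$ and its first-order approximation are close enough to be exchanged in Theorem~\ref{thm:CLT_double}. Your Slutsky decomposition, the identity $\mathbb V_r=c_{M,N}\mathbb V$, and the verification that $\sqrt{MN}\,\Delta\to0$ under $\delta/\sqrt\Delta\equiv r$ are exactly the ingredients this reduction requires.
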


\begin{rem}
  The previous central limit results are satisfied for a possibly growing time horizon $T_{N,\Delta}:= N\Delta$, too.  Theorem~\ref{thm:CLT_space} only requires that $T_{N,\Delta}>\eps$ for some $\eps>0$. Theorem~\ref{thm:CLT_double} holds if $T_{N,\Delta}=o(M)$ and, in particular, Corollary~\ref{cor:CLTr} is applicable if $N\Delta^{3/2} \to 0$.
\end{rem}

To end this section, we compare the realized quadratic variations $V_{\mathrm t}, V_\mathrm{sp}$ and $\mathbb V$ and their asymptotic variances. For this purpose, we scale the statistics in such a way that they are asymptotically centered around the same mean, say $\sigma^2$:
\begin{equation} \label{eq:Vprime.def}
V_\mathrm{t}^\prime=\sqrt{\pi \vt_2}V_\mathrm{t},\quad V_\mathrm{sp}^\prime=2\vt_2V_\mathrm{sp},\quad V^\prime = \mathbb V.
\end{equation}
For simplicity, let $\kappa =0$. Plugging in the asymptotic expressions for $\Phi_{\vt}(\delta,\Delta)$ from Proposition \ref{prop:mean_double} shows that 
$$V' \approx \frac{1}{2}\sum_{k=0}^{M-1} \sum_{i=0}^{N-1}D_{ik}^2\cdot
\begin{dcases}
\frac{2\vt_2}{NM\delta},& \delta/\sqrt{\Delta}\to 0,\\
\frac{\sqrt{\vt_2 \pi}}{NM\sqrt{\Delta}},& \delta/\sqrt{\Delta}\to \infty.
\end{dcases}
$$
Therefore, $V'$ approximately coincides with $V_\mathrm{sp}^\prime$ and $V_\mathrm{t}^\prime$ for $r\in \{0,\infty\}$, respectively, except for the factor $1/2$ and using double increments instead of time or space increments, respectively.

Further, denoting the asymptotic variances of $V'_\mathrm t,V'_\mathrm{sp}$ and $V'$ by $\mathfrak S_\mathrm{t}$, $\mathfrak S_\mathrm{sp}$ and $\mathfrak S(r)$, respectively, we  observe the relations
$\mathfrak S(\infty) = \frac{3}{2}\mathfrak S_\mathrm{t}$
and 
$\mathfrak S(0) = \frac{3}{2}\mathfrak S_\mathrm{sp}$, {where the factor $3/2$ occurs since each double increment consists of two space or time increments, respectively.}


\section{Parameter estimation} \label{sec:ParameterEstimation}
In view of the covariance structure of the observation vector and the fact that the value of $\vt_0$ is irrelevant from a statistical point of view (cf. Proposition \ref{prop:abs_cont}), we consider the parameter {vector}
$$\eta = (\sigma^2, \vt_2,\kappa).$$ 
It is straightforward to use the results from the previous section to construct method of moments estimators for the volatility parameter $\sigma^2$ or the diffusivity parameter $\vt_2$, provided that the other two parameters in $(\sigma^2,\vt_2,\kappa)$ are known, respectively. Doing so, we generalize the spatial increments based estimator from \cite{Cialenco17} to the double asymptotic regime and we complement the time increments based methods in \cite{Bibinger18,Chong18}. Our estimators do not hinge on $\vt_0$ (or $\Gamma$) such that the knowledge of its true value is not required.
\smallskip

Assuming firstly that $\vt_2$ and $\kappa$ are known, we obtain the following volatility estimators:
$$\hat \sigma^2_{\mathrm{sp}} := V_{\mathrm{sp}}',\qquad \hat \sigma^2_\mathrm{t}:=V_{\mathrm{t}}'\qquad\text{and} \quad  \hat \sigma^2 :=  \mathbb V$$
where $V_{\mathrm{sp}}'$ and $V_{\mathrm{t}}'$ have been introduced in \eqref{eq:Vprime.def}. 
\begin{prop}\label{prop:sigma}$\,$
\begin{enumerate}[(i)]
\item If $N = o(M)$, then we have 
$$\sqrt{MN}\left(\hat \sigma^2_\mathrm{sp}-\sigma^2\right)\overset{\mathcal D}\longrightarrow \mathcal{N}(0,2\sigma^4),\quad N,M \to \infty.$$
\item If $M = o(N^\rho)$ for some $\rho \in (0,1/2)$, then we have with $B$ defined in \eqref{eq:CLT_time_B}:
$$\sqrt{MN}\left(\hat \sigma^2_\mathrm{t}-\sigma^2\right)\overset{\mathcal D}\longrightarrow \mathcal{N}(0,
B\sigma^4),\quad N,M \to \infty.$$
\item If $\sqrt N=o(M)$, $M=o(\sqrt N)$ or $\sqrt{N}/M \equiv r_0>0$, then  we have with $r=r_0 \frac{1-2b}{\sqrt T}$ and $C(\cdot)$ from \eqref{eq:C.r}:
$$\sqrt{MN}(\hat \sigma^2-\sigma^2)\overset{\mathcal D}{\longrightarrow}\mathcal N (0, C(r/\sqrt{\vt_2})\sigma^4 ),\quad N,M \to \infty,$$
\end{enumerate}
\end{prop}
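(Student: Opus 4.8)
The plan is to recognize that each of the three estimators in Proposition~\ref{prop:sigma} is merely a constant rescaling of a realized quadratic variation for which a central limit theorem has already been established, so that every assertion follows by transporting the respective limit theorem through an affine map. Concretely, I will repeatedly invoke the elementary fact that if $a_n(Y_n-\mu)\overset{\mathcal D}{\longrightarrow}\mathcal N(0,s^2)$ and $c\in\R$ is a fixed constant, then $a_n(cY_n-c\mu)=c\cdot a_n(Y_n-\mu)\overset{\mathcal D}{\longrightarrow}\mathcal N(0,c^2s^2)$ by Slutsky's theorem. Since $\sigma^2,\vt_2$ and the normalizing constants below do not vary with $n=\min(M,N)$, this handles the scaling step without any further probabilistic input.

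For part $(i)$, recall from \eqref{eq:Vprime.def} that $\hat\sigma^2_\mathrm{sp}=V_\mathrm{sp}'=2\vt_2 V_\mathrm{sp}$ and note that the hypothesis $N=o(M)$ is exactly the condition $N/M\to0$ of Theorem~\ref{thm:CLT_space}. Applying that theorem with $c=2\vt_2$ yields the limiting variance $(2\vt_2)^2\cdot\frac{\sigma^4}{2\vt_2^2}=2\sigma^4$, which is the claim. For part $(ii)$, we have $\hat\sigma^2_\mathrm{t}=V_\mathrm{t}'=\sqrt{\pi\vt_2}\,V_\mathrm{t}$; the assumption $M=o(N^\rho)$ for some $\rho\in(0,1/2)$ implies $M=\O(N^{\rho})$ with the same $\rho$, so the cited central limit theorem \eqref{eq:CLT_time} applies. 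Rescaling by $c=\sqrt{\pi\vt_2}$ transforms the asymptotic variance $\frac{B\sigma^4}{\pi\vt_2}$ into $\pi\vt_2\cdot\frac{B\sigma^4}{\pi\vt_2}=B\sigma^4$, as asserted.

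For part $(iii)$ no rescaling is needed, as $\hat\sigma^2=\mathbb V$; here the only genuine work is translating the sampling conditions on $(M,N)$ into the regimes of Theorem~\ref{thm:CLT_double}, which are phrased in terms of the ratio $\delta/\sqrt\Delta$. Using $\delta=(1-2b)/M$ and $\Delta=T/N$ I compute
$$\frac{\delta}{\sqrt\Delta}=\frac{1-2b}{\sqrt T}\cdot\frac{\sqrt N}{M},$$
so that $\sqrt N=o(M)$ gives $\delta/\sqrt\Delta\to0$ (i.e.~$r=0$), $M=o(\sqrt N)$ gives $\delta/\sqrt\Delta\to\infty$ (i.e.~$r=\infty$), and $\sqrt N/M\equiv r_0>0$ gives $\delta/\sqrt\Delta\equiv r=r_0\frac{1-2b}{\sqrt T}$. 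In each case the hypothesis of Theorem~\ref{thm:CLT_double} is met, and the stated limit $\mathcal N(0,C(r/\sqrt{\vt_2})\sigma^4)$ with this value of $r$ follows directly.

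I expect no serious obstacle: the proposition is essentially a corollary of the three preceding central limit theorems, and the entire argument reduces to bookkeeping of multiplicative constants together with a one-line translation of the asymptotic regime. The only points requiring mild care are matching the slightly different phrasings of the sampling conditions ($N=o(M)$ versus $N/M\to0$, and $M=o(N^\rho)$ versus $M=\O(N^\rho)$) and verifying that the constant $r$ produced by the scaling of $\delta/\sqrt\Delta$ agrees with the $r$ appearing in the variance $C(r/\sqrt{\vt_2})$.
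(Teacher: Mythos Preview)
Your proposal is correct and matches the paper's approach exactly: the paper simply states that Propositions~\ref{prop:sigma} and \ref{prop:theta} ``follow immediately from the central limit theorems for the realized quadratic variations and the delta method,'' and you have spelled out precisely this argument, including the translation of the sampling regimes into the conditions on $\delta/\sqrt\Delta$ required by Theorem~\ref{thm:CLT_double}.
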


As discussed above, the double increments estimator has a larger variance than the single increments estimators. Hence, if one of the regimes $N = o(M)$ or $M= o(\sqrt{N})$ certainly applies, the single increments estimators are preferable. If none of the regimes is present or the situation is unclear, one can profit from the robustness of the double increments estimator with respect to the sampling regime.

If $N=o(M)$, the situation is close to that of $N$ independent semi-martingales (cf.~Proposition \ref{prop:Ito_representation}) and the asymptotic variance $2\sigma^4$ of the spatial increments estimator equals the Cram\'er-Rao lower bound for estimating $\sigma^2$, as can be seen by a simple calculation. Consequently, $\hat \sigma^2_\mathrm{sp}$ is an asymptotically efficient estimator.  The efficiency loss of the other estimators is due to the fact that for increasingly more temporal observations the infinite dimensional nature of the process  $X$ becomes apparent, leading to non-negligible covariances between increments.
\smallskip

If $\sigma^2$ and $\kappa$ are known, the diffusivity $\vt_2$ can be estimated by
$$\hat\vt_{2,\mathrm{sp}}:=\frac{\sigma^2}{2V_{\mathrm{sp}}}\quad\text{and}\quad\hat\vt_{2,\mathrm{t}}:=\frac{\sigma^4}{\pi V_{\mathrm{t}}^2}$$
using $V_{\mathrm{sp}}$ and $V_{\mathrm{t}}$ from \eqref{eq:Vsp} and \eqref{eq:Vt}, respectively.
Due to the non-trivial dependence of the renormalization $\Phi_{\vt}(\delta,\Delta)$ on $\vt$, it is not apparent how to construct a method of moments estimator for $\vt_2$ based on Theorem~\ref{thm:CLT_double} in general. However, if $\sqrt N/M\equiv r_0>0$, the renormalization can  be decoupled from the unknown parameter as exploited in Corollary~\ref{cor:CLTr}. Since the function  $\vt_2\mapsto \psi_{\vt_2}(r)$ has range $(0,\infty)$ and is monotonic, there is an inverse $H_r(\cdot)$ and we can define the method of moments estimator
\begin{equation*}
\hat \vt_{2,r} = H_r(\mathbb V_r / \sigma^2)
\end{equation*}
with $\mathbb V_r$ from \eqref{eq:Vr} and $r=\frac{\delta}{\sqrt\Delta}=r_0 \frac{1-2b}{\sqrt T}$.
As a direct consequence of the delta method, 
\begin{align*}
H_r'(\psi_{\vt_2}(r)) = \Big(\frac{\partial}{\partial \vt_2}\psi_{\vt_2}(r)\Big)^{-1} =-\vt_2^{3/2}\sqrt{\pi}\Big(1-\e^{-\frac{r^2}{4\vt_2}}+\frac{2r}{\sqrt{\vt_2}}\int_{\frac{r}{2\sqrt{\vt_2}}}\e^{-z^2}\,dz\Big)^{-1}
\end{align*}
and the above central limit theorems, we obtain:
\begin{prop}\label{prop:theta}$\,$
\begin{enumerate}[(i)]
\item If $N = o(M)$, then we have 
$$\sqrt{MN}\left(\hat\vt_{2,\mathrm{sp}}-\vt_2\right)\overset{\mathcal D}\longrightarrow \mathcal{N}\left(0,2\vt_2^2\right),\quad N,M \to \infty.$$
\item If $M = o(N^\rho)$ for some $\rho \in (0,1/2)$, then we have with $B$ from \eqref{eq:CLT_time_B}:
$$\sqrt{MN}\left(\hat\vt_{2,\mathrm{t}}-\vt_2\right)\overset{\mathcal D}\longrightarrow \mathcal{N}(0,{4 \vt_2^2
B}),\quad N,M \to \infty.$$
\item If $\sqrt{N}/M \equiv r_0>0$, then we have with $r=r_0 \frac{1-2b}{\sqrt T}$ and $C(\cdot)$ from \eqref{eq:C.r}:
$$\sqrt{MN}(\hat \vt_{2,r} -\vt_2)\overset{\mathcal D}{\longrightarrow}\mathcal N \bigg(0, C(r/\sqrt{\vt_2})\Big(\psi_{\vt_2}(r)\Big/\frac{\partial}{\partial \vt_2}\psi_{\vt_2}(r)\Big)^2 \bigg),\quad N,M \to \infty.$$
\end{enumerate}
\end{prop}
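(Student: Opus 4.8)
The plan is to derive all three statements by the delta method applied to the central limit theorems already established for $V_\mathrm{sp}$, $V_\mathrm{t}$ and $\mathbb V_r$. In each case the estimator is a smooth function of one of these rescaled quadratic variations, evaluated at a statistic that converges in probability to a known deterministic limit; so the only work is to identify the transformation $g$, to verify that $g$ is $C^1$ with finite, nonzero derivative at the limiting mean $\mu$, and to compute the asymptotic variance $(g'(\mu))^2\tau^2$ from the base variance $\tau^2$.

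For (i) I would start from Theorem~\ref{thm:CLT_space}, which under $N=o(M)$ gives $\sqrt{MN}(V_\mathrm{sp}-\sigma^2/(2\vt_2))\overset{\mathcal D}{\longrightarrow}\mathcal N(0,\sigma^4/(2\vt_2^2))$. Writing $\hat\vt_{2,\mathrm{sp}}=g(V_\mathrm{sp})$ with $g(x)=\sigma^2/(2x)$, one has $g'(x)=-\sigma^2/(2x^2)$, hence $g'(\mu)=-2\vt_2^2/\sigma^2$ at $\mu=\sigma^2/(2\vt_2)$, and the delta method yields the variance $(g'(\mu))^2\cdot\sigma^4/(2\vt_2^2)=2\vt_2^2$. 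For (ii) I would use the temporal central limit theorem \eqref{eq:CLT_time}, valid under $M=o(N^\rho)$, namely $\sqrt{MN}(V_\mathrm{t}-\sigma^2/\sqrt{\pi\vt_2})\overset{\mathcal D}{\longrightarrow}\mathcal N(0,B\sigma^4/(\pi\vt_2))$ with $B$ from \eqref{eq:CLT_time_B}. With $\hat\vt_{2,\mathrm{t}}=g(V_\mathrm{t})$ and $g(x)=\sigma^4/(\pi x^2)$, the derivative at $\mu=\sigma^2/\sqrt{\pi\vt_2}$ equals $g'(\mu)=-2\sqrt{\pi}\,\vt_2^{3/2}/\sigma^2$, so that $(g'(\mu))^2\cdot B\sigma^4/(\pi\vt_2)=4\vt_2^2 B$, as claimed.

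For (iii) I would invoke Corollary~\ref{cor:CLTr}: under $\sqrt N/M\equiv r_0>0$, with $r=r_0(1-2b)/\sqrt T$ the associated value of $\delta/\sqrt\Delta$, we have $\sqrt{MN}(\mathbb V_r-\psi_{\vt_2}(r)\sigma^2)\overset{\mathcal D}{\longrightarrow}\mathcal N(0,C(r/\sqrt{\vt_2})\psi_{\vt_2}^2(r)\sigma^4)$, which after dividing by $\sigma^2$ reads $\sqrt{MN}(\mathbb V_r/\sigma^2-\psi_{\vt_2}(r))\overset{\mathcal D}{\longrightarrow}\mathcal N(0,C(r/\sqrt{\vt_2})\psi_{\vt_2}^2(r))$. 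Since $\vt_2\mapsto\psi_{\vt_2}(r)$ is monotonic with range $(0,\infty)$, its inverse $H_r$ is well defined and differentiable, and $\hat\vt_{2,r}=H_r(\mathbb V_r/\sigma^2)$ is a smooth function of the statistic at its limit $\psi_{\vt_2}(r)$. Using the recorded identity $H_r'(\psi_{\vt_2}(r))=(\frac{\partial}{\partial\vt_2}\psi_{\vt_2}(r))^{-1}$, the delta method produces the variance $(\frac{\partial}{\partial\vt_2}\psi_{\vt_2}(r))^{-2}\cdot C(r/\sqrt{\vt_2})\psi_{\vt_2}^2(r)=C(r/\sqrt{\vt_2})(\psi_{\vt_2}(r)/\frac{\partial}{\partial\vt_2}\psi_{\vt_2}(r))^2$.

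The proof is therefore largely mechanical; the only point requiring genuine care is part (iii), where one must ensure that $H_r$ is well defined and $C^1$ with nonvanishing derivative at $\psi_{\vt_2}(r)$. This is precisely guaranteed by the strict monotonicity and surjectivity of $\vt_2\mapsto\psi_{\vt_2}(r)$ together with $\frac{\partial}{\partial\vt_2}\psi_{\vt_2}(r)\neq0$; verifying these properties from the explicit form \eqref{eq:psi_r_def} of $\psi_{\vt_2}(r)$ (equivalently, that the bracketed derivative expression never vanishes) is the main prerequisite, after which the delta method applies verbatim.
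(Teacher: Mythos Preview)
Your proposal is correct and follows exactly the approach the paper indicates: the proposition is stated as ``a direct consequence of the delta method\ldots and the above central limit theorems,'' and your computations of $g'(\mu)$ and the resulting asymptotic variances in all three cases are accurate. There is nothing to add.
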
 

We now consider parameter estimation when $(\sigma^2, \vt)$ is unknown. Recall from Proposition \ref{prop:abs_cont} and its subsequent discussion that $\vt_0$ cannot be estimated consistently on a finite time horizon. Moreover, it is not possible to estimate other parameters than $(\sigma^2/\sqrt{\vt_2},\kappa)$ or $(\sigma^2/{\vt_2},\kappa)$ only based on the temporal or the spatial covariance structure, respectively.
Estimation of $(\sigma^2/\sqrt{\vt_2},\kappa)$ via a least squares procedure based on temporal increments is disussed in \cite{Bibinger18} in the $M=o(\sqrt N)$ regime. Analogously, it is possible to estimate $(\rho^2,\kappa)$, where $\rho^2 = \sigma^2/\vt_2$,  using spatial increments and Theorem~\ref{thm:CLT_space}: Provided that $N=o(M)$, classical M-estimation theory reveals that
\begin{equation*} 
(\hat \rho^2, \hat \kappa) := \argmin_{(\tilde \rho^2,\tilde \kappa)} \sum_{k=0}^{M-1} \left(\frac{2}{N\delta}\sum_{i=0}^{N-1}(\si{ik})^2-\tilde \rho^2\e^{-\tilde \kappa y_k}\right)^2 
\end{equation*}
satisfies a central limit theorem with rate $1/\sqrt{MN}$. {We omit a detailed analysis of this estimator.} 
\smallskip

To estimate all three identifiable parameters $\eta=(\sigma^2,\vt_2,\kappa)$, we employ a least squares approach based on double increments. Due to the highly nontrivial dependence of the normalization $\Phi_\vt(\delta,\Delta)$ on $\vt$, a direct application of Theorem \ref{thm:CLT_double} is impossible. Assuming, however, a balanced design in the sense of $\delta/\sqrt \Delta\equiv r\in(0,\infty)$, we can use Corollary~\ref{cor:CLTr} where the normalization is decoupled from the unknown parameter $\vt$. 

Let $\delta/\sqrt \Delta\equiv r\in(0,\infty)$ and define $\bar{D}_{ik}:=D_{ik}+D_{(i+1)k}$ as well as $z_k = (y_k+y_{k+1})/2$. Corollary~\ref{cor:CLTr} suggests that 
$$\frac{1}{N\sqrt \Delta}\sum_{i=0}^{N-1}D_{ik}^2\approx \e^{-\kappa z_k}\sigma^2 \psi_{\vt_2}(r)\quad\text{and}\quad \frac{1}{N\sqrt {2\Delta}}\sum_{i=0}^{N-2}\bar D_{ik}^2\approx \e^{-\kappa z_k}\sigma^2 \psi_{\vt_2}(r/\sqrt 2).$$
By considering the two different sampling frequency ratios $r$ and $r/\sqrt 2$, we can distinguish $\sigma^2$ and $\vt_2$ instead of recovering only the product $\sigma^2\psi_{\vt_2}(r)$. 
To estimate $\eta=(\sigma^2,\vt_2,\kappa)$, we thus introduce the contrast process  
\begin{align*}
K_{M,N}(\tilde \eta)&:=K_{M,N}^1(\tilde \eta)+K^2_{M,N}(\tilde \eta)\quad\text{where}\\
K_{M,N}^1(\tilde \eta)&:=\frac{1}{M}\sum_{k=0}^{M-1}\Big(\frac{1}{N\sqrt \Delta}\sum_{i=0}^{N-1}D_{ik}^2 - f_{\tilde \eta}^1\left(z_k\right)\Big)^2,\\
K_{M,N}^2(\tilde \eta)&:=\frac{1}{M}\sum_{k=0}^{M-1}\Big(\frac{1}{N\sqrt{ 2\Delta}}\sum_{i=0}^{N-2}\bar D_{ik}^2 - f_{\tilde \eta}^2\left(z_k\right)\Big)^2,
\end{align*}
and $f_\eta^\nu(z) := \sigma^2 \e^{-\kappa z}\psi_{\vt_2}(r/\sqrt \nu),\,\nu=1,2$. The corresponding M-estimator is given by
\begin{equation} \label{eq:def_LSestimator}
\hat \eta = \argmin_{\tilde \eta\in H}K_{M,N}(\tilde \eta),
\end{equation}
where $H$ is some subset of $(0,\infty)^2\times \R$ containing the true parameter $\eta$.

\begin{thm} \label{thm:CLT_LS}
Assume $b>0$ and $\delta/\sqrt \Delta \equiv r>0$. If $\eta=(\sigma^2,\vt_2,\kappa)$ lies in the interior of $H$ for some compact set $H \subset (0,\infty)^2\times \R$, then
the least squares estimator $\hat \eta$ from \eqref{eq:def_LSestimator} satisfies
$$\sqrt{MN}(\hat{\eta}-\eta) \overset{\mathcal D}{\longrightarrow} \mathcal N(0,\Omega_{\eta}^r),\qquad M,N \to \infty,$$
where $\Omega_{\eta}^r\in \R^{3\times 3}$ is a strictly positive definite covariance matrix, explicitly given by \eqref{eq:LS_Cov}.
\end{thm}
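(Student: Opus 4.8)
The plan is to handle $\hat\eta$ by the classical route for minimum-contrast estimators: first prove consistency, then linearise the first order condition $\nabla K_{M,N}(\hat\eta)=0$ and combine a central limit theorem for the normalised score with a law of large numbers for the Hessian. Write $T_k^1:=\frac{1}{N\sqrt\Delta}\sum_{i=0}^{N-1}D_{ik}^2$ and $T_k^2:=\frac{1}{N\sqrt{2\Delta}}\sum_{i=0}^{N-2}\bar D_{ik}^2$, so that $K_{M,N}(\tilde\eta)=\frac1M\sum_k[(T_k^1-f^1_{\tilde\eta}(z_k))^2+(T_k^2-f^2_{\tilde\eta}(z_k))^2]$. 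Corollary~\ref{cor:CLTr} (applied with the ratios $r$ and $r/\sqrt2$) together with Proposition~\ref{prop:mean_double} yields $T_k^\nu\to f_\eta^\nu(z_k)$ in an averaged sense, and the Riemann-sum approximation $\frac1M\sum_k g(z_k)\to\frac{1}{1-2b}\int_b^{1-b}g(z)\,dz$ gives uniform convergence on the compact set $H$ to
$$K_\infty(\tilde\eta)=\frac{1}{1-2b}\int_b^{1-b}\Big[\big(f_\eta^1(z)-f_{\tilde\eta}^1(z)\big)^2+\big(f_\eta^2(z)-f_{\tilde\eta}^2(z)\big)^2\Big]\,dz.$$
Since $K_\infty\ge0$ with equality at $\tilde\eta=\eta$, consistency $\hat\eta\overset{P}{\longrightarrow}\eta$ follows from standard M-estimation theory once the minimiser is unique, i.e. once $\eta$ is identifiable from the pair of profiles $(f^1_{\tilde\eta},f^2_{\tilde\eta})$. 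This is exactly where the two distinct ratios $r,r/\sqrt2$ are indispensable: a single ratio identifies only the product $\sigma^2\psi_{\vt_2}(r)$, whereas comparing $\psi_{\vt_2}(r)$ and $\psi_{\vt_2}(r/\sqrt2)$ separates $\sigma^2$ and $\vt_2$.

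Because $\eta$ is interior to $H$ and $\hat\eta\to\eta$, eventually $\nabla K_{M,N}(\hat\eta)=0$, and a Taylor expansion gives
$$\sqrt{MN}\,(\hat\eta-\eta)=-\big[\nabla^2 K_{M,N}(\bar\eta)\big]^{-1}\,\sqrt{MN}\,\nabla K_{M,N}(\eta)$$
for an intermediate $\bar\eta$. The score at the truth decomposes as a stochastic part plus a bias remainder $R_{M,N}$ collecting $\E T_k^\nu-f_\eta^\nu(z_k)$:
$$\sqrt{MN}\,\nabla K_{M,N}(\eta)=-\frac{2}{\sqrt{MN\Delta}}\sum_{k=0}^{M-1}\sum_{i=0}^{N-1}\big(D_{ik}^2-\E D_{ik}^2\big)\nabla f_\eta^1(z_k)+(\nu=2\text{ term})+R_{M,N}.$$
By \eqref{eq:double_mean_r} and Proposition~\ref{prop:mean_double} the bias is $\mathcal O(\Delta)$ uniformly in $k$; since the balanced design forces $M\eqsim\sqrt N$, its contribution is of order $\sqrt{MN}\cdot\mathcal O(\Delta)=\mathcal O(\sqrt{M/N})\to0$, so $R_{M,N}$ is asymptotically negligible.

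For the Hessian one computes $\nabla^2 K_{M,N}(\tilde\eta)=\frac2M\sum_k\sum_{\nu}\nabla f_{\tilde\eta}^\nu(z_k)\nabla f_{\tilde\eta}^\nu(z_k)^\top$ minus a remainder weighted by $T_k^\nu-f_{\tilde\eta}^\nu(z_k)$, which vanishes at $\bar\eta\to\eta$; the Riemann sum then yields $\nabla^2 K_{M,N}(\bar\eta)\overset{P}{\longrightarrow}W$ with
$$W=\frac{2}{1-2b}\int_b^{1-b}\sum_{\nu=1,2}\nabla f_\eta^\nu(z)\nabla f_\eta^\nu(z)^\top\,dz.$$
Using $\nabla f_\eta^\nu(z)=\e^{-\kappa z}\big(\psi_{\vt_2}(r/\sqrt\nu),\ \sigma^2\partial_{\vt_2}\psi_{\vt_2}(r/\sqrt\nu),\ -\sigma^2 z\,\psi_{\vt_2}(r/\sqrt\nu)\big)^\top$ and the linear independence of $z\mapsto\e^{-\kappa z}$ and $z\mapsto z\e^{-\kappa z}$ on $[b,1-b]$, positive definiteness of $W$ reduces to $c_3=0$ followed by non-degeneracy of the $2\times2$ system, i.e. to $\psi_{\vt_2}(r)\,\partial_{\vt_2}\psi_{\vt_2}(r/\sqrt2)\ne\psi_{\vt_2}(r/\sqrt2)\,\partial_{\vt_2}\psi_{\vt_2}(r)$ — the same two-ratio condition underlying identifiability.

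It remains to establish the score CLT. The leading stochastic part is a weighted sum of centred squares of the jointly Gaussian increments $D_{ik},\bar D_{ik}$, normalised exactly as in Corollary~\ref{cor:CLTr}, so by the Cramér--Wold device every linear combination has the form $S_n=\sum_i\alpha_{i,n}Z_{i,n}^2$ covered by Proposition~\ref{prop:CLT_fund}; verifying $\Vert\Sigma_n\Vert_2/\Vert\Sigma_n\Vert_F\to0$ (equivalently \eqref{eq:lyapunov}) reuses the covariance estimates developed for Theorem~\ref{thm:CLT_double}. This delivers $\sqrt{MN}\,\nabla K_{M,N}(\eta)\overset{\mathcal D}{\longrightarrow}\mathcal N(0,V)$ for an explicit $V$ built from $C(\cdot)$ and the weights $\nabla f_\eta^\nu$, and Slutsky's lemma concludes $\sqrt{MN}(\hat\eta-\eta)\overset{\mathcal D}{\longrightarrow}\mathcal N(0,\Omega_\eta^r)$ with $\Omega_\eta^r=W^{-1}VW^{-1}$ positive definite. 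I expect the main obstacle to be precisely the assembly of $V$: beyond the spatial decorrelation of $T_k^\nu$ across $k$, one must control the \emph{cross}-covariance between the two statistics $T_k^1$ and $T_k^2$, which are formed from the same field and hence correlated, and then check the spectral-norm condition for the combined Gaussian scheme — this is the technical heart of the proof.
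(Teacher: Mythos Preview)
Your proposal is correct and follows essentially the same route as the paper: consistency via uniform convergence of the contrast process to $K_\infty$, linearisation of the first-order condition, a CLT for the normalised score via Cram\'er--Wold and Proposition~\ref{prop:CLT_fund} (reusing the double-increment covariance bounds from Theorem~\ref{thm:CLT_double}), and convergence of the Hessian to the Gram matrix $W$.

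Two points deserve more than you give them. First, identifiability---that $(f_\eta^1,f_\eta^2)=(f_{\tilde\eta}^1,f_{\tilde\eta}^2)$ forces $\eta=\tilde\eta$---is not automatic: the paper verifies it by proving strict monotonicity of $z\mapsto H(r_1 z)/H(r_2 z)$ via an explicit sign computation, and this is what simultaneously delivers the determinant condition you quote for $W$. Second, and more substantively, you assert $\Omega_\eta^r=W^{-1}VW^{-1}$ is strictly positive definite, but this requires $V$ (the score variance) to be non-degenerate, not just $W$. This does \emph{not} follow from the individual variances being positive: the two score components built from $T_k^1$ and $T_k^2$ are correlated through the same field, and one must rule out that some linear combination has vanishing asymptotic variance. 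The paper handles this by writing the asymptotic variance in terms of sequences $(A_{ik}^r),(B_{ik}^r)\in\ell^2(\Z^2)$ with cross term $C_r=\langle A^r,B^r\rangle_{\ell^2}$ and invoking the strict Cauchy--Schwarz inequality $C_r<\sqrt{A_rB_r}$ (strict because the sequences are linearly independent). Without this step the theorem's claim that $\Omega_\eta^r$ is \emph{strictly} positive definite is unproved.
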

\begin{rem}
{Based on $\hat\eta$, we can define $\hat \vt_1 :=\hat \eta_2 \hat \eta_3=\hat\vt_2\hat\kappa$ to estimate $\vt_1$. The delta method then yields a central limit theorem for $(\hat \sigma^2,\hat \vt_2,\hat\vt_1)$.}
\end{rem}

Even when $\delta/\sqrt\Delta \equiv r>0$ does not hold, there are always subsets of the data having the balanced sampling design. Hence, the estimation procedure treated in Theorem \ref{thm:CLT_LS} can be generalized to an arbitrary set $\{X_{t_i}(y_k),i\leq  N,\,k\leq M\}$  of discrete observations by considering an averaged version of the above contrast process. To that aim, choose $v,w \in \N$ such that $v\eqsim \max(1,N/M^2)$ and $w\eqsim \max(1,M/\sqrt N)$. Then, $\tilde \Delta:=v\Delta$ and $\tilde \delta := w \delta$  satisfy
$$ r:= {\tilde \delta}/{\sqrt{\tilde \Delta}} \eqsim 1.$$
Using double increments on the coarser grid 
$$D_{v,w}(i,k)=X_{t_{i+v}}(y_{k+w})-X_{t_{i}}(y_{k+w})-X_{t_{i+v}}(y_{k})+X_{t_{i}}(y_{k}),$$ 
we set
$$\mathcal K^\nu_{N,M}(\tilde \eta) =\frac{1}{M-w+1}\sum_{k=0}^{M-w}\left(\frac{1}{(N-\nu v+1)\sqrt{\nu v \Delta}}\sum_{i=0}^{N-\nu v}D^2_{\nu v,w}(i,k)-f_{\tilde \eta}^\nu\Big(\frac{y_k+y_{k+w}}{2}\Big) \right)^2,$$ where 
$
f^\nu_\eta(z)=2\sigma^2\psi_{\vt_2}(r/\sqrt \nu)\e^{-{\kappa}z}
$
and $\nu=1,2$. The final estimator for $\eta$ is then defined as 
\begin{align} \label{eq:def_LSestimator.1}
\hat \eta_{v,w}= \argmin_{\tilde \eta \in H}\big(\mathcal K^1_{N,M}(\tilde \eta) +\mathcal K^2_{N,M}(\tilde \eta) \big).
\end{align}

The rate of convergence of this estimation procedure is inherited from the observations on the coarser grids $\{(t_{i+jv},y_{k+lw}):0\le j\le N/v-1,0\le l\le M/w-1\}$, $i=0,\dots,v-1,k=0,\dots,w-1,$ on which we calculate the double increments. Each such subset consists of 
$$\frac{M}{w}\cdot \frac{N}{v}\eqsim (M\wedge N^{1/2})(N\wedge M^2)=M^3\wedge N^{3/2}$$ 
observations and has  a balanced design by construction. Therefore, Theorem~\ref{thm:CLT_LS} implies the convergence rate $1/\sqrt{M^3\wedge N^{3/2}}$. 

\begin{prop} \label{prop:LSestimator.1_rate}
Assume $b>0$ and let $\eta=(\sigma^2,\vt_2,\kappa)$ lie in the interior of $H$ for some compact set $H \subset (0,\infty)^2\times \R$. If there exist values $v\eqsim \max(1,N/M^2)\in\N$ and $w\eqsim \max(1,M/\sqrt N)\in\N$ such that $w\delta/\sqrt{v\Delta}$ is constant, then the estimator given by \eqref{eq:def_LSestimator.1} satisfies 
$$ \Vert \hat \eta_{v,w}-\eta \Vert =\O_{P}\Big( \frac{1}{\sqrt{M^3\wedge N^{3/2}}}\Big),\quad M,N\to \infty.$$
\end{prop}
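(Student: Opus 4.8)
The plan is to read $\hat\eta_{v,w}$ as a standard M-estimator whose contrast $\mathcal K^1_{N,M}+\mathcal K^2_{N,M}$ is, up to the choice of mesh, identical in structure to the contrast from \eqref{eq:def_LSestimator}: single increments are replaced by the coarse double increments $D_{\nu v,w}(i,k)$, the effective mesh is $(\tilde\delta,\tilde\Delta)=(w\delta,v\Delta)$, which is balanced because $r=w\delta/\sqrt{v\Delta}$ is constant by assumption, and the two ratios $r,r/\sqrt2$ play the role of the two sampling frequencies in Theorem~\ref{thm:CLT_LS}. The only genuinely new feature is that the inner sum over $i$ and the outer sum over $k$ run over all indices, so that overlapping increments enter. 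First I would establish consistency: writing $T^\nu_k:=\frac{1}{(N-\nu v+1)\sqrt{\nu v\Delta}}\sum_i D^2_{\nu v,w}(i,k)$, the coarse-grid version of \eqref{eq:double_mean_r} gives $\E T^\nu_k=f^\nu_\eta(z_k)+\mathcal O(\tilde\Delta)$ uniformly in $k$, so $\E(\mathcal K^1_{N,M}+\mathcal K^2_{N,M})$ is asymptotically minimised exactly at $\eta$; the identifiability of $(\sigma^2,\vt_2,\kappa)$ from the pair $z\mapsto(f^1_\eta(z),f^2_\eta(z))$ is the same as in Theorem~\ref{thm:CLT_LS}. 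Together with compactness of $H$ and a uniform law of large numbers (a consequence of the variance bounds below) this yields $\hat\eta_{v,w}\overset{P}{\to}\eta$.

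For the rate I would expand the estimating equation about $\eta$, writing $\hat\eta_{v,w}-\eta=-\big(\nabla^2\mathcal K_{N,M}(\bar\eta)\big)^{-1}\nabla\mathcal K_{N,M}(\eta)$ for an intermediate point $\bar\eta$, and treat the Hessian and the score separately. To leading order the Hessian equals the deterministic matrix $\frac{2}{M}\sum_k\big(g_1(z_k)g_1(z_k)^\top+g_2(z_k)g_2(z_k)^\top\big)$ with $g_\nu(z):=\nabla_{\tilde\eta}f^\nu_{\tilde\eta}(z)\big|_{\tilde\eta=\eta}$, the cross term being negligible at $\eta$ since $T^\nu_k-f^\nu_\eta(z_k)\to0$. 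This matrix converges to a positive definite limit by the same Jacobian non-degeneracy that makes $\Omega^r_\eta$ in Theorem~\ref{thm:CLT_LS} well defined, and smoothness of $\vt_2\mapsto\psi_{\vt_2}(r/\sqrt\nu)$ together with compactness of $H$ gives uniform control on a neighbourhood of $\eta$. It then remains to show the score has the right order, $\nabla\mathcal K_{N,M}(\eta)=\mathcal O_P\big(\sqrt{vw/(MN)}\big)=\mathcal O_P\big(1/\sqrt{M^3\wedge N^{3/2}}\big)$.

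The score is $\nabla\mathcal K^\nu_{N,M}(\eta)=-\frac{2}{M-w+1}\sum_k\big(T^\nu_k-f^\nu_\eta(z_k)\big)g_\nu(z_k)$. I would split $T^\nu_k-f^\nu_\eta(z_k)$ into its bias $\E T^\nu_k-f^\nu_\eta(z_k)=\mathcal O(\tilde\Delta)$ and its centred part. The coarse-grid analogue of the condition in the remark after Corollary~\ref{cor:CLTr}, namely $(N/(\nu v))\tilde\Delta^{3/2}\eqsim\sqrt{v/N}\to0$ (which holds since $v/N\eqsim\max(1/N,1/M^2)\to0$), forces the bias contribution to be $o\big(\sqrt{vw/(MN)}\big)$. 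For the centred part I would decompose the inner sum over $i$ into its $\nu v$ time-residue classes and the outer sum over $k$ into its $w$ space-residue classes; on each of the resulting $\nu v\,w$ sub-grids the coarse double increments are non-overlapping and the design is balanced, so the corresponding sub-grid score is exactly of the type analysed in Theorem~\ref{thm:CLT_LS} on $(M/w)(N/(\nu v))\eqsim M^3\wedge N^{3/2}$ observations and hence has standard deviation $\eqsim\sqrt{vw/(MN)}$. A direct bookkeeping of the normalisations shows that $\nabla\mathcal K^\nu_{N,M}(\eta)$ is, up to the negligible bias, exactly the average of these $\nu v\,w$ sub-grid scores. Cauchy--Schwarz then bounds the variance of the average by the squared average of the individual standard deviations, hence by a constant multiple of $vw/(MN)$, regardless of how strongly the overlapping shifts are correlated; this yields the claimed score order.

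The main obstacle is the covariance bookkeeping behind the claim that each sub-grid score has standard deviation of order $\sqrt{vw/(MN)}$: via the series representation \eqref{eq:SPDE_solution} one must control $\Cov\big(D_{\nu v,w}(i,k),D_{\nu v,w}(i',k')\big)$ on the coarse grid, using the decay of the Ornstein--Uhlenbeck increment correlations in $|i-i'|$ and of the spatial eigenfunction increments in $|k-k'|$. These are precisely the estimates underlying Theorem~\ref{thm:CLT_double} and Theorem~\ref{thm:CLT_LS}, now read off at mesh $(w\delta,v\Delta)$; once they are in place, combining the score order with the positive definite Hessian in the Taylor expansion gives $\|\hat\eta_{v,w}-\eta\|=\mathcal O_P\big(1/\sqrt{M^3\wedge N^{3/2}}\big)$.
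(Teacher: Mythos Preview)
Your proposal is correct and follows essentially the same route as the paper: both argue via the standard M-estimator expansion, invoke the analogues of Steps~1--3 and 8--9 of the proof of Theorem~\ref{thm:CLT_LS} for consistency and Hessian convergence, and bound the score by writing $\dot{\mathcal K}_{N,M}(\eta)$ as an average of sub-grid scores of the type $\dot K_{N,M}$ and applying Cauchy--Schwarz together with the Step~5 variance computation. The only cosmetic difference is that the paper uses the integral mean-value form $\int_0^1\ddot{\mathcal K}_{N,M}(\eta+\tau(\hat\eta_{v,w}-\eta))\,d\tau$ rather than a single intermediate point $\bar\eta$, which is the technically correct device in $\R^3$.
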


\begin{rem}
Integer values $v $ and $w $ such that $w\delta/\sqrt{v\Delta} $ is constant exist, for instance, if the observations are recorded at a diadic grid, i.e.~$M=2^m$ and $N=4^{n}$ where $m,n\to \infty$. 
\end{rem}
Compared to the thinning method of \cite{Uchida19}, this rate is a considerable improvement.
Indeed,  it is (almost) optimal in the minimax sense, as shown in Section~\ref{sec:lowerBound}.


\section{Lower bounds}\label{sec:lowerBound}
Our next theorem proves that the estimator $\hat\eta$ from \eqref{eq:def_LSestimator.1} for $\eta=(\sigma^2,\vt_2,\kappa)$ is optimal in the minimax sense, up to a logarithmic factor. To obtain a lower bound, it suffices to consider the sub-problem where $\vt_1=\vt_0=0$ and only $(\sigma^2,\vt_2)$ has to be estimated.
\begin{thm} \label{thm:lower_bound_main}
Let  $\vt_1=\vt_0=0$, $(\sigma^2,\vt_2)\in H$ for some open set $H\subset (0,\infty)^2$ and consider observations at $t_i=i/N,\,i\leq N,$ and  $y_k = b+k\delta,\,k\leq M,$ for some $b\in [0,1/2)\cap \Q$.  Then: 
\begin{enumerate}[(i)]
\item 
If $\min(M,N)$ remains finite, there is no consistent estimator of $(\sigma^2,\vt_2)$.
\item
There is a constant $c>0$ such that 
\begin{gather*}
\liminf_{M,N\to \infty}\, \inf_{T}\sup_{(\sigma^2,\vt_2)\in H}\P_{(\sigma^2,\vt_2)}\Big(\Big\Vert T -\binom{\sigma^2}{\vt_2}\Big\Vert>c\cdot r_{M,N}\Big)>0,\\
\text{ where }r_{M,N}:=
\begin{dcases}
N^{-3/4},&\frac{M}{\sqrt N} \gtrsim 1,\\
\Big(M^3 \log \frac{N}{M^2} \Big)^{-1/2} ,& \frac{M}{\sqrt N} \to 0.
\end{dcases}
\end{gather*}
and $\inf_{T}$ is taken over all estimators $T$ of $(\sigma^2,\vt_2)$ based on observations $\{X_{t_{i+1}}(y_k)-X_{t_{i}}(y_k),\,i< N,\,k\leq M\}$.
  \end{enumerate}
\end{thm}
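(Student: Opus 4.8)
The plan is to reduce both parts to properties of the Gaussian experiment generated by the observed increments and then to control a Fisher information. Since $\vt_1=\vt_0=0$ we have $\kappa=0$, $e_\ell(y)=\sqrt2\sin(\pi\ell y)$ and $\lambda_\ell=\vt_2\pi^2\ell^2$, so the increment vector $(X_{t_{i+1}}(y_k)-X_{t_i}(y_k))_{i<N,k\le M}$ is centered Gaussian with covariance $\Sigma(\sigma^2,\vt_2)$; write $\theta:=(\sigma^2,\vt_2)$. First I would diagonalize the spatial part by a discrete sine transform on the grid: because $\sum_k\sin(\pi\ell y_k)\sin(\pi\ell' y_k)$ is supported on the residue classes $\ell\equiv\pm\ell'\pmod{2M}$ (the rationality of $b$ making the grid commensurate so this is clean), the experiment splits, up to boundary effects, into $\asymp M$ independent $N$-dimensional temporal Gaussians, abbreviating $g_m:=g^{[m]}=\frac{\sigma^2}2\sum_{\ell\in A_m}\lambda_\ell^{-1}\tilde G(\lambda_\ell\Delta)$, where $A_m$ is the aliasing class of $m$ and $\tilde G(s)$ is the (Toeplitz) correlation matrix of the increments of a standard stationary Ornstein--Uhlenbeck process with $\lambda\Delta=s$. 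Fisher information is then additive, $I=\sum_m I^{[m]}$, and since $g_m\propto\sigma^2$ one gets $I_{\sigma^2\sigma^2}\asymp MN/\sigma^4$; the rate for the hard joint direction is governed by the effective information $I_{\vt_2\mid\sigma^2}=\det I/I_{\sigma^2\sigma^2}$, equivalently the smallest eigenvalue of $I$.

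For part (i) I would argue non-identifiability by equivalence of measures. If $N$ is fixed and $M\to\infty$, the increment field converges, as a spatial process (cf.\ Proposition~\ref{prop:Ito_representation}), to continuum observation of $\{y\mapsto X_{t_{i+1}}(y)-X_{t_i}(y)\}_{i<N}$; its law is Gaussian, diagonal in the sine basis with eigenvalues $c_\ell(\theta)$ behaving like $\sigma^2/(\vt_2\pi^2\ell^2)$ for large $\ell$. By Feldman--H\'ajek, two parameter values give equivalent laws iff the modal ratios satisfy the Hilbert--Schmidt condition; this forces the singular high-frequency part to match, i.e.\ $\sigma^2/\vt_2$ to agree, and once it does the remaining differences decay super-exponentially in $\ell$, so the laws are equivalent and the Kullback--Leibler divergence is finite. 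The data-processing inequality then bounds the grid Kullback--Leibler divergence uniformly in $M$, so no test separates two such parameters and no consistent estimator exists. The case of fixed $M$, $N\to\infty$ is symmetric, with the temporal invariant $\sigma^2/\sqrt{\vt_2}$ of Proposition~\ref{prop:abs_cont}(iii) playing the role of $\sigma^2/\vt_2$.

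For the rate in part (ii) I would use the two-point (or van Trees) method along the least favourable direction: exhibit $h=(h_{\sigma^2},h_{\vt_2})$ with $\Vert h\Vert\asymp r_{M,N}$ and $D_{\mathrm{KL}}(P_\theta\Vert P_{\theta+h})\le C$; since $D_{\mathrm{KL}}^{[m]}\approx\tfrac14\Vert g_m^{-1/2}(\partial_h g_m)g_m^{-1/2}\Vert_F^2$ this amounts to bounding $I_{\vt_2\mid\sigma^2}$ from above. Writing $B_m=g_m^{-1}\partial_{\vt_2}g_m$ and using $g_m^{-1}\partial_{\sigma^2}g_m=\sigma^{-2}I_N$, the per-mode conditional information is $\tilde q_m=\tfrac N2\operatorname{Var}(\mathrm{eig}\,B_m)$ and $I_{\vt_2\mid\sigma^2}=\sum_m\tilde q_m+\tfrac{2\sigma^4M}{N}\operatorname{Var}_m(p_m)$ with $p_m=\tfrac1{2\sigma^2}\mathrm{tr}\,B_m$. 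In the regime $M\gtrsim\sqrt N$ the aliasing is negligible, each mode is a single Ornstein--Uhlenbeck increment series with $s_m=\vt_2\pi^2 m^2\Delta$, and only the transition band $s_m\asymp1$, that is $m\asymp\sqrt N$, carries non-confounded information $\asymp N$ per mode; both terms then scale like $N^{3/2}$, the factor $M$ cancelling, which yields $r_{M,N}=N^{-3/4}$. In the regime $M=o(\sqrt N)$ every resolved frequency has $s_m\ll1$, so the variance (diagonal) direction is fully confounded with $\sigma^2$; the surviving information comes from the aliased high-frequency partners, and summing their contributions across the frequency scales between the resolution $M$ and the intrinsic scale $\sqrt N$ produces the factor $\log(N/M^2)$, giving $I_{\vt_2\mid\sigma^2}\lesssim M^3\log(N/M^2)$ and hence $r_{M,N}=(M^3\log(N/M^2))^{-1/2}$.

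I expect the main obstacle to be the sharp analysis in the regime $M=o(\sqrt N)$. Here $\sigma^2$ and $\vt_2$ are almost perfectly confounded --- at low frequencies the increments are nearly white and at high frequencies nearly saturated, both pointing in the direction $(\,1,-\sigma^2/\vt_2\,)$ --- so the informative part of $I$ is a delicate lower-order effect surviving only after the leading identity and the near-constant scalar parts of the $g_m$ cancel. Controlling it requires treating the aliased Toeplitz symbols $\sum_{\ell\in A_m}\lambda_\ell^{-1}\tilde G(s_\ell)$ quantitatively (via Szeg\H{o}/Whittle-type approximations, and Poisson summation to show the leading $m$-dependence is negligible) and isolating the logarithmic scale sum; pinning down the constant in front of the $\log$ is precisely what confines the result to optimality up to a logarithmic factor. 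A secondary, more routine, obstacle is to turn these Fisher-information heuristics into rigorous minimax statements uniform over the open set $H$ and over the choice of $T$: this I would do through the explicit two-point Gaussian Kullback--Leibler bound above combined with Le Cam's two-point lemma, and by establishing part (i) via Feldman--H\'ajek rather than through the (divergent) Fisher information.
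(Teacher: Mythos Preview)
Your strategy is essentially the paper's: decompose the observations into independent temporal blocks via a discrete sine transform on the spatial grid, bound the Fisher information along a least-favourable direction, and convert to a minimax lower bound by the two-point method; part~(i) via equivalence of Gaussian measures. The reparametrizations you implicitly use---profiling out $\sigma^2$ in the regime $M\gtrsim\sqrt N$ and profiling out $\sigma^2/\sqrt{\vt_2}$ in the regime $M=o(\sqrt N)$---are exactly those the paper makes explicit.

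Two points of execution differ and are worth knowing. First, in the regime $M\gtrsim\sqrt N$ the paper does \emph{not} carry the aliased-mode analysis through; instead it shows (via a total-variation bound between $\Sigma_{N,M}$ and the covariance of the truncated field $X^L=\sum_{\ell\le L}u_\ell e_\ell$) that the discrete-observation experiment is Le~Cam equivalent to observing the first $L$ Fourier modes $(u_\ell(t_i))_{\ell\le L,i\le N}$ for some $L\ge M$. One can then compute the Fisher information of discrete OU samples in closed form (Proposition~\ref{prop:FisherCoeff}) and read off $J(\sigma^2)=\mathcal O(N^{3/2})$. This sidesteps your ``aliasing is negligible'' step entirely. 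Second, in the regime $M=o(\sqrt N)$ your appeal to ``Szeg\H{o}/Whittle-type approximations'' hides the actual difficulty: Whittle's formula gives the \emph{asymptotic} per-sample information, but here the spectral density $\Phi_k^\Delta$ of $\bar U_k$ itself depends on $\Delta=1/N$, so there is no fixed process to which the limit applies. The paper resolves this by observing that the $N$-th order Fourier partial sum $\Phi_k^{N,\Delta}$ of $\Phi_k^\Delta$ is nonnegative, hence is the spectral density of an auxiliary stationary process $Y_k$ from which one can extract $L$ independent copies of $(\bar U_k(0),\dots,\bar U_k(N-1))$; monotonicity of Fisher information then yields $I_N(\bar U_k)\le 2N\mathscr I_k^{N,\Delta}$ with $\mathscr I_k^{N,\Delta}$ the Whittle integral for $\Phi_k^{N,\Delta}$. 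The hard analytic work is bounding this integral (Lemma~\ref{lem:spec_properties}), which is where the $\log(N/M^2)$ appears. Your Poisson-summation heuristic points at the right phenomenon but would need to be replaced by this kind of explicit spectral estimate to be rigorous.
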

\begin{rem}
The lower bound for the case $M/\sqrt N \gtrsim 1$ is also valid for estimators based on $\{X_{t_i}(y_j),\,i\leq N,k\leq M\}$ instead of the increments. We conjecture that this is also true for the case $M/\sqrt N \to 0$.
\end{rem}

This lower bound shows that, in general, $(\sigma^2,\vt_2)$ cannot be estimated with the parametric rate $1/\sqrt{MN}$, in contrast to a conjecture in \cite{Cialenco17}. Instead, we observe a phase transition in the rate depending on the sampling frequency. The parametric rate can only be attained for a balanced design $N\eqsim M^2$.

The proof of Theorem~\ref{thm:lower_bound_main} relies on the standard lower bound technique, cf. \citet{Tsybakov10}. Using an inequality by \citet{IbragimovHasminskii1981}, we will bound the Hellinger distance of the laws of the observations in terms of the corresponding Fisher information for suitably chosen reparametrizations of $(\sigma^2,\vt_2)$.  For each sampling regime we choose a reparametrization $(\gamma_1,\gamma_2)$ of $(\sigma^2,\vt_2)$ in such a way that $\gamma_1$ can be estimated with parametric rate, even without knowledge of $\gamma_2$. {Bounding the Fisher information for $\gamma_2$, we then obtain} a lower bound for the simpler problem of estimating the one dimensional parameter $\gamma_2$, assuming that $\gamma_1$ is known. Clearly, the resulting lower bound for $\gamma_2$ carries over to $(\gamma_1,\gamma_2)$ and consequently to $(\sigma^2,\vt_2)$. The main effort, noting that the observations are significantly correlated, is to derive sharp upper bounds for the Fisher information in the different sampling regimes. 
\smallskip

In the case  $M/\sqrt{N}\gtrsim 1$ we apply the following bound on the Fisher information for discrete observations of the first $M$ coefficient processes. Thanks to the Markov property, the probability density function for discrete observations of an Ornstein-Uhlenbeck process is provided by the transition density and allows for explicit computations.
\begin{prop} \label{prop:FisherCoeff}
Let $\vt_1=\vt_0=0$ and consider a sample $(u_{\ell}(i\Delta),\,\ell\leq M,\,i\leq N)$
where $(u_{\ell},\,\ell\in\N)$ are independent Ornstein-Uhlenbeck processes given by
\[
du_{\ell}(t)=-\lambda_{\ell}u_{\ell}(t)\,dt+\sigma\,d\beta_{\ell}(t),\quad u_{\ell}(0)\sim\mathcal{N}\left(0,\,\frac{\sigma^{2}}{2\lambda_{\ell}}\right).
\]
Consider the reparametrization $(\sigma^2,\rho^2)$ where $\rho^2={\sigma^2}/{\vt_2}$ and the corresponding Fisher information $ J_{N,M}$. For $\max(M,N)\to \infty$, the diagonal entries of $J_{N,M}$ satisfy 
\begin{equation}
  J_{N,M}(\sigma^2)= \mathcal{O} (N^{3/2}\wedge(MN))\qquad\text{and}\qquad 
  J_{N,M}(\rho^2)= \mathcal{O} (M^3\wedge(MN)).
\end{equation}
In particular, 
$\min \big(J_{N,M}(\sigma^2),J_{N,M}(\rho^2)\big)\lesssim N^{3/2}\wedge M^3$ for $\max(N,M)\to \infty.$
\end{prop}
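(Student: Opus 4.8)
The plan is to exploit the product structure of the model under $\vt_1=\vt_0=0$: the coefficient processes $u_\ell$ are independent stationary Ornstein--Uhlenbeck processes, so the Fisher information of the full sample decomposes as a sum over $\ell\le M$ of the Fisher informations of the individual discretely observed $\ell$-th mode. For each mode, the observations $(u_\ell(i\Delta))_{i\le N}$ form a discrete stationary Gauss--Markov (AR(1)) sequence with autoregression coefficient $\phi_\ell=\e^{-\lambda_\ell\Delta}$ and marginal variance $\sigma^2/(2\lambda_\ell)$, where $\lambda_\ell=\vt_2\pi^2\ell^2$. Using the Markov property, the joint density factorizes through the transition densities, so the per-mode Fisher information can be written explicitly in terms of $\phi_\ell$ and the stationary variance. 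First I would reparametrize to $(\sigma^2,\rho^2)$ with $\rho^2=\sigma^2/\vt_2$ and record the dependence of $\phi_\ell$ and the marginal variance on these parameters: note $\lambda_\ell\Delta=\vt_2\pi^2\ell^2/N$ and $\sigma^2/(2\lambda_\ell)=\rho^2/(2\pi^2\ell^2)$, so $\rho^2$ enters only through the scalar variance while $\sigma^2$ (equivalently $\vt_2=\sigma^2/\rho^2$) enters through $\phi_\ell$.

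Next I would derive a closed form for the single-mode Fisher information of a stationary Gaussian AR(1) vector of length $N+1$. Writing the covariance as $\Sigma_\ell=c_\ell\,R(\phi_\ell)$ with $c_\ell$ the marginal variance and $R(\phi)$ the AR(1) correlation matrix, the Fisher information for a scalar parameter is $\tfrac12\operatorname{tr}\!\big((\Sigma_\ell^{-1}\partial\Sigma_\ell)^2\big)$. The inverse $R(\phi)^{-1}$ is the well-known tridiagonal precision matrix, which makes $\operatorname{tr}\!\big((\Sigma_\ell^{-1}\partial_\phi\Sigma_\ell)^2\big)$ computable: it grows linearly in $N$ with a coefficient that is a rational function of $\phi_\ell$ blowing up like $(1-\phi_\ell)^{-2}$ as $\phi_\ell\to1$. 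For the marginal-variance parameter the information per mode is simply $\tfrac12(N+1)/c_\ell^2$ up to the boundary correction. I would then sum over $\ell\le M$ and estimate the two resulting sums.

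The heart of the argument is the asymptotic analysis of these sums across the two relevant ranges of $\ell$. The crossover occurs where $\lambda_\ell\Delta\eqsim1$, i.e.\ $\ell\eqsim\sqrt{N/\vt_2}=:\ell_*$. For $\ell\lesssim\ell_*$ we have $\phi_\ell\approx1$ and $1-\phi_\ell\eqsim\lambda_\ell\Delta\eqsim\ell^2/N$, so the per-mode information for the $\phi$-dependent parameter behaves like $N\cdot(1-\phi_\ell)\eqsim\ell^2$ (the $(1-\phi)^{-2}$ singularity is tamed by the derivative $\partial_\phi$ producing a compensating factor), while for $\ell\gtrsim\ell_*$ the mode is essentially a collection of near-independent observations and contributes $\O(N)$. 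Summing $\sum_{\ell\le M}\min(\ell^2,N)$ then yields the claimed $N^{3/2}\wedge(MN)$ for $J_{N,M}(\sigma^2)$, since $\sum_{\ell\le\ell_*}\ell^2\eqsim\ell_*^3\eqsim N^{3/2}$ while the tail gives $MN$ when $M\le\ell_*$. A parallel but distinct bookkeeping governs $J_{N,M}(\rho^2)$: here I expect the per-mode contribution to saturate at $\O(1)$ in the low-frequency regime (each mode carrying bounded information about the marginal variance) and the dominant balance to produce $M^3\wedge(MN)$, with the $M^3$ arising from a $\sum_{\ell\le M}\ell^{?}$-type sum whose scaling I would pin down by tracking exactly how $c_\ell^{-1}\partial_{\rho^2}c_\ell$ and the $\phi$-derivatives combine. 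The minimum bound then follows by comparing the two rates, the crossover being precisely $N\eqsim M^2$.

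The main obstacle I anticipate is the per-mode information for $\sigma^2$ (equivalently $\vt_2$), because $\sigma^2$ enters \emph{both} through $c_\ell$ and through $\phi_\ell$ (since $\vt_2=\sigma^2/\rho^2$ depends on $\sigma^2$ at fixed $\rho^2$), so $\partial_{\sigma^2}\Sigma_\ell$ has two pieces and the trace $\operatorname{tr}\!\big((\Sigma_\ell^{-1}\partial_{\sigma^2}\Sigma_\ell)^2\big)$ mixes a scale part with the delicate AR(1) part. Controlling the singular behaviour as $\phi_\ell\to1$ uniformly and summing the resulting bounds cleanly to get the sharp exponent $3/2$ (rather than a cruder $2$) is the technical crux; I would handle it by Taylor-expanding $\phi_\ell=1-\lambda_\ell\Delta+\O((\lambda_\ell\Delta)^2)$ in the low-frequency block, treating the high-frequency block by the trivial $\O(N)$ per-mode bound, and matching the two at $\ell_*$. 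All other steps — the factorization of the Fisher information, the explicit AR(1) precision matrix, and the final $\min$ — are routine given this estimate.
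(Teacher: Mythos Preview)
Your overall architecture is right and matches the paper: exploit independence across $\ell$, compute the per-mode Fisher information from the discrete stationary AR(1) structure, and sum. But two concrete errors in the per-mode scaling would derail the argument as written.

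First, your parametrization bookkeeping slips. You write that ``$\rho^2$ enters only through the scalar variance'' $c_\ell=\rho^2/(2\pi^2\ell^2)$, but $\phi_\ell=\exp(-(\sigma^2/\rho^2)\pi^2\ell^2\Delta)$ depends on $\rho^2$ as well. Conversely, at fixed $\rho^2$ the parameter $\sigma^2$ affects not only $\phi_\ell$ but also the \emph{innovation} variance $s_\ell^2=c_\ell(1-\phi_\ell^2)$. This second dependence is what you are missing for $J_{N,M}(\sigma^2)$: the $\phi$-part alone does give $\O(\ell^2)$ per mode as you compute, but the $s^2$-part contributes $(\partial_{\sigma^2}\log s_\ell^2)^2\cdot N/2\eqsim N$ per mode in the low-frequency range, and this is the dominant piece. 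The correct per-mode scaling for $\sigma^2$ is therefore $\O(N)$ for $\ell\lesssim\sqrt N$ and exponentially small beyond; summing gives $\O(MN)$ when $M\lesssim\sqrt N$ and $\O(N^{3/2})$ (via Riemann summation) otherwise. Your $\sum\min(\ell^2,N)$ heuristic has the two regimes swapped.

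Second, and this is the crux: the bound $J_{N,M}(\rho^2)=\O(M^3)$ in the regime $M\sqrt\Delta\to0$ does \emph{not} come from each mode carrying $\O(1)$ information. Each of the three pieces (the $c_\ell$-part, the $\phi_\ell$-part, and the cross term) contributes $\O(N)$ per mode; only their specific linear combination cancels the leading $NM$ term, leaving an $\O(\ell^2)$ per-mode remainder and hence $\sum_{\ell\le M}\ell^2\eqsim M^3$. Your sketch (``saturate at $\O(1)$'') does not anticipate this cancellation, and without it you would be stuck at $\O(MN)$ for $\rho^2$. The paper makes this transparent by first computing the full $2\times2$ Fisher information $I$ in the intermediate parametrization $(\mu,\nu^2)=(\pi^2\vt_2,\rho^2/\pi^2)$, where the per-mode entries $I_{11},I_{12},I_{22}$ have clean closed forms (each of order $N$ per mode for small $\ell\sqrt\Delta$), and then applying the Jacobian $J=A^\top IA$ to $(\sigma^2,\rho^2)$: the $(2,2)$ entry is $\tfrac{\pi^4\sigma^4}{\rho^8}I_{11}-\tfrac{2\sigma^2}{\rho^4}I_{12}+\tfrac{1}{\pi^4}I_{22}$, and Taylor-expanding the explicit $g_{ij}$ at $0$ shows the three leading $NM$-contributions sum to zero, with the next order giving $\O(M^3)$. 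I would adopt this route rather than trying to track the cancellation directly inside $\operatorname{tr}\big((\Sigma_\ell^{-1}\partial_{\rho^2}\Sigma_\ell)^2\big)$.
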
 
\begin{rem}$\,$
\begin{enumerate}
\item If $M\lesssim \sqrt N$ and $\sigma^2$ is known, Proposition \ref{prop:FisherCoeff} suggest a lower bound of $M^{-3/2}$ for estimation of $\vt_2$ in the spectral approach. Indeed, this rate is achieved by the maximum likelihood estimator for time continuous observations of the coefficient processes, cf. \cite{lototsky2009}. 
\item The reparametrization was chosen since $\sigma^2$ can be computed from the quadratic variation of any coefficient process $u_\ell$ when $N\to \infty$, while $\rho^2$ can be computed from the empirical variance of $\ell u_\ell(t_i),\,\ell \leq M,$ for a fixed $t_i$ as $M\to \infty$, even without knowledge of the other parameter, respectively.
 \end{enumerate}

\end{rem}
Letting $M\to \infty$, Proposition~\ref{prop:FisherCoeff} suggests that based on observations of the coefficient processes it is not possible to estimate $\sigma^2$ (and in particular $(\sigma^2,\vt_2)$) at a rate faster than $N^{-3/4}$. Further, assuming $\vt_1=0$, the eigenfunctions $e_\ell(\cdot)$ do not depend on unknown parameters and hence, the space-time discrete observations of the SPDE may be reconstructed from  $\{u_\ell(t_i),\,i \leq N, \ell \in \N\}$. Consequently,  the lower bound $N^{-3/4}$ carries over to discrete observations of the SPDE. 

Although the  lower bounds resulting from Proposition~\ref{prop:FisherCoeff} and Theorem~\ref{thm:lower_bound_main} are almost the same, their proofs require a very different reasoning if $M/\sqrt N \to 0$:
In this case, if $\sigma^2$ is known, Proposition~\ref{prop:theta} shows that it is possible to estimate $\vt_2$ with parametric rate of convergence based on discrete observations of the SPDE whereas Proposition~\ref{prop:FisherCoeff} suggests that $\vt_2= \sigma^2/\rho^2$  cannot be estimated at a faster rate than $M^{-3/2}$ based on  the coefficient processes. In particular, both observation schemes are not asymptotically equivalent in the sense of Le Cam.
\smallskip

To derive the lower bound in the case $M/\sqrt N \to 0$, we consider  the situation where observations are recorded at rational positions $y_k=\frac{k}{M},\,k = 1,\ldots,M-1$, where we work with $M-1$ instead of $M$ spatial observations for ease of notation. Thus, we potentially add spatial observations on the margin $[0,b)\cup(1-b,1]$ which can only increase the amount of information contained in the data. Since $e_\ell(\cdot)=\sqrt 2 \sin(\pi\ell \,\cdot)$ is the sine basis, trigonometric identities imply that the vectors 
$$\bar e_k := (e_k(y_1),\ldots,e_k(y_{M-1}))\in \R^{M-1},\qquad k\in\N,$$ 
satisfy $\bar{e}_{k+2M} =\bar{e}_{k}$ for all $k\in\N$ and $\langle\bar{e}_{k},\bar{e}_{l}\rangle = M\1_{\{k=l\neq M\}}-M\1_{\{k+l=2 M\}}$ for $k,l\leq 2M$. Equivalently, $(e_k)_{k=1,\dots,M-1}$ form an orthonormal basis with respect to the empirical scalar product and the relations for $(\bar e_k)_{k\geq 1}$ follow from the symmetry of the sine. Therefore, observing $\{X_{t_i}(y_k),\,i\leq N, k\leq M-1\}$ is equivalent to observing
\begin{equation} \label{eq:U_k_def}
\{U_k(t_i),\,k\leq M-1, i\leq N\},\qquad U_k(t):=\frac{1}{M}\left\langle X_{t}(y_\cdot),\bar e_k \right\rangle=\sum_{\ell \in \mathcal{I}_k^+}u_\ell (t)-\sum_{\ell \in \mathcal{I}_k^-}u_\ell (t),
\end{equation}
where $\mathcal I_k^+ := \{k+2M\ell,\,\ell \geq 0\},\,\mathcal I_k^- := \{2M-k+2M\ell,\,\ell \geq 0\}$. Since the sets $\mathcal I_k = \mathcal I_k^+\cup \mathcal I_k^-$ are disjoint for different values of $k$, the processes $\{U_1,\ldots,U_{M-1}\}$ are independent which simplifies the calculation of the Fisher information considerably.  Based on their spectral densities and Whittle's formula \eqref{eq:Whittle} for the asymptotic Fisher information of a stationary Gaussian time series, we obtain the following result for the increment processes $\bar{U}_k,\,k\leq M-1,$  defined by 
\begin{equation}
\label{eq:barU_k_def}
\bar U_k(j) := U_k(t_{j+1})-U_k(t_j),\qquad j=0,\ldots,N-1.
\end{equation}
\begin{prop} \label{prop:FisherCoeff_2}
Consider the parametrization $(\sigma_0^2,\vartheta_2)$ where $\sigma_0^2:=\sigma^2/\sqrt{\vt_2}$. If $M/\sqrt N \to 0$, the Fisher information $J_{M,N}$ with respect to $\vt_2$ of a sample $\{\bar U _k(j),\,j\leq N-1,k\leq M-1\}$ satisfies 
$$J_{M,N}(\vt_2)=\mathcal O\Big( M^{3} \log\frac{N}{M^2}\Big).$$
\end{prop}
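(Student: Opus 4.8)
The plan is to reduce the problem, via independence and Whittle's formula, to a one–dimensional spectral estimate in which the parametrization $(\sigma_0^2,\vt_2)$ produces a decisive cancellation. Since the index sets $\mathcal I_k=\mathcal I_k^+\cup\mathcal I_k^-$, $k\le M-1$, are pairwise disjoint, the processes $U_1,\dots,U_{M-1}$ from \eqref{eq:U_k_def}, and hence the increment series $\bar U_1,\dots,\bar U_{M-1}$ from \eqref{eq:barU_k_def}, are independent. As Fisher information is additive over independent coordinates, it suffices to bound the information $J^{(k)}(\vt_2)$ contained in the single stationary Gaussian series $\{\bar U_k(j):j\le N-1\}$ and to sum, $J_{M,N}(\vt_2)=\sum_{k=1}^{M-1}J^{(k)}(\vt_2)$. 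Because each $u_\ell(i\Delta)$ is an AR$(1)$ sequence with lag–one correlation $\rho_\ell=\e^{-\lambda_\ell\Delta}$ and $\bar U_k$ is a differenced superposition of independent such sequences, its spectral density is
\[
f_k(\lambda)=\sum_{\ell\in\mathcal I_k}g_\ell(\lambda),\qquad
g_\ell(\lambda)=\frac{\sigma^2}{2\lambda_\ell}\cdot\frac{2(1-\cos\lambda)\,(1-\rho_\ell^2)}{1-2\rho_\ell\cos\lambda+\rho_\ell^2},
\]
the factor $2(1-\cos\lambda)$ stemming from the increment filter. Whittle's formula \eqref{eq:Whittle} then gives $J^{(k)}(\vt_2)=\tfrac{N}{4\pi}\int_{-\pi}^{\pi}\big(\partial_{\vt_2}\log f_k(\lambda)\big)^2\,d\lambda$, where the derivative is taken with $\sigma_0^2=\sigma^2/\sqrt{\vt_2}$ held fixed, i.e.\ $\sigma^2=\sigma_0^2\sqrt{\vt_2}$.

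The structural heart of the proof is that, in this parametrization, $\vt_2$ is asymptotically invisible in the bulk of the spectrum. Writing $a_\ell:=\lambda_\ell\Delta=\vt_2\pi^2\ell^2/N$, each summand factorizes as $g_\ell=\tfrac{\sigma^2}{N}q(a_\ell,\lambda)$ for a function $q(a,\lambda)$ with $q(0,\lambda)=1$ and $q(a,\lambda)=\mathcal O(a^{-1})$ as $a\to\infty$, so that $\log f_k=\log\sigma_0^2+\tfrac12\log\vt_2+\log\tilde f_k$ with $\tilde f_k=\tfrac1N\sum_{\ell\in\mathcal I_k}q(a_\ell,\lambda)$ depending on $\vt_2$ only through the $a_\ell$. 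Hence $\partial_{\vt_2}\log f_k=\tfrac1{2\vt_2}+\partial_{\vt_2}\log\tilde f_k$. Since $\partial_{\vt_2}a_\ell=a_\ell/\vt_2$, replacing the sum over $\mathcal I_k$ by $\tfrac1M\int_0^\infty d\ell$ and substituting $a=\vt_2\pi^2\ell^2/N$ expresses $\sum_\ell q(a_\ell,\lambda)$ and $\sum_\ell\partial_{\vt_2}q(a_\ell,\lambda)$ as constant multiples of $\int_0^\infty a^{-1/2}q\,da$ and $\tfrac1{\vt_2}\int_0^\infty a^{1/2}\partial_a q\,da$, respectively. Integrating the latter by parts, the boundary terms vanish by the stated behaviour of $q$ and one obtains $\int_0^\infty a^{1/2}\partial_a q\,da=-\tfrac12\int_0^\infty a^{-1/2}q\,da$; thus in the continuum approximation $\partial_{\vt_2}\log\tilde f_k\equiv-\tfrac1{2\vt_2}$ for every $\lambda$, and the leading term of $\partial_{\vt_2}\log f_k$ cancels identically. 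The Fisher information therefore resides entirely in the error of this continuum approximation.

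To quantify the error I would count the number $n(\lambda)$ of modes $\ell\in\mathcal I_k$ that are not yet exponentially damped at frequency $\lambda$, namely those with $a_\ell\lesssim\sqrt{1-\cos\lambda}$; since the modes are spaced by $2M$ and the smallest is $\ell=k$, one gets $n(\lambda)\eqsim 1\vee\big((1-\cos\lambda)^{1/4}\sqrt N/M\big)$. Comparing the lacunary sum over $\mathcal I_k$ with its integral by Euler–Maclaurin shows that the continuum cancellation holds up to relative order $1/n(\lambda)$, whence $|\partial_{\vt_2}\log f_k(\lambda)|\lesssim n(\lambda)^{-1}$ uniformly in $k$; in the low–frequency range $|\lambda|\lesssim M^2/N$ the single mode $\ell=k$ governs $f_k\approx\sigma^2/N$ and $\partial_{\vt_2}\log f_k\approx\tfrac1{2\vt_2}$, consistent with $n\equiv1$ there. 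Splitting the integral accordingly gives
\[
\int_{-\pi}^{\pi}\big(\partial_{\vt_2}\log f_k\big)^2\,d\lambda\;\lesssim\;\int_{0}^{\pi}\frac{d\lambda}{n(\lambda)^2}\;\lesssim\;\frac{M^2}{N}+\frac{M^2}{N}\int_{M^2/N}^{1}\frac{d\lambda}{\lambda}\;\eqsim\;\frac{M^2}{N}\log\frac{N}{M^2},
\]
the logarithm coming precisely from the transitional range $M^2/N\lesssim|\lambda|\lesssim1$ on which $n(\lambda)$ climbs from $\mathcal O(1)$ to its bulk value. Thus $J^{(k)}(\vt_2)\lesssim M^2\log(N/M^2)$ uniformly in $k$, and summing the $M-1$ independent contributions yields $J_{M,N}(\vt_2)=\mathcal O\big(M^3\log(N/M^2)\big)$.

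I expect the main obstacle to be the error estimate of the third paragraph: the leading term of $\partial_{\vt_2}\log f_k$ vanishes identically in the continuum, so everything hinges on controlling the correction to replacing the lacunary sum over $\mathcal I_k$ by an integral, uniformly in $\lambda$ across the transition from the few–mode regime ($|\lambda|\lesssim M^2/N$, where $f_k$ is dominated by the single mode $\ell=k$) to the many–mode regime. Pinning this correction down as $\eqsim 1/n(\lambda)$, rather than a crude $\mathcal O(1)$, is what simultaneously produces the correct power $M^2$ per coordinate and the logarithmic factor; establishing it requires careful, $\lambda$–uniform Euler–Maclaurin bounds that also account for the pairing $2Mj\pm k$ of the modes, and is the delicate part of the argument.
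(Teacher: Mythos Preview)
Your overall strategy---independence over $k$, spectral representation, and the observation that the parametrization $(\sigma_0^2,\vt_2)$ kills the leading contribution of $\partial_{\vt_2}\log f_k$---matches the paper's, and your heuristic mode--counting estimate for $\int(\partial_{\vt_2}\log f_k)^2\,d\lambda$ is essentially what Lemma~\ref{lem:spec_properties} makes precise. However, there is a genuine gap at the point where you write
\[
J^{(k)}(\vt_2)=\frac{N}{4\pi}\int_{-\pi}^{\pi}\big(\partial_{\vt_2}\log f_k(\lambda)\big)^2\,d\lambda.
\]
Whittle's formula \eqref{eq:Whittle} is an asymptotic statement, $\lim_{n\to\infty}\tfrac1n I_n(Z)=\tfrac{1}{4\pi}\int(\partial_\theta\log\phi_\theta)^2\,d\omega$, valid for a \emph{fixed} stationary process as the sample size $n\to\infty$. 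Here the spectral density $f_k=f_k^\Delta$ depends on $\Delta=1/N$, so the process changes with $N$; there is no reason why $I_N(\bar U_k)/N$ should be close to the Whittle integral computed from $f_k^\Delta$. The paper flags this explicitly and circumvents it by replacing $f_k^\Delta$ with its $N$-th order Fourier truncation $\Phi_k^{N,\Delta}$, which is shown to be nonnegative (Lemma~\ref{lem:spec_properties}(i)), hence the spectral density of some auxiliary stationary process $Y_k$. From a sample of $Y_k$ of length $2NL$ one can extract $L$ independent copies of $(\bar U_k(0),\dots,\bar U_k(N-1))$, giving $L\cdot I_N(\bar U_k)\le I_{2NL}(Y_k)$; now Whittle applies to $Y_k$ with $N$ fixed and $L\to\infty$, yielding the rigorous inequality $I_N(\bar U_k)\le 2N\mathscr I_k^{N,\Delta}$ with $\mathscr I_k^{N,\Delta}=\tfrac{1}{4\pi}\int(\partial_{\vt_2}\log\Phi_k^{N,\Delta})^2\,d\omega$.

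Your continuum/Euler--Maclaurin argument would then have to be carried out for $\Phi_k^{N,\Delta}$ rather than $f_k$, and the paper does this not via mode counting but by separate pointwise lower bounds on $\Phi_k^{N,\Delta}$ and upper bounds on $\partial_{\vt_2}\Phi_k^{N,\Delta}$ across the three frequency regimes $|\omega|\ge M^2\Delta$, $k^2\Delta\le|\omega|\le M^2\Delta$, and $|\omega|\le k^2\Delta$ (Lemma~\ref{lem:spec_properties}(ii),(iii)). The cancellation you identify is exactly what makes $\partial_{\vt_2}\Phi_k^{N,\Delta}=\mathcal O(\Delta)$ uniformly in $\omega$, but the delicate part---which you correctly anticipate---is controlling the ratio near $\omega=0$, where the paper needs the refined bound $\partial_{\vt_2}\Phi_k^{N,\Delta}\lesssim \omega^2/(k^4\Delta)+\Delta k^2\e^{-\vt_2 k^2}$ together with a matching lower bound on $\Phi_k^{N,\Delta}$; this requires tracking the truncation remainder $R_N$ explicitly rather than working with the limiting spectral density.
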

Hereby, the reparametrization allows for estimation of $\sigma_0^2=\sigma^2/\sqrt{\vt_2}$ with parameteric rate based on time increments in the regime $M/\sqrt N \to 0$, even when $\vt_2$ is unknown. We have considered $\bar U_k$ instead of $U_k$ due to the technical reason that the $N$-th order Fourier approximation of the spectral density of the increment process is positive and hence, a spectral density as well. We conjecture that the same bound holds for the Fisher information of $U_k$. 

\section{Simulations}\label{sec:sim}
The following numerical example illustrates the asymptotic results for the estimators derived in Section~\ref{sec:ParameterEstimation}. In order to simulate $X$ on a grid in time and space, we have considered the approximation $X^K_{t_i}(y_k)= \sum _{\ell=1}^K u_\ell(t_i)e_{\ell}(y_k)$ where $K$ is a large number. Moreover, the Ornstein-Uhlenbeck processes $u_\ell$ are simulated exploiting their AR(1)-structure, namely $$u_\ell(0)=\frac{\sigma}{\sqrt{2\lambda_\ell}}N_0^\ell,\qquad u_\ell(t_{i+1})= \e^{-\lambda_\ell \Delta}u_\ell(t_{i})+\sigma \sqrt{\frac{1-\e^{-2\lambda_\ell \Delta}}{2\lambda_\ell}}N_{i}^\ell,\quad i\in \N,$$
where $(N_i^\ell )$ are independent standard normal random variables.

Hereby, we have considered a fixed number $N=625=25^2$ of temporal observations and $M\in\{10,  15 , 25  ,40  ,70, 110, 180, 300\}$. The margin was set to $b=0.1$. In general, an appropriate choice for the cut of frequency $K$ highly depends on these values. For our setting $K=70,000$ produced accurate results.  The parameters are chosen as $\sigma^2=0.1,\,\vt_2=0.5,\,\vt_1=-0.4$ and $\vt_0=0.3$.

First, we consider the estimators for the volatility $\sigma^2$ and the diffusivity $\vt_2$ which have been analyzed in Propositions~\ref{prop:sigma} and \ref{prop:theta}, respectively. Figure \ref{fig:sigmatheta} shows the normalized mean squared error based on 500 Monte Carlo iterations plotted against the logarithm of the sampling ratio $\sqrt N /M$. The simplified double increments estimator $\hat \vt_{2,r}$ is computed with $r=(1-2b)\frac{\sqrt N}{M}$. Using the same value for $r$, the simplified double increments estimator for $\sigma^2$ is computed by replacing the normalization $\Phi_\vt(\delta,\Delta)$ by $\e^{-\kappa \delta/2}\psi_{\vt_2}(r)\sqrt \Delta$. 

As expected, the estimators based on temporal increments only achieve the parametric rate of convergence as long as $M$ is not too large, whereas estimators based on space increments only work well when $M$ is not too small. The estimators based on double increments perform very well throughout any regime depicted in the plot. Even the simplified versions work surprisingly well, although their applicability is only supported by our theory as long as $M\eqsim \sqrt N$. In particular, the double increments estimator for $\sigma^2$ can barely be distinguished from the simplified one. The theory suggests that the estimators based on space increments or time increments should have a smaller mean squared error than the double increments estimators in the regimes $\sqrt N/M\to0$ or $\sqrt N/M\to\infty$, respectively. The simulation confirms this effect for time increments, while we would require larger values of $M$ to see the asymptotic behavior for space increments. {However, to simulate the spatial increments estimator for large $M$, a considerably larger value of $K$ turns out to be crucial since otherwise the statistical bias of the estimator is amplified by a numerical bias.}

\begin{figure}[t]
\centering
\includegraphics[scale=.35]{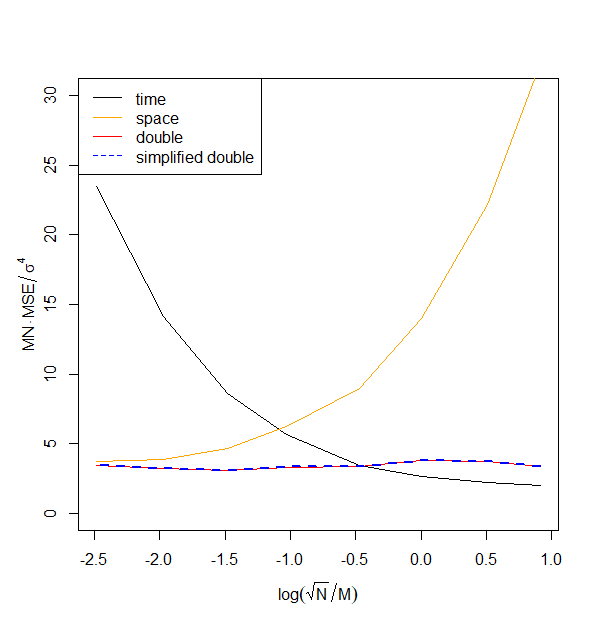}
\includegraphics[scale=.35]{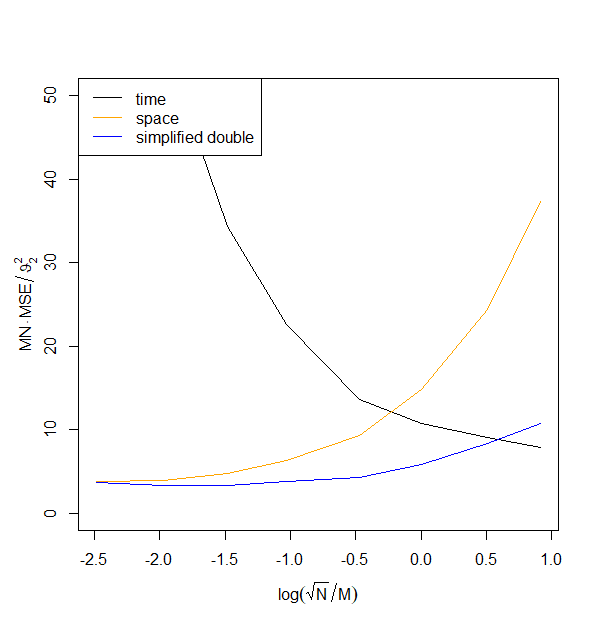}
\caption{Normalized mean squared errors of estimators for $\sigma^2$ (left) and $\vt_2$ (right) based on 500 Monte Carlo simulations.}
 \label{fig:sigmatheta}
\end{figure}

The above estimators require that all but one of the parameters $(\sigma^2,\vt_2,\kappa)$ are known. In the more difficult statistical problem where all parameters are unknown, $\eta= (\sigma^2,\vt_2,\kappa)$ can be estimated by $\hat \eta$ from \eqref{eq:def_LSestimator} and by $\hat \eta_{v,w}$ from \eqref{eq:def_LSestimator.1}. Figure~\ref{fig:LS} shows their mean squared error, again based on 500 Monte Carlo iterations. For the averaged estimator $\hat \eta_{v,w}$, we set $v=[ \max (1,\frac{N}{4M^2})]$ and $w=[\max(1,M/\sqrt N)]$ where $[\cdot]$ indicates rounding to the next integer. Since minimizing a functional of the type $\Vert F(\tilde \eta)\Vert^2$ for some function $F$ on a compact set is a hard numerical task we have considered the corresponding ridge regression problem, that is we minimize $\Vert F(\tilde \eta)\Vert^2+\lambda \Vert \tilde\eta\Vert^2$ instead. Regularizing with the squared inverse of the expected rate of convergence, i.e.~$\lambda = 1/({N^{3/2}\wedge M^3})$ for $\hat \eta_{v,w}$ and $\lambda=1/({N M})$ for $\hat \eta$ produced reasonable results, respectively. 

\begin{figure}[t] 
\centering
\includegraphics[scale=.35]{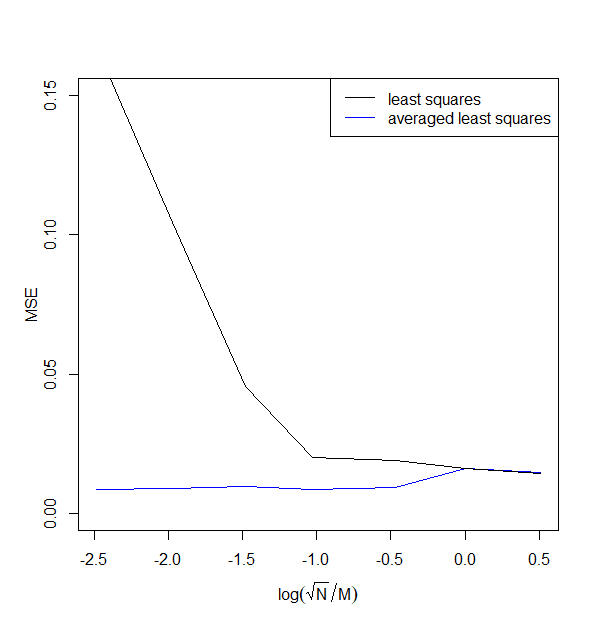}
\caption{Mean squared errors for the least squares estimator $\hat \eta$ and its averaged version $\hat \eta_{v,w}$.}
\label{fig:LS}
\end{figure}

In contrast to the double increments estimators for single parameters, $\hat \eta$ only produces good results as long as $M\eqsim \sqrt N$, which is covered by the theoretical foundation. The averaged version $\hat \eta_{v,w}$ works well throughout.  Furthermore, we see that it is only possible to profit from an increasing number of spatial observations up to a certain degree. Indeed, for $M \geq \sqrt N$ the optimal rate is $N^{-3/2}$ and the Monte Carlo mean squared error does not improve further. To cover also the regime $\sqrt N/M\to\infty$ for sufficiently large values of $M,N$, corresponding simulations are costly and not part of this simulation study.
The Monte Carlo mean squared error of $\hat \eta_{v,w}$ is not everywhere monotonic in $M$ since the effective sampling frequency ratio $r$ on the coarser grid where the double increments are computed is only approximately constant throughout the plot. Finally, we remark that our choice of $v$ and $w$ results in $v=w=1$ for the two smallest values of $M$ and hence, the two estimators are the same.

\section{Proofs of the main results}\label{sec:mainProofs}

\subsection{Proofs for the central limit theorems for realized quadratic variations}
First, we prove the generic central limit result in Proposition~\ref{prop:CLT_fund}. Afterwards, we can verify the central limit theorems for realized quadratic variations based on spatial increments (Theorem~\ref{thm:CLT_space}) and double increments (Theorem~\ref{thm:CLT_double}).
\begin{proof}[Proof of Proposition \ref{prop:CLT_fund}]
Since $\Sigma_{n}=Q_{n}^{\top}\Lambda_{n}Q_{n}$ for an orthogonal matrix
$Q_{n}\in\R^{d_{n}\times d_{n}}$ and a diagonal matrix $\Lambda_{n}$, the vector $Z_{\bullet,n}$ has the same distribution as $B_n X^n$ for $B_n:=Q_n^T\Lambda^{1/2}$ and $X^n:=(X_1,\ldots,X_{d_n})$ with independent standard normal random variables $(X_k)_{k\in \N}$. Denoting $A_n = \mathrm{diag}(\alpha_{1,n},\ldots,\alpha_{d_n,n})$, we obtain
$S_n = Z_{\bullet,n}^\top A_n Z_{\bullet,n} \overset{\mathcal D}{=}  {X^n}^\top B_n^\top  A_n B_n X^n$. Furthermore, $ B_n^\top  A_n B_n $ is symmetric such that $ B_n^\top  A_n B_n =P_n^\top \Gamma_n P_n$ where $P_n$ is an orthogonal matrix and $\Gamma_n$  is a diagonal matrix. Since $P_n X^n\sim\mathcal N(0,E_{d_n})$, we conclude as in \cite[p.~36]{Mathai92}
$$
  S_n\overset{\mathcal D}={X^n}^\top B_n^\top  A_n B_n X^n=(P_nX^n)^T \Gamma (P_nX^n)\overset{\mathcal D}{=}{X^n}^\top\Gamma_n X^n=\sum_{i=1}^{d_n}\gamma_{i,n}X_i^2,
$$ 
where $\gamma_{i,n},\,i\leq d_n$ are the eigenvalues of  $B_n^\top  A_n B_n$. 
The statement now follows by Lyapunov's condition and $\Vert B_n \Vert^2_2=\Vert \Sigma_n\Vert_2$:
\begin{align*}
\frac{\sum_{i=1}^{d_{n}}\gamma_{i,n}^{4}\E\left(\left(X_{k}^2-\E X_{k}^2\right)^{4}\right)}{\left(\V S_{n}\right)^{2}}
&\eqsim\frac{\sum_{i=1}^{d_{n}}\gamma_{i,n}^{4}}{\left(\sum_{i=1}^{d_{n}}\gamma_{i,n}^{2}\right)^{2}}
\lesssim\frac{\max_{i\leq d_{n}}\gamma_{i,n}^{2}}{\sum_{i=1}^{d_{n}}\gamma_{i,n}^{2}}
=\frac{\Vert B_n^T A_n B_n\Vert_{2}^{2}}{\V S_n}\\
&\leq \frac{(\Vert B_n \Vert_2^2\Vert A_n \Vert_2)^2}{\V S_n}=\frac{\Vert \Sigma \Vert_2^2}{\V S_n}. \qedhere
\end{align*}
\end{proof}
Throughout, for a function $f\colon\R \to \R$ we use  the notation
\begin{align*}
D_\delta f(x) &:= f(x+\delta)-f(x)\quad\text{and}\quad
D_\delta^2 f(x) := f(x+2\delta)-2f(x+\delta)+f(x).
\end{align*}
\begin{proof}[Proof of Theorem \ref{thm:CLT_space}] 
We abbreviate the (rescaled) space increments by 
$$S_{ik}:=(\delta_k^M X)(t_i)\quad\text{and}\quad \tilde S_{ik}:=\e^{\kappa y_k/2}(\delta_k^M X)(t_i).$$
\textit{Step 1.} We calculate the asymptotic mean of $V_{\mathrm{sp}}$. Application of the  trigonometric identity
$
\sin(\alpha)\sin(\beta) =\frac{1}{2}\left(\cos(\alpha-\beta)-\cos(\alpha+\beta)\right) 
$
yields
\begin{equation} \label{eq:eig_mult}
\begin{aligned}
&\e^{\kappa x/2}(e_\ell(x+\delta)-e_\ell(x))\e^{\kappa y/2}(e_\ell(y+\delta)-e_\ell(y)) \\ 
&=g(\delta)\left(2\cos(\pi \ell (y-x)) -\cos(\pi \ell (y-x-\delta))-\cos(\pi \ell(y-x+\delta))\right) \\
&  \quad+(g(2\delta)+g(0)-2g(\delta))(\cos(\pi \ell (y-x))) \\
& \quad+2g(\delta)\cos(\pi \ell (y+x+\delta))-g(0)\cos(\pi \ell (y+x))-g(2\delta)\cos(\pi \ell (x+y+2\delta)),
\end{aligned}
\end{equation}
where $g(x)=\exp(-\kappa x /2)$.
Plugging in $x=y$ gives
\begin{equation}
\begin{aligned} \label{eq:eig_single}
&\e^{\kappa y}(e_\ell(y+\delta)-e_\ell(y))^2\\
&=2(1-\cos(\pi \ell \delta ))+2(1-g(\delta))(\cos(\pi \ell \delta) - 1)+(g(2\delta)+g(0)-2g(\delta))\\
&\qquad  +2g(\delta) \cos(\pi \ell(2y+\delta )) -g(2\delta)\cos(2\pi \ell (y+\delta))-g(0)\cos(2\pi \ell y).
\end{aligned}
\end{equation}
Thus, in terms of
$$f(y):= \sum_{\ell\geq 1} \frac{1}{2\lambda_\ell}\cos(\pi \ell y),\qquad y\in[0,1],$$
we have
\begin{align*}
&\E \left(\e^{\kappa y}\left(X_t(y+\delta)-X_t(y) \right)^2 \right)
 =\sigma^2 \sum_{\ell\geq 0}  \frac{1}{2\lambda_\ell}\e^{\kappa y} \left(e_\ell(y+\delta)-e_\ell(y) \right)^2\\
 &\qquad\qquad=\sigma^2 \left(-2D_\delta f(0)-2D_\delta g(0)D_\delta f (0)+f(0)D_\delta^2 g(0)-D^2_\delta (g(\cdot) f (2y+\cdot))(0) \right).
\end{align*}
Owing to its closed form expression in \eqref{eq:Fourier_expl} below, we see that $f\in C_b^\infty([0,2])$ and $f'(0)=-\frac{1}{4\vt_2}$. 
Hence,  
\begin{align*}
\E\left(\e^{\kappa y}(X_t(y+\delta)-X_t(y))^2 \right) &=-2\sigma^2 f'(0)\cdot \delta +\O (\delta ^2)
= \frac{\sigma^2 }{2\vt_2}\cdot \delta +\O (\delta ^2).
\end{align*}
For $y = y_{k}$ we obtain the asymptotic mean
$\E (V_\mathrm{sp})=\frac{\sigma^2}{2\vt_2} + \O(\delta)$ and in particular, under the condition $N/M\to 0$,
\begin{equation*} 
\sqrt{MN}\left(V_\mathrm{sp}-\frac{\sigma^2}{2\vt_2}\right) = \sqrt{MN}(V_\mathrm{sp}-\E(V_\mathrm{sp})) + o(1).
\end{equation*}
\textit{Step 2.} We calculate the asymptotic variance. By Isserlis' Theorem \cite{Isserlis18} we have 
$$
\Cov((\tilde S_{ik})^2,(\tilde S_{jl})^2)=2 \,\Cov(\tilde S_{ik},\tilde S_{jl})^2.
$$
Together with the symmetry $\Cov(\tilde S_{ik},\tilde S_{jl})=\Cov(\tilde S_{jk},\tilde S_{il})$ this implies
$$
\V(V_\mathrm{sp})=\frac{2}{N^2M^2\delta^2}(v_1+v_2+v_3+v_4)
$$
where 
\begin{gather*}
v_1 := \sum_{i=0}^{N-1} \sum_{k=0}^{M-1} \V (\tilde S_{ik})^2,\qquad
v_2 := 2\sum_{i=0}^{N-2}\sum_{j=i+1}^{N-1} \sum_{k=0}^{M-1} \Cov(\tilde S_{ik},\tilde S_{jk})^2 \\
v_3 := 2\sum_{i=0}^{N-1}\sum_{k=0}^{M-2} \sum_{l=k+1}^{M-1} \Cov(\tilde S_{ik},\tilde S_{il})^2,\qquad 
v_4 := 4\sum_{i=0}^{N-2}\sum_{j=i+1}^{N-1}\sum_{k=0}^{M-2} \sum_{l=k+1}^{M-1} \Cov(\tilde S_{ik},\tilde S_{jl})^2.
\end{gather*}
We have already shown that $\V (\tilde S_{ik})= \E((\tilde S_{ik})^2) =\frac{\sigma^2}{2\vt_2}\cdot \delta + \O(\delta^2)$. Therefore,
$$
v_1 = NM\delta^2\cdot\frac{\sigma^4}{4\vt_2^2}+\O\left(\frac{N}{M^2}\right)=NM\delta^2\cdot\frac{\sigma^4}{4\vt_2^2}+o\left(\frac{N}{M}\right).
$$
In the sequel, we show that the remaining covariances do not contribute to the asymptotic variance.

For $v_2$ we define $\omega := \vt_2 (\pi^2\wedge (\pi^2+\Gamma))>0$ such that $\lambda_\ell\geq \omega \ell^2$ for all $\ell\in \N$.
Since 
$\left(e_\ell(y_{k+1})-e_\ell(y_{k}) \right)^2\lesssim  \ell^2 \delta^2,$
we get for $J=|i-j|\geq 1$
\begin{align*}
\Cov(\tilde S_{ik},\tilde S_{jk})&=\sigma^2\sum_{\ell \geq 1} \frac{\e^{-\lambda_\ell J \Delta}}{2\lambda_\ell}\e^{\kappa y_{k}}\left(e_\ell(y_{k+1})-e_\ell(y_{k}) \right)^2
\lesssim \delta^2\sum_{\ell \geq 1} \e^{-\omega \ell^2 J\Delta}\lesssim \frac{\delta^2}{\sqrt{J\Delta}}
\end{align*}
where the last step follows by Riemann summation with mesh size $\sqrt{J\Delta}$. Since $\frac{\log N}{M^2 \Delta}\leq\frac{N}{M^2 \Delta}=\frac{N^2}{M^2}\frac{1}{T}\to 0 $, 
$$
v_2 \lesssim \frac{M\delta^4}{ \Delta}\sum_{i=0}^{N-1}\sum_{j=i+1}^N  \frac{1}{(j-i)}\leq \frac{NM\delta^4}{ \Delta}\sum_{i=1}^{N}  \frac{1}{i}=\O\left(\frac{N \log N}{M^3 \Delta} \right)=o\left( \frac{N}{M} \right).
$$

To bound $v_3$ we follow the same strategy as for the mean: Since \eqref{eq:eig_mult} consists exclusively of second order differences we have $\Cov(\rsi{ik},\rsi{il})=\O(\delta^2)$ for $k\neq l$. Therefore,
$
v_3 = \O(NM^2\delta^4)=o(N/M).
$ 

To estimate $v_4$, we deduce from \eqref{eq:eig_mult} for $k<l$ and $J=|i-j|\geq 1$ that
\begin{align*}
\Cov(\tilde S_{ik},\tilde S_{jl})=&-g(\delta)D^2_{\delta} f_{J\Delta}\left(y_l-y_{k+1}\right)\\
&+f_{J\Delta}\left(y_l-y_{k}\right)D_{\delta}^2g(0)-D^2_{\delta} \left(g(\cdot)f_{J\Delta}\left(y_l+y_{k}+\cdot\right)\right)(0),\quad\text{where}\\
f_t(y):=& \sigma^2 \sum_{\ell\geq 1} \frac{\e^{-\lambda_\ell t}}{2\lambda_\ell}\cos(\pi  \ell y ).
\end{align*}
By Riemann summation we have
$f_t''(y)\lesssim \sum_{\ell\geq 1} \e^{-\lambda_\ell t}\lesssim \frac{1}{\sqrt t}.$ On the other hand, by Lemma \ref{lem:trig_series_fundamental}, 
$$f_t''(y)\lesssim \frac{1}{y\wedge (2-y)}\sup_k \left\vert\frac{k^2}{\lambda_k}\e^{-\lambda_k t} \right\vert\lesssim \frac{1}{y\wedge (2-y)}.$$
Therefore,
$$f_t''(y)\lesssim B(t,y):=\frac{1}{y\wedge (2-y)}\wedge\frac{1}{\sqrt t}.$$
Similarly, $f_t(y),\,f_t'(y)\lesssim B(t,y)$ can be shown. We conclude 
\begin{align*}
v_4 &\lesssim NM\sum_{i=0}^{N-1} \sum_{k=0}^{2M-2} \delta^4 B\left(i\Delta,\frac{k}{M}\right)^2
\lesssim \frac{N}{M^3}\sum_{i=1}^{N} \sum_{k=0}^M \frac{M^2}{k^2}\wedge \frac{1}{i\Delta} \\
&= \frac{N}{M^3}\sum_{i=1}^N \left( \sum_{k<M\sqrt{i\Delta}}  \frac{1}{i\Delta}  + \sum_{ M \geq k\geq M\sqrt{i\Delta}} \frac{M^2}{k^2}\right)
\lesssim \frac{N}{M^3}\sum_{i=1}^N \frac{M}{\sqrt{i\Delta}}
\lesssim \frac{N^{3/2}}{M^2 \sqrt{\Delta}}
=o\left( \frac{N}{M} \right)
\end{align*}
where the last step follows from $\frac{\sqrt{N}}{M\sqrt{\Delta}}=\frac{N}{M}\frac{1}{\sqrt{T}}\to 0$. Summing up, we have proved that 
\begin{equation*} 
\V(V_\mathrm{sp})=\frac{\sigma^4}{2\vt_2^2} \cdot \frac{1}{MN}+ o\left( \frac{1}{NM} \right).
\end{equation*}
\textit{Step 3.} 
To prove asymptotic normality, we interpret the number of temporal and spatial observations as sequences $M=M_n$, $N=N_n$ indexed by $n\in \N$ and consider the triangular array $(Z_{ik,n},n\in \N, k< M_n,i< N_n)$, where $Z_{ik,n}=\tilde S_{ik}/\sqrt{NM\delta}$. Since $\V(\sum_{i,k} Z_{ik}^2)\eqsim (MN)^{-1}$, Proposition~\ref{prop:CLT_fund} applies if:
$$\frac{1}{MN\delta^2}\Big(\sum_{i,k}|\Cov(\tilde S_{ik},\tilde S_{jl})|\Big)^2\to 0 $$
uniformly in $j<N,l<M$ in view of criterion \eqref{eq:lyapunov}.
The covariance bounds in Step 2 yield uniformly in $j<N,k<M$:
\begin{gather*}
  \sum_{k<M}|\Cov(\tilde S_{jk},\tilde S_{jl})|=\mathcal O(\delta),\qquad
  \sum_{i<N}|\Cov(\tilde S_{il},\tilde S_{jl})|=\mathcal O(\delta^2\sqrt{N}/\sqrt \Delta),\\
  \Big(\sum_{\substack{i\neq j,k\neq l}}|\Cov(\tilde S_{ik},\tilde S_{jl})|\Big)^2\lesssim MN \sum_{\substack{i\neq j,k\neq l}}|\Cov(\tilde S_{ik},\tilde S_{jl})|^2=o(N/M),
\end{gather*}
where we have used the Cauchy-Schwarz inequality to obtain the last bound. It remains to note $N/M\to 0$ and $N\Delta \gtrsim 1$.
\end{proof}


The proof of Theorem~\ref{thm:CLT_double} is similar to the previous one but the more complex covariance structure of the double increments has to be taken into account carefully, see Section~\ref{sec:AuxDoubleIncr}. The (asymptotic) mean of the realized quadratic space-time variation is provided by Proposition~\ref{prop:mean_double}, which we prove first. In the following, we write
\begin{equation}
  \rsti{ik} := \e^{\kappa y_k /2}D_{ik}.\label{eq:DTilde}
\end{equation}

\begin{proof}[Proof of Proposition \ref{prop:mean_double}] 
\textit{Step 1. }We show asymptotic independence of $\Gamma$, i.e.,
\[
\E\left((\sti{ik})^{2}\right)=\sigma^{2}\sum_{\ell\geq1}\frac{1-\e^{-\pi^{2}\vt_{2}\ell^{2}\Delta}}{\pi^{2}\vt_{2}\ell^{2}}(e_{\ell}(y_{k+1})-e_{\ell}(y_k))^{2}+\O\left(\delta\sqrt{\Delta}\left(\delta\wedge\sqrt{\Delta}\right)\right).
\]
Define $f(x):=\frac{1-\e^{-x}}{x}$. A first order Taylor approximation
of $f$ yields 
\[
\E\left((\sti{ik})^{2}\right)=\sigma^{2}\Delta\sum_{\ell\geq1}f\left(\pi^{2}\vt_{2}\ell^{2}\Delta\right)(e_{\ell}(y_{k+1})-e_{\ell}(y_k))^{2}+R
\]
where 
$
R\lesssim\Delta^{2}\sum_{\ell\geq1}f'(\vt_{2}(\pi^{2}\ell^{2}+\xi_{\ell})\Delta)(e_{\ell}(x+\delta)-e_{\ell}(x))^{2}
$
for some $|\xi_k| \leq |\Gamma|$. Since 
\begin{align*}
(e_{\ell}(y+\delta)-e_{\ell}(y))^{2} 
 & \lesssim\left(\e^{-\kappa \delta/2}(\sin(\pi \ell(y+\delta))-\sin(\pi \ell y))+\sin(\pi \ell y)(\e^{-\kappa\delta/2}-1)\right)^{2}
  \lesssim1\wedge\left(\ell\delta\right)^{2}
\end{align*}
and noting that $f'(x^2)$ and $x^2f'(x^2)$ are integrable,
we deduce 
\[
R\lesssim\Delta^{2}\sum_{\ell\geq1}(1\wedge(\ell\delta)^2)f'(\vt_2(\pi^{2}\ell^{2}+\xi_{\ell})\Delta)=\O\big(\Delta^{3/2}\wedge( \delta^{2}\sqrt{\Delta})\big)=\O\big((\delta\Delta)\wedge (\delta^{2}\sqrt{\Delta})\big).
\]

 \textit{Step 2. } We verify $(i)$. Thanks to Step 1 we may assume $\lambda_\ell = \pi^2\vt_2 \ell^2 $. It follows from \eqref{eq:eig_single} that
\begin{align*}
\E(\rsti{ik}^2)
=&\sigma^{2}\e^{-\kappa y}\left(F_{\vt_2}(0,\Delta)\left(1+\e^{-\kappa \delta}\right)-2F_{\vt_2}(\delta,\Delta)\e^{-\kappa\delta/2}\right)\\
&-\sigma^{2}\e^{-\kappa y}D^2_\delta\Big(g(\cdot)F_{\vt_2}(2y_k+\cdot\,,\Delta)\Big)(0).
\end{align*}
Consequently, it remains to show  
\[
D^2_\delta\Big(g(\cdot)F_{\vt_2}(2y+\cdot\,,\Delta)\Big)(0)=\O\left(\delta\sqrt{\Delta}\left(\delta\wedge\sqrt{\Delta}\right)\right)
\]
uniformly in $y\in[b,1-b]$. As before, this is done by showing
\[
F_{\vt_2}(x,\Delta)\lesssim\Delta,\quad\frac{\partial F_{\vt_2}(x,\Delta)}{\partial x}\lesssim\Delta\quad\text{and}\quad \frac{\partial^2 F_{\vt_2}(x,\Delta)}{\partial x^2}\lesssim\sqrt{\Delta}
\]
uniformly in $x\in[2b,2(1-b)]$. By Lemma
\ref{lem:trig_integral} we have 
$
F_{\vt_2}(x,\Delta)=\Delta\sum_{\ell\geq1}f(\lambda_\ell\Delta)\cos(\pi\ell x)=\O(\Delta).
$
In order to access the first two derivatives of $F_{\vt_2}(\cdot,\Delta)$, we split
it into two summands, 
\begin{align*}
F_{\vt_2}(x,\Delta) & = \underbrace{\Delta\sum_{\ell\geq1}\frac{1}{1+\lambda_\ell\Delta}\cos(\pi \ell x)}_{=:H_{\Delta}(x)}+\underbrace{\Delta\sum_{\ell\geq1}\left(\frac{1-\e^{-\lambda_\ell\Delta}}{\lambda_\ell\Delta}-\frac{1}{1+\lambda_\ell\Delta}\right)\cos(\pi \ell x)}_{=:G_{\Delta}(x)}.
\end{align*}
Using the cosine series formula \eqref{eq:Fourier_expl}, we can compute
\begin{align*}
H_{\Delta}(x) & =\frac{1}{\vt_{2}\pi^{2}}\sum_{\ell\geq1}\frac{1}{\ell^{2}+\frac{1}{\pi^{2}\vt_{2}\Delta}}\cos(\pi \ell x)
  =\frac{\sqrt{\Delta}}{2\sqrt{\vt_{2}}}\frac{\cosh\left(\frac{1}{\sqrt{\vt_{2}\Delta}}(x-1)\right)}{\sinh\left(\frac{1}{\sqrt{\vt_{2}\Delta}}\right)}-\frac{\Delta}{2},
\end{align*}
from which it easily follows that $H'_\Delta(x)\lesssim \Delta$ and $H''_\Delta(x)\lesssim \sqrt \Delta$.
The derivatives of
\begin{align*}
G_{\Delta}(x) & =\Delta\sum_{\ell\geq 1}h(\ell\sqrt \Delta)\cos(\pi \ell x),\quad\text{where}\quad h(z):=\frac{1-\e^{-z}(1+z)}{z(1+z)},
\end{align*}
can be bounded summand-wisely,
\begin{align*}
G_{\Delta}'(x) & \eqsim \sqrt{\Delta}\sum_{\ell \geq1}(\ell\sqrt{\Delta}) h(\ell\sqrt{\Delta})\sin(\pi \ell x) \lesssim\Delta,\quad
G_{\Delta}''(x)  \eqsim \sum_{\ell\geq1}(\ell^2 \Delta) h(\ell\sqrt{\Delta})\cos(\pi \ell x)\lesssim\sqrt{\Delta},
\end{align*}
where the bounds follow from the Riemann sum approximations in Lemma~\ref{lem:trig_integral}, owing to $xh(x)\vert_{x=0}=x^2 h(x)\vert_{x=0}=0$.

\textit{Step 3.} We show the asymptotic expressions in $(ii)$. 
Due to a Riemann sum argument, we have $\left\Vert F_{\vt_2}(\cdot,\Delta)\right\Vert _{\infty}\lesssim\sqrt{\Delta}$
and consequently,
\begin{align*}
\Phi_{\vt}(\delta,\Delta) & =2\left(F_{\vt_2}(0,\Delta)-F_{\vt_2}(\delta,\Delta)\right)+F_{\vt_2}(0,\Delta)\left[1+\e^{-\kappa \delta}-2\e^{-\kappa\delta/2}\right]\\
&\qquad-2\left(F_{\vt_2}(\delta,\Delta)-F_{\vt_2}(0,\Delta)\right)\left(\e^{-\kappa\delta/2}-1\right)\\
 & =2\left(F_{\vt_2}(0,\Delta)-F_{\vt_2}(\delta,\Delta)\right)+\O(\delta\sqrt{\Delta}).
\end{align*}
In the case $\delta/\sqrt{\Delta}\to 0$ Taylor's formula yields 
\[
F_{\vt_2}(0,\Delta)-F_{\vt_2}(\delta,\Delta)=-\delta \frac{\partial F_{\vt_2}(0,\Delta)}{\partial x}-\frac{\delta^{2}}{2}\frac{\partial^2 F_{\vt_2}(\eta,\Delta)}{\partial x^2}
\]
 for some $\eta\in[0,\delta]$. We employ the representation $F_{\vt_2}(\cdot,\Delta)=H_{\Delta}+G_{\Delta}$
from Step~2: Since $\sin(0)=0$ we have $\frac{\partial F_{\vt_2}(0,\Delta)}{\partial x}=H_{\Delta}'(0)=-\frac{1}{2\vt_{2}}$.
Further, $H_\Delta''(\eta)={1}/{\sqrt \Delta}$ and the Riemann sum argument yields
$
G''_\Delta(\eta)\lesssim
 \sum_{\ell\geq1}(\ell^2\Delta) h(\ell\sqrt \Delta)
  \lesssim {1}/{\sqrt{\Delta}}.
$
Therefore,
$
F_{\vt_2}(0,\Delta)-F_{\vt_2}(\delta,\Delta)=\frac{1}{2\vt_{2}}\cdot\delta+\O\left(\frac{\delta^{2}}{\sqrt{\Delta}}\right).
$

If $\delta/\sqrt{\Delta}\to \infty$, Lemma~\ref{lem:trig_integral} implies
$
F_{\vt_2}(\delta,\Delta)=-\frac{\Delta}{2}+\O(\frac{\Delta^{3/2}}{\delta^{2}})
$
and Lemma \ref{lem:Riemann} yields 
\begin{equation}
F_{\vt_2}(0,\Delta)=\sqrt{\Delta}\int_{0}^{\infty}\frac{1-\e^{-\pi^{2}\vt_{2}z^{2}}}{\pi^{2}\vt_{2}z^{2}}dz-\frac{\Delta}{2}+\O(\Delta^{3/2}).\label{eq:integral_at_0}
\end{equation}
Since 
$
\int_{0}^{\infty}\frac{1-\e^{-\pi^{2}\vt_{2}z^{2}}}{\pi^{2}\vt_{2}z^{2}}dz=\frac{1}{\sqrt{\vt_{2}\pi}},
$
we obtain 
$
F_{\vt_2}(0,\Delta)-F_{\vt_2}(\delta,\Delta)=\frac{\sqrt{\Delta}}{\sqrt{\vt_{2}\pi}}+\O (\frac{\Delta^{3/2}}{\delta^{2}}).
$

Finally, we derive the asymptotic expression for the case $\delta /\sqrt{\Delta}\equiv r$, while $\delta /\sqrt{\Delta}\to r$ can be handled similarly. We have
\begin{align*}\Phi_{\vt}(\delta,\Delta)&=2(F_{\vt_2}(0,\Delta)-F_{\vt_2}(\delta,\Delta))\e^{-\kappa \delta/2}+F_{\vt_2}(0,\Delta)(1+\e^{-\kappa \delta }-2\e^{-\kappa \delta /2})\\
&=2(F_{\vt_2}(0,\Delta)-F_{\vt_2}(\delta,\Delta))\e^{-\kappa \delta/2}+\O(\Delta^{3/2})
\end{align*}
and since $1-\cos(0)=0,$ Lemma \ref{lem:Riemann}
yields
\begin{align*}
F_{\vt_2}(0,\Delta)-F_{\vt_2}(r\sqrt{\Delta},\Delta) & =\sum_{\ell\geq1}\frac{1-\e^{-\pi^{2}\vt_{2}\ell^{2}\Delta}}{\pi^{2}\vt_{2}\ell^{2}}\left(1-\cos\left({\pi \ell r\sqrt{\Delta}}\right)\right)\\
 & =\sqrt{\Delta}\int_{0}^{\infty}\frac{1-\e^{-\pi^{2}\vt_{2}z^{2}}}{\pi^{2}\vt_{2}z^{2}}\left(1-\cos \left({\pi r z}\right)\right)dz+\O(\Delta^{3/2}).
\end{align*}
It remains to compute the integral. By substituting $\tilde r = r/\sqrt{\vt_2}$
we can pass to 
$$
\int_{0}^{\infty}\frac{1-\e^{-\pi^{2}\vt_{2}z^{2}}}{\pi^{2}\vt_{2}z^{2}}\left(1-\cos \left({\pi r z}\right)\right)dz=\frac{1}{\pi\sqrt{\vt_2}} \Big(h_1(\tilde{r})-h_2(\tilde{r})\Big)
$$
 where 
$$h_1(\tilde r)=\int_{0}^{\infty}\frac{1-\cos ({\tilde r z})}{z^{2}}dz,\qquad h_2(\tilde r)=\int_{0}^{\infty}\e^{-z^{2}}\frac{1-\cos ({\tilde r z})}{z^{2}}dz.
$$
To compute $h_1$, note that $S(z)+\frac{\cos(z)-1}{z}$ is an antiderivative of $\frac{1-\cos(z)}{z}$, where $S(z)=\int_0^z\frac{\sin(h)}{h}\,dh$ is the sine integral. Consequently, a substitution and $\lim_{z\to \infty} S(z)=\pi/2$ yields
$$h_1(\tilde r)=\tilde{r}\int_0^\infty \frac{1-\cos(z)}{z^2}\,dz=\frac{\pi \tilde r}{2}.$$
To treat $h_2$, note that $h_2(0)=h_2'(0)=0$ and hence,
$h_2(\tilde r)=\int_0^{\tilde r} \int_0^sh_2''(u)\,du\,ds$.
Now, plugging in
$
h_2''(\tilde r)= \int_0^\infty \e^{- z^2}\cos(\tilde r z)\,dz = \frac{\sqrt \pi}{2}\e^{-{\tilde r^2}/4}
$
and integrating by parts yields
\begin{align*}
h_2(\tilde r)&=\frac{\sqrt \pi}{2}\int_0^{\tilde r }\int_0^s \e^{-{u^2}/4}\,du
= \sqrt{\pi}\tilde r \int_0^{{\tilde{r}}/{2}} \e^{-u^2}\,du +\sqrt \pi \left( \e^{-\tilde r^2/4}-1\right) .
\end{align*}
The claim thus follows from 
\begin{align*}
h_1(\tilde r)-h_2(\tilde r)&=\frac{\pi\tilde r}{2}\left(1-\frac{2}{\sqrt \pi}\int_0^{{\tilde{r}}/{2}} \e^{-u^2}\,du\right) +\sqrt \pi \left(1- \e^{-\tilde r^2/4}\right)\\
&=\tilde r \sqrt \pi\int_{\tilde r /2}^{\infty} \e^{-u^2}\,du +\sqrt \pi \left(1- \e^{-\tilde r^2/4}\right).\qedhere
\end{align*}
\end{proof}

\begin{proof}[Proof of Theorem \ref{thm:CLT_double}]
Asymptotic normality follows just like in the proof of Theorem \ref{thm:CLT_space}. Using the notation from the proof of the latter theorem (with space increments replaced by double increments) we have 
$$\V(\mathbb V)=\frac{2}{M^2N^2\Phi_{\vt}^2(\delta,\Delta)}(v_1+v_2+v_3+v_4).$$
To determine the asymptotic variances, we have to treat the three different sampling regimes separately. 

\textit{Case $\delta/\sqrt{\Delta}\to 0 $.} By Lemmas \ref{lem:var_basic} and \ref{lem:var_dbyD0} we have 
\begin{align*}
\V(\rsti{ki})^2 &= \frac{\sigma^4}{\vt_2^2} \e^{-\kappa \delta}\cdot \delta^2 +o(\delta^2),\quad
\Cov(\rsti{ki},\rsti{k(i+1)})^2 =\frac{\sigma^4}{4\vt_2^2} \e^{-\kappa \delta}\cdot \delta^2 +o(\delta^2)
\end{align*}
as well as 
\begin{align*}
\Cov(\rsti{ki},\rsti{kj})^2 &= o \left(\frac{\delta^2}{|i-j|^5} \right),\quad |i-j|\geq 2,\\
\Cov(\rsti{ki},\rsti{lj})^2 &=\O \left( \frac{\delta^4}{(|i-j|+1)^4}\left(\frac{M^2}{(k-l)^2}\wedge \frac{1}{\Delta}\right)\right),\quad k\neq l.
\end{align*}
The latter covariances are negligible for the asymptotic variance since
$\sum_{k\leq M}(\frac{M^2}{k^2}\wedge\frac{1}{\Delta})\lesssim \frac{M}{\sqrt \Delta},$ cf.~the proof of Theorem \ref{thm:CLT_space}.
Inserting
$\Phi_{\vt}^2(\delta,\Delta) = \frac{\e^{-\kappa \delta}}{\vt_2^2} \delta^2 + o(\delta^2)$ from Proposition~\ref{prop:mean_double} yields the claim.

\textit{Case $\delta/\sqrt{\Delta}\to \infty$.} By Lemmas \ref{lem:var_basic} and \ref{lem:var_dbyDinf} we have
\begin{align*}
\V(\rsti{ki})^2 &= \frac{4\sigma^4}{\pi \vt_2}\e^{-\kappa \delta}\cdot \Delta +o(\Delta),\quad
\Cov(\rsti{ki},\rsti{(k+1)i})^2 = \frac{\sigma^4}{\pi \vt_2}\e^{-\kappa \delta}\cdot \Delta +o(\Delta).
\end{align*}
From $\sqrt{J-1}+\sqrt{J+1}-2\sqrt{J}=\O(J^{-3/2})$ and $\sqrt \Delta /\delta \to 0$ it follows for $J=|i-j|\geq 1$ that 
\begin{align*}
\Cov(\rsti{ki}, \rsti{kj})^2 &= \frac{\sigma^4}{\pi \vt_2}\left(\sqrt{J-1}+\sqrt{J+1}-2\sqrt{J}\right)^2\e^{-\kappa \delta}\cdot \Delta +o\left(\frac{{\Delta}}{J^{3/2}}\right)+\O(\Delta^3),\\
\Cov(\rsti{ki},\rsti{(k+1)j})^2 &= \frac{\sigma^4}{4\pi \vt_2}\left(\sqrt{J-1}+\sqrt{J+1}-2\sqrt{J}\right)^2\e^{-\kappa \delta}\cdot \Delta+o\left(\frac{{\Delta}}{J^{3/2}}\right)+\O(\Delta^3).
\end{align*}
Note that the $\O(\Delta^3)$-term is negligible for the asymptotic variance since
$$N^2 M\Delta^3 =MN\Delta \cdot N\Delta^2=MN\Delta \cdot \frac{T}{M}\cdot M\sqrt \Delta \cdot \sqrt \Delta =o(NM\Delta).$$
The remaining covariances do not contribute to the asymptotic variance since for \mbox{$|k-l|\geq 2$} we have
$$\Cov(\rsti{ki}, \rsti{lj})^2=\O\left(\frac{\Delta \delta^4}{(J+1)^{3}}\right)+\O\left(\frac{\Delta^2 }{(J+1)^{2}}\frac{M^2}{(k-l)^2}\right).$$
The claim is now proved by inserting  
$\Phi_{\vt}^2(\delta,\Delta) = \frac{4}{\pi \vt_2} \e^{-\kappa \delta} \Delta +o(\Delta)$
and noting that for the function $g(j)=(\sqrt{j-1}+\sqrt{j+1}-2\sqrt{j})^2$ we have 
$$\frac{1}{N}\sum_{\stackrel{i,j=0}{i\neq j}}^{N-1}g(|i-j|)=\frac{2}{N}\sum_{i=1}^{N-1}\sum_{j=1}^i g(j)\longrightarrow 2\sum_{j\geq 1}g(j),\quad N\to \infty$$
by Ces\`aro summation.

\textit{Case $\delta/\sqrt{\Delta}\equiv r \in (0,\infty)$.} For $f\colon\R^2\to \R$ define
\begin{align*}
D^2_xf(x,y):=f(x+2,y)+f(x,y)-2f(x+1,y),\\
D^2_yf(x,y):=f(x,y+2)+f(x,y)-2f(x,y+1).
\end{align*}
We show that the asymptotic variance is given by $C(r/\sqrt{\vt_2})\sigma^4$ where 
\begin{equation} \label{eq:C.r}
C(h):=\frac{2}{\Lambda^2_{0,0}(h)}\sum_{j,l\in \Z}\Lambda^2_{j,l}(h),\qquad\Lambda_{j,l}(h):= \left(D^2_xD^2_y G_h\right)(|j|-1,|l|-1)
\end{equation}
and
\begin{align}
G_h(j,l):=\sqrt{|j|}H \left(\frac{h|l|}{\sqrt{|j|}}\right)\1_{\{j\neq 0\}},\qquad
H(x)&:=\frac{1}{2\sqrt{\pi}}\left(\exp\left(-\frac{ x^2}{4{}}\right)-{ x }\int_{x/2}^\infty\e^{-z^2}\,dz\right).\label{eq:GH}
\end{align}
Define 
$$\xi_{i-j,k-l}^\Delta:=
\begin{cases}
2D_\delta F_{|i-j|,\Delta}(0),&l=k,\\
D_\delta^2F_{|i-j|,\Delta}((|k-l|-1)\delta),&l\neq k,
\end{cases}
$$ with $\delta=r\sqrt \Delta$ such that Lemma~\ref{lem:var_basic} reads as
\begin{equation}
  \Cov(\rsti{ik},\rsti{ik})=-\sigma^2 \e^{-\kappa \delta /2}\xi_{i-j,k-l}^\Delta+\O\left(\frac{\Delta^{3/2}}{(J+1^{3/2})}\right).\label{eq:CovDD} 
\end{equation}
Since each term $\xi_{J,L}^\Delta$ is a Riemann sum multiplied by $\sqrt \Delta $, we have for $J,L\geq 0$
\begin{align*}
&\lim_{\Delta \to 0 }\Delta^{-1/2} \xi_{J,L}^\Delta=-
\begin{cases}
2(\Psi_r(J,1))-\Psi_r(J,0)),&L=0,\\
\Psi_r(J,L-1)+\Psi_r(J,L+1)-2\Psi_r(J,L),& L\geq 1,
\end{cases}
\end{align*}
where
\[
\Psi_r(J,L):=\begin{cases}
{\displaystyle \int_{0}^{\infty}\frac{1-\e^{-\pi^{2}\vt_{2}z^{2}}}{\pi^{2}\vt_{2}z^{2}}\cos\left(\pi rLz\right)\,dz}, & J=0,\\
{\displaystyle \int_{0}^{\infty}\frac{2\e^{-J\pi^{2}\vt_{2}z^{2}}-\e^{-(J+1)\pi^{2}\vt_{2}z^{2}}-\e^{-(J-1)\pi^{2}\vt_{2}z^{2}}}{2\pi^{2}\vt_{2}z^{2}}\cos\left(\pi rLz\right)\,dz}, & J\geq1.
\end{cases}
\]
By symmetry of the cosine, 
\begin{align*}
\lim_{M,N\to \infty}\Delta^{-1/2}\xi_{J,L}=&- 
\Big(\Psi_r(J,|L|-1)+\Psi_r(J,|L|+1)-2\Psi_r(J,|L|)\Big)
\end{align*}
also holds for negative $L$. Hence, we can write for all $L\in\Z$ and $J\geq 0$ and with $G$ from \eqref{eq:GH}
\begin{align*}
\Psi_r(J,L)&=
{\displaystyle \int_{0}^{\infty}\frac{2\e^{-J\pi^{2}\vt_{2}z^{2}}-\e^{-(J+1)\pi^{2}\vt_{2}z^{2}}-\e^{-|J-1|\pi^{2}\vt_{2}z^{2}}}{2\pi^{2}\vt_{2}z^{2}}\cos\left(\pi rLz\right)\,dz} \\
&=\left(G_{r/\sqrt{\vt_2}}(J+1,L)+G_{r/\sqrt{\vt_2}}(J-1,L)-2G_{r/\sqrt{\vt_2}}(J,L)\right)/\sqrt{\vt_2},
\end{align*}
where the last equality follows from
$$\frac{G_{r/\sqrt{\vt_2}}(j,l)}{\sqrt{\vt_2}}=\int_{0}^{\infty}\frac{1-\e^{-|j|\pi^{2}\vt_{2}z^{2}}}{2\pi^{2}\vt_{2}z^{2}}\cos\left(\pi rlz\right)\,dz,\qquad j,l\in \Z,$$ 
which may be shown analogously to the calculation of $\psi_{\vt_2}(r)$.
Consequently, for all $J\in \{1-N,\ldots,N-1\}$ and $L\in \{1-M,\ldots,M-1\}$ we have
$$\lim_{M,N\to \infty}\Delta^{-1/2} \xi_{J,L}=-
\Lambda_{J,L}(r/\sqrt{\vt_2})/\sqrt{\vt_2}.$$
The usual Riemann sum argument yields $F_{J,\Delta}(0)\lesssim \frac{\sqrt \Delta}{(J+1)^{3/2}}\lesssim \frac{\sqrt \Delta}{(J+1)}$ for $J\geq 0$ and Lemma~\ref{lem:var_dbyDinf} (more precisely \eqref{eq:FJDeltaz_bound}) yields $F_{J,\Delta}(L\delta)\lesssim \frac{\Delta}{(J+1)L\delta}\lesssim \frac{\sqrt\Delta}{(J+1)(L+1)}$ for $J \in \N_0$ and $L\geq 1$.
We obtain
\begin{equation}
\Delta^{-1/2}\xi^\Delta_{J,L}=\O\left(\frac{1}{(|J|+1)(|L|+1)}\right),\qquad J,L \in \Z.
\end{equation} 
Therefore, 
\begin{align*}
\V \left( \frac{1}{\sqrt{NM\Delta}}\sum_{i=0}^{N-1}\sum_{k=0}^{M-1}\rsti{ik}^2 \right) 
 & =  \frac{2\sigma^4}{{NM}\Delta}\sum_{i,j=0}^{N-1}\sum_{k,l=0}^{M-1}(\xi_{i-j,k-l}^\Delta)^2 +o (1).
\end{align*}
By dominated convergence and taking Ces\`aro limits twice, we conclude
\begin{align*}
\lim_{M,N\to \infty}\V \left( \frac{1}{\sqrt{NM\Delta}}\sum_{i=0}^{N-1}\sum_{k=0}^{M-1}\rsti{ik}^2 \right) &=\lim_{M,N\to \infty }\frac{2\sigma^4}{{\vt_2}{NM}}\sum_{i,j=0}^{N-1}\sum_{k,l=0}^{M-1}\Lambda_{i-j,k-l}^2(r/\sqrt{\vt_2})\\ 
&=\frac{2\sigma^4}{\vt_2}\sum_{i,k\in \Z}\Lambda_{i,k}^2(r/\sqrt{\vt_2}).
\end{align*}
Since $\psi_{\vt_2}(r)=-\Lambda_{0,0}(r/\sqrt{\vt_2})/\sqrt{\vt_2}$, we have 
$\Phi_{\vt}^2(\delta, \Delta) =  \e^{-\kappa \delta} \Lambda_{0,0}^2(r/\sqrt{\vt_2})/\vt_2\cdot \Delta+o(\Delta)$
and dividing by $\lim_{M,N\to \infty} \Delta^{-1} \Phi_\vt^2(\delta,\Delta)=\Lambda^2_{0,0}(r/\sqrt{\vt_2})/\vt_2$ yields the claimed asymptotic variance.
\end{proof}

\subsection{Proofs for the estimators}

Propositions~\ref{prop:sigma} and \ref{prop:theta} follow immediately from the central limit theorems for the realized quadratic variations and the delta method. Before proving Theorem~\ref{thm:CLT_LS}, we introduce some notation that will be used throughout the proof and we state the asymptotic covariance matrix explicitly. 
Recall the definition of $\Lambda_{i,k}(\cdot)$ from \eqref{eq:C.r} and for any $i,k\in \Z$ let 
\begin{align*}
A^r_{ik} &:= -\Lambda_{ik}(r/\sqrt{\vt_2})/\sqrt{\vt_2},\qquad
&B^r_{ik}:=2 A^r_{ik}+A^r_{(i-1)k}+A^r_{(i+1)k},\qquad
&C^r_{ik}:=A^r_{ik}+A^r_{(i-1)k},\\
A_r &:=\sum_{i,k\in \Z} (A^r_{ik})^2,\qquad
&B_r :=\sum_{i,k\in \Z} (B^r_{ik})^2,\qquad\qquad\qquad
&C_r :=\sum_{i,k\in \Z} (C^r_{ik})^2.
\end{align*}
In terms of 
$$H(x) := \frac{4x}{\sqrt \pi} \left(1-\e^{-x^2}+2x \int_{x}^\infty\e^{-z^2}\,dz \right),\qquad H'(x)=\frac{4}{\sqrt \pi} \left(1-\e^{-x^2}+4x \int_{x}^\infty\e^{-z^2}\,dz \right),$$ 
$x\geq 0$, we have $\psi_{\vt_2}(r)= \frac{1}{r}H\big(\frac{r}{2\sqrt{\vt_2}}\big)$ and $\frac{\partial}{\partial \vt_2}\psi_{\vt_2}(r)=-H'\big(\frac{r}{2\sqrt{\vt_2}}\big)\frac{1}{4 \vt_2^{3/2}}$.
Denoting $r_i:=r/\sqrt i$, let
\begin{align*}
g_\eta^i(z):= \e^{-\kappa z}\left(\frac{1}{r_i}H\Big(\frac{r_i}{2\sqrt{\vt_2}}\Big),-\frac{\sigma^2}{4\vt_2^{3/2}}H'\Big(\frac{r_i}{2\sqrt{\vt_2}}\Big), -z\frac{\sigma^2}{r_i}H\Big(\frac{r_i}{2\sqrt{\vt_2}}\Big)\right)^\top,\quad h_\eta^i(z) :=\e^{-\kappa z } g_\eta^i(z)
\end{align*}
for $i=1,2$ and $z\in [b,1-b]$, where $g_\eta^i$ is the gradient of $\eta \mapsto f_{\eta}^i(z)$. Moreover, we write $\langle f,g \rangle_b:=\frac1{1-2b}\int_b^{1-b}f(x)g(x)dx$ for $f,g\in L^2([b,1-b])$. We will prove that the asymptotic covariance matrix equals
\begin{align} \label{eq:LS_Cov}
\Omega^r_{\eta}:= V^{-1} U V^{-1},
\end{align}  
where  $U=U(\eta)$ and $V=V(\eta)$ are defined via
\begin{align*}
U_{ij}&=4\sigma^4 \left(2A_r \langle (h^1_\eta)_i,(h^1_\eta)_j \rangle_b+B_r \langle (h^2_\eta)_i,(h^2_\eta)_j \rangle_b+\sqrt 2 C_r\big( \langle (h^1_\eta)_i,(h_\eta^2)_j \rangle_b+ \langle (h^2_\eta)_i,(h^1_\eta)_j \rangle_b\big) \right),\\
V_{ij} &= 2\left( \langle (g^1_\eta)_i,(g^1_\eta)_j \rangle_b +\langle (g_\eta^2)_i,(g^2_\eta)_j \rangle_b \right),\qquad\qquad i,j\in \{1,2,3\}.
\end{align*}

\begin{proof}[Proof of Theorem~\ref{thm:CLT_LS}]

The proof uses the classical theory on minimum contrast estimators, see e.g.~\cite{Dacunha86}. In particular, the mean value theorem yields $$-\dot K_{N,M}(\eta)=\dot K_{N,M}(\hat \eta)-\dot K_{N,M}(\eta)= \left(\int_0^1 \ddot K_{N,M}(\eta+\tau(\hat \eta -\eta))\,d\tau\right)(\hat \eta -\eta ) $$ 
as soon as $[\hat \eta,\eta]\subset H$, where $\dot K_{N,M}$ and $\ddot K_{N,M}$ denote gradient and Hessian with respect to $\eta$, respectively. In the sequel, we will verify that $K_{N,M}$ is associated with the contrast function $$K(\eta,\tilde \eta)= K^1(\eta,\tilde \eta)+K^2(\eta,\tilde \eta),\quad\text{where}\quad
K^i(\eta, \tilde \eta)=\frac{1}{1-2b}\int_b^{1-b}(f^i_\eta(z)-f^i_{\tilde{\eta}}(z))^2\,dz,$$ (Steps 1-2), show consistency of $\hat \eta$ (Step~3), prove asymptotic normality of $\dot K_{N,M}(\eta)$ with covariance matrix $U$ (Steps~4-7) and deduce stochastic convergence of $\int_0^1 \ddot K_{N,M}(\eta+\tau(\hat \eta -\eta))\,d\tau$ to the invertable matrix $V$ (Steps~8-9). The result then follows from Slutsky's Lemma and $-\sqrt{MN} {V(\eta)^{-1}}\dot K_{N,M}(\eta)\overset{\mathcal D}{\longrightarrow} \mathcal N (0,\Omega^r_\eta)$.

\emph{Step 1.}  We show that $K$
 is a contrast function in the sense that for each $\eta$ the function $\tilde \eta \mapsto K(\eta,\tilde \eta)$ attains its unique minimum in $\tilde \eta=\eta$.  Since $f^i_{\eta}(\cdot)$ is continuous it is sufficient to show that $(f^1_{\eta},f^2_{\eta})=(f^1_{\tilde \eta},f^2_{\tilde \eta})$ if and only if  $\eta= \tilde \eta$. Clearly, $(f^1_{\eta},f^2_{\eta})=(f^1_{\tilde \eta},f^2_{\tilde \eta})$ holds if and only if $\kappa = \tilde \kappa$ and $\sigma^2 \psi_{ \vt_2}( r_i)=\tilde \sigma^2 \psi_{\tilde \vt_2}( r_i)$ for $i=1,2$. Therefore, in order to prove identifyability, it is sufficient to show that $\vt_2\mapsto \psi_{\vt_2}(r_1)/\psi_{\vt_2}(r_2)$ is injective, which in turn is implied by strict monotonicity of $ H(r_1 z)/H(r_2 z)$ in $z>0$. We show that the corresponding derivative or, equivalently, the function $z\mapsto H'(r_1z)H(r_2z)r_1-H'(r_2z)H(r_1z)r_2$, is strictly negative for all $z>0$:
For $x>0$ define $p(x)=\int_x^\infty \e^{-z^2}\,dz$ and $q(x)=1-\e^{-x^2}$.
A simple calculation shows that
\begin{align*}
H'(r_1z)H(r_2z)r_1-H'(r_2z)H(r_1z)r_2=\frac{32}{\pi}r_1r_2 z \Big(p(r_1z)q(r_2z)r_1z-p(r_2z)q(r_1z)r_2z \Big)
\end{align*}
which is strictly negative if we can show that $p(b)q(a)b-p(a)q(b)a <0$ for all $0<a<b$.
Now, a substitution yields 
$p(x) = x \int_1^\infty \e^{-x^2t^2}\,dt$ and $q(x)=2x^2\int_0^1s\e^{-x^2s^2}\,ds$
and therefore, 
\begin{align*}
p(b) q(a)b -p(a)q(b)a
=2a^2b^2\int_0^1 \int_1^\infty  s\left(\e^{-b^2 t^2-a^2s^2}-\e^{-a^2 t^2-b^2s^2} \right)\,dt\,ds<0
\end{align*}
follows from negativity of the integrand.
\\
In the sequel we follow the series of arguments from Theorem 5.1 of \cite{Bibinger18}.
\\
\emph{Step 2.} $K$ is the contrast function associated with the process $K_{N,M}$ in the sense that $K_{N,M}(\tilde \eta)\overset{\P_\eta}\longrightarrow K(\eta,\tilde \eta),\, N,M\to \infty,$ for all $\tilde \eta \in H$: 
Recall from the proof of Theorem \ref{thm:CLT_double} that for $i,j,k,l\in \N$ we have 
\begin{align} 
\Cov(D_{ik},D_{jl})&=\sigma^2 \e^{-\kappa\frac{z_k+z_l}{2}}
\xi^\Delta_{i-j,k-l} +\O\left(\frac{\Delta^{3/2}}{(|i-j|+1)^{3/2}}\right), \label{eq:xi.ikDelta}\\
\xi^\Delta_{i,k}&=\O\left(\frac{\sqrt \Delta}{(|i|+1)(|k|+1)}\right)\label{eq:xi.ikDelta.bound}
\end{align}
and $\lim_{N,M\to \infty} \Delta^{-1/2}\xi^\Delta_{i-j,k-l} =A^r_{ik}=-\Lambda_{ik}(r/\sqrt{\vt_2})/\sqrt{\vt_2}$. Now, in terms of
$$r_{ik}(\eta) = D_{ik}^2/\sqrt \Delta - f^1_{\eta}(z_k),\qquad R_{k}(\eta)=\frac{1}{N}\sum_{i=0}^{N-1}r_{ik}(\eta)$$
we can write
\begin{align} 
K^1_{N,M}(\tilde\eta)
 =& \frac{1}{M}\sum_{k=0}^{M-1}\left( f_{\eta}^1(z_k)
- f_{\tilde \eta}^1\left(z_k\right)\right)^2 \notag\\
&+\frac{2}{M}\sum_{k=0}^{M-1}R_k(\eta)\left( f_{\eta}^1(z_k) - f_{\tilde\eta}^1\left(z_k\right)\right)
+\frac{1}{M}\sum_{k=0}^{M-1} R_k^2(\eta). \label{eq:K1_decomp}
\end{align}
Clearly, the first summand converges to $K^1(\eta,\tilde\eta)$. To prove that the other two summands are negligible, note that 
\begin{align*}
\E(r_{ik}r_{jl})&=\E \left((D_{ik}^2/\sqrt \Delta - \E(D_{ik}^2/\sqrt \Delta) +\O(\Delta))(D_{jl}^2/\sqrt \Delta - \E(D_{jl}^2/\sqrt \Delta) +\O(\Delta)) \right)\\
&= \frac{1}{\Delta}\Cov(D_{ik}^2,D_{jl}^2) +\O(\Delta^2)\\
&= \frac{2}{\Delta}\Cov(D_{ik},D_{jl})^2 +\O(\Delta^2)
=\O\left( \frac{1}{(|i-j|+1)^2(|k-l|+1)^2}\right) + \O(\Delta^2).
\end{align*}
By Markov's inequality and boundedness of $\phi(\cdot)=f_\eta^1(\cdot) -f_{\tilde \eta}^1(\cdot)$, we have for any $\eps>0$,
\begin{align*}
\P \left( \left|\frac{1}{M}\sum_{k=0}^{M-1}R_k \phi(z_k)\right|\geq \eps\right) &\leq \frac{1}{\eps^2M^2}\sum_{k,l=0}^{M-1}|\E \left(R_k R_l \right)\phi(z_k)\phi(z_l)| \lesssim \frac{1}{M^2}\sum_{k,l=0}^{M-1}|\E \left(R_k R_l \right)|\\& \leq \frac{1}{M^2N^2}\sum_{k,l=0}^{M-1}\sum_{i,j=0}^{N-1}|\E \left(r_{ik} r_{jl} \right)|=o(1),
\end{align*}
hence, the second summand in \eqref{eq:K1_decomp} converges to zero in probability. For the third summand the same conclusion holds since
\begin{align*}
\E \left(\frac{1}{M}\sum_{k=0}^{M-1} R_k^2 \right)=\frac{1}{M}\sum_{k=0}^{M-1} \E \left(R_k^2 \right)=\frac{1}{MN^2}\sum_{k=0}^{M-1} \sum_{i,j=0}^{N-1}\E \left(r_{ik} r_{jk}\right)=o(1)
\end{align*}
and $L^1$-convergence implies convergence in probability.  $K_{N,M}^2$ can be handled similarly by considering a decomposition into two sums of non-overlapping increments:
$$\bar R_k(\eta)=2\left(\frac{1}{2N}\sum_{\substack{i\leq N-1\\i\text{ even}}}\bar r_{ik}(\eta)+\frac{1}{2N}\sum_{\substack{i\leq N-1\\i\text{ odd}}}\bar r_{ik}(\eta)\right)$$
where $\bar r_{ik} = \bar D_{ik}^2/\sqrt{2 \Delta} - f_\eta^2(z_k)$.
\\
\emph{Step 3.} Consistency of $\hat \eta$ follows from uniform convergence in probability of the contrast process. Since $K_{N,M}$ and $K$ are continuous, this in turn follows from
$$\forall \eps>0:\quad\lim_{h\to 0} \limsup_{M,N \to \infty}\P_\eta\left(\sup_{|\eta_1-\eta_2|<h}|K_{N,M}(\eta_1) - K_{N,M}( \eta_2)|\geq \epsilon \right)=0:$$
By compactness of the parameter space, for each $a>0$ there exists $h>0$ such that $\Vert f^i_{\eta_1}-f^i_{\eta_2}\Vert_\infty,\Vert (f^i_{\eta_1})^2-(f^i_{\eta_2})^2\Vert_\infty\leq a$ for all ${|\eta_1-\eta_2|<h}$. Therefore, 
\begin{align*}
&|K^1_{N,M}(\eta_1)-K^1_{N,M}(\eta_2)|\\
&\qquad\leq \frac{2}{M}\sum_{k=0}^{M-1}\left(\frac{1}{N\sqrt \Delta}\sum_{i=0}^{N-1}D_{ik}^2\right)|f_{\eta_2}^1(z_k)-f_{\eta_1}^1(z_k)|
+  \frac{1}{M}\sum_{k=0}^{M-1}| f_{\eta_1}^1(z_k)^2-f_{\eta_2}^1(z_k)^2|\\
&\qquad\leq a \left( \frac{2}{M}\sum_{k=0}^{M-1}\left(\frac{1}{N\sqrt \Delta}\sum_{i=0}^{N-1}D_{ik}^2\right)+  1\right)
\end{align*}
and hence, 
\begin{align*}
 &\limsup_{M,N \to \infty}\P_\eta \left(\sup_{|\eta_1-\eta_2|<h}|K^1_{N,M}(\eta_1)-K^1_{N,M}(\eta_2)|\geq \eps \right)\\
 &\qquad\qquad\leq  \limsup_{M,N \to \infty} \frac{1}{\eps} \E \left(\sup_{|\eta_1-\eta_2|<h}|K^1_{N,M}(\eta_1)-K^1_{N,M}(\eta_2)| \right)\\
 &\qquad\qquad\leq  \limsup_{M,N \to \infty} \frac{a}{\eps} \E \left( \frac{2}{M}\sum_{k=0}^{M-1}\left(\frac{1}{N\sqrt \Delta}\sum_{i=0}^{N-1}D_{ik}^2\right)+  1 \right)
 \lesssim \frac{a}{\eps}.
\end{align*}
The same argument applies to $K^2_{N,M}$ and the result follows.
\\
\emph{Step 4.} Let $F_1,F_2\in C^1([0,1])$ and $(a_k)_{k\in \Z}$ be absolutely summable. Then we can write
\begin{align*}
\frac{1}{n}\sum_{k,l=0}^{n-1}a_{k-l}F_1(z_k)F_2(z_l)=&\frac{a_0}{n}\left(F_1(z_0)F_2(z_0)+\cdots +F_1(z_{n-1})F_2(z_{n-1})\right)\\
&+\frac{a_1}{n}\left(F_1(z_1)F_2(z_0)+\cdots +F_1(z_{n-1})F_2(z_{n-2})\right)\\
&+\frac{a_{-1}}{n}\left(F_1(z_0)F_2(z_1)+\cdots +F_1(z_{n-2})F_2(z_{n-1})\right)
+\cdots 
\end{align*} 
and, consequently, we have
$\frac{1}{n}\sum_{k,l=0}^{n-1}a_{k-l}F_1(z_k)F_2(z_l)\to \langle F_1,F_2 \rangle_b\cdot \sum_{k\in \Z}a_k,\,n\to \infty$, by dominated convergence.
\\
\emph{Step 5.} We show that the asymptotic covariance matrix of $\sqrt{NM}\dot K_{N,M}(\eta)$ is given by $U$: We have $\dot K_{N,M}(\eta)=\dot K^1_{N,M}(\eta)+\dot K^1_{N,M}(\eta)$ as well as
$$\dot K_{N,M}^1(\eta) = -\frac{2}{M}\sum_{k=0}^{M-1}\left(\frac{1}{N\sqrt \Delta}\sum_{i=0}^{N-1}D_{ik}^2 - f_\eta^1\left(z_k\right)\right) g^1_\eta(z_k) $$
and similarly for $\dot K_{N,M}^2(\eta)$.
From Isserlis' theorem, \eqref{eq:xi.ikDelta} and $\bar D_{ik} = D_{ik}+D_{(i+1)k}$ it follows that 
\begin{align*}
\Cov(D_{ik}^2,D_{jl}^2)&=2\left(\sigma^2 \e^{-\frac{z_k+z_l}{2}}\xi^\Delta_{i-j,k-l} +\O\left(\frac{\Delta^{3/2}}{(|i-j|+1)^{3/2}}\right)\right)^2,\\
\Cov(\bar D_{ik}^2,\bar D_{jl}^2)&=2\left(\sigma^2 \e^{-\frac{z_k+z_l}{2}}({2\xi^\Delta_{i-j,k-l}+\xi^\Delta_{i-j-1,k-l}+\xi^\Delta_{i-j+1,k-l}})+\O\left(\frac{\Delta^{3/2}}{(|i-j|+1)^{3/2}}\right)\right)^2,\\
\Cov( D_{ik}^2,\bar D_{jl}^2)&=2\left(\sigma^2 \e^{-\frac{z_k+z_l}{2}}({\xi^\Delta_{i-j,k-l}+\xi^\Delta_{i-j-1,k-l}}) +\O\left(\frac{\Delta^{3/2}}{(|i-j|+1)^{3/2}}\right)\right)^2.
\end{align*}
Now, for any $1\leq e,f\leq 3$, the first summand in the expansion 
\begin{align} 
\Cov((\dot K_{N,M})_e,(\dot K_{N,M})_f)=&\Cov((\dot K_{N,M}^1)_e,(\dot K_{N,M}^1)_f)+\Cov((\dot K_{N,M}^2)_e,(\dot K_{N,M}^2)_f) \nonumber \\
&+\Cov((\dot K_{N,M}^1)_e,(\dot K_{N,M}^2)_f)+\Cov((\dot K_{N,M}^2)_e,(\dot K_{N,M}^1)_f) \label{eq:U_parts}
\end{align}
is given by
\begin{align*} 
\Cov((\dot K_{N,M}^1)_e,(\dot K_{N,M}^1)_f)= \frac{4}{M^2 N^2 \Delta} \sum_{i,j=0}^{N-1} \sum_{k,l=0}^{M-1} \Cov(D^2_{ik},D^2_{jl}) \,(g^1_\eta)_e(z_k)\,(g^1_\eta)_f(z_l).
\end{align*}
Like in the proof of Theorem \ref{thm:CLT_double}, the covariances may be replaced by their asymptotic expressions due to dominated convergence. Further, using
$(h_\eta^i)_e(z) = \e^{-\kappa z}(g_\eta^i)_e(z)$ and Step 4, we have
\begin{align*}
MN\cdot\Cov((\dot K_{N,M}^1)_e,(\dot K_{N,M}^1)_f)\to 8\sigma^4 \sum_{i,k\in \Z}(A_{i,k}^r)^2 \cdot \langle (h^1_\eta)_e,(h^1_\eta)_f \rangle_b,\qquad M,N \to \infty.
\end{align*}
Analogously,
\begin{align*}
MN\cdot\Cov((\dot K_{N,M}^2)_e,(\dot K_{N,M}^2)_f)&\to 4\sigma^4 \sum_{i,k\in \Z}(B_{i,k}^r)^2 \cdot \langle (h^2_\eta)_e,(h^2_\eta)_f \rangle_b,\qquad M,N \to \infty,\\
MN\cdot\Cov((\dot K_{N,M}^1)_e,(\dot K_{N,M}^2)_f)&\to 4\sqrt{2}\sigma^4 \sum_{i,k\in \Z}(C_{i,k}^r)^2 \cdot \langle (h^1_\eta)_e,(h^2_\eta)_f \rangle_b,\qquad M,N \to \infty,
\end{align*}
and insertion into \eqref{eq:U_parts} yields the claimed asymptotic covariance matrix. 
\\
\emph{Step 6.} $U$ is strictly positive definite:
It is sufficient to show that $C_r<\sqrt{A_r B_r}$, then it follows for any $\alpha\in \R^3\setminus\{0\}$ and $H_\alpha^i = \sum_{j=1}^3 \alpha_j (h_\eta^i)_j,\,i=1,2$ that
\begin{align*}
\alpha^\top U \alpha&=4\sigma^4\left(2A_r \Vert H^1_\alpha\Vert_b^2 +B_r \Vert H^2_\alpha\Vert_b^2+2\sqrt{2}C_r\langle H^1_\alpha,H^2_\alpha \rangle_b \right)\\
&> 4\sigma^4\left(2A_r \Vert H^1_\alpha\Vert_b^2 +B_r \Vert H^2_\alpha\Vert_b^2+2\sqrt{2A_r B_r}\langle H^1_\alpha,H^2_\alpha \rangle_b \right)\\
&= 8 \sigma^4 \left\Vert \sqrt{ 2A_r} H_\alpha^1 +\sqrt B_r H_\alpha^2 \right\Vert_b^2 
\geq 0,
\end{align*}
where we may assume $\langle H^1_\alpha,H^2_\alpha \rangle_b<0$ since otherwise $\alpha^\top U\alpha>0$ follows immediately from the first equality. Now, consider $(A^r_{i,k})$ and $(B^r_{i,k})$  as elements in the Hilbert space $\ell^2$ of square summable sequences indexed by $\Z\times \Z$. Clearly, $A_r = \Vert (A^r_{i,k})\Vert^2_{\ell^2}$,  $B_r = \Vert (B^r_{i,k})\Vert^2_{\ell^2}$ and a direct calculation shows that $C_r = \langle (A^r_{i,k}),(B^r_{i,k}) \rangle _{\ell^2}$. Thus, by the Cauchy-Schwarz inequality we have $C_r \leq \sqrt{A_r B_r}$ and equality is ruled out by the fact that $(A^r_{i,k})$ and $(B^r_{i,k})$ are not linearly dependent.
\\
\emph{Step 7.} We show 
$\sqrt{NM}\dot K^1_{N,M}(\eta)\overset{\mathcal D}{\longrightarrow} \mathcal N (0,U)$ under  $\P_\eta$.
In view of the Cram\'{e}r-Wold device, we have to prove $\sqrt{NM}\alpha^\top \dot{K}_{N,M} \overset{\mathcal D}{\longrightarrow}\mathcal N(0,\alpha^\top U \alpha) $ for any $\alpha\in \R^3$. 
Let $s_{ik}$ and $Z_{ik}$ be given by the relation $s_{ik} Z_{ik}^2= -\frac{2\alpha^T \dot f_\eta ^1 (z_k)}{\sqrt{NM \Delta}}D_{ik}^2 $ where $s_{ik} \in \{-1,1\}$ is deterministic. Analogously, define $\bar s_{ik}$ and $\bar Z_{i,k}^2$. Then,
$\mathcal Z_{N,M}=(Z_{ik},\bar Z_{j,l})_{i,j,k,l}$ is a Gaussian vector and from Proposition \ref{prop:mean_double} it follows that
$$\sqrt{NM}\alpha^\top \dot K_{N,M}(\eta)=S_{N,M}-\E(S_{N,M})+o(1)$$ where $S_{N,M}=\sum_{i=0}^{N-1}\sum_{k=0}^{M-1}s_{ik} Z_{ik}^2+\sum_{i=0}^{N-1}\sum_{k=0}^{M-2}\bar s_{ik} \bar Z_{ik}^2$. From Steps 5 and 6 we can deduce that $\V\left(S_{N,M}\right)\to \alpha^\top U\alpha>0,\,N,M\to \infty$ and thus, in view of criterion \eqref{eq:lyapunov}, asymptotic normality follows if the absolute row sums of the covariance matrix of $\mathcal Z_{N,M}$ vanish uniformly. This in turn is a simple consequence of \eqref{eq:xi.ikDelta} and \eqref{eq:xi.ikDelta.bound}.
\\
\emph{Step 8.} In order to prove $\int_0^1 \ddot K_{N,M}(\eta+\tau(\hat \eta -\eta))\,d\tau \overset{\P_\eta}{\longrightarrow}V(\eta)$,  we show 
$\ddot K_{N,M}(\eta_{N,M})\overset{\P_\eta}{\longrightarrow}V(\eta)$ for any consistent estimator $\eta_{N,M}$ of $\eta$: We have
\begin{align*}
\ddot K_{N,M}^1(\eta) &= \frac{2}{M}\sum_{k=0}^{M-1}g_\eta^1(z_k) g_\eta^1(z_k)^\top 
-\frac{2}{M}\sum_{k=0}^{M-1}\left(\frac{1}{N\sqrt \Delta}\sum_{i=0}^{N-1}D_{ik}^2 - f_\eta^1\left(z_k\right)\right) \ddot f_\eta^1\left(z_k\right). 
\end{align*}
and analogously for $\ddot K_{N,M}^2$.
By using $\P_\eta(\eta_{N,M}\in H)\to 1$ and the uniform continuity of $f^i_\eta(z)$ and its derivatives in the parameter $(z,\eta)\in [0,1]\times H$, it is straightforward to show $\ddot K_{N,M}(\eta_{N,M})-\ddot K_{N,M}(\eta) \overset{\P_\eta}{\longrightarrow}0$. Now, write $V =2( V^1+V^2)$ where $V^i$ is the Gram matrix of the functions $\{(g_\eta^i)_1,\,(g_\eta^i)_2,\,(g_\eta^i)_3\}$ with respect to the inner product $\langle \cdot,\cdot \rangle_b$, i.e.~$V^i_{ef}=\langle (g_\eta^i)_e,(g_\eta^i)_f \rangle_b,\,1\leq e,f\leq 3$. Clearly, first summand  of $\ddot K^1_{N,M}(\eta)$
converges to $2V^1$ while the calculations of Step 2 show that the second summand converges to 0 in probability. The same reasoning holds for $\ddot K^2_{N,M}(\eta)$ and the result follows.
\\
\emph{Step 9.} $V$ is strictly positive definite: 
Being Gram matrices, $V^1$ and $V^2$ are positive semi-definite and consequently, the same holds for $V$. Clearly, the only way $V$ can be singular is if there exists $\alpha \in \R^3$ such that
$0=\alpha^\top V^i \alpha = \big\Vert\sum_{e=1}^3 \alpha_e (g_\eta^i)_e \big\Vert^2_{b}$ holds for both $i\in \{1,2\}$.
From the particular form of the functions $(g_\eta^i)_e$ it is apparent that this would imply that
$\alpha_1 \psi_{\vt_2}(r_i) + \alpha_2 \sigma^2
\frac{\partial\psi_{\vt_2}(r_i)}{\partial \vt_2}=\alpha_3 = 0$  for both $i\in\{1,2\}$, which is impossible.
\end{proof}

\begin{proof}[Proof of Proposition \ref{prop:LSestimator.1_rate}]
We have to prove
$$\forall \eps>0 \,\exists C>0 :\limsup_{N,M \to \infty}\P_\eta\left(\sqrt{M^3\wedge N^{3/2}}\Vert\hat \eta_{v,w} -\eta\Vert \geq C \right) \leq \eps.$$
Similar calculations as in Theorem \ref{thm:CLT_LS} show that Steps 1-3 and 8-9 of the corresponding proof remain valid. Consequently, we have the representation $-\dot {\mathcal{K}}_{N,M} (\eta) = V_{N,M}(\hat \eta_{v,w},\eta)(\hat  \eta_{v,w} -\eta)$, where $V_{N,M}(\tilde \eta, \eta )=\int_0^1 \ddot{\mathcal K}_{N,M}(\eta + \tau (\tilde \eta- \eta))\,d\tau$ as well as $V_{N,M}(\hat \eta_{v,w},\eta) \overset{\P_\eta}{\longrightarrow} V(\eta)$ where $V(\eta)$ is an invertible deterministic matrix. In particular, the set
$$A_{N,M}=\left\{ V_{N,M}(\hat \eta_{v,w},\eta) \text{ is invertible with } \Vert V_{N,M}(\hat \eta_{v,w},\eta)^{-1} \Vert_2 \leq \Vert V(\eta)^{-1} \Vert_2+1\right\}$$
satisfies $\P_\eta(A_{N,M}) \to 1$. Further, $\dot{\mathcal K}_{N,M}(\eta)$ can be written as an average of expressions of the type $\dot K_{N,M}$ from Theorem \ref{thm:CLT_LS} so that the calculations of Step 5 show together with the Cauchy-Schwarz  inequality that $\E_\eta\left(\Vert \dot{\mathcal K}_{N,M}(\eta)\Vert^2\right)=\mathcal O((M^3\wedge N^{3/2})^{-1})$.
Now,
\begin{align*}
\P_\eta \left(\sqrt{M^3\wedge N^{3/2}}\Vert\hat \eta_{v,w} -\eta\Vert\geq C\right) 
&\leq \P_\eta \left(\big\{\sqrt{M^3\wedge N^{3/2}}\Vert\hat \eta_{v,w} -\eta\Vert\geq C\big\}\cap A_{N,M}\right) + \P_\eta(A_{N,M}^{\mathrm c}).
\end{align*}
The second summand becomes arbitrarily small as $M,N\to \infty$. For the first summand, let $\gamma(\eta) = \Vert V(\eta)^{-1} \Vert_2+1$, then it follows from Markov's inequality that
\begin{align*}
&\P_\eta\left(\{\sqrt{M^3\wedge N^{3/2}}\Vert\hat \eta_{v,w} -\eta\Vert\geq C\}\cap A_{N,M}\right) \\
&= \P_\eta\left(\{\sqrt{M^3\wedge N^{3/2}}\Vert V_{N,M}(\hat \eta_{v,w},\eta)^{-1}\dot{\mathcal K}_{N,M}(\eta)\Vert\geq C\}\cap A_{N,M}\right)\\
&\leq \P_\eta\left(\{\sqrt{M^3\wedge N^{3/2}}\Vert\dot{\mathcal K}_{N,M}(\eta)\Vert\geq \frac{C}{\gamma(\eta)}\}\cap A_{N,M}\right)\\
&\leq \P_\eta\left(\sqrt{M^3\wedge N^{3/2}}\Vert \dot{\mathcal K}_{N,M}(\eta) \Vert\geq \frac{C}{\gamma(\eta)}\right)
\leq (M^3\wedge N^{3/2})\E_\eta(\Vert \dot{\mathcal K}_{N,M}(\eta)\Vert ^2 ) \frac{\gamma(\eta)^2}{C^2}
\lesssim \frac{1}{C^2}.\qedhere
\end{align*}
\end{proof}

\subsection{Proofs of the lower bounds}
Before we prove Theorem~\ref{thm:lower_bound_main}, we verify its ingredients Proposition~\ref{prop:FisherCoeff} and Proposition~\ref{prop:FisherCoeff_2}.
\begin{proof}[Proof of Proposition \ref{prop:FisherCoeff}]
By setting $a=k^2$, $\mu=\pi^{2}\vartheta_{2}$
and $\nu^{2}=\frac{\sigma^{2}}{\pi^{2}\vartheta_{2}}$ in Lemma \ref{lem:FisherOU} and using
independence of $(u_{\ell},\,\ell\in\N)$
we get the Fisher information matrix $I$ for the parameters $(\mu, \nu^2)$, namely 
\begin{align*}
I_{11} & =N\sum_{\ell=1}^{M}\frac{{\ell^{4}}\Delta^2(\e^{-4\mu\ell^{2}\Delta}+\e^{-2\mu\ell^{2}\Delta})}{(1-\e^{-2\mu\ell^{2}\Delta})^{2}}=N\sum_{\ell=1}^{M}g_{11}(\ell\sqrt{\Delta}),\quad 
&g_{11}(x):=\frac{x^{4}(\e^{-4\mu x^{2}}+\e^{-2\mu x^{2}})}{(1-\e^{-2\mu x^{2}})^{2}},\\
I_{12} & =N\sum_{\ell=1}^{M}\frac{\ell^{2}\Delta\e^{-2\mu\ell^{2}\Delta}}{\nu^{2}(1-\e^{-2\mu\ell^{2}\Delta})}=N\sum_{\ell=1}^{M}g_{12}(\ell\sqrt{\Delta}),\quad 
&g_{12}(x):=\frac{x^{2}\e^{-2\mu x^{2}}}{\nu^{2}(1-\e^{-2\mu x^{2}})},\qquad\\
I_{22} & =\frac{(N+1)M}{2\nu^{4}}.
\end{align*}
The Fisher information matrix $J=J_{M,N}$ for the parameters  $(\sigma^2,\rho^2)$ can be computed via the change of variables formula $J= A^\top IA$ where
$$A = \begin{pmatrix}\pi^2/\rho^2&-\pi^2\sigma^2/\rho^4\\0&1/\pi^2 \end{pmatrix}$$
is the Jacobian of the function transforming $(\sigma^2,\rho^2)$ to $(\mu,\nu^2)$. Hence, the diagonal entries of $J$ are given by $$J_{11}=\frac{\pi^4}{\rho^4}I_{11},\qquad J_{22}=\frac{\pi^4\sigma^4}{\rho^8}I_{11}-\frac{2\sigma^2}{\rho^4}I_{12}+\frac{1}{\pi^4}I_{22}.$$
If $M\sqrt{\Delta}$ is bounded away from $0$, then $I_{11}$ can be interpreted as a Riemann sum. We obtain
\[
J_{11}\eqsim I_{11}\eqsim N^{3/2}\int_{0}^{M\sqrt{\Delta}}g_{11}(x)\,dx\eqsim N^{3/2}.
\]
On the other hand, if $M\sqrt{\Delta}\to0$, it follows
from Lemma \ref{lem:RiemannTaylor} and $g_{11}(0)=\frac{1}{2\mu^{2}}=\frac{\rho^4}{2\pi^4\sigma^{4}},\,g_{12}(0)=\frac{1}{2\mu\nu^{2}}=\frac{1}{2\sigma^2}$
as well as $g_{11}'(0)=g_{12}'(0)=0$ that
\begin{align*}
I_{11} & =N^{3/2}(M\sqrt{\Delta}g_{11}(0)+\frac{M^{2}\Delta}{2}g_{11}'(0)+\mathcal{O}(M^{3}\Delta^{3/2}))=\frac{\rho^4}{2\pi^4\sigma^{4}}NM+\mathcal{O}(M^{3}),\\
I_{12} & =N^{3/2}((M\sqrt{\Delta}g_{12}(0)+\frac{M^{2}\Delta}{2}g_{12}'(0)+\mathcal{O}(M^{3}\Delta^{3/2})))=\frac{NM}{2\sigma^{2}}+\mathcal{O}(M^{3}),\\
I_{22} &= \frac{\pi^4}{2\rho^4} MN + \O(M).
\end{align*}
Therefore, the leading terms in $J_{22}$ cancel and consequently, $J_{22}=\O(M^3)$.
\end{proof}

\begin{proof}[Proof of Proposition~\ref{prop:FisherCoeff_2}]

For a discrete time, centered, stationary Gaussian process $(Z_j)_{j\in\Z}$ whose covariance function depends on an unknown parameter $\theta\in\R$ we denote the Fisher information of a sample $(Z_0,\ldots,Z_{n-1})$  with respect to $\theta$ by $I_n(Z)$. 
A particularly useful result to calculate $I_n(Z)$ for the above class of Gaussian processes is given by \citet{Whittle53}:
\begin{equation} \label{eq:Whittle}
\lim_{n\to\infty}\frac{1}{n}I_n(Z)
= \frac{1}{4\pi} \int_{-\pi}^\pi\left( \frac{\frac{\partial}{\partial \theta}\phi_\theta(\omega)}{\phi_\theta(\omega)}\right)^2\, d\omega,\qquad n\to \infty, 
\end{equation}
where 
$\phi(\omega)=\sum_{j\in \Z}\E[Z_0Z_j]\e^{-ij\omega}, \omega\in [-\pi,\pi],$
is the spectral density of $Z$. 

Setting $\theta=\pi^2 \vt_2$, \eqref{eq:Whittle} cannot be directly applied to the process $Z=\bar U_k$, for $1\le k\le M-1$, since $\bar U_k$ arises from high-frequency increments of the continuous time process $U_k$. In this case, the spectral density $\Phi_k^\Delta$ of $\bar U_k$ hinges on $\Delta=1/N$  and therefore, even for large $N$, $I_N(\bar U_k)/N$ is not necessarily close to the asymptotic Fisher information defined in \eqref{eq:Whittle}.

To circumvent this difficulty, consider the $N$-th order Fourier approximation to $\Phi_k^\Delta$:
\begin{equation} \label{eq:spetralDens_approx}
\Phi_k ^{N,\Delta}(\omega)=\sum _{j=1-N}^{N-1}\E[\bar U_k(0)\bar U_k(j)] \e^{-ij\omega}\geq 0,\qquad \omega\in [-\pi,\pi].
\end{equation}
Lemma~\ref{lem:spec_properties}(i) verifies that $\Phi_k^{N,\Delta}$ is positive. Therefore, there exists a stationary Gaussian process $Y_k=(Y_k(j))_{j\in \Z}$ with spectral density $\Phi_k^{N,\Delta}$. Clearly,
$$\left(Y_k(j),\ldots,Y_k(j+N-1))\right)\overset{\mathcal D}=\left(\bar U_k(0) ,\ldots,\bar U_k(N-1)\right),\qquad j\in \N_0,$$  and $\left(Y_k(j),\ldots,Y_k(j+N-1))\right)$ is independent of $\left(Y_k(h),\ldots,Y_k(h+N-1))\right)$ whenever $|j-h|>2N$. Consequently, it is possible to extract $L$ independent copies of $\left(\bar U_k(0) ,\ldots,\bar U_k(N-1)\right)$ from a sample $(Y_k(0),\ldots ,Y_k({2NL-1}))$ for any $L\in \N$. Now, using the fact that a statistic never has larger information than the data from which it is constructed (cf. \cite[Theorem~I.7.2]{IbragimovHasminskii1981})  yields
\begin{equation} \label{eq:FI_bound1}
L \cdot I_N(\bar U_k) \leq  I_{2NL}(Y_k).
\end{equation}  
For fixed $\Delta=1/N$ we can now apply Whittle's formula \eqref{eq:Whittle} for $L\to\infty$: For each $\eps>0$ we can choose $L\in \N$ such that 
\begin{equation} \label{eq:FI_bound2}
I_{2NL}(Y_k)\leq 2 NL(1+\eps) \mathscr{I}_k ,
\end{equation}  
where
$$\mathscr{I}^{N,\Delta }_k:=\frac{1}{4\pi}\int_{-\pi}^\pi S^2(\omega)\,d\omega,\qquad S:=\frac{\partial}{\partial \vt_2}\log \Phi_k^{N,\Delta}.$$
By combining \eqref{eq:FI_bound1} and \eqref{eq:FI_bound2} we get $I_N(\bar U_k)\leq 2N\mathscr I _k$. Proving below that uniformly in $k=0,\dots, M-1$
\begin{equation}\label{eq:proofFisher}
\mathscr I^{N,\Delta }_k\lesssim M^2\Delta \log \frac{1}{M^2\Delta},
\end{equation}
we obtain 
$I_N(\bar{U}_k)\lesssim M^2 \log \frac{1}{M^2 \Delta}$
and the results follows by independence of the processes $\bar U_1,\ldots,\bar U_{M-1}$.

In order to verify \eqref{eq:proofFisher}, we only have to consider the integral over $[0,\pi]$ by symmetry. From Lemma \ref{lem:spec_properties} we can deduce for $\omega\geq k^2 \Delta$
$$S(\omega) \lesssim 
\begin{cases}
\frac{M\sqrt \Delta}{\sqrt \omega},&\omega\geq M^2 \Delta\\
1,&\omega\in [k^2\Delta,M^2\Delta]
\end{cases}
\quad\text{implying}\quad
\int_{k^2 \Delta}^\pi S^2(\omega)\,d\omega \lesssim M^2 \Delta \log \frac{1}{M^2\Delta}.
$$
For $\omega\leq k^2 \Delta$, Lemma \ref{lem:spec_properties} gives
$S(\omega)\lesssim (\frac{\omega^2}{k^4\Delta^2}+k^2 \e^{-\theta k^2})/(\frac{\omega^2}{k^4\Delta^2}+ \e^{-\theta k^2}).$
Since
\begin{align*}
\int_0^1 \frac{d\omega}{(\omega^2+\e^{-\theta k^2})^2} \leq \int_0^{\e^{-\theta k^2/2}} \frac{1}{\e^{-2\theta k^2}}\,d\omega + \int_{\e^{-\theta k^2/2}}^1 \frac{1}{\omega^4}\,d\omega\lesssim \exp \left(\frac{3}{2}\theta k^2\right),
\end{align*}
a substitution yields
\begin{align*}
\int_0^{k^2\Delta}S^2(\omega)\,d\omega &\lesssim k^2\Delta\int_0^1 
\left(\frac{{\omega^2}+k^2 \e^{-\theta k^2}}{{\omega^2}+ \e^{-\theta k^2}}\right)^2\,d\omega
\lesssim M^2 \Delta.\qedhere
\end{align*}
\end{proof}

We can now conclude the main lower bound.

\begin{proof}[Proof of Theorem~\ref{thm:lower_bound_main}]

The proof of the lower bound relies on the fact that if $(P_\gamma)_{\gamma \in G}$ is a dominated family of distributions with a convex parameter space $G\subset \R$, then the Hellinger distance $\mathcal H$ can be bounded in terms of the Fisher Information $J$: Let $\nu$ be a dominating measure, $p(\cdot,\gamma)=dP_\gamma/d\nu$ and $g=\sqrt p$. 
Then, as shown in \cite[Theorem~I.7.6]{IbragimovHasminskii1981}, Jensen's inequality yields
\begin{align*}
\mathcal H^2(P_\gamma,P_{\gamma+h})&= \int (g(x,\gamma)-g(x,\gamma+h))^2\,\nu(dx) \leq h^2\int \int_0^1 \frac{\partial g}{\partial \gamma}(x,\gamma+sh)^2\,ds\,\nu(dx) \\
&=\frac{h^2}{4} \int_0^1 \int \left(\frac{\partial}{\partial \theta}\log p(x,\gamma+sh) \right)^2P_{\gamma+sh}(dx)\,ds =\frac{h^2}{4} \int_0^1 J(\gamma+sh)\,ds.
\end{align*}
Combining this bound of the Hellinger distance (in the setting of Theorem~\ref{thm:lower_bound_main}) with Theorem~2.2 by \citet{Tsybakov10}, it suffices that for each sampling regime there is a reparametrization $(\gamma_1,\gamma_2)$ of $(\sigma^2,\vt_2)$ such that the corresponding Fisher information satisfies $J_{M,N}(\gamma_2)\lesssim r_{M,N}^{-2}$ locally uniformly. Inspection of the proofs of Propositions \ref{prop:FisherCoeff} and \ref{prop:FisherCoeff_2}  shows that the bounds on the Fisher information are indeed locally uniform.

$(ii)$ \emph{Case $M/\sqrt N\gtrsim 1$.} For $L\in \N$ define the process $X^L$ via $X^L_t(y)=\sum_{\ell=1}^L u_\ell(t)e_\ell(y),\,t\geq0,\,y\in [0,1],$ and let $\mathcal{X}^L_{N,M}=\{X^L_{t_i}(y_k),\,i=0,\ldots,N-1,\,k=0,\ldots,M\}$ as well as $\mathcal{X}_{N,M}= \mathcal{X}^\infty_{N,M}$. Denoting the corresponding covariance matrices by $\Sigma^L_{N,M}$ and $\Sigma_{N,M}$ and using the result of \cite{Devroye19}, we can bound the total variation distance of the Gaussian distributions by $\mathrm{TV}(\Sigma_{N,M},\Sigma^L_{N,M})\leq \frac{3}{2}\Vert \Sigma_{N,M}^{-1}(\Sigma_{N,M}^{L}-\Sigma_{N,M}
)\Vert_F\leq \frac{3}{2}\Vert \Sigma_{N,M}^{-1}\Vert_F \Vert\Sigma_{N,M}^{L}-\Sigma_{N,M}\Vert_F$. Consequently, we can pick a sequence $L_{N,M} \to \infty$ such that $\mathcal{X}^{L_{N,M}}_{N,M}$ and $\mathcal{X}_{N,M}$ are statistically equivalent in the sense of Le Cam and it is sufficient to derive a lower bound for $\mathcal{X}^{L_{N,M}}_{N,M}$, or even $\{u_\ell(t_i),\,i\leq N,\ell\leq L_{N,M}\}$. Assuming $L_{N,M} \geq M$ without loss of generality, for this observation scheme Proposition \ref{prop:FisherCoeff} yields under the parametrization $(\sigma^2/\vt_2,\sigma^2)$:
\[
  J_{M,N}(\sigma^2)\lesssim N^{3/2}\wedge L_{N,M}^3=N^{3/2}=r_{N,M}^{-2}.
\]
\emph{Case $M/\sqrt N\to 0$.} For $b\in \Q \cap(0,1/2)$ write $b=p/q$ where $p\in \Z$ and $q \in \N$ such that
$y_k =\frac{pM+k(q-2p)}{q M},\,k \leq M,$
and consequently $\{y_k,\,k=0,\ldots,M\}$ is a subset of $\{z_k,\,k=1,\ldots, qM-1\}$ where $z_k=\frac{k}{qM}$. Now,  $ qM \sqrt\Delta   \to 0$ and since
$q^3M^3\log\left(\frac{1}{q^2M^2\Delta}\right)\lesssim M^3\log \left(\frac{1}{M^2\Delta}\right)$
Proposition~\ref{prop:FisherCoeff_2} implies under the parametrization $(\sigma^2/\sqrt{\vt_2},\vt_2)$:
\[
  J_{M,N}(\vt_2)\lesssim M^3\log(\frac{1}{M^2 \Delta})=r_{N,M}^{-2}.
\]

$(i)$ If $\min(M,N)$ remains finite and $M/\sqrt N \gtrsim 1$, then $N$ necessarily remains finite and the result follows from $(ii)$.  On the other hand, if $M/\sqrt N \to 0$, then $M$ must remain finite. Like in the proof of $(ii)$, extend the set of spatial locations to $\{z_k,\, k<qM\}$ and consider the corresponding processes $U_k,\,k=1,\ldots,qM-1$ from \eqref{eq:U_k_def}. A similar calculation as in the proof of Proposition \ref{prop:abs_cont} shows that for any $k<qM$, the laws of the independent continuous processes $\{U_k(t),\,t\leq 1\}$ are absolutely continuous for different parameter values $(\sigma^2,\vt_2)$ and $(\tilde \sigma^2,\tilde \vt_2)$ as long as $\sigma^2/\sqrt{\vt_2}=\tilde\sigma^2/\sqrt{\tilde\vt_2}$   and hence, consistent estimation of $(\sigma^2,\vt_2)$ based on continuous or discrete observations is impossible: Note that the continuous spectral density of $U_k$ is 
$f_k(u)=\frac{1}{2u^2}\sum_{\ell \in \mathcal I_k}h_{(\sigma^2,\vt_2)}\left(\frac{\ell}{\sqrt{ |u|}} \right),\,u\in \R$, where $h_{(\sigma^2,\vt_2)}$ is defined in the proof of Proposition \ref{prop:abs_cont}.
Now, a { Riemann sum midpoint approximation, cf. Lemma~\ref{lem:Riemann},} shows that 
\begin{align*}
f_k^+(u)&:=\frac{1}{2u^2} \sum_{\ell\geq 0}h_{(\sigma^2,\vt_2)}\Big(\frac{k+2M\ell}{\sqrt u} \Big) = \frac{1}{2u^2} \left( \frac{\sqrt u}{2M} \int_{(k-M)/\sqrt u}^\infty h_{(\sigma^2,\vt_2)}(z)\,dz+\O\Big( \frac{1}{\sqrt u} \Big) \right)\\
f_k^-(u)&:=\frac{1}{2u^2} \sum_{\ell\geq 0}h_{(\sigma^2,\vt_2)}\Big(\frac{2M-k+2M\ell}{\sqrt u} \Big) = \frac{1}{u^2} \left( \frac{\sqrt u}{4M} \int_{(M-k)/\sqrt u}^\infty h_{(\sigma^2,\vt_2)}(z)\,dz+\O\Big( \frac{1}{\sqrt u} \Big) \right).
\end{align*}
as $u\to \infty$. Since $h_{(\sigma^2,\vt_2)}$ is symmetric around 0 we obtain
\begin{align*}
f_k(u)=f_k^+(u)+f_k^-(u) =  \frac{1}{u^2} \left( \frac{\sqrt u}{2M} \int_0^\infty h_{(\sigma^2,\vt_2)}(z)\,dz+\O\left( \frac{1}{\sqrt u} \right) \right)
\end{align*}
from which equivalence follows as in Proposition \ref{prop:abs_cont}.
\end{proof}

\subsection{Proofs for Section 2}
\begin{proof}[{Proof of Proposition \ref{prop:Ito_representation}}]
Due to \eqref{eq:cov} and the trigonometric identity 
\begin{equation} \label{eq:trig_id}
\sin(\alpha)\sin(\beta) =\frac{1}{2}\left(\cos(\alpha-\beta)-\cos(\alpha+\beta)\right)
\end{equation}
we have 
\begin{align*}
\Cov(X_t(x),X_t(y))
&=\frac{\sigma^2}{2\pi^2 \vt_2}\e^{-\frac{\kappa}{2}(x+y)} \sum_{\ell\geq 1}\frac{1}{\ell^2+\Gamma/\pi^2}(\cos(\pi \ell (y-x))-\cos(\pi \ell (x+y))).
\end{align*} 
The claimed formulas now follow by inserting the closed expressions
\begin{equation} \label{eq:Fourier_expl}
\sum_{\ell\geq 1}\frac{1}{\ell^2+\beta}\cos(\pi \ell x)=
\begin{cases}
-\frac{\pi\cos(\pi \sqrt{| \beta|}(x-1))}{2\sqrt{ |\beta|} \sin(\pi \sqrt {|\beta|})}+\frac{1}{2|\beta|}, &-1<\beta < 0\\
\frac{\pi^2(x-1)^2}{4}-\frac{\pi^2}{12},&\beta = 0\\
\frac{\pi\cosh(\pi \sqrt \beta(x-1))}{2\sqrt \beta \sinh(\pi \sqrt \beta)}-\frac{1}{2\beta}, &\beta > 0 \\
\end{cases}
\end{equation} 
for  $x\in [0,1]$ and again applying \eqref{eq:trig_id} and
$\sinh(\alpha)\sinh(\beta)=\frac{1}{2}(\cosh(\alpha+\beta)-\cosh(\alpha -\beta)),$
respectively. To prove the second statement we use the ansatz $Z(x)=u(x)B({v(x)})$, $u,v$ positive and $v$
non-decreasing, which is the  general form of a Gaussian Markov process, cf.~\cite{Neveu68}. Comparison of covariance functions yields explicit expressions for $u$ and $v$.
Further, $u(x)B({v(x)})\stackrel{(d)}{=}u(x)\int_{0}^{x}\sqrt{v'(z)}\,dB({z})$ for $v(0)=0$ and the claimed semi-martingale representation follows from It\^o's formula.
\end{proof}

\begin{proof}[Proof of Proposition \ref{prop:abs_cont}]
The necessity of the conditions on the parameters follow from the fact that $(i)$ the parameter ${\sigma^2}/{\sqrt{\vt_2}}\e^{-\kappa x_0}$ may be consistently estimated using time increments, see  \cite{Bibinger18}, and $(ii)$ the parameters $\frac{\sigma^2}{\vt_2}$ and $\kappa$ may be consistently estimated by computing the quadratic variation of the process $x\mapsto X_t(x)$ on two different sub-intervals of $[0,1]$ in view of Proposition \ref{prop:Ito_representation}.

It remains to prove sufficiency of the conditions on the parameters: 

$(i)$ is a simple consequence of 
\cite[Proposition 1]{KoskiLoges85}: Set $\lambda_\ell= \vt_2(\pi^2 \ell^2 +\Gamma)$ and $\tilde \lambda_\ell= \vt_2(\pi^2 \ell^2 +\tilde \Gamma)$ where $\Gamma= \frac{\vt_1^2}{4\vt_2^2}-\frac{\vt_0}{\vt_2}$ and $\tilde\Gamma = \frac{\vt_1^2}{4\vt_2^2}-\frac{\tilde \vt_0}{\vt_2}$. Then, absolute continuity follows from $\sum_{\ell\geq 1}\frac{(\lambda_\ell-\tilde \lambda_\ell)^2}{\lambda_\ell}<\infty.$ Thanks to $(i)$ and due to the one to one correspondence between $\Gamma$ and $\vt_0$  we may assume $\Gamma = \tilde \Gamma=0$ for the remainder of the proof.

$(ii)$ follows from the fact that  $\Cov (X_{t_0} (x),X_{t_0}(y))$ only depends on $\left(\frac{\sigma^2}{{\vt_2}},\kappa\right)$ in view of the Gaussianity of $X$.

For $(iii)$ note that $t\mapsto X_t(x_0)$ is a stationary Gaussian process with covariance function
$$\rho (t)= \sigma^2\sum_{k\geq 1} \frac{\e^{-\lambda_k t}}{2 \lambda_k} e_k^2(x_0). $$  Let 
$$f_{(\sigma^2,\vt_2)} (u)= \frac{1}{2\pi} \int \e^{-iut}\rho(|t|)\,dt=\frac{1}{\pi} \int_0^\infty  \cos(ut)\rho(t)\,dt=\frac{\sigma^2}{2\pi}\sum_{\ell\geq 1} \frac{e_\ell^2(x_0)}{\lambda_\ell^2 +u^2}$$ be the spectral density of $t\mapsto X_t (x_0)$. By Theorem 17 and its preceding discussion in \cite{Ibragimov78} it suffices to show 
\begin{equation*} 
\exists r>1:\qquad \lim _{u\to \infty}u^r f_{(\sigma^2,\vt_2)} (u)\in (0,\infty)\qquad\text{and}\qquad
\frac{f_{(\sigma^2,\vt_2)} -f_{(\tilde\sigma^2,\tilde\vt_2)} }{f_{(\sigma^2,\vt_2)} } \in L^2(\R).
\end{equation*}
To prove these statements, we may assume $\kappa=0$ without loss of generality.
Set 
$h_{(\sigma^2,\vt_2)}(z)=\frac{\sigma^2}{\pi(\pi^4\vt_2^2z^4+1)},\, z\in \R.$
By Lemma \ref{lem:Riemann} (ii) we have for $u\to \infty$
\begin{align*}
f_{(\sigma^2,\vt_2)} (u)&=\frac{1}{u^2}\sum_{\ell\geq 1} h_{(\sigma^2,\vt_2)}\left(\frac{\ell}{\sqrt{u}}\right)\sin^2(\pi \ell x_0)
=\frac{1}{u^2}\left(\frac{\sqrt{u}}{2}\int_0^\infty h_{(\sigma^2,\vt_2)}(z)\,dz +\O\left(\frac{1}{\sqrt{u}}\right) \right),
\end{align*}
which proves the first condition. Now, if $\sigma^2/\sqrt{\vt_2}=\tilde \sigma^2/\sqrt{\tilde\vt_2}$ then clearly
$\int_0^\infty h_{(\sigma^2,\vt_2)}(z)\,dz=\int_0^\infty h_{(\tilde\sigma^2,\tilde \vt_2)}(z)\,dz$ and therefore, the second condition follows:
\[\frac{f_{(\sigma^2,\vt_2)} (u)-f_{(\tilde\sigma^2,\tilde\vt_2)} (u)}{f_{(\sigma^2,\vt_2)} (u)}=\O\left(\frac{1}{u}\right),\quad u \to \infty.\qedhere
\]
\end{proof}

\appendix

\section{Remaining proofs and auxiliary results}\label{sec:AuxProofs}

\subsection{Covariances of double increments}\label{sec:AuxDoubleIncr}
The following three lemmas are used to calculate the asymptotic variance of $\mathbb V$.  
Recall the definition of $\rsti{ik}$ from \eqref{eq:DTilde}.

\begin{lem} \label{lem:var_basic}
Let $b\in(0,1/2)$. For $J\geq 1$ define
\[
F_{J,\Delta}(z)=\sum_{\ell\geq1}\frac{2\e^{-\pi^{2}\vt_{2}J\ell^{2}\Delta}-\e^{-\pi^{2}\vt_{2}(J+1)\ell^{2}\Delta}-\e^{-\pi^{2}\vt_{2}(J-1)\ell^{2}\Delta}}{2\pi^{2}\vt_{2}\ell^{2}}\cos(\pi \ell z)
\]
and $F_{0,\Delta}=F_{\vt_2}(\cdot\,,\Delta)$. Then, for $J=|i-j|$,
\begin{align*}
\Cov(\rsti{ik},\rsti{jl})
=&- \sigma^{2}\e^{-\kappa\delta/2}\cdot\begin{cases}
2D_{\delta}F_{J,\Delta}(0) & l=k\\
D^2_{\delta}F_{J,\Delta}(y_l-y_{k+1}) & l>k
\end{cases}+\O\left(\frac{\sqrt{\Delta}\delta^{2}}{(J+1)^{3/2}}\right).
\end{align*}  
\end{lem}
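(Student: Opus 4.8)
The plan is to collapse the covariance into a single spectral series by combining the series representation \eqref{eq:SPDE_solution} with the independence of the coefficient processes, isolate a leading ``diagonal'' contribution that reassembles into the stated difference of $F_{J,\Delta}$, and bound the remaining contributions.

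First I would expand $\sti{ik}=\sum_{\ell\ge1}\big(u_\ell(t_{i+1})-u_\ell(t_i)\big)\big(e_\ell(y_{k+1})-e_\ell(y_k)\big)$ and likewise for $\sti{jl}$. Since the $(u_\ell)_{\ell\ge1}$ are independent and stationary with $\Cov(u_\ell(s),u_\ell(t))=\frac{\sigma^2}{2\lambda_\ell}\e^{-\lambda_\ell|t-s|}$, bilinearity collapses the double series into
$$\Cov(\rsti{ik},\rsti{jl})=\sum_{\ell\ge1}\tau_\ell^{(J)}\,\Big(\e^{\kappa y_k/2}(e_\ell(y_{k+1})-e_\ell(y_k))\Big)\Big(\e^{\kappa y_l/2}(e_\ell(y_{l+1})-e_\ell(y_l))\Big),$$
where $J=|i-j|$ and, for $J\ge1$, the temporal factor is $\tau_\ell^{(J)}=\frac{\sigma^2}{2\lambda_\ell}\big(2\e^{-\lambda_\ell J\Delta}-\e^{-\lambda_\ell(J+1)\Delta}-\e^{-\lambda_\ell(J-1)\Delta}\big)$; for $J=0$ the analogous computation produces $\frac{\sigma^2}{\lambda_\ell}(1-\e^{-\lambda_\ell\Delta})$, which is exactly why the case $F_{0,\Delta}=F_{\vt_2}(\cdot\,,\Delta)$ is singled out. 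As in Step~1 of the proof of Proposition~\ref{prop:mean_double}, a first-order Taylor argument lets me replace $\lambda_\ell=\vt_2(\pi^2\ell^2+\Gamma)$ by $\pi^2\vt_2\ell^2$ at the cost of an error of the required order; after this replacement $\tau_\ell^{(J)}$ is precisely $\sigma^2$ times the $\ell$-th Fourier coefficient of $F_{J,\Delta}$.

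Next I would insert the spatial identity. For $l>k$ the product of the two bracketed factors equals the right-hand side of \eqref{eq:eig_mult} with $x=y_k$, $y=y_l$ (recall $y_l-y_k=(l-k)\delta$, so $y_l-y_k-\delta=y_l-y_{k+1}$), while for $l=k$ it equals \eqref{eq:eig_single}. The decisive observation is that the first group in \eqref{eq:eig_mult}, namely $g(\delta)$ times $2\cos(\pi\ell(y_l-y_k))-\cos(\pi\ell(y_l-y_{k+1}))-\cos(\pi\ell(y_l-y_k+\delta))$ with $g(x)=\e^{-\kappa x/2}$, is exactly $-g(\delta)$ times the symmetric second difference at spacing $\delta$ of the cosine in the Fourier series of $F_{J,\Delta}$. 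Summing against $\tau_\ell^{(J)}$ therefore reassembles into $-\sigma^2 g(\delta)\,D^2_\delta F_{J,\Delta}(y_l-y_{k+1})$, which is the claimed main term since $g(\delta)=\e^{-\kappa\delta/2}$. In the case $l=k$ the first two summands of \eqref{eq:eig_single} combine to $2g(\delta)(1-\cos(\pi\ell\delta))$, summing to $-2\sigma^2 g(\delta)\,D_\delta F_{J,\Delta}(0)$, again matching the stated form.

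It remains to estimate the leftovers, of which the dominant one is the $\ell$-independent factor $g(2\delta)+g(0)-2g(\delta)=D^2_\delta g(0)=\O(\delta^2)$ multiplying $F_{J,\Delta}$ at the relevant argument: using that the coefficients of $F_{J,\Delta}$ have constant sign together with the Riemann-sum bound $|F_{J,\Delta}(z)|\le|F_{J,\Delta}(0)|\lesssim\sqrt\Delta\,(J+1)^{-3/2}$ (obtained from $2\e^{-Ja}-\e^{-(J+1)a}-\e^{-(J-1)a}=-4\e^{-Ja}\sinh^2(a/2)$ with $a=\pi^2\vt_2\ell^2\Delta$ and the substitution $x=\ell\sqrt\Delta$), this term is $\O(\sqrt\Delta\,\delta^2(J+1)^{-3/2})$. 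The reflected cosines at the arguments $y_k+y_l+\{0,\delta,2\delta\}$ assemble into $-\sigma^2 D^2_\delta\big(g(\cdot)F_{J,\Delta}(y_k+y_l+\cdot)\big)(0)$, which I would bound exactly as in Step~2 of the proof of Proposition~\ref{prop:mean_double} through uniform bounds on $F_{J,\Delta}$ and its first two $z$-derivatives for arguments in $[2b,2(1-b)]$, and which is of the same or smaller order; together with the $\lambda_\ell$-replacement error this yields the stated remainder. The main obstacle I anticipate is precisely securing these bounds \emph{uniformly in $J$} with the sharp decay $(J+1)^{-3/2}$: the rate hinges on the small-argument behaviour $\sim x^2\e^{-Jx^2}$ after the $\sinh^2$ cancellation, with the borderline case $J=1$ (where the $\sinh^2$ growth nearly offsets the exponential decay at large $\ell$) requiring the sharper Riemann-sum control, and one must invoke $b>0$ to keep $y_k+y_l$ away from the lattice singularities at $0$ and $2$ so that the trigonometric series in the reflected terms stay smooth.
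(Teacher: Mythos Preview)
Your proposal is correct and follows essentially the same route as the paper: expand the covariance via the spectral representation, replace $\lambda_\ell$ by $\pi^2\vt_2\ell^2$ via a Taylor argument, insert the product identities \eqref{eq:eig_mult}--\eqref{eq:eig_single} to extract the main $D_\delta F_{J,\Delta}$ and $D^2_\delta F_{J,\Delta}$ terms, and control the remainders through uniform bounds on $F_{J,\Delta}$ and its first two $z$-derivatives on $[2b,2(1-b)]$. The only point where the paper is a bit more explicit is the handling of the borderline case $J=1$ (both for the $\Gamma$-replacement error and for the bounds on $F_{1,\Delta}$ and its derivatives): rather than a direct Riemann-sum estimate it uses the algebraic identity $F_{1,\Delta}=\tfrac12 F_{\vt_2}(\cdot,2\Delta)-F_{\vt_2}(\cdot,\Delta)$ to reduce to the already-treated $J=0$ case, which you may find cleaner than chasing the endpoint of the $\sinh^2$ argument.
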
 

\begin{proof}
It immediately follows from the covariance structure 
$
\Cov(u_\ell(s),u_\ell(t))
=\frac{\sigma^2}{2\lambda_\ell}\e^{-\lambda_\ell|t-s|}$, $s,t\geq 0$,
of the coefficient processes that
\begin{align*}
&\Cov(\sti{ik},\,\sti{jl})\\
&\quad=\sigma^2 \sum_{\ell\geq 1}(e_\ell(y_{k+1})-e_\ell(y_k))(e_\ell(y_{l+1})-e_\ell(y_l))\cdot
\begin{cases}
\frac{1-\e^{-\lambda_{\ell}\Delta}}{\lambda_{\ell}},&J=0,\\
\frac{2\e^{-\lambda_{\ell}J\Delta}-\e^{-\lambda_{\ell}(J+1)\Delta}-\e^{-\lambda_{\ell}(J-1)\Delta}}{2\lambda_{\ell}},&J\geq 1. 
\end{cases}
\end{align*}
\textit{Step 1.} We show negligibilty of $\Gamma$. From the first step
of the last proof we already know that 
\begin{align*}
\Cov(\sti{ik},\sti{il})=\sigma^{2}\sum_{\ell\geq1}\frac{1-\e^{-\pi^{2}\vt_{2}\ell^{2}\Delta}}{\pi^{2}\vt_{2}\ell^{2}}(e_{\ell}(y_{k+1})-e_{\ell}(y_k))(e_{\ell}(y_{l+1})-e_{\ell}(y_l))+\O\left(\sqrt{\Delta}\delta^{2}\right).
\end{align*}
For $J\geq1$ we will show now that
\begin{multline*}
\Cov(\sti{ik},\sti{jl})=\sigma^{2}\sum_{\ell \geq1}\frac{2\e^{-\pi^{2}\vt_{2}\ell^{2}J\Delta}-\e^{-\pi^{2}\vt_{2}\ell^{2}(J+1)\Delta}-\e^{-\pi^{2}\vt_{2}\ell^{2}(J-1)\Delta}}{2\pi^{2}\vt_{2}\ell^{2}}\\ \cdot(e_{\ell}(y_{k+1})-e_{\ell}(y_{k}))(e_{\ell}(y_{l+1})-e_{\ell}(y_{l}))+\O\left(\frac{\sqrt{\Delta}\delta^{2}}{(J+1)^{3/2}}\right).
\end{multline*}
If $J=1$ this directly follows from the case $J=0$ since
\begin{equation} \label{eq:J0to1}
\frac{2\e^{-\lambda_{\ell}\Delta}-\e^{-2\lambda_{\ell}\Delta}-1}{2\lambda_{\ell}}=\frac{1-\e^{-2\lambda_{\ell}\Delta}}{2\lambda_{\ell}}-\frac{1-\e^{-\lambda_{\ell}\Delta}}{\lambda_{\ell}}.
\end{equation}
For $J\geq 2$ define
$g_{J}(x)=\frac{2\e^{-Jx}-\e^{-(J+1)x}-\e^{-(J-1)x}}{2x}.$
A first order Taylor approximation of $g_J$ gives
\begin{align*}
\Cov(\sti{ik},\sti{jl}) & =\Delta\sum_{\ell\geq1}g_{J}(\lambda_{\ell}\Delta)(e_{\ell}(y_{k+1})-e_{\ell}(y_{k}))(e_{\ell}(y_{l+1})-e_{\ell}(y_{l}))\\
 & =\Delta\sum_{\ell\geq1}g_{J}(\pi^{2}\vt_{2}\ell^{2}\Delta)(e_{\ell}(y_{k+1})-e_{\ell}(y_{k}))(e_{\ell}(y_{l+1})-e_{\ell}(y_{l}))+R,
\end{align*}
where 
$
R\lesssim\Delta^{2}\sum_{\ell\geq1}g_{J}'(\vt_{2}(\pi^{2}\ell^{2}+\xi_{\ell})\Delta)\ell^{2}\delta^{2}
$
for some $|\xi_\ell|\leq |\Gamma|$. It can be shown easily that
$
g_{J}'(x)\lesssim\e^{-(J-1)x/2}.
$
Therefore, for some $\omega >0$ and by regarding $R$ as a Riemann sum with lag $\sqrt{(J-1)\Delta}$,
\begin{align*}
R\lesssim \Delta^{2}\sum_{\ell\geq1}\e^{-\omega(J-1)\ell^2\Delta}\ell^{2}\delta^{2}
\lesssim \frac{\sqrt{\Delta}\delta^{2}}{(J-1)^{3/2}}\lesssim \frac{\sqrt{\Delta}\delta^{2}}{(J+1)^{3/2}}.
\end{align*}
\textit{Step 2.} By Step 1 we may assume $\lambda_\ell = \pi^2 \vt_2 \ell^2$. By \eqref{eq:eig_single} we have
\begin{align*}
\Cov(\rsti{ik},\rsti{jk})  =&-2\sigma^{2}g(\delta)D_{\delta}F_{J,\Delta}(0)+\sigma^{2}F_{J,\Delta}(0)D_{\delta}^2g(0) -\sigma^{2}D_{\delta}^2\left(g(\cdot)F_{J,\Delta}(2y_k+\cdot) \right)(0)
\end{align*}
and by \eqref{eq:eig_mult} for $l>k$ 
\begin{align*}
\Cov(\rsti{ik},\rsti{jl}) =&-\sigma^{2}g(\delta)D^2_{\delta}F_{J,\Delta}(y_l-y_{k+1})
+\sigma^{2}F_{J,\Delta}(y_l-y_{k})D_{\delta}^2g(0)\\&-\sigma^{2}D_{\delta}^2\left(g(\cdot)F_{J,\Delta}(y_l+y_k+\cdot)\right)(0).
\end{align*}
Hence, as in previous Lemmas it is sufficient to establish 
\[
F_{J,\Delta}(0),\,F_{J,\Delta}(z),\,F_{J,\Delta}'(z)\,F_{J,\Delta}''(z)\lesssim\frac{\sqrt{\Delta}}{J^{3/2}},\quad z \in[2b,2(1-b)].
\]
For $J=0$ this was already proven in Proposition \ref{prop:mean_double}.
The case $J=1$ follows from the case $J=0$ since \eqref{eq:J0to1} shows
\begin{align} \label{eq:FvsF1}
F_{1,\Delta}(z)=\frac{1}{2}F_{2\Delta}(z)-F_{\Delta}(z).
\end{align}
For $J\geq 2$ we have
\begin{align} 
2\e^{-\lambda_{\ell}J\Delta}-\e^{-\lambda_{\ell}(J+1)\Delta}-\e^{-\lambda_{\ell}(J-1)\Delta} 
&\lesssim\e^{-\lambda_{\ell}(J-1)\Delta}(\lambda_{\ell}\Delta)^{2}, \label{eq:exp_order2diff}
\end{align}
and therefore, again using a Riemann sum approximation with lag $\sqrt{(J-1)\Delta}$,
\begin{align*}
F_{J,\Delta}(z) & \lesssim F_{J,\Delta}(0)
 \lesssim\sum_{\ell\geq1}\lambda_{\ell}\Delta^{2}\e^{-\lambda_{\ell}(J-1)\Delta}
  =\O\left(\frac{\sqrt{\Delta}}{(J-1)^{3/2}}\right).
\end{align*}
The bound on the first derivative is provided by Lemma \ref{lem:trig_series_fundamental},
\begin{align*}
F_{J,\Delta}'(z) & \lesssim \sum_{\ell\geq1}\frac{2\e^{-\lambda_{\ell}J\Delta}-\e^{-\lambda_{\ell}(J+1)\Delta}-\e^{-\lambda_{\ell}(J-1)\Delta}}{2\lambda_{\ell}}\ell\sin(\pi \ell z)\\
 & \lesssim\sup_{\ell}\left|\frac{2\e^{-\lambda_{\ell}J\Delta}-\e^{-\lambda_{\ell}(J+1)\Delta}-\e^{-\lambda_{\ell}(J-1)\Delta}}{2\lambda_{\ell}}\ell\right|\frac{1}{z\wedge(2-z)}
  \lesssim\sup_{\ell}\left|\lambda_{\ell}\Delta^{2}\e^{-\lambda_{\ell}J\Delta}\ell\right|
  \lesssim\frac{\sqrt{\Delta}}{J^{3/2}}.
\end{align*}
Finally, to bound $F_{J,\Delta}''$ we define $h_J(z)=2\e^{-Jz^2}-\e^{-(J+1)z^2}-\e^{-(J-1)z^2}$. Clearly, $h_J(0)=0$ and
\begin{align*}
\frac{d}{dz}h_J(z)
&=-2(J-1)z\e^{-(J-1)z^2}\underbrace{(2\e^{-z^2}-\e^{-2z^2}-1)}_{\lesssim z^4}-\e^{-(J-1)z^2}\underbrace{(4z\e^{-z^2}-4z\e^{-2z^2})}_{\lesssim z^3}
\lesssim \frac{1}{J^{3/2}},
\end{align*}
i.e.~$\Vert h_J'\Vert_\infty\lesssim J^{-3/2}$. In view of Lemma \ref{lem:trig_integral} this shows
\begin{align*}
F_{J,\Delta}''(z) & \lesssim\sum_{\ell\geq1}\frac{2\e^{-\lambda_{\ell}J\Delta}-\e^{-\lambda_{\ell}(J+1)\Delta}-\e^{-\lambda_{\ell}(J-1)\Delta}}{2\lambda_{\ell}}\ell^{2}\cos(\pi \ell z)\\
&\lesssim \sum_{\ell\geq 1} h_J(\sqrt{ \lambda_\ell \Delta})\cos(\pi \ell z)  =\O\left(\frac{1}{\left(z\wedge(2-z)\right)^{2}}\frac{\sqrt{\Delta}}{J^{3/2}}\right).\qedhere
\end{align*}
\end{proof}


\begin{lem} \label{lem:var_dbyD0} 
For $J\in \N_0$ and $z\in(\text{0,2})$ it holds that
\begin{enumerate}[(i)]
\item
$F_{J,\Delta}(0)-F_{J,\Delta}(\delta)=\delta\frac{1}{2\vt_{2}}\mathbf{1}_{\{J=0\}}-\delta\frac{1}{4\vt_{2}}\mathbf{1}_{\{J=1\}}+\O\left(\frac{\delta^{2}}{(J+1)^{5/2}\sqrt{\Delta}}\right)$
\item
$2F_{J,\Delta}(z)-F_{J,\Delta}(z+\delta)-F_{J,\Delta}(z-\delta)=\O\left(\frac{\delta^{2}}{(J+1)^{2}}\left(\frac{1}{\sqrt{\Delta}}\wedge\frac{1}{z\wedge(2-z)}\right)\right).$
\end{enumerate}
\end{lem}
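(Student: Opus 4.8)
The plan is to write both quantities as explicit cosine series in the coefficients
$c_\ell^{(J)}:=\frac{2\e^{-\lambda_\ell J\Delta}-\e^{-\lambda_\ell(J+1)\Delta}-\e^{-\lambda_\ell(J-1)\Delta}}{2\lambda_\ell}$ for $J\geq1$ and $c_\ell^{(0)}:=\frac{1-\e^{-\lambda_\ell\Delta}}{\lambda_\ell}$, where $\lambda_\ell=\pi^2\vt_2\ell^2$ (by Step~1 of Lemma~\ref{lem:var_basic} we may ignore $\Gamma$). Then $F_{J,\Delta}(0)-F_{J,\Delta}(\delta)=\sum_{\ell\geq1}c_\ell^{(J)}\big(1-\cos(\pi\ell\delta)\big)$ and, by the product-to-sum identity, $2F_{J,\Delta}(z)-F_{J,\Delta}(z+\delta)-F_{J,\Delta}(z-\delta)=2\sum_{\ell\geq1}c_\ell^{(J)}\cos(\pi\ell z)\big(1-\cos(\pi\ell\delta)\big)$. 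Throughout I would use $1-\cos(\pi\ell\delta)\leq\frac{\pi^2}{2}(\ell\delta)^2$. The analysis splits according to whether $J\in\{0,1\}$ or $J\geq2$, since only in the latter range does \eqref{eq:exp_order2diff} supply the crucial gain $|c_\ell^{(J)}|\lesssim\lambda_\ell\Delta^2\e^{-\lambda_\ell(J-1)\Delta}$.

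For part $(i)$ the linear term comes only from $J\in\{0,1\}$. The case $J=0$ is exactly the computation carried out in Proposition~\ref{prop:mean_double} for the regime $\delta/\sqrt\Delta\to0$, which gives $F_{\vt_2}(0,\Delta)-F_{\vt_2}(\delta,\Delta)=\frac{\delta}{2\vt_2}+\O(\delta^2/\sqrt\Delta)$ via the $H_\Delta+G_\Delta$ decomposition (naive term-by-term differentiation of the cosine series is illegitimate, which is why the closed form of $H_\Delta$ is needed). For $J=1$ I would invoke \eqref{eq:FvsF1}, $F_{1,\Delta}=\tfrac12F_{\vt_2}(\cdot,2\Delta)-F_{\vt_2}(\cdot,\Delta)$, so that the difference equals $\tfrac12\cdot\frac{\delta}{2\vt_2}-\frac{\delta}{2\vt_2}+\O(\delta^2/\sqrt\Delta)=-\frac{\delta}{4\vt_2}+\O(\delta^2/\sqrt\Delta)$, the leading constant being independent of the time step. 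For $J\geq2$ there is no linear contribution: bounding $|F_{J,\Delta}(0)-F_{J,\Delta}(\delta)|\leq\frac{\pi^2}{2}\delta^2\sum_\ell\ell^2|c_\ell^{(J)}|$ and inserting \eqref{eq:exp_order2diff} reduces matters to the Riemann sum $\Delta^2\sum_\ell\ell^4\e^{-\omega\ell^2(J-1)\Delta}\lesssim(J-1)^{-5/2}\Delta^{-1/2}$ (Lemma~\ref{lem:Riemann}), which is $\O\big(\delta^2(J+1)^{-5/2}\Delta^{-1/2}\big)$.

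For part $(ii)$ I would establish the two estimates under the minimum separately. The $(z\wedge(2-z))^{-1}$ bound follows by applying Lemma~\ref{lem:trig_series_fundamental} to the cosine series with coefficients $b_\ell:=2c_\ell^{(J)}(1-\cos(\pi\ell\delta))$: for $J\geq2$ one has $|b_\ell|\lesssim\delta^2\ell^2\lambda_\ell\Delta^2\e^{-\lambda_\ell(J-1)\Delta}$, whose supremum over $\ell$ is $\lesssim\delta^2(J-1)^{-2}$, giving $\lesssim\delta^2(J+1)^{-2}(z\wedge(2-z))^{-1}$; the cases $J\in\{0,1\}$ are reduced via \eqref{eq:FvsF1} and the elementary bound $c_\ell^{(0)}\lesssim\min(\Delta,\lambda_\ell^{-1})$, which yields $\sup_\ell|b_\ell|\lesssim\delta^2$. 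The $\Delta^{-1/2}$ bound, for $J\geq1$, is precisely the absolute-summation estimate from part $(i)$ (it even produces the stronger $(J+1)^{-5/2}$), while for $J=0$ absolute summation diverges and I would instead use the tent-integral form $2F_{0,\Delta}(z)-F_{0,\Delta}(z+\delta)-F_{0,\Delta}(z-\delta)=-\int_{-\delta}^{\delta}(\delta-|s|)F_{0,\Delta}''(z+s)\,ds$ together with the uniform bound $\|F_{0,\Delta}''\|_\infty\lesssim\Delta^{-1/2}$. The latter is read off from the closed form of $H_\Delta''$ (bounded by $\sqrt\Delta\,a^2\coth a\lesssim\Delta^{-1/2}$ with $a=(\vt_2\Delta)^{-1/2}$) and the summand-wise bound $G_\Delta''\lesssim\sqrt\Delta$. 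Taking the minimum of the two estimates yields the claim.

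The hard part is organisational rather than analytic: the gain furnished by \eqref{eq:exp_order2diff} is genuine only for $J\geq2$, so the low-order cases $J\in\{0,1\}$ must be isolated and handled by the exact algebraic identities \eqref{eq:J0to1}--\eqref{eq:FvsF1} and by the $H_\Delta+G_\Delta$ decomposition of Proposition~\ref{prop:mean_double}; in particular the $J=0$ part of the $\Delta^{-1/2}$ bound cannot be obtained by absolute summation and forces reliance on the closed form of $F_{0,\Delta}''$. A second delicate point is matching the powers of $J+1$: the absolute-summation route produces $(J+1)^{-5/2}$ whereas the Dirichlet-kernel route through Lemma~\ref{lem:trig_series_fundamental} produces $(J+1)^{-2}$, and one must verify that the supremum-type hypothesis of that lemma is satisfied by the unimodal coefficients $b_\ell$.
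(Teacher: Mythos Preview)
Your plan for part $(i)$ and for the $\Delta^{-1/2}$ branch of part $(ii)$ matches the paper: both handle $J=0$ via the $H_\Delta+G_\Delta$ decomposition of Proposition~\ref{prop:mean_double}, reduce $J=1$ to $J=0$ through \eqref{eq:FvsF1}, and for $J\ge2$ bound $\|F_{J,\Delta}''\|_\infty$ by the Riemann sum $\sum_\ell\lambda_\ell^2\Delta^2\e^{-\lambda_\ell(J-1)\Delta}\lesssim (J-1)^{-5/2}\Delta^{-1/2}$.

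The gap is in the $(z\wedge(2-z))^{-1}$ branch of part $(ii)$. You propose to apply Lemma~\ref{lem:trig_series_fundamental} directly to the series $\sum_\ell b_\ell\cos(\pi\ell z)$ with $b_\ell=2c_\ell^{(J)}(1-\cos(\pi\ell\delta))$, but these coefficients are \emph{not} unimodal: the factor $1-\cos(\pi\ell\delta)$ vanishes near $\ell\in\tfrac{2}{\delta}\Z$, so $(b_\ell)$ has unboundedly many monotone sections and the constant $K_N$ in Lemma~\ref{lem:trig_series_fundamental} blows up. This is fatal for $J=0$ (where $c_\ell^{(0)}\sim\lambda_\ell^{-1}$ decays only polynomially and no tail truncation rescues the argument), and it remains an issue for $J\ge2$ unless you insert an explicit truncation at $\ell\eqsim C/\sqrt{(J-1)\Delta}$ and argue that on this finite range $\pi\ell\delta$ stays below $\pi$ in the regime $\delta/\sqrt\Delta\to0$. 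You flag this (``one must verify the supremum-type hypothesis''), but the claimed unimodality is false.

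The paper sidesteps the oscillation by separating the two variables: it bounds $F_{J,\Delta}''(z)$ pointwise and then uses $|D_\delta^2F_{J,\Delta}(z-\delta)|\le\delta^2\sup_{|s|\le\delta}|F_{J,\Delta}''(z+s)|$. The series for $F_{J,\Delta}''$ has coefficients $\propto 2\e^{-\lambda_\ell J\Delta}-\e^{-\lambda_\ell(J+1)\Delta}-\e^{-\lambda_\ell(J-1)\Delta}=-\e^{-\lambda_\ell(J-1)\Delta}(1-\e^{-\lambda_\ell\Delta})^2$, which for $J\ge2$ is genuinely unimodal in $\ell$, so Lemma~\ref{lem:trig_series_fundamental} applies and yields $\sup_\ell|\cdot|\lesssim(J+1)^{-2}$. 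For $J=0$ the paper again falls back on $H_\Delta+G_\Delta$: the closed form gives $H_\Delta''(z)\lesssim(z\wedge(2-z))^{-1}$, and $G_\Delta''$ has unimodal coefficients $\tfrac{1-\e^{-\lambda_\ell\Delta}(1+\lambda_\ell\Delta)}{1+\lambda_\ell\Delta}$. Since you already invoke this decomposition for the $\Delta^{-1/2}$ bound, the fix is simply to use it for the $(z\wedge(2-z))^{-1}$ bound as well.
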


\begin{proof}
$(i)$
The validity for the case $J=0$ follows from the proof of Proposition \ref{prop:mean_double} (ii), the case $J=1$ follows from \eqref{eq:FvsF1}. For $J\geq2$ we have by Taylor's theorem
\[
F_{J,\Delta}(0)-F_{J,\Delta}(\delta)=-\delta F_{J,\Delta}'(0)-\frac{\delta^{2}}{2}F_{J,\Delta}''(\xi)
\]
for some $\xi\in[0,\delta].$ Now, the claim is proved by inserting $F_{J,\Delta}'(0)=0$ and noting due to \eqref{eq:exp_order2diff}: 
\begin{align*}
\left\Vert F_{J,\Delta}''\right\Vert _{\infty} & \lesssim\sum_{\ell \geq1}\left(2\e^{-\lambda_{\ell }J\Delta}-\e^{-\lambda_{\ell}(J+1)\Delta}-\e^{-\lambda_{\ell}(J-1)\Delta}\right)
  \lesssim\sum_{\ell\geq1}\lambda_{\ell}^{2}\Delta^{2}\e^{-\lambda_{\ell}(J-1)\Delta} \lesssim\frac{1}{J^{5/2}\sqrt{\Delta}}.
\end{align*}
$(ii)$
As in previous Lemmas it suffices to establish 
\[
F_{J,\Delta}''(z)\lesssim \frac{1}{(J+1)^2}\left( \frac{1}{\sqrt{\Delta}}\wedge\frac{1}{z\wedge(2-z)}\right).
\]
For the case $J=0$ we employ the representation $F_{\Delta}=H_{\Delta}+G_{\Delta}$ from Proposition \ref{prop:mean_double} . The validity of the bound on $H_{\Delta}''$
follows from 
$H_\Delta''(z)
\lesssim \frac{1}{\sqrt \Delta}\wedge \frac{1}{z\wedge (2-z)}.$
The bound on $G_\Delta''(z)$ follows from $\left\Vert G_{\Delta}''\right\Vert _{\infty}\lesssim 1/\sqrt{\Delta}$
and 
$
G_{\Delta}''(z)  \lesssim\sup_{\ell}\left|\frac{1-\e^{-\lambda_\ell \Delta}(1+\lambda_\ell\Delta)}{1+\lambda_\ell\Delta}\right|\frac{1}{z\wedge(2-z)}
 \lesssim\frac{1}{z\wedge(2-z)},
$
see Lemma \ref{lem:trig_series_fundamental}. The case $J=1$ follows from the case $J=0$, see \eqref{eq:FvsF1}. For $J\geq2$ we proceed in the same way: In the proof of $(i)$ it was shown that $\Vert F_{\Delta,J}''\Vert _{\infty}\lesssim\frac{1}{J^{5/2}\sqrt{\Delta}} \lesssim\frac{1}{J^{2}\sqrt{\Delta}}$.
Finally, by Lemma \ref{lem:trig_series_fundamental},
\begin{align*}
F_{J,\Delta}''(z) & \lesssim\sup_{\ell}\left|2\e^{-\lambda_{\ell}J\Delta}-\e^{-\lambda_{\ell}(J+1)\Delta}-\e^{-\lambda_{\ell}(J-1)\Delta}\right|\frac{1}{z\wedge(2-z)}\\
 & \lesssim\sup_{\ell}\left|(\lambda_{\ell}\Delta)^{2}\e^{-\lambda_{\ell}(J-1)\Delta}\right|\frac{1}{z\wedge(2-z)} \lesssim\frac{1}{(J+1)^{2}}\frac{1}{z\wedge(2-z)}.\qedhere
\end{align*}
\end{proof}


\begin{lem} \label{lem:var_dbyDinf} 
For $J\in\N_{0}$ and $z\in(0,2)$ we have
\begin{enumerate}[(i)]
\item $\displaystyle
F_{J,\Delta}(0)-F_{J,\Delta}(\delta)=\begin{cases}
\frac{\sqrt{\Delta}}{\sqrt{\vt_{2}\pi}}+\O\left(\frac{\Delta^{3/2}}{\delta^{2}}\right), & J=0,\\
\frac{\sqrt{\Delta}}{2\sqrt{\pi\vt_{2}}}\left(\sqrt{J-1}+\sqrt{J+1}-2\sqrt{J}\right)+\O\left(\Delta^{3/2}+\frac{\Delta}{(J+1)\delta}\right), & J\geq1,
\end{cases}
$
\item $\displaystyle
2F_{J,\Delta}(\delta)-F_{J,\Delta}(0)-F_{J,\Delta}(2\delta)$\\ $\hspace*{1cm}\displaystyle
=\begin{cases}
-\frac{\sqrt{\Delta}}{\sqrt{\vt_{2}\pi}}+\O\left(\frac{\Delta^{3/2}}{\delta^{2}}\right), & J=0,\\
-\frac{\sqrt{\Delta}}{2\sqrt{\pi\vt_{2}}}\left(\sqrt{J-1}+\sqrt{J+1}-2\sqrt{J}\right)+\O\left(\Delta^{3/2}+\frac{\Delta}{(J+1)\delta}\right), & J\geq1,
\end{cases}
$
\item $\displaystyle
2F_{J,\Delta}(z)-F_{J,\Delta}(z-\delta)-F_{J,\Delta}(z+\delta)=\O\left(\frac{\Delta}{J+1}\frac{1}{z\wedge(2-z)}\right).
$
\end{enumerate}
\end{lem}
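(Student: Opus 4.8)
The plan is to derive all three statements from a single analysis of the cosine series $F_{J,\Delta}$, exploiting that we are in the regime $\delta/\sqrt\Delta\to\infty$. The starting observation is that, with $\lambda_\ell=\pi^2\vt_2\ell^2$ and
\[
g_J(z):=\frac{2\e^{-\pi^{2}\vt_{2}Jz^{2}}-\e^{-\pi^{2}\vt_{2}(J+1)z^{2}}-\e^{-\pi^{2}\vt_{2}(J-1)z^{2}}}{2\pi^{2}\vt_{2}z^{2}}\qquad(J\ge1),
\]
one has $F_{J,\Delta}(w)=\sqrt\Delta\sum_{\ell\ge1}\sqrt\Delta\,g_J(\ell\sqrt\Delta)\cos(\pi\ell w)$, i.e.\ a $\sqrt\Delta$-multiple of a Riemann sum of mesh $\sqrt\Delta$ (for $J=0$ the analogue is $\frac{1-\e^{-\pi^{2}\vt_{2}z^{2}}}{\pi^{2}\vt_{2}z^{2}}$, already treated in Proposition~\ref{prop:mean_double}). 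Since $\delta$ is much larger than the mesh, $\cos(\pi\ell\delta)$ oscillates on a scale finer than $\sqrt\Delta$, so its contribution to $F_{J,\Delta}(\delta)$ is negligible and $F_{J,\Delta}(0)-F_{J,\Delta}(\delta)\approx\sqrt\Delta\int_0^\infty g_J$. The value of this integral comes from the elementary scaling identity $\int_0^\infty\frac{1-\e^{-a\pi^2\vt_2 z^2}}{\pi^2\vt_2 z^2}\,dz=\sqrt a/\sqrt{\pi\vt_2}$ after rewriting the numerator of $g_J$ as the discrete second difference $(1-\e^{-a(J+1)})+(1-\e^{-a(J-1)})-2(1-\e^{-aJ})$ with $a=\pi^2\vt_2 z^2$; this gives $\int_0^\infty g_J=\frac{1}{2\sqrt{\pi\vt_2}}(\sqrt{J-1}+\sqrt{J+1}-2\sqrt J)$, the leading constant appearing in $(i)$ and $(ii)$.

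Next I would dispose of the small-$J$ cases and the algebra of $(ii)$. The case $J=0$ of $(i)$ is exactly the $r=\infty$ computation in Step~3 of the proof of Proposition~\ref{prop:mean_double}. The case $J=1$ follows from the identity \eqref{eq:FvsF1}, $F_{1,\Delta}=\tfrac12F_{0,2\Delta}-F_{0,\Delta}$, by applying the $J=0$ result at meshes $2\Delta$ and $\Delta$; the combination $\tfrac{\sqrt2}{2}-1$ reproduces $\tfrac12(\sqrt0+\sqrt2-2)$. Statement $(ii)$ is then purely algebraic once $(i)$ is available at both lags $\delta$ and $2\delta$: writing $2F_{J,\Delta}(\delta)-F_{J,\Delta}(0)-F_{J,\Delta}(2\delta)=-2\big(F_{J,\Delta}(0)-F_{J,\Delta}(\delta)\big)+\big(F_{J,\Delta}(0)-F_{J,\Delta}(2\delta)\big)$ and noting that the leading constant $\sqrt\Delta\int_0^\infty g_J$ is independent of the (large) lag, the two copies combine to $-\sqrt\Delta\int_0^\infty g_J$, the asserted leading term, with the errors simply adding.

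The genuinely new work is the case $J\ge2$ of $(i)$ together with the uniform error bounds. Here I would split $F_{J,\Delta}(0)-F_{J,\Delta}(\delta)$ into a smooth part $\sqrt\Delta\sum_\ell\sqrt\Delta g_J(\ell\sqrt\Delta)$ and an oscillatory remainder $\sqrt\Delta\sum_\ell\sqrt\Delta g_J(\ell\sqrt\Delta)\cos(\pi\ell\delta)$. Estimate \eqref{eq:exp_order2diff}, namely $2\e^{-\lambda_\ell J\Delta}-\e^{-\lambda_\ell(J+1)\Delta}-\e^{-\lambda_\ell(J-1)\Delta}\lesssim(\lambda_\ell\Delta)^2\e^{-\lambda_\ell(J-1)\Delta}$, shows that the coefficients are concentrated near $\ell\eqsim((J-1)\Delta)^{-1/2}$, so the effective Riemann mesh of the smooth part is $\sqrt{(J-1)\Delta}$ and Lemma~\ref{lem:Riemann} yields $\sqrt\Delta\int_0^\infty g_J+\O(\Delta^{3/2})$ uniformly in $J$. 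The oscillatory remainder contributes only to the error and is controlled by the trigonometric-sum estimate of Lemma~\ref{lem:trig_integral}, which quantifies the cancellation due to the rapid oscillation of $\cos(\pi\ell\delta)$ and produces the bound $\O(\Delta/((J+1)\delta))$. Part $(iii)$ is handled analogously: using $2\cos(\pi\ell z)-\cos(\pi\ell(z-\delta))-\cos(\pi\ell(z+\delta))=2\cos(\pi\ell z)(1-\cos(\pi\ell\delta))$, the left-hand side equals $2\sum_{\ell\ge1}c_\ell^{(J)}(1-\cos(\pi\ell\delta))\cos(\pi\ell z)$ with $c_\ell^{(J)}$ the coefficients of $F_{J,\Delta}$; from \eqref{eq:exp_order2diff} one gets $|c_\ell^{(J)}|\lesssim\lambda_\ell\Delta^2\e^{-\lambda_\ell(J-1)\Delta}$, hence $\sup_\ell|c_\ell^{(J)}|\lesssim\Delta/(J+1)$, and Lemma~\ref{lem:trig_series_fundamental} supplies the factor $1/(z\wedge(2-z))$, giving the claimed bound.

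The main obstacle is the uniform-in-$J$ control of the errors in the double limit $\Delta\to0$, $\delta/\sqrt\Delta\to\infty$: one must simultaneously track the Gaussian decay in $\ell\sqrt\Delta$, rescaled by $J$ (which moves the effective cutoff to $\ell\eqsim((J-1)\Delta)^{-1/2}$), and the oscillation scale $\delta$, and must verify that the remainder is of the exact order $\O(\Delta^{3/2}+\Delta/((J+1)\delta))$ needed to remain summable over $J$ when later inserted into the variance computation in the proof of Theorem~\ref{thm:CLT_double}. Keeping careful account of which of the two error contributions dominates, as the relative sizes of $J$, $\Delta$ and $\delta$ vary, is the delicate point of the argument.
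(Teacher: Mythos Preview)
Your overall strategy coincides with the paper's: compute $F_{J,\Delta}(0)$ via the Riemann-sum Lemma~\ref{lem:Riemann} (using $g_J(0)=0$) to obtain the leading term $\frac{\sqrt\Delta}{2\sqrt{\pi\vt_2}}(\sqrt{J-1}+\sqrt{J+1}-2\sqrt J)+\O(\Delta^{3/2})$, reduce $J=0$ to Proposition~\ref{prop:mean_double} and $J=1$ to \eqref{eq:FvsF1}, and obtain $(ii)$ algebraically from $(i)$. Two technical points, however, do not go through as written.

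First, for the oscillatory piece $F_{J,\Delta}(\delta)$ in $(i)$ you invoke Lemma~\ref{lem:trig_integral}; that lemma returns an error of size $\Delta^{3/2}\|g_J'\|_\infty/\delta^2$, which is \emph{not} the stated $\O(\Delta/((J+1)\delta))$ and is weaker when $J\Delta$ is of order one. The paper instead appeals to Lemma~\ref{lem:trig_series_fundamental}: since the raw coefficients $c_\ell^{(J)}=(2\e^{-\lambda_\ell J\Delta}-\e^{-\lambda_\ell(J+1)\Delta}-\e^{-\lambda_\ell(J-1)\Delta})/(2\lambda_\ell)$ have $\sup_\ell|c_\ell^{(J)}|\lesssim\Delta/(J+1)$ (via \eqref{eq:exp_order2diff}) and only finitely many monotone sections in $\ell$, one gets the pointwise bound $F_{J,\Delta}(z)=\O\big(\frac{\Delta}{(J+1)(z\wedge(2-z))}\big)$ for all $z\in(0,2)$. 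Evaluated at $z=\delta$ this gives exactly the error in $(i)$, and it simultaneously yields $(iii)$ by bounding each of the three terms $F_{J,\Delta}(z)$, $F_{J,\Delta}(z\pm\delta)$ separately.

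Second, your route to $(iii)$ via the identity $2\cos(\pi\ell z)-\cos(\pi\ell(z-\delta))-\cos(\pi\ell(z+\delta))=2(1-\cos(\pi\ell\delta))\cos(\pi\ell z)$ does not combine with Lemma~\ref{lem:trig_series_fundamental} as you claim: the modified coefficients $c_\ell^{(J)}(1-\cos(\pi\ell\delta))$ inherit $\sim M$ monotone sections from the factor $\cos(\pi\ell\delta)$, so the constant $K_N$ in that lemma blows up and the bound degrades by a factor $M$. The paper avoids this by not introducing the product identity at all and using the pointwise bound above instead.
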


\begin{proof}
$(iii)$ It is sufficient to show 
\begin{equation} \label{eq:FJDeltaz_bound}
F_{J,\Delta}(z)=\O\left(\frac{\Delta}{J+1}\frac{1}{z\wedge(2-z)}\right) 
\end{equation}
for $J\in\N_{0}$ and $z\in(0,2)$: If $J=0$, Lemma
\ref{lem:trig_series_fundamental} gives 
\begin{align*}
F_{\Delta}(z)
 & \lesssim\sup_{\ell\geq 1}\left|\frac{1-\e^{-\lambda_{\ell}\Delta}}{\lambda_{\ell}}\right|\frac{1}{z\wedge(2-z)}
 \lesssim\frac{\Delta}{z\wedge(2-z)}.
\end{align*}
By (\ref{eq:FvsF1}) this bound is also valid for $F_{1,\Delta}(z)$.
For $J\geq2$ the same method gives 
\begin{align*}
F_{J,\Delta}(z)
 & \lesssim\sup_{\ell\geq 1}\left|\frac{2\e^{-\lambda_{\ell}J\Delta}-\e^{-\lambda_{\ell}(J+1)\Delta}-\e^{-\lambda_{\ell}(J-1)\Delta}}{\lambda_{\ell}}\right|\frac{1}{z\wedge(2-z)}\\
 & \lesssim\sup_{\ell\geq 1}\left|\lambda_{\ell}\Delta^{2}\e^{-\lambda_{\ell}J\Delta}\right|\frac{1}{z\wedge(2-z)} \lesssim\frac{\Delta}{J}\frac{1}{z\wedge(2-z)},
\end{align*}
where we have used \eqref{eq:exp_order2diff}.\\
$(i)$ The case $J=0$ was already shown in the proof of Proposition \ref{prop:mean_double}. For $J\geq1$ we prove  
\[
F_{J,\Delta}(0)=\frac{\sqrt{\Delta}}{2\sqrt{\pi\vt_{2}}}\left(\sqrt{J-1}+\sqrt{J+1}-2\sqrt{J}\right)+\O(\Delta^{3/2}),
\]
then $(ii)$ follows in view of \eqref{eq:FJDeltaz_bound}: If $J=1$ we use (\ref{eq:integral_at_0}) to calculate
\begin{align*}
F_{1,\Delta}(0) & =\frac{1}{2}F_{2\Delta}(0)-F_{\Delta}(0)
  =\frac{1}{2}\left(\frac{\sqrt{2\Delta}}{\sqrt{\pi\vt_{2}}}-\Delta\right)-\left(\frac{\sqrt{\Delta}}{\sqrt{\pi\vt_{2}}}-\frac{\Delta}{2}\right)+\O\left(\Delta^{3/2}\right)\\
 & =\frac{\sqrt{\Delta}}{2\sqrt{\pi\vt_{2}}}\left(\sqrt{2}-2\right)+\O\left(\Delta^{3/2}\right).
\end{align*}
For $J\geq2$ define 
$
g_{J}(z)=\frac{2\e^{-J\pi^{2}\vt_{2}z^{2}}-\e^{-(J+1)\pi^{2}\vt_{2}z^{2}}-\e^{-(J-1)\pi^{2}\vt_{2}z^{2}}}{2\pi^{2}\vt_{2}z^{2}}.
$
Then,
\begin{align*}
\int_{0}^{\infty}g_{J}(z)\,dz 
 & =\frac{1}{2\sqrt{\pi\vt_{2}}}\left(\sqrt{J-1}+\sqrt{J+1}-2\sqrt{J}\right)
\end{align*}
and since $g_{J}(0)=0$ we have by Lemma \ref{lem:Riemann} 
\begin{align*}
F_{J,\Delta}(0) 
  {=\Delta\sum_{\ell\geq1}g_{J}(\ell\sqrt{\Delta})}
 & =\sqrt{\Delta}\int_{0}^{\infty}g_{J}(z)\,dz+\O\left(\Delta^{3/2}\right)\\
 & =\frac{\sqrt{\Delta}}{2\sqrt{\pi\vt_{2}}}\left(\sqrt{J-1}+\sqrt{J+1}-2\sqrt{J}\right)+\O(\Delta^{3/2}).
\end{align*}
Finally, $(ii)$ is a direct consequence of $(i)$.
\end{proof}

\subsection{Auxiliary results for the lower bounds}
For the proofs of Propositions~\ref{prop:FisherCoeff} and \ref{prop:FisherCoeff_2} we require the following auxiliary lemmas.
\begin{lem} \label{lem:FisherOU}
Consider a discrete sample $(u(i\Delta),\:i=0,\ldots,N)$ of an Ornstein-Uhlenbeck
process given by 
\[
du({t})=-a\mu u(t)\,dt+\nu\sqrt{\mu}\,dB_{t},\quad u(0)\sim\mathcal{N}\left(0,\,\frac{\nu^{2}}{2a}\right)
\]
and assume $\Delta=1/N$. Then, the Fisher information $I=I_N$ for
the parameter $(\mu,\nu^{2})$ is given by 
\[
I_{11}=\frac{a^{2}\Delta(\e^{-4\mu a \Delta}+\e^{-2\mu a \Delta})}{(1-\e^{-2\mu a\Delta})^{2}},\qquad I_{12}=\frac{a\e^{-2\mu a\Delta}}{\nu^{2}(1-\e^{-2\mu a\Delta})},\qquad I_{22}=\frac{N+1}{2\nu^{4}}.
\]
\end{lem}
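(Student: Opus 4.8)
The plan is to exploit the Markov property of the Ornstein--Uhlenbeck process, which factorizes the joint Gaussian density and reduces the problem to the Fisher informations of univariate Gaussians. Writing $u_i:=u(i\Delta)$, I would first observe that under the stationary initial law the sample $(u_0,\dots,u_N)$ is a stationary Gaussian Markov chain with
$$p(u_0,\dots,u_N)=p_0(u_0)\prod_{i=1}^{N}p(u_i\mid u_{i-1}),$$
where $p_0$ is the $\ND{0}{\gamma_0}$ density with $\gamma_0=\nu^2/(2a)$, and the transition density $p(u_i\mid u_{i-1})$ is $\ND{\beta u_{i-1}}{v}$ with autoregressive coefficient $\beta:=\e^{-a\mu\Delta}$ and conditional variance $v:=\gamma_0(1-\beta^2)=\frac{\nu^2}{2a}(1-\e^{-2a\mu\Delta})$. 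This is the standard transition law of an OU process, and one checks that the prescribed initial variance $\gamma_0$ is exactly the stationary one.

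Next I would decompose the score $\partial_\theta\log p$ into the initial score plus the conditional scores $\partial_\theta\log p(u_i\mid u_{i-1})$. Since each conditional score has conditional mean zero given $\filt{i-1}$, the cross terms vanish in expectation and the information is additive, $I=I^{(0)}+\sum_{i=1}^N I^{(i)}$. To each Gaussian factor I would apply the classical formula that an observation $\ND{m(\theta)}{\tau(\theta)}$ carries information $\tau^{-1}(\partial_j m)(\partial_k m)+\frac{1}{2}\tau^{-2}(\partial_j\tau)(\partial_k\tau)$. For the initial factor the mean is zero and only $\tau=\gamma_0$ depends on $\nu^2$, giving $I^{(0)}_{11}=I^{(0)}_{12}=0$ and $I^{(0)}_{22}=1/(2\nu^4)$. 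For the transition factors the mean $m=\beta u_{i-1}$ is random, so I would apply the formula conditionally on $u_{i-1}$ and then take expectations, using stationarity $\E[u_{i-1}^2]=\gamma_0$. With the chain rule $\partial_\mu\beta=-a\Delta\beta$, $\partial_\mu v=2a\Delta\gamma_0\beta^2$ and $\partial_{\nu^2}v=(1-\beta^2)/(2a)$ (while $\beta$ is $\nu^2$-free and $\gamma_0$ is $\mu$-free), simplification with $v=\gamma_0(1-\beta^2)$ yields
$$I^{(i)}_{11}=\frac{a^2\Delta^2\beta^2(1+\beta^2)}{(1-\beta^2)^2},\qquad I^{(i)}_{12}=\frac{a\Delta\beta^2}{\nu^2(1-\beta^2)},\qquad I^{(i)}_{22}=\frac{1}{2\nu^4}.$$
Summing the $N$ identical transition contributions, adding $I^{(0)}$, and substituting $N=1/\Delta$ together with $\beta^2=\e^{-2a\mu\Delta}$ then produces the three claimed entries.

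The computation is essentially mechanical, so I expect no genuine obstacle beyond careful bookkeeping. The one point that requires attention is the transition contribution to $I_{11}$: here the mean-derivative term (which must be averaged over the random conditioning variable $u_{i-1}$ and is responsible for the $\beta^2/(1-\beta^2)$ summand) combines with the variance-derivative term $2a^2\Delta^2\beta^4/(1-\beta^2)^2$, and one must verify that these collapse into $a^2\Delta^2\beta^2(1+\beta^2)/(1-\beta^2)^2$ before recognizing the numerator as $\e^{-2a\mu\Delta}+\e^{-4a\mu\Delta}$.
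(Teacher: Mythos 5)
Your proposal is correct and follows essentially the same route as the paper: both exploit the Markov property to factor the likelihood into the stationary initial density times $N$ Gaussian transition densities, use stationarity to reduce to one representative transition term, and then compute the Gaussian Fisher information of each factor explicitly (the paper via the expected negative Hessian, you via the equivalent conditional mean/variance formula, including the correct handling of the random mean $\beta u_{i-1}$ with $\E[u_{i-1}^2]=\nu^2/(2a)$ and the substitution $N\Delta=1$). Your entries match the lemma's exactly, so there is no gap.
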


\begin{proof}
By the Markov property of $u$, the log-likelihood function of $(\mu,\nu^{2})$
is given by 
\[
\ell(\mu,\nu^{2})=\log\pi_{0}(u(0))+\sum_{i=0}^{N-1}\log p_{\Delta}(u(i\Delta),u((i+1)\Delta)),
\]
 where 
$
p_{t}(x,y)  =\frac{1}{\sqrt{\pi\nu^{2}(1-\e^{-2\mu a t})/a}}\exp\left(-\frac{(y-x\e^{-\mu a t})^{2}}{\nu^{2}(1-\e^{-2\mu a t})/a}\right)
$
is the transition density of $u$ and $\pi_{0}$ is the density of
the initial distribution $\mathcal{N}\left(0,\,\frac{\nu^{2}}{2 a}\right)$.
By stationarity of $u$, the Fisher information simplifies to 
\begin{align*}
I & =-\E\left(D^{2}\ell(\mu,\nu^{2})\right)=-\E\left(D^{2}\log\pi_{0}(u(0))\right)-N\E\left(D^{2}\log p_{\Delta}(u(0),u(\Delta))\right),
\end{align*}
where we write $D^2g$ for the Hessian of a function $g$.
This expression can be computed explicitly, yielding the claimed formulas.
\end{proof}


\begin{lem} \label{lem:g_properties}
The function $g\colon[0,\infty)\times [-\pi,\pi]\to \R$ defined by $$g(x,\omega)=\frac{2x^2-\sinh(x^2)\cosh(x^2)+\cos(\omega)(\sinh(x^2)-2x^2\cosh(x^2))}{x^2(\cosh(x^2)-\cos(\omega))^2}(1-\cos(\omega))$$ satisfies
\begin{enumerate}[(i)]
\item $\int_0^\infty g(x,\omega)\,dx=0,$ for all $\omega \in [-\pi,\pi],$ 
\item $\sup_{|\omega|\leq \pi}\Vert \frac{\partial}{\partial x} g(\cdot,\omega) \Vert_{L^1}<\infty.$
\item  $|g(x,\omega)| \lesssim \frac{1+x^2}{x^4}\omega^2$ uniformly in $\omega\in [-\pi,\pi],\,x>0$.
\end{enumerate}
\end{lem}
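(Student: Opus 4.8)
The key observation for all three parts is that $g(\cdot,\omega)$ is an exact derivative in $x$. Abbreviating $c=\cos\omega$ and setting $f(y):=\sinh(y)/(\cosh(y)-c)$, the numerator of $g$ regroups as
\[
2y-\sinh(y)\cosh(y)+c\big(\sinh(y)-2y\cosh(y)\big)=2y\big(1-c\cosh(y)\big)-\sinh(y)\big(\cosh(y)-c\big),\quad y=x^2 .
\]
Dividing by $x^2(\cosh(x^2)-c)^2$ and using $f'(y)=(1-c\cosh y)/(\cosh y-c)^2$ gives the identity
\[
g(x,\omega)=(1-c)\Big(2f'(x^2)-\tfrac{f(x^2)}{x^2}\Big)=(1-c)\,\frac{d}{dx}\Big(\frac{f(x^2)}{x}\Big)=\partial_x G(x,\omega),
\]
where $G(x,\omega):=(1-c)\,\sinh(x^2)/\big(x(\cosh(x^2)-c)\big)$. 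Part $(i)$ then follows at once from the fundamental theorem of calculus: since $\sinh(x^2)/(\cosh(x^2)-c)\to1$ we have $G(x,\omega)\to0$ as $x\to\infty$, while near $0$ one has $G\eqsim(1-c)x/\big((1-c)+x^4/2\big)\to0$ (and $g\equiv0$ when $c=1$), so $\int_0^\infty g(x,\omega)\,dx=[G]_0^\infty=0$.

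For $(iii)$ I would use $1-c=1-\cos\omega\le\omega^2/2$ and reduce the claim to the $\omega$-uniform pointwise bound $\big|2f'(y)-f(y)/y\big|\lesssim(1+y)/y^2$ for the factor $P(y,\omega):=g(x,\omega)/(1-c)$. Writing $P=N/\big(y(\cosh y-c)^2\big)$ with $N$ the regrouped numerator above, I would treat $y\le1$ and $y\ge1$ separately. For small $y$ the expansions $\cosh y-c=(1-c)+y^2/2+\mathcal{O}(y^4)$ and $N=(1-c)y+\mathcal{O}(y^3)$ (with remainders uniform in $c$) reduce $P$ to $\tfrac{a}{(a+y^2/2)^2}$ up to controlled terms, where $a:=1-c$; the elementary inequality $\tfrac{a}{(a+b)^2}\le\tfrac{1}{4b}$ then yields $|P|\lesssim y^{-2}$. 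For large $y$ one has $\cosh y-c\eqsim e^{y}$ and $f'(y)\eqsim e^{-y}$, so $P\eqsim -1/y$ and $|P|\lesssim 1/y\le(1+y)/y^2$. Multiplying by $1-c\le\omega^2/2$ gives the stated estimate.

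Part $(ii)$ is where the real work lies. By the identity above, $\partial_x g=(1-c)\Psi''$ with $\Psi(x):=f(x^2)/x$, so the $L^1$ norm equals $(1-c)\int_0^\infty|\Psi''(x)|\,dx$, and the point is that although $\int|\Psi''|$ blows up like $1/(1-c)$ as $\omega\to0$, the prefactor $1-c$ exactly compensates. The plan is to split at $x=1$. On $[1,\infty)$ the denominator is bounded below by $\cosh1-1>0$ uniformly in $\omega$ and $\Psi,\Psi',\Psi''$ are dominated by $\omega$-independent, exponentially decaying expressions, so $\int_1^\infty|\Psi''|$ is uniformly bounded and $1-c\le2$ closes this piece. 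On $[0,1]$ I would rescale via $x=a^{1/4}s$ (equivalently the substitution $w=x^4$), which turns the leading behaviour $\Psi''\eqsim -4ax^3/(a+x^4/2)^3$ into a scale-invariant integrand and gives $\int_0^1|\Psi''|\,dx\lesssim 1/a=1/(1-c)$.

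The hard part will be making the near-origin estimate uniform in $\omega$: one must show that the higher-order Taylor remainders of $\sinh$ and $\cosh$ do not spoil the exact $a=1-c$ scaling of $\Psi''$ when $a\to0$ and $x\to0$ simultaneously along the critical regime $x^4\eqsim a$. Concretely, I would establish remainder bounds of the form $\cosh(x^2)-c=a+x^4/2+\mathcal{O}(x^8)$ together with matching expansions of $N$ whose remainders are controlled by the same powers of $(a+x^4)$, so that the rescaled integrand admits an integrable, $\omega$-uniform majorant; a dominated-convergence argument (or a direct estimate) then finishes the bound. Throughout, the two-sided comparison $1-\cos\omega\eqsim\omega^2$ on $[-\pi,\pi]$ is what lets me pass freely between the $a$- and $\omega$-scalings.
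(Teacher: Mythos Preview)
Your derivation of the primitive $G(x,\omega)=(1-\cos\omega)\sinh(x^2)/\big(x(\cosh(x^2)-\cos\omega)\big)$ and the identity $g=\partial_x G$ is correct and is exactly what the paper uses; part~$(i)$ and your treatment of~$(iii)$ agree with the paper's proof (the paper merely says ``direct calculations'' for~$(iii)$, and your two-regime split $y\le1$ versus $y\ge1$ is a reasonable way to carry this out).

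For part~$(ii)$, however, you are working much harder than the paper. The paper's argument is a one-line total-variation observation: since $g=G'$ and we need $\|\partial_x g\|_{L^1}=\|g'\|_{L^1}$, one writes $g(\cdot,\omega)$ as a finite sum of monotone functions (the paper's wording says $G$, but the intended object is $g=G'$), and then for each monotone piece $h$ one has $\|h'\|_{L^1}=|h(\infty)-h(0)|$. Your own formula $g=(1-c)\big(2f'(x^2)-f(x^2)/x^2\big)$ already suggests such a decomposition, and you have computed $g(0,\omega)=1$ and $g(\infty,\omega)=0$ implicitly; what remains is only to check that the number of monotone pieces and their endpoint values are bounded uniformly in $\omega$. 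This bypasses the rescaling $x=a^{1/4}s$, the Taylor remainders, and the entire ``hard part'' you flag.

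Your rescaling approach is not wrong, and the plan you outline would succeed if carried through carefully; but it trades a short structural argument for a page of uniform remainder estimates. If you prefer your route, the one point to watch is that the leading term of $\Psi''$ near the origin is $(-20abx^3+12b^2x^7)/(a+bx^4)^3$ with $b=1/2$, not $-4ax^3/(a+x^4/2)^3$ as you wrote; the scaling is the same, so the conclusion $\int_0^1|\Psi''|\,dx\lesssim 1/a$ survives, but the constant and the subleading $x^7$ term should be tracked when you control remainders in the critical regime $x^4\eqsim a$.
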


\begin{proof}
$(i)$ follows from the fact that 
$$G(x,\omega):=\frac{\sinh(x^2)(1-\cos(\omega))}{x(\cosh(x^2)-\cos(\omega))},\qquad x>0,\, \omega\in [-\pi,\pi],$$
is a primitive of $x\mapsto g(x,\omega)$ and since $\lim_{x\to \infty}G(x,\omega)=\lim_{x\to 0}G(x,\omega)=0$ for all $\omega \in [-\pi,\pi]$.

$(ii)$ can be shown by writing $G(\cdot,\omega)$ as a sum of monotonic functions and noting that for a monotonic function  $g:\R_+\to \R$ it holds that $\Vert g' \Vert_{L^1} = |\lim_{x\to \infty}g(x)-\lim_{x\to 0}g(x)|$.

Finally, $(iii)$ follows by direct calculations.
\end{proof}


\begin{lem} \label{lem:spec_properties}
Consider the parametrization of Proposition~\ref{prop:FisherCoeff_2}  and the function $ \Phi_k^{N,\Delta}$ from \eqref{eq:spetralDens_approx}.
If $M\sqrt{\Delta}\to 0$,  then
\begin{enumerate}[(i)]
\item $\Phi_k^{N,\Delta}(\omega)>0$ for all $\omega \in [-\pi,\pi],$
\item 
\begin{subnumcases}{ \Phi_k^{N,\Delta}(\omega)\gtrsim}
\frac{\sqrt\Delta }{M }\sqrt{ |\omega|},&$|\omega| \geq M^2 \Delta$, \label{eq:spec.lowerbound.1}\\
\Delta,&$k^2 \Delta\leq |\omega|\leq  M^2 \Delta$, \label{eq:spec.lowerbound.2} \\
\frac{\omega^2}{k^4\Delta}+\Delta \e ^{-\vt_2 k^2},&$|\omega|\leq k^2 \Delta$, \label{eq:spec.lowerbound.3}
\end{subnumcases}
\item 
\begin{subnumcases}{\frac{\partial}{\partial \vt_2} \Phi_k^{N,\Delta}(\omega)\lesssim}
\Delta,&$\omega\in [-\pi,\pi]$, \label{eq:spec.upperbound.1}\\
\frac{\omega^2}{k^4\Delta}+\Delta k^2\e ^{-\vt_2 k^2},&$|\omega|\leq k^2 \Delta$.\label{eq:spec.upperbound.2}
\end{subnumcases}
\end{enumerate}
\end{lem}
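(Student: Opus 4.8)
The plan is to start from an explicit formula for the spectral density of a single coefficient‑increment process and then pass to the function $G$ of Lemma~\ref{lem:g_properties}. Since $U_k=\sum_{\ell\in\mathcal I_k^+}u_\ell-\sum_{\ell\in\mathcal I_k^-}u_\ell$ with independent stationary Ornstein--Uhlenbeck modes satisfying $\Cov(u_\ell(s),u_\ell(t))=\frac{\sigma^2}{2\lambda_\ell}\e^{-\lambda_\ell|t-s|}$, the signs cancel in every covariance and the autocovariances of $\bar U_k$ split over $\ell\in\mathcal I_k$. Writing $\gamma_\ell=\e^{-\lambda_\ell\Delta}$, a direct computation gives $\E[\bar U_k(0)\bar U_k(j)]=-\sum_{\ell\in\mathcal I_k}\frac{\sigma^2}{2\lambda_\ell}(1-\gamma_\ell)^2\gamma_\ell^{|j|-1}$ for $j\neq0$, and summing the geometric series produces the single‑mode spectral density $\phi_\ell(\omega)=\frac{\sigma^2\Delta\sinh(\lambda_\ell\Delta)(1-\cos\omega)}{\lambda_\ell\Delta(\cosh(\lambda_\ell\Delta)-\cos\omega)}$. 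With the substitution $x_\ell=\pi\ell\sqrt{\vt_2\Delta}$, so that $\lambda_\ell\Delta=x_\ell^2$, this is exactly $\phi_\ell=\sigma^2\Delta\,G(x_\ell,\omega)/x_\ell$. I would then record the truncation explicitly: $\Phi_k^{N,\Delta}=\sum_{\ell\in\mathcal I_k}\phi_\ell^N$ with $\phi_\ell^N=\phi_\ell+E_\ell$ and $E_\ell(\omega)=\frac{\sigma^2}{\lambda_\ell}(1-\gamma_\ell)^2\gamma_\ell^{N-1}\Re\frac{\e^{\i N\omega}}{1-\gamma_\ell\e^{\i\omega}}$, noting the two facts $|E_\ell(\omega)|\le E_\ell(0)=\frac{\sigma^2}{\lambda_\ell}(1-\gamma_\ell)\gamma_\ell^{N-1}$ and $\gamma_\ell^{N-1}=\e^{-\lambda_\ell(1-\Delta)}\eqsim\e^{-\pi^2\vt_2\ell^2}$.

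For part~(i) I would bound the truncated cosine sum crudely by $\sum_{j=1}^{N-1}\gamma_\ell^{j-1}\cos(j\omega)\le\sum_{j\ge1}\gamma_\ell^{j-1}=(1-\gamma_\ell)^{-1}$, so that $\phi_\ell^N=\frac{\sigma^2}{\lambda_\ell}\big[(1-\gamma_\ell)-(1-\gamma_\ell)^2\sum_{j=1}^{N-1}\gamma_\ell^{j-1}\cos(j\omega)\big]>0$ for every $\ell$, whence positivity of $\Phi_k^{N,\Delta}$. For part~(ii), positivity lets me lower bound by any subset of modes via $\phi_\ell^N\ge\phi_\ell-E_\ell(0)$. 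The closed form shows $\phi_\ell(\omega)\eqsim\sigma^2\Delta$ exactly when $\lambda_\ell\Delta\lesssim|\omega|$ and $\phi_\ell(\omega)\eqsim\sigma^2\Delta\,\omega^2(\lambda_\ell\Delta)^{-2}$ when $\lambda_\ell\Delta\gtrsim|\omega|$. In the regime $|\omega|\ge M^2\Delta$ there are $\gtrsim M^{-1}\sqrt{|\omega|/\Delta}$ indices $\ell\in\mathcal I_k$ with $\lambda_\ell\Delta\lesssim|\omega|$, each contributing $\eqsim\sigma^2\Delta$, which sums to the claimed $\sqrt\Delta\sqrt{|\omega|}/M$ (the truncation is negligible there because the relevant $\ell$ are large); for $k^2\Delta\le|\omega|\le M^2\Delta$ the single mode $\ell=k$ already gives $\phi_k\eqsim\sigma^2\Delta\gg E_k(0)$; and for $|\omega|\le k^2\Delta$ the mode $\ell=k$ contributes $\phi_k\eqsim\omega^2/(k^4\Delta)$, while the value at the origin $\Phi_k^{N,\Delta}(0)\ge\phi_k^N(0)=E_k(0)\eqsim\Delta\e^{-\pi^2\vt_2k^2}$ furnishes the additive floor $\Delta\e^{-\vt_2k^2}$.

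The derivative bound~(iii) is where the reparametrization $(\sigma_0^2,\vt_2)$, $\sigma^2=\sigma_0^2\sqrt{\vt_2}$, does the essential work. Holding $\sigma_0^2$ fixed and using the identity $x\cdot\big(G(x,\omega)/x\big)=G(x,\omega)$ together with $\partial_{\vt_2}x_\ell=x_\ell/(2\vt_2)$, the chain rule collapses to $\partial_{\vt_2}\phi_\ell=\frac{\sigma^2\Delta}{2\vt_2}\,\partial_xG(x_\ell,\omega)=\frac{\sigma^2\Delta}{2\vt_2}\,g(x_\ell,\omega)$, so that the contributions proportional to $\Phi_k^\Delta$ itself cancel and $\partial_{\vt_2}\Phi_k^\Delta=\frac{\sigma^2\Delta}{2\vt_2}\sum_{\ell\in\mathcal I_k}g(x_\ell,\omega)$. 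Because $\int_0^\infty g(\cdot,\omega)\,dx=0$ and $g(\cdot,\omega)$ has uniformly bounded variation by Lemma~\ref{lem:g_properties}(i)--(ii), the Riemann sum over each of the two arithmetic progressions forming $\mathcal I_k$ is $O(1)$ uniformly in $\omega$, giving $\partial_{\vt_2}\Phi_k^\Delta\lesssim\Delta$; adding $\partial_{\vt_2}\sum_\ell E_\ell\lesssim\Delta\,k^2\e^{-\vt_2k^2}\lesssim\Delta$ (the $j$‑weights from the tail bring down a factor $\lambda_\ell\eqsim k^2$) yields \eqref{eq:spec.upperbound.1}. For \eqref{eq:spec.upperbound.2} on $|\omega|\le k^2\Delta$ I would instead insert the pointwise estimate $|g(x,\omega)|\lesssim(1+x^2)x^{-4}\omega^2$ from Lemma~\ref{lem:g_properties}(iii); the sum is then dominated by $\ell=k$ and produces $\lesssim\omega^2/(k^4\Delta)$, again up to the truncation contribution $\Delta k^2\e^{-\vt_2k^2}$.

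The main obstacle is the truncation correction $E_\ell$. Its tail factor $\Re\,\e^{\i N\omega}/(1-\gamma_\ell\e^{\i\omega})$ genuinely oscillates once $N\omega\gtrsim1$, so $E_\ell(\omega)$ can change sign, and the crude bound $|E_\ell|\le E_\ell(0)$ is too lossy away from the origin. The delicate task is to show that $E_\ell$ is negligible against the main terms everywhere except in a shrinking neighbourhood of $\omega=0$, where it must simultaneously supply the exact additive floor $\Delta\e^{-\vt_2k^2}$ in the lower bound and the matching $\Delta k^2\e^{-\vt_2k^2}$ term in the derivative bound, without disturbing the clean Riemann‑sum cancellation underlying (iii). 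Setting up that cancellation correctly—it relies on $\int g=0$, which is the whole point of the reparametrization—before doing any estimation is the structural key to the argument.
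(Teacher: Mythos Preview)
Your outline is close to the paper's argument—the decomposition $\phi_\ell^N=\phi_\ell+E_\ell$, per-mode positivity, Riemann sums over the arithmetic progressions in $\mathcal I_k$, and the identity $\partial_{\vt_2}\phi_\ell=\frac{\sigma^2\Delta}{2\vt_2}g(x_\ell,\omega)$ coming from the reparametrization are exactly the paper's ingredients. Part~(iii) in particular is the paper's proof verbatim.

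Two places need repair. In~(i), extending the cosine sum to infinity only yields $\phi_\ell^N\ge0$, not strict positivity; you must keep the finite sum $\sum_{j=1}^{N-1}\gamma_\ell^{j-1}=(1-\gamma_\ell^{N-1})/(1-\gamma_\ell)$, which gives $\phi_\ell^N(\omega)\ge\phi_\ell^N(0)=\frac{\sigma^2}{\lambda_\ell}(1-\gamma_\ell)\gamma_\ell^{N-1}>0$. In~(ii), regime $|\omega|\ge M^2\Delta$, the justification ``truncation negligible because the relevant $\ell$ are large'' is wrong: $\ell=k$ is always in the active set and $E_k(0)\eqsim\Delta\e^{-\theta k^2}$ is of the \emph{same} order as the target lower bound at the boundary $|\omega|=M^2\Delta$, so the crude estimate $|E_\ell|\le E_\ell(0)$ does not separate the two cleanly. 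The paper instead bounds the \emph{total} tail $R_N=\sum_\ell E_\ell$ by exploiting that for $|\omega|\ge M^2\Delta$ one has $|\e^{-\theta\ell^2\Delta}\cos((N-1)\omega)-\cos(N\omega)|\lesssim\ell^2\omega$, which after inserting into the explicit form of $E_\ell$ produces $R_N(\omega)\lesssim(\Delta/\omega)\Phi(\omega)\le M^{-2}\Phi(\omega)$; this makes the truncation genuinely lower order.

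For regimes $|\omega|\le M^2\Delta$ in~(ii), your plan (drop to $\phi_k^N$ by positivity, then combine $\phi_k-E_k(0)$ with the floor $\phi_k^N(0)$) is workable, though the constants require care near the crossover $\omega^2/(k^4\Delta)\eqsim\Delta\e^{-\theta k^2}$. The paper avoids this by proving the sharper inequality $|\varrho_k^N(\omega)-\varrho_k^N(0)|\le(1-\delta)\varphi_k(\omega)$ on a neighbourhood of the origin via the explicit trigonometric identity $\cos((N-1)\omega)-\cos(N\omega)=2\sin\frac{(2N-1)\omega}{2}\sin\frac\omega2$, which directly gives $\phi_k^N(\omega)\ge\varrho_k^N(0)+\delta\varphi_k(\omega)$ and hence both pieces of the lower bound simultaneously.
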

\begin{proof}
Without loss of generality let $\theta=\pi^2 \vt_2$ and $\sigma_0^2=\pi^2$. We denote the covariance function of $\bar U_k$ by $\rho_k:\Z \to \R$ and write $ \Phi_k^N$ instead of $ \Phi_k^{N,\Delta}$, i.e. 
$ \Phi_k^N(\omega) = \sum_{j = 1-N}^{N-1} \rho_k(j)\e^{-ij\omega },\, \omega \in [-\pi,\pi].$
$(i)$ Let $r_k$ be the covariance function of the process  $(U_k(t_0),U_k(t_1),\ldots)$, i.e. $$r_k(j)=\sum_{\ell \in \mathcal I_k}\frac{\e^{-\theta\ell^2|j|\Delta}}{2\sqrt{\theta}\ell ^2},\qquad j \in \Z,$$ where $\mathcal{I}_k=\mathcal{I}_k^+\cup\mathcal{I}_k^-$. Note that $r_k$ and $\rho_k$ are related by $\rho_k(j) = 2r_k(j)-r_k(j-1)-r_k(j+1)$, $j\in \Z,$ which is a second order difference. Since $x\mapsto \e^{-x}$ has a positive second derivative, it follows that $\rho_k(j)<0$  if $j\neq 0$. On the other hand, for $j=0$ we have $\rho_k(0)=\V(\bar U_k(t_0))>0$ and therefore,
\begin{align*}
\Phi_k^N(\omega) &= \rho_k(0)+2\sum_{j=1}^{N-1}\rho_k(j)\cos(j\omega)
\geq  \rho_k(0)+2\sum_{j=1}^{N-1}\rho_k(j)
=2(r_k(N-1)-r_k(N))
>0.
\end{align*}
To treat $(ii)$ and $(iii)$ we calculate 
\begin{align*}
\Phi_k^N(\omega) = \sum_{j=1-N}^{N-1}\rho_k(j)\e^{-ij\omega}
&= 2(1-\cos(\omega))\sum_{j=2-N}^{N-2}r_k(j)\e^{-ij\omega}+4r_k(N-1)\cos((N-1)\omega) \\
&\qquad -2r_k(N)\cos((N-1)\omega)-2r_k(N-1)\cos((N-2)\omega).
\end{align*}
From $\sum_{j=0}^{J-1} z^j = \frac{1-z^J}{1-z}$ for $z\in \C\setminus\{1\}$ it follows that 
\begin{align*}
\mathrm{\sum_{j=1-J}^{J-1}}\mathrm{e}^{-\theta\ell^{2}|j|\Delta}\mathrm{e}^{-ij\omega} 
 & =\frac{1-\mathrm{e}^{-2\theta\ell^{2}\Delta}+2\mathrm{e}^{-(J+1)\theta\ell^{2}\Delta}\cos((J-1)\omega)-2\mathrm{e}^{-J\theta\ell^{2}\Delta}\cos(J\omega)}{1+\mathrm{e}^{-2\theta\ell^{2}\Delta}-2\mathrm{e}^{-\theta\ell^{2}\Delta}\cos(\omega)}\\
 & =\frac{\sinh(\theta\ell^{2}\Delta)+\mathrm{e}^{-J\theta\ell^{2}\Delta}\cos((J-1)\omega)-\mathrm{e}^{-(J-1)\theta\ell^{2}\Delta}\cos(J\omega)}{\cosh(\theta\ell^{2}\Delta)-\cos(\omega)}
\end{align*}
for $J\geq 1$ and by elementary manipulations  we can pass to the representation
$\Phi_k^N = \Phi + R_N,$
where
\begin{align*}
\Phi(\omega)&=(1-\cos(\omega))\sum_{\ell \in \mathcal{I}_k}\frac{1}{\sqrt \theta \ell ^2 }\frac{\sinh(\theta\ell^{2}\Delta)}{\cosh(\theta\ell^{2}\Delta)-\cos(\omega)},\\
R_N(\omega)
&=\sum_{\ell \in \mathcal I_k}(1-\cosh(\theta \ell ^2 \Delta))\frac{\e^{-\theta\ell^2(N-1)\Delta}}{\sqrt \theta \ell^2} \frac{\e^{-\theta\ell^2 \Delta}\cos((N-1)\omega)-\cos(N\omega)}{\cosh(\theta \ell ^2 \Delta)-\cos(\omega)}.
\end{align*}
Note that we have suppressed the dependence on $k$ for ease of notation. We remark that 
$\Phi(\omega)=\sum_{j\in \Z}\rho_k(j)\e^{-ij\omega},\,\omega\in [-\pi,\pi],$
is the spectral density of the process $(\bar U_k(j))_{j\geq0}$.\\
$(ii)$ To prove \eqref{eq:spec.lowerbound.1} we note that for $\omega \geq M^2 \Delta$ we have 
\begin{align*}
&\Big|\e^{-\theta\ell^2 \Delta}\cos((N-1)\omega)-\cos(N\omega)\Big|\\
&=\Big|(\e^{-\theta\ell^2 \Delta}-1)\cos((N-1)\omega)+\cos((N-1)\omega)-\cos(N\omega)\Big|\lesssim\ell^2 \Delta +\omega
\lesssim \ell^2 \omega.
\end{align*}
Consequently,
\begin{align*}
R_N(\omega)
& \lesssim \sum_{\ell \in \mathcal I_k}\ell^2 \Delta \sinh(\theta \ell^2 \Delta)\frac{\e^{-\theta\ell^2(N-1)\Delta}}{\sqrt \theta \ell^2} \frac{\ell^2\omega}{\cosh(\theta \ell ^2 \Delta)-\cos(\omega)}\\
&\lesssim \frac{\Delta}{\omega}  \sum_{\ell \in \mathcal I_k} \frac{\sinh(\theta \ell^2 \Delta)}{\ell^2(\cosh(\theta \ell ^2 \Delta)-\cos(\omega))}(1-\cos(\omega))\lesssim \frac{1}{M^2} \Phi(\omega)
\end{align*}
and hence, $R_N$ is negligible compared to $\Phi$. In order to compute an asymptotic expression for $\Phi$, set
$$h(x,\omega)=\frac{\sinh(\theta x^2)(1-\cos(\omega))}{x^2(\cosh(\theta x^2)-\cos(\omega))},\quad x>0, \,\omega \in [-\pi,\pi].$$
We have $\frac{\partial h}{\partial x}\leq 0$ and therefore,
$\left\Vert\frac{\partial}{\partial x}h(\cdot,\omega)\right\Vert_{L^1}=h(0,\omega)-\lim_{x\to \infty}h(x,\omega)=\theta$ is uniformly bounded in $\omega$. Thus, using the mean value theorem and a  Riemann sum approximation with mesh size $M\sqrt \Delta $ for $\frac{\partial}{\partial x}h(\cdot,\omega)$, we obtain
\begin{equation*} 
\Phi(\omega)\eqsim \Delta\sum_{\ell \in \mathcal I_k}h(\ell \sqrt \Delta,\omega)
= \Delta \sum_{\ell =1}^\infty h(2\ell M \sqrt \Delta,\omega)+\mathcal O(\Delta).
\end{equation*}
Further, since 
\begin{equation} \label{eq:Riemann_firstorder}
\Big|\eps \sum_{\ell\geq 1}f(\ell\eps)-\int_0^\infty f(x)\,dx\Big|\leq \eps \Vert f'\Vert_{L^1}
\end{equation}
 for any function $f\in C^1[0,\infty)$,
we get
$
\Phi(\omega)\eqsim \frac{\sqrt \Delta}{M}\int_0^\infty h(x,\omega)\,dx + \mathcal{O}(\Delta).
$
Finally, due to
\begin{equation} \label{eq:sum_equiv_max}
a+b \eqsim \max(a,b), \qquad a,b>0,
\end{equation}
we have
$(\cosh(\theta \omega x^2)-\cos(\omega))\eqsim \max \left(\cosh(\theta \omega x^2)-1,1-\cos(\omega) \right)$
and consequently,
$$h(\sqrt \omega x,\omega)=\frac{\sinh(\theta \omega x^2)(1-\cos(\omega))}{\omega x^2(\cosh(\theta \omega x^2)-\cos(\omega))}\gtrsim \frac{\sinh(\theta \omega x^2)}{\omega x^2} \gtrsim 1,\qquad x\leq \theta^{-1/2}.$$
Therefore,
$$\int_0^\infty h(x,\omega)\,dx =\sqrt \omega \int_0^\infty h(\sqrt \omega x,\omega)\,dx\gtrsim \sqrt \omega,$$
finishing the proof of \eqref{eq:spec.lowerbound.1}.\\
To prove \eqref{eq:spec.lowerbound.2} and \eqref{eq:spec.lowerbound.3}, let us write 
$\Phi = \sum_{\ell \in \mathcal I_k}\varphi_\ell$ and $R_N=\sum_{\ell \in \mathcal I_k}\varrho_\ell^N.$
Since the argument in the proof of $(i)$ was on a summand-wise level, also each of the functions $\varphi_\ell+\varrho_\ell^N$ is positive, $\ell \in \N$.  Therefore, we can bound $\Phi_k^N$ from below with the first summand,
$$\Phi_k^N \geq \varphi_k+\varrho_k^N =\varrho_k^N(0)+\varphi_k+\left(\varrho_k^N-\varrho_k^N(0)\right).$$ We show that there exists an environment $U$ around zero and some $\delta \in (0,1)$ such that 
\begin{equation} \label{eq:PsiminPsi0}
|\varrho_k^N(\omega)-\varrho_k^N(0)|\leq  (1-\delta)\varphi_k(\omega),\qquad \omega \in  U:
\end{equation}
A simple calculation yields
\begin{align*}
\varrho_k^N(\omega)-\varrho_k^N(0)
&= \mathrm{e}^{-(N-1)\theta k^{2}\Delta}\frac{(\cos((N-1)\omega))-\cos(N\omega))(1-\cosh(\theta k^{2}\Delta))}{\sqrt{\theta} k^{2}(\cosh(\theta  k^{2}\Delta)-\cos(\omega))}\\
&\qquad +
\mathrm{e}^{-(N-1)\theta k^{2}\Delta}\frac{(1-\mathrm{e}^{-\theta k^{2}\Delta})(1-\cos((N-1)\omega)))(1-\cosh(\theta k^{2}\Delta))}{\sqrt{\theta} k^{2}(\cosh(\theta k^{2}\Delta)-\cos(\omega))}\\
&\qquad +
\mathrm{e}^{-(N-1)\theta k^{2}\Delta}\frac{\left(\mathrm{e}^{-\theta k^{2}\Delta}-1\right)(1-\cos(\omega))}{\sqrt{\theta} k^{2}(\cosh(\theta k^{2}\Delta)-\cos(\omega))} .
\end{align*}
Since $\cos(x)-\cos(y)=-2\sin \frac{x+y}{2}\sin \frac{x-y}{2},\,x,y\in\R,$ we have
\begin{align} \label{eq:bound_Nomega2}
\Big|\cos((N-1)\omega)-\cos(N\omega)\Big|=\Big|2\sin\left( \frac{(2N-1)\omega}{2} \right) \sin \left( \frac{\omega}{2} \right)\Big|\leq N\omega^2.
\end{align}
Therefore, for any $\alpha>0$ there exists an environment $U$ of 0 such that 
\begin{align*}
|\cos((N-1)\omega)-\cos(N\omega)|&\leq N \omega^2 \leq N (1-\cos(\omega))(2+\alpha)\\
1-\cos((N-1)\omega) &\leq \frac{N^2 \omega^2}{2} \leq \frac{N^2}{2}(1-\cos(\omega))(2+\alpha)
\end{align*}
holds for all $\omega \in U$.
Further, for all $x \geq 0$ we have
$\cosh(x)-1  \leq \frac{\sinh(x)x}{2}$, $1-\e^{-x} \leq \sinh(x),$
and consequently,
\begin{align*}
\frac{|\varrho_k^N(\omega)-\varrho_k^N(0)|}{\varphi_k(\omega)} &\leq \mathrm{e}^{-(N-1)\theta k^{2}\Delta} (1+ \frac{2+\alpha}{2}\theta k^2+\frac{2+\alpha}{4} \theta^2 k^4 ) \\
&\leq \frac{2+\alpha}{2}\e^{\Delta\theta k^2}\e^{-\theta k^2}(1+\theta k^2 + \frac{\theta^2 k^4}{2})<\frac{2+\alpha}{2}\e^{\Delta\theta k^2}.
\end{align*}
Clearly, for $\Delta$ sufficiently small one can choose $\alpha$ in such a way that this bound is strictly less than 1 for all $k \leq M-1$, yielding \eqref{eq:PsiminPsi0}. Consequently, it is sufficient to prove  \eqref{eq:spec.lowerbound.2} and \eqref{eq:spec.lowerbound.3} with $\Phi_k^N$ replaced by $\varphi_k +\varrho_k^N(0)$: Now, 
$$\varphi_k(0) +\varrho_k^N(0)=\varrho_k^N(0) =\e^{-\theta k^2 (N-1)\Delta}\frac{1-\e^{-\theta k^2 \Delta}}{k^2}\eqsim \Delta \e^{-\theta k^2 } $$ 
and again by using \eqref{eq:sum_equiv_max},  we get
\begin{align*}
 \varphi_k (\omega) &\gtrsim \frac{\sinh(\theta k^{2}\Delta)}{k^2} \gtrsim \Delta,\qquad \omega\geq k^2 \Delta,\\
 \varphi_k(\omega) &\gtrsim (1-\cos(\omega))\frac{1}{\sqrt \theta k ^2 }\frac{\sinh(\theta k^{2}\Delta)}{\cosh(\theta k^{2}\Delta)-1} \gtrsim \frac{\omega^2}{k^4 \Delta},\qquad \omega \leq  k^2 \Delta.
\end{align*}
$(iii)$ We show \eqref{eq:spec.upperbound.1}: We have 
$\frac{\partial}{\partial \theta} \Phi(\omega)=  \frac{\Delta}{2\sqrt \theta}\sum_{\ell\in\mathcal{I}_{k}}g(\ell  \sqrt {\theta\Delta},\omega)$
with $g$ defined in Lemma \ref{lem:g_properties}.
Using the properties of $g$ derived in Lemma \ref{lem:g_properties} and the Riemann sum approximation \eqref{eq:Riemann_firstorder} with mesh size $M\sqrt \Delta$, we obtain
\begin{align*}
\frac{\partial}{\partial \theta} \Phi(\omega) 
&\eqsim \Delta\sum _{\ell \geq 1} g(\ell M\sqrt \Delta,\omega)+\mathcal{O}(\Delta)
=\frac{\sqrt \Delta}{M}\int_0^\infty g(x,\omega)\,dx +\O(\Delta)
=\O(\Delta).
\end{align*}
To show that also $\frac{\partial}{\partial \theta} R_N$ is of the claimed order, we write 
\begin{align*}
\varrho_\ell^N= \alpha_\ell \beta_\ell\quad\text{where}\quad\alpha_\ell(\omega)&= \frac{1-\cosh(\theta \ell ^2 \Delta)}{\sqrt \theta \ell^2\left(\cosh(\theta \ell ^2 \Delta)-\cos(\omega)\right)},\\
\beta_\ell(\omega)&=\e^{-\theta\ell^2(N-1)\Delta}\left(\e^{-\theta\ell^2 \Delta}\cos((N-1)\omega)-\cos(N\omega)\right).
\end{align*}
The corresponding derivatives are given by
\begin{align*} 
\frac{\partial}{\partial \theta}\alpha_\ell(\omega) 
&=  \underbrace{\frac{\cosh(\theta \ell ^2 \Delta)-1}{2\theta^{3/2} \ell^2\left(\cosh(\theta \ell ^2 \Delta)-\cos(\omega)\right)}}_{=:{a}_\ell^1(\omega)} 
\underbrace{- \frac{\Delta\sinh(\theta \ell ^2 \Delta) \left(1-\cos(\omega)\right)}{ \sqrt \theta\left(\cosh(\theta \ell ^2 \Delta)-\cos(\omega)\right)^2}} _{=:{a}_\ell^2(\omega)} \quad\text{and}\\
\frac{\partial}{\partial \theta}\beta_\ell(\omega)
&= \e^{-\theta\ell^2(N-1)\Delta} \left(-\ell^2N\Delta\e^{-\theta\ell^2\Delta}\cos((N-1)\omega)+\ell^2(N-1)\Delta\cos(N\omega)\right)
=:b_\ell(\omega).
\end{align*}
Using the estimates
\begin{align*}
\frac{\cosh(x)-1}{\cosh(x)-\cos(\omega)}\lesssim \frac{x^2}{x^2\vee \omega^2},\quad
\frac{x\sinh(x)(1-\cos(\omega))}{(\cosh(x)-\cos(\omega))^2}\lesssim \frac{x^2}{x^2\vee \omega^2}
\end{align*}
in combination with $
\beta_\ell(\omega) 
\lesssim \e^{-\theta\ell^2(N-1)\Delta}\left((\ell^2\Delta)\vee \omega\right)$ and 
$b_\ell(\omega)
\lesssim \e^{-\theta\ell^2(N-1)\Delta}\ell^2\left((\ell^2\Delta)\vee \omega\right)$
shows that any of the
 three products in 
\begin{equation} \label{eq:dR_N_product}
\frac{\partial}{\partial \theta}R_N=\sum_{\ell \in \mathcal I _k} a_\ell^1\beta_\ell+a_\ell^2\beta_\ell+\alpha_\ell b_\ell
\end{equation} 
can be bounded by 
\begin{align*}
\sum_{\ell \in \mathcal I_k} \e^{-\theta\ell^2(N-1)\Delta} \frac{\ell^4\Delta^2}{(\ell^4\Delta^2)\vee \omega^2}\left((\ell^2 \Delta) \vee \omega\right)\leq \Delta\sum_{\ell \in \mathcal I_k} \e^{-\theta\ell^2(N-1)\Delta} \ell^2\lesssim \Delta.
\end{align*}
Consequently, we have $\frac{\partial}{\partial \theta }R_N =\mathcal O (\Delta)$, which finishes the proof of \eqref{eq:spec.upperbound.1}.\\ 
To prove \eqref{eq:spec.upperbound.2}, we use property $(iii)$ of Lemma \ref{lem:g_properties} to deduce
\begin{align*}
\frac{\partial}{\partial \theta} \Phi(\omega)
&\lesssim \omega^2\Delta\sum_{\ell\in \mathcal I_k}\frac{1+\theta \ell^2 \Delta}{\theta^2 \ell^4 \Delta^2}
\lesssim \frac{\omega^2}{\Delta}\left( \frac{1+\theta k^2 \Delta}{\theta^2 k^4 }+\sum_{\ell \geq 1}\frac{1+\theta (2\ell M)^2 \Delta}{\theta^2 (2\ell M)^4}\right)
\lesssim \frac{\omega^2}{k^4 \Delta},
\end{align*}
where the last step follows from $k^2 \Delta\leq M^2 \Delta \to 0$. Further, using decomposition \eqref{eq:dR_N_product},
\begin{align}
&\frac{\partial}{\partial \theta}( R_N(\omega)-R_N(0)) =\sum_{\ell \in \mathcal I_k}a_\ell^1(\omega)(\beta_\ell(\omega)-\beta_\ell(0))+ \sum_{\ell \in \mathcal I_k}(a_\ell^1(\omega)-a_\ell^1(0))\beta_\ell(0)\nonumber\\
&\qquad\qquad+\sum_{\ell \in \mathcal I_k}a_\ell^2(\omega)\beta_\ell(\omega)+\sum_{\ell \in \mathcal I_k}\alpha_\ell(\omega)(b_\ell(\omega)-b_\ell(0))+ \sum_{\ell \in \mathcal I_k}(\alpha_\ell(\omega)-\alpha_\ell(0))b_\ell(0).\label{eq:dR_min_dR0}
\end{align}
Now, by \eqref{eq:bound_Nomega2}, we have
\begin{align*}
\beta_\ell(\omega)-\beta_\ell(0)
&= \e^{-\theta\ell^2(N-1)\Delta}\left((\e^{-\theta\ell^2 \Delta}-1)(\cos((N-1)\omega)-1)+\cos((N-1)\omega)-\cos(N\omega)\right)\\
&\lesssim\e^{-\theta\ell^2(N-1)\Delta} \ell^2 N\omega^2.
\end{align*}
In a similar way we can bound
\begin{align*}
\beta_\ell(0)&\lesssim \e^{-\theta\ell^2(N-1)\Delta} \ell^2\Delta,\qquad
\beta_\ell(\omega)\lesssim \e^{-\theta\ell^2(N-1)\Delta} \left( (\ell^2 \Delta) \vee \omega\right) \lesssim \e^{-\theta\ell^2(N-1)\Delta} \ell^2 \Delta,\\
b_\ell(\omega)-b_\ell(0) &\lesssim \e^{-\theta\ell^2(N-1)\Delta} \ell^4 N \omega^2,\quad
b_\ell(0)\lesssim \e^{-\theta\ell^2(N-1)\Delta} \ell^4 \Delta,
\end{align*}
where the second inequality uses $\omega \leq k^2 \Delta \leq \ell^2 \Delta$ for $\ell \in \mathcal{I}_k$.
Also,
\begin{align*}
a_\ell^1(\omega)-a_\ell^1(0)&\lesssim\frac{1-\cos(\omega)}{\cosh(\theta\ell^2\Delta)-\cos(\omega)} \lesssim \frac{1-\cos(\omega)}{(\cosh(\theta\ell^2\Delta)-1)}\lesssim \frac{\omega^2}{k^4 \Delta^2}
\end{align*}
and similarly,
$\alpha_\ell(\omega)-\alpha_\ell(0)\lesssim \frac{\omega^2}{k^4 \Delta^2}$, 
$a_\ell^2(\omega)\lesssim \frac{\omega^2}{k^4\Delta^2}$,
$a_\ell^1(\omega)\lesssim 1$ and $\alpha_\ell(\omega)\lesssim 1$.

Using the bounds just developed in combination with $\e^{-\theta\ell^2(N-1)\Delta}\lesssim \frac{1}{k^4 \ell^m},\,m\in\N,$ shows that any of the five terms in \eqref{eq:dR_min_dR0} is of order $\mathcal O(\frac{\omega^2}{k^4\Delta})$ and hence, 
$\frac{\partial}{\partial \theta}(R_N(\omega)-R_N(0))\lesssim \frac{\omega^2}{k^4\Delta}.$
Now, the proof of \eqref{eq:spec.upperbound.2} is finalized by 
\begin{align*}
\frac{\partial}{\partial \theta}R_N(0)&=\sum_{\ell \in \mathcal I_k}\e^{-\theta\ell^2(N-1)\Delta}\frac{2\theta\ell^2(N-1)\Delta(\e^{-\theta \ell^2 \Delta}-1)+2\theta\ell^2\Delta\e^{-\theta\ell^2\Delta}+\e^{-\theta\ell^2\Delta}-1}{2\theta^{3/2} \ell^2}\\
&\lesssim \Delta \sum_{\ell \in \mathcal I_k}\e^{-\theta\ell^2(N-1)\Delta} \ell^2 \lesssim \Delta k^2 \e^{-\theta k^2}.\qedhere
\end{align*}
\end{proof}

\subsection{Bounds on Fourier series and Riemann summation}
The Lemmas in this section provide bounds for Fourier series and Taylor expansions for Riemann sums. 
Similar results are stated in Lemma 7.2 of \cite{Bibinger18}. 
\begin{lem} \label{lem:trig_series_fundamental}
Let $(a_{n})$ be a real sequence and $\tau\in \{\sin, \cos\}$. Then, 
\[
\left|\sum_{k=1}^{N}a_{k}\tau(ky)\right|\leq\frac{1+2K_N}{y\wedge(2\pi-y)}\sup_{n\leq N}|a_{n}|
\]
holds for any $y\in(0,2\pi)$ where $K_N$ is the number of monotone sections of $(a_{n})_{1\leq n \leq N}$. 
\end{lem}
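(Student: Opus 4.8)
The plan is to prove this quantitative version of Dirichlet's test by \emph{summation by parts} (Abel's transformation), which decouples the oscillatory factor $\tau(ky)$ from the coefficients $a_k$. Writing $B_k:=\sum_{j=1}^{k}\tau(jy)$ for the trigonometric partial sums and setting $B_0:=0$, Abel's identity gives
\[
\sum_{k=1}^{N}a_{k}\tau(ky)=a_{N}B_{N}+\sum_{k=1}^{N-1}(a_{k}-a_{k+1})B_{k},
\]
so that
\[
\Big|\sum_{k=1}^{N}a_{k}\tau(ky)\Big|\le\Big(\max_{1\le k\le N}|B_{k}|\Big)\Big(|a_{N}|+\sum_{k=1}^{N-1}|a_{k}-a_{k+1}|\Big).
\]
It then remains to bound the two factors on the right-hand side separately.

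For the first factor I would use the geometric-sum identity $\sum_{j=1}^{k}\e^{\i jy}=\e^{\i(k+1)y/2}\sin(ky/2)/\sin(y/2)$, whose real and imaginary parts are exactly $\sum_{j=1}^{k}\cos(jy)$ and $\sum_{j=1}^{k}\sin(jy)$. Taking moduli and using $|\sin(ky/2)|\le 1$ yields $|B_{k}|\le 1/|\sin(y/2)|$ uniformly in $k$, for both $\tau\in\{\cos,\sin\}$. To convert this into the denominator $y\wedge(2\pi-y)$ of the statement, I would invoke the Jordan inequality $\sin t\ge (2/\pi)t$ on $[0,\pi/2]$: treating $y\le\pi$ and $y\ge\pi$ symmetrically (in the latter case via $\sin(y/2)=\sin(\pi-y/2)$) gives $|\sin(y/2)|\ge\pi^{-1}\bigl(y\wedge(2\pi-y)\bigr)$, and hence $\max_k|B_k|$ is controlled by a constant multiple of $1/(y\wedge(2\pi-y))$.

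For the second factor I would estimate the total variation of $(a_{n})_{n\le N}$ through its monotone sections. Labelling the endpoints of the $K_N$ maximal monotone runs by $1=n_{0}<n_{1}<\cdots<n_{K_N}=N$, the variation telescopes within each run, so
\[
\sum_{k=1}^{N-1}|a_{k}-a_{k+1}|=\sum_{j=1}^{K_N}|a_{n_{j}}-a_{n_{j-1}}|\le 2K_N\sup_{n\le N}|a_{n}|,
\]
each summand being a difference of two values of modulus at most $\sup_{n\le N}|a_n|$. Adding $|a_N|\le\sup_{n\le N}|a_n|$ produces the factor $(1+2K_N)\sup_{n\le N}|a_n|$, and multiplying by the partial-sum bound yields the assertion. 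The proof is short, and its only genuinely delicate point is the numerical constant in the denominator: the geometric-sum estimate naturally produces $1/|\sin(y/2)|$, and converting this to the stated $1/(y\wedge(2\pi-y))$ rests precisely on the Jordan-type comparison above, so this constant should be read up to that elementary inequality. For the paper's applications, where the estimate enters only through $\lesssim$, the precise numerical factor is immaterial.
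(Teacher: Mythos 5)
Your proof is correct and follows essentially the same route as the paper's: summation by parts, a uniform bound $1/\sin(y/2)$ on the trigonometric partial sums (the paper uses Lagrange's identities where you use the geometric sum, the same computation), and the bound of the total variation of $(a_n)$ by $2K_N\sup_{n\le N}|a_n|$ via telescoping over monotone sections. The only point of divergence is the constant, and there you are in fact more careful than the paper: Jordan's inequality gives $\sin(y/2)\ge \pi^{-1}\bigl(y\wedge(2\pi-y)\bigr)$, so your argument yields the stated bound with an extra factor $\pi$, whereas the paper asserts $1/\sin(y/2)\le 1/\bigl(y\wedge(2\pi-y)\bigr)$, which is false as written (for $y\to0$ one has $\sin(y/2)\approx y/2<y$), so the lemma should really carry such a constant; as you observe, this is immaterial for the paper, since the lemma only ever enters through $\lesssim$ bounds.
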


\begin{proof}
By Lagrange's trigonometric
identities, 
\begin{align*}
\sum_{k=1}^{N}\cos(ky) & =\frac{\sin\left((N+1/2)y\right)-\sin(y/2)}{2\sin(y/2)},\quad
\sum_{k=1}^{N}\sin(ky)  =\frac{\cos(y/2)-\cos\left((N+1/2)y\right)}{2\sin(y/2)},
\end{align*}
we have $\left|\sum_{k=M}^{N}\tau(ky)\right|\leq\frac{1}{\sin(y/2)}\leq\frac{1}{y\wedge(2\pi-y)}$ uniformly in $M\leq N$.
Therefore, $|\sum_{k=1}^{N}a_{k}\tau(ky)|$ can be decomposed by
\begin{align*}
 &\left|a_{1}\sum_{k=1}^{N}\tau(ky)+(a_{2}-a_{1})\sum_{k=2}^{N}\tau(ky)
 +(a_{3}-a_{2})\sum_{k=3}^{N}\tau(ky)+\cdots+(a_{N}-a_{N-1})\tau(Ny)\right|\\
 &\leq|a_{1}|\left|\sum_{k=1}^{N}\tau(ky)\right|+|a_{2}-a_{1}|\left|\sum_{k=2}^{N}\tau(ky)\right|
 +|a_{3}-a_{2}|\left|\sum_{k=3}^{N}\tau(ky)\right|+\cdots+|a_{N}-a_{N-1}|\left|\tau(Ny)\right|\\
 &\leq\frac{1}{y\wedge(2\pi-y)}\left(|a_{1}|+\sum_{k=1}^{N-1}|a_{k+1}-a_{k}|\right)
  \leq\frac{1+2K_N}{y\wedge(2\pi-y)}\sup_{n\leq N}|a_{n}|,
\end{align*}
where the last inequality follows from the fact that if $(a_{k})_{N_0\leq k\leq N_1}$ is monotone for some $N_0\leq N_1\leq N$, then 
$\sum _{k=N_0}^{N_1-1}|a_{k+1}-a_k|=|a_{N_1}-a_{N_0}| \leq 2\sup_{n\leq N}|a_n|.$
\end{proof}

\begin{lem} \label{lem:trig_integral}
Let $g\in C^{1}\left(\R_{+}\right)$ be such that $g'$ is bounded and has a finite number $K$ of monotone sections. Then, for
$y\in(0,2\pi)$, as $\eps\to 0,$
\begin{align*}
\sum_{k=1}^{\infty}g(k\eps)\cos(ky) & =-\frac{g(0)}{2}+\mathcal{O}\left(\frac{\eps\left\Vert g'\right\Vert _{\infty}}{\left(y\wedge(2\pi-y)\right)^{2}}\right)\\
\sum_{k=1}^{\infty}g(k\eps)\sin(ky) & =\frac{g(0)}{2}\cot\left(\frac{y}{2}\right)+\mathcal{O}\left(\frac{\eps\left\Vert g'\right\Vert _{\infty}}{\left(y\wedge(2\pi-y)\right)^{2}}\right).
\end{align*}
\end{lem}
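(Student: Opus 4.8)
The plan is to prove both identities by Abel summation (summation by parts), exploiting that the partial sums of $\cos(ky)$ and $\sin(ky)$ — and, crucially, their \emph{iterated} partial sums — stay bounded on $(0,2\pi)$, so that the oscillation of the trigonometric factor can be played off against the slow variation of $k\mapsto g(k\eps)$. I treat the cosine case; the sine case is identical up to replacing the constant $-\tfrac12$ by $\tfrac12\cot(y/2)$.

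First I would record the exact partial sums. Lagrange's identities (as already used in the proof of Lemma~\ref{lem:trig_series_fundamental}) give $C_N := \sum_{k=1}^N\cos(ky) = -\tfrac12 + D_N$ with $D_N := \frac{\sin((N+1/2)y)}{2\sin(y/2)}$, so that $|D_N|\le \frac{1}{2|\sin(y/2)|}\lesssim (y\wedge(2\pi-y))^{-1}$. The second key ingredient is that the iterated sums $\tilde D_M := \sum_{k=1}^M D_k = \frac{1}{2\sin(y/2)}\sum_{k=1}^M\sin((k+\tfrac12)y)$ are again bounded, by another geometric-sum estimate: $|\tilde D_M| \le \frac{1}{2\sin^2(y/2)} \lesssim (y\wedge(2\pi-y))^{-2}$. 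Writing $b_k := g((k+1)\eps)-g(k\eps) = \int_{k\eps}^{(k+1)\eps}g'$, a first summation by parts together with the substitution $C_k = -\tfrac12 + D_k$ yields, after the $-\tfrac12 g(N\eps)$ contributions cancel,
\[ \sum_{k=1}^N g(k\eps)\cos(ky) = -\tfrac12 g(\eps) + g(N\eps)D_N - \sum_{k=1}^{N-1} b_k D_k. \]
Letting $N\to\infty$ (the boundary term $g(N\eps)D_N$ vanishes since $g(x)\to0$ in the intended applications, which also guarantees convergence of the series via Dirichlet's test) and using $g(\eps) = g(0) + \mathcal O(\eps\|g'\|_\infty)$, the main term $-\tfrac12 g(0)$ emerges with an admissible error. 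It then remains to control $\sum_{k\ge1} b_k D_k$.

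The main obstacle — and the whole reason for the hypotheses on $g'$ — is that $\sum_k b_k D_k$ cannot be bounded termwise (that would diverge linearly in $N$); the cancellation encoded in the bounded partial sums of $\sum D_k$ must be used a second time. A further summation by parts gives $\sum_{k\ge1} b_k D_k = -\sum_{k\ge1}(b_{k+1}-b_k)\tilde D_k$, hence $\big|\sum_k b_k D_k\big| \le \big(\sup_M|\tilde D_M|\big)\sum_k|b_{k+1}-b_k|$. Here the assumption that $g'$ is bounded with finitely many ($K$) monotone sections enters decisively: it forces $g'$ to have finite total variation $\mathrm{TV}(g')\lesssim K\|g'\|_\infty$, and since $b_{k+1}-b_k = \int_0^\eps\big(g'((k+1)\eps+t)-g'(k\eps+t)\big)\,dt$, Fubini gives $\sum_k|b_{k+1}-b_k| \le \eps\,\mathrm{TV}(g') \lesssim \eps K\|g'\|_\infty$. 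Combined with the bound on $\tilde D_M$ this yields $\big|\sum_k b_k D_k\big| \lesssim \eps\|g'\|_\infty (y\wedge(2\pi-y))^{-2}$, matching the claimed remainder (with the implicit constant depending on the fixed number $K$).

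For the sine identity I would repeat the argument verbatim, now starting from $\sum_{k=1}^N\sin(ky) = \tfrac12\cot(y/2) - \frac{\cos((N+1/2)y)}{2\sin(y/2)}$: the constant part $\tfrac12\cot(y/2)$ produces the main term $\tfrac12 g(0)\cot(y/2)$, while the oscillating part is bounded exactly as above, with identical iterated-partial-sum and bounded-variation estimates, giving the same error order. The only genuine difficulty throughout is the double Abel summation against the bounded iterated sums $\tilde D_M$; everything else is bookkeeping of the telescoping and of the $\mathcal O(\eps)$ Taylor remainder in $g(\eps)-g(0)$.
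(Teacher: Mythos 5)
Your proof is correct and follows essentially the same route as the paper's: a double summation by parts in which the bounded (iterated) partial sums of the trigonometric terms each contribute a factor $(y\wedge(2\pi-y))^{-1}$, while the finitely-many-monotone-sections hypothesis controls the variation sum $\sum_k|b_{k+1}-b_k|\lesssim\eps K\Vert g'\Vert_\infty$. The paper merely packages the second summation by parts as an application of Lemma~\ref{lem:trig_series_fundamental} to the coefficients $g'(\xi_k^\eps)$ produced by the mean value theorem, and, like you, it implicitly relies on decay of $g$ at infinity to dispose of the boundary terms, so the two arguments coincide in substance.
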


\begin{proof}
We use  the formula $\sin(\alpha)-\sin(\beta)=2\cos\frac{\alpha+\beta}{2}\sin\frac{\alpha-\beta}{2},\,\alpha,\beta\in\mathbb{R}$,
to calculate 
\begin{align*}
&\frac{g(0)}{2}+\sum_{k=1}^{\infty}g(k\eps)\cos(ky)
 =\frac{g(0)}{2}+\frac{1}{2\sin\frac{y}{2}}\sum_{k=1}^{\infty}g(k\eps)\big(\sin\left(\left(k+1/2\right)y\right)-\sin\left(\left(k-1/2\right)y\right)\big)\\
 &\qquad =\frac{g(0)}{2}-\frac{g(\eps)}{2}+\frac{1}{2\sin\frac{y}{2}}\sum_{k=1}^{\infty}\sin\left(\left(k+1/2\right)y\right)\big(g(k\eps)-g((k+1)\eps)\big)\\
 &\qquad =-\frac{1}{2}\left(g'(\xi_{0}^{\eps})+\frac{1}{\sin\frac{y}{2}}\sum_{k=1}^{\infty}\sin\left(\left(k+1/2\right)y\right)g'(\xi_{k}^{\eps})\right)\eps
 \leq\frac{1+2K}{\left(y\wedge(2\pi-y)\right)^{2}}\left\Vert g'\right\Vert _{\infty}\eps,
\end{align*}
where $\xi_{k}^{\eps}\in[k\eps,(k+1)\eps]$. Here, the last step follows from $\sin((k+1/2)y)=\sin(ky)\cos(y/2)+\cos(ky)\sin(y/2)$ and then applying Lemma \ref{lem:trig_series_fundamental}. The  second
statement can be proved analogously, using $\cos(\alpha)-\cos(\beta)=-2\sin\left(\frac{\alpha+\beta}{2}\right)\sin\left(\frac{\alpha-\beta}{2}\right),\,\alpha,\beta\in\R.$
\end{proof}

\begin{lem} \label{lem:Riemann} 
Let $g\in C^{2}(\R_{+})\cap L^{1}(\R_{+})$, $g'\in L^\infty(\R_+)$ and $g''\in L^{1}(\R_{+})$.
Then,
\begin{enumerate}[(i)]
\item $\displaystyle
\varepsilon\sum_{k\geq1}g(k\varepsilon)=\int_{0}^{\infty}g(z)\,dz-\frac{g(0)}{2}\varepsilon+\O(\varepsilon^{2}\left\Vert g''\right\Vert _{L^{1}}),
$
\item $\displaystyle
\varepsilon\sum_{k\geq1}g(k\varepsilon)\sin^2(ky)=\frac{1}{2}\int_{0}^{\infty}g(z)\,dz+\O\left(\eps^2\left(\frac{\left\Vert g'\right\Vert_\infty }{(y\wedge (\pi -y))^2}\wedge \Vert g''\Vert_{L^1}\right)\right).
$
\end{enumerate}
\end{lem}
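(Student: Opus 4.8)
The plan is to prove the two assertions separately, obtaining (ii) from (i) together with the oscillatory–sum estimate of Lemma~\ref{lem:trig_integral}. For part (i) I would use the composite trapezoidal rule with a Peano–kernel remainder. On each cell $[k\eps,(k+1)\eps]$ the elementary identity
$$\int_{k\eps}^{(k+1)\eps} g(z)\,dz-\frac{\eps}{2}\big(g(k\eps)+g((k+1)\eps)\big)=-\frac12\int_{k\eps}^{(k+1)\eps}(z-k\eps)\big((k+1)\eps-z\big)g''(z)\,dz$$
holds, and since the kernel is bounded by $\eps^{2}/4$ the cell error is at most $\tfrac{\eps^{2}}{8}\int_{k\eps}^{(k+1)\eps}|g''|$. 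Summing over $0\le k\le K-1$ yields the trapezoidal identity on $[0,K\eps]$ with total remainder bounded by $\tfrac{\eps^{2}}{8}\Vert g''\Vert_{L^{1}}$. Letting $K\to\infty$ and using that $g\in L^{1}(\R_{+})$ together with $g'\in L^{\infty}(\R_{+})$ forces $g(K\eps)\to0$, the boundary term $\tfrac{\eps}{2}g(K\eps)$ vanishes while the left-hand side converges to $\int_{0}^{\infty}g$; rearranging isolates $\eps\sum_{k\ge1}g(k\eps)$ and gives (i) with error $\O(\eps^{2}\Vert g''\Vert_{L^{1}})$. Note that convergence of the series is a by-product of this identity rather than a separate hypothesis.

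For part (ii) I would insert $\sin^{2}(ky)=\tfrac12-\tfrac12\cos(2ky)$, so that
$$\eps\sum_{k\ge1}g(k\eps)\sin^{2}(ky)=\tfrac12\,\eps\sum_{k\ge1}g(k\eps)-\tfrac12\,\eps\sum_{k\ge1}g(k\eps)\cos(2ky).$$
Part (i) rewrites the first sum as $\tfrac12\int_{0}^{\infty}g-\tfrac14 g(0)\eps+\O(\eps^{2}\Vert g''\Vert_{L^{1}})$. For the second sum I would apply Lemma~\ref{lem:trig_integral} with $y$ replaced by $2y$, giving $\sum_{k\ge1}g(k\eps)\cos(2ky)=-\tfrac12 g(0)+\O\!\big(\eps\Vert g'\Vert_{\infty}/(y\wedge(\pi-y))^{2}\big)$; after multiplication by $\eps$ this produces a term $+\tfrac14 g(0)\eps$ that exactly cancels the boundary contribution from (i). The cancellation of the $g(0)\eps$ terms is the structural reason the $\sin^{2}$–weighting removes the first-order boundary correction, and it leaves
$$\eps\sum_{k\ge1}g(k\eps)\sin^{2}(ky)-\tfrac12\int_{0}^{\infty}g=\O\!\Big(\eps^{2}\Vert g''\Vert_{L^{1}}\Big)+\O\!\Big(\eps^{2}\Vert g'\Vert_{\infty}/(y\wedge(\pi-y))^{2}\Big),$$
which is already the $\Vert g'\Vert_{\infty}/(y\wedge(\pi-y))^{2}$ branch of the claimed estimate (recall $\Vert g'\Vert_{\infty}\le\Vert g''\Vert_{L^{1}}$, so the oscillatory error dominates whenever $y$ stays in the interior of $(0,\pi)$).

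The delicate point — and the step I expect to be the main obstacle — is upgrading this to the stated minimum, i.e.\ recovering the clean $\Vert g''\Vert_{L^{1}}$ branch without the $(y\wedge(\pi-y))^{-2}$ loss, which is the relevant bound as $y$ approaches the endpoints. For this I would avoid the crude split above and instead estimate the oscillatory sum by two successive summations by parts: the partial sums $\sum_{j\le k}\cos(2jy)$ are uniformly bounded (Lemma~\ref{lem:trig_series_fundamental}), the first summation by parts transfers one difference onto $g$ and reproduces the main term $-\tfrac12 g(0)\eps$, and the second transfers a further difference, so that the remainder is controlled by $\sum_{k}|g((k+2)\eps)-2g((k+1)\eps)+g(k\eps)|\lesssim\eps\Vert g''\Vert_{L^{1}}$. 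Packaging the two resulting bounds into the minimum then amounts to choosing, for each $y$, whichever estimate is sharper; I would carry this out for $y$ bounded away from $0$ and $\pi$ (which is the regime in which the lemma is invoked, e.g.\ with fixed $y=\pi x_{0}$ in the proof of Proposition~\ref{prop:abs_cont}), where both quantities are finite and the interchange is legitimate. Everything else reduces to routine bookkeeping of Peano remainders and of the vanishing boundary contributions at infinity.
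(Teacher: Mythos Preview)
Your argument for (i) via the trapezoidal rule is correct and equivalent to the paper's midpoint approach; the paper simply cites \cite{Bibinger18} and says ``regard each term $\eps g(k\eps)$ as a midpoint integral approximation'', which produces the same $-\tfrac12 g(0)\eps$ boundary term and the same $\O(\eps^{2}\Vert g''\Vert_{L^{1}})$ remainder.

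For (ii), your split $\sin^{2}(ky)=\tfrac12-\tfrac12\cos(2ky)$ followed by (i) and Lemma~\ref{lem:trig_integral} is exactly what the paper does (its entire proof reads ``direct consequence of (i) and the previous lemma''). This yields an error of the form
\[
\O\big(\eps^{2}\Vert g''\Vert_{L^{1}}\big)+\O\Big(\eps^{2}\,\frac{\Vert g'\Vert_{\infty}}{(y\wedge(\pi-y))^{2}}\Big),
\]
which is $\O$ of the \emph{maximum} (equivalently the sum) of the two quantities, not the minimum. Your parenthetical that this ``is already the $\Vert g'\Vert_{\infty}/(y\wedge(\pi-y))^{2}$ branch'' is not right: a bound $\O(A)+\O(B)$ is $\O(A\vee B)$, not $\O(A\wedge B)$.

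More importantly, the $\wedge$ in the stated error term cannot hold uniformly in $y\in(0,\pi)$ and is almost certainly a typographical slip for $\vee$ (or simply a sum). To see this, fix any $\eps>0$ and let $y\to 0$: by dominated convergence $\eps\sum_{k\ge1}g(k\eps)\sin^{2}(ky)\to 0$, so the error tends to $\tfrac12\int_{0}^{\infty}g$, which is not $\O(\eps^{2}\Vert g''\Vert_{L^{1}})$. Hence a $y$-independent $\Vert g''\Vert_{L^{1}}$ bound is impossible. Your proposed remedy of ``two successive summations by parts'' would not circumvent this: each Abel summation introduces a Dirichlet-kernel factor bounded only by $1/|\sin y|$, so after two steps you recover a bound of order $\Vert g''\Vert_{L^{1}}/\sin^{2}y$, not a $y$-free one. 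In the paper the lemma is only invoked with a fixed $y=\pi x_{0}\in(0,\pi)$ (proof of Proposition~\ref{prop:abs_cont}), where the distinction between $\wedge$ and $\vee$ is immaterial; your derivation already delivers everything the paper actually uses.
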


\begin{proof}
For a detailed proof of $(i)$ we refer to \cite[Lemma 7.2]{Bibinger18}. The main idea is to regard each term $\eps g(k\eps)$ as a midpoint integral approximation. 
Since $\sin^2(y)=(1-\cos(2y))/2$, statement $(ii)$ is a direct consequence of $(i)$ and the previous lemma.
\end{proof}

\begin{lem} \label{lem:RiemannTaylor}
Let $g\in C^{2}(\mathbb{R}_{+})$ and $M\to\infty,\,M\epsilon\to0$.
Then,
\[
\epsilon\sum_{k=1}^{M}g(k\epsilon)=M\epsilon g(0)+\frac{(M^{2}+M)\epsilon^{2}}{2}g'(0)+\mathcal{O}((M\epsilon)^{3}).
\]
\end{lem}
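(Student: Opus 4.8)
The plan is to exploit the hypothesis $M\epsilon\to0$, which forces every evaluation point $k\epsilon$ with $1\le k\le M$ to lie in the shrinking interval $[0,M\epsilon]$, so that a single Taylor expansion around the origin governs all summands. First I would fix $M$ large enough that $M\epsilon\le1$ and apply Taylor's theorem with Lagrange remainder to $g\in C^2(\R_+)$: for each $k$ there is $\xi_k\in[0,k\epsilon]\subset[0,M\epsilon]$ with
\[
g(k\epsilon)=g(0)+g'(0)\,k\epsilon+\tfrac12 g''(\xi_k)(k\epsilon)^2.
\]

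Next I would multiply by $\epsilon$ and sum over $k=1,\dots,M$, treating the three resulting pieces separately. The constant term gives $\epsilon\sum_{k=1}^M g(0)=M\epsilon g(0)$ immediately. The linear term is summed in closed form using $\sum_{k=1}^M k=\tfrac12 M(M+1)$, producing exactly $g'(0)\epsilon^2\cdot\tfrac{M(M+1)}{2}=\tfrac{(M^2+M)\epsilon^2}{2}g'(0)$, which matches the two stated main terms.

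It remains to control the remainder $R:=\tfrac{\epsilon^3}{2}\sum_{k=1}^M g''(\xi_k)\,k^2$. Since $g''$ is continuous and all $\xi_k$ lie in the compact interval $[0,1]$ (using $M\epsilon\le1$), there is a constant $C:=\sup_{x\in[0,1]}|g''(x)|<\infty$ independent of $M$ and $\epsilon$. Combining this with $\sum_{k=1}^M k^2=\tfrac{M(M+1)(2M+1)}{6}\lesssim M^3$ yields
\[
|R|\le \frac{C\epsilon^3}{2}\sum_{k=1}^M k^2\lesssim \epsilon^3 M^3=(M\epsilon)^3,
\]
which is precisely the claimed $\mathcal O((M\epsilon)^3)$ error, completing the proof.

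There is no genuine obstacle here; the result is a routine second-order Riemann/Taylor estimate. The only point requiring a moment's care is that $g''$ is assumed merely continuous rather than globally bounded, so one must invoke the shrinking-interval condition $M\epsilon\to0$ to obtain the uniform local bound $C$ on $g''$; without that condition the remainder estimate would fail.
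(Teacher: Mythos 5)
Your proof is correct. It takes a genuinely different, and more elementary, route than the paper's: the paper first replaces the sum by the integral $\int_{\epsilon/2}^{(M+1/2)\epsilon}g(x)\,dx$ via the midpoint rule, with error $\epsilon^{3}\sum_{k=1}^{M}|g''(\eta_{k})|\lesssim M^{3}\epsilon^{3}$, and then Taylor-expands that integral around $0$, recovering $(M^2+M)\epsilon^2/2$ from $\left((M+1/2)^{2}-1/4\right)\epsilon^{2}/2$. You instead Taylor-expand each summand $g(k\epsilon)$ directly at $0$ with Lagrange remainder and evaluate the sums $\sum_{k=1}^{M}k=M(M+1)/2$ and $\sum_{k=1}^{M}k^{2}\lesssim M^{3}$ in closed form. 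What your approach buys is a shorter, purely arithmetic derivation in which the exact coefficient $(M^{2}+M)/2$ appears with no integration step; what the paper's approach buys is stylistic and technical consistency with the companion result Lemma~\ref{lem:Riemann}, where the same midpoint-rule comparison with an integral is the essential device (there the sum runs over all $k\geq1$ and must be compared with $\int_{0}^{\infty}g$, so a term-by-term expansion at $0$ would not work). Both proofs hinge on the point you correctly flag: $g''$ is only assumed continuous on $\R_{+}$, so the uniform bound on the second-order remainder comes from confining all intermediate points to a fixed compact neighbourhood of $0$, which is exactly what $M\epsilon\to0$ guarantees.
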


\begin{proof}
First of all, by the midpoint rule there exist $\eta_{k}\in[(k-1/2)\epsilon,(k+1/2)\epsilon]$
such that
\begin{align*}
\left|\epsilon\sum_{k=1}^{M}g(k\epsilon)-\int_{\epsilon/2}^{(M+1/2)\epsilon}g(x)\,dx\right| & =\left|\sum_{k=1}^{M}\int_{(k-1/2)\epsilon}^{(k+1/2)\epsilon}(g(k\epsilon)-g(x))\,dx\right|
\leq\epsilon^{3}\sum_{k=1}^{M}|g''(\eta_{k})|
\lesssim  M^{3}\epsilon^{3}
\end{align*}
and secondly, a Taylor approximation shows that 
\begin{align*}
\int_{\epsilon/2}^{(M+1/2)\epsilon}g(x)\,dx 
 & =M\epsilon g(0)+\frac{(M^{2}+M)\epsilon^{2}}{2}g'(0)+\mathcal{O}((M\epsilon)^{3}).\qedhere
\end{align*}
\end{proof}

\bibliography{refs}

\begin{thebibliography}{}

\bibitem[Altmeyer and Rei{\ss}, 2019]{altmeyerReiss2019}
Altmeyer, R. and Rei{\ss}, M. (2019).
\newblock Nonparametric estimation for linear {SPDE}s from local measurements.
\newblock {\em arXiv preprint arXiv:1903.06984}.

\bibitem[Bibinger and Trabs, 2019a]{bibingerTrabs2019}
Bibinger, M. and Trabs, M. (2019a).
\newblock On central limit theorems for power variations of the solution to the
  stochastic heat equation.
\newblock In {\em Stochastic Models, Statistics and Their Applications.
  Springer Proceedings in Mathematics \& Statistics}, volume 294, pages 69--84.

\bibitem[Bibinger and Trabs, 2019b]{Bibinger18}
Bibinger, M. and Trabs, M. (2019b).
\newblock Volatility estimation for stochastic {PDE}s using high-frequency
  observations.
\newblock {\em Stochastic Process. Appl.}
\newblock Forthcoming.

\bibitem[Chong, 2019a]{Chong18}
Chong, C. (2019a).
\newblock High-frequency analysis of parabolic stochastic {PDE}s.
\newblock {\em Ann. Statist.}
\newblock Forthcoming.

\bibitem[Chong, 2019b]{chong2019}
Chong, C. (2019b).
\newblock High-frequency analysis of parabolic stochastic {PDE}s with
  multiplicative noise: {Part I}.
\newblock {\em arXiv preprint arXiv:1908.04145}.

\bibitem[Cialenco, 2018]{Cialenco18}
Cialenco, I. (2018).
\newblock Statistical inference for spdes: an overview.
\newblock {\em Statistical Inference for Stochastic Processes}, 21(2):309--329.

\bibitem[Cialenco and Huang, 2019]{Cialenco17}
Cialenco, I. and Huang, Y. (2019).
\newblock A note on parameter estimation for discretely sampled {SPDE}s.
\newblock {\em Stoch. Dyn.}
\newblock Forthcoming.

\bibitem[Cont, 2005]{Cont2005}
Cont, R. (2005).
\newblock Modeling term structure dynamics: an infinite dimensional approach.
\newblock {\em Int. J. Theor. Appl. Finance}, 8(3):357--380.

\bibitem[Dacunha-Castelle and Duflo, 1986]{Dacunha86}
Dacunha-Castelle, D. and Duflo, M. (1986).
\newblock {\em Probability and statistics. Vol. II}.
\newblock Springer-Verlag, Berlin Heidelberg New York.

\bibitem[Devroye et~al., 2019]{Devroye19}
Devroye, L., Mehrabian, A., and Reddad, T. (2019).
\newblock The total variation distance between high-dimensional {G}aussians.
\newblock {\em arXiv preprint arXiv:1810.08693v3}.

\bibitem[Dostal, 2019]{dostal2019}
Dostal, L. (2019).
\newblock The effect of random wind forcing in the nonlinear {S}chr{\"o}dinger
  equation.
\newblock {\em Fluids}, 4(3):121.

\bibitem[Hottovy and Stechmann, 2015]{hottovyStechmann2015}
Hottovy, S. and Stechmann, S.~N. (2015).
\newblock A spatiotemporal stochastic model for tropical precipitation and
  water vapor dynamics.
\newblock {\em J. Atmospheric Sci.}, 72(12):4721--4738.

\bibitem[Huebner et~al., 1993]{huebnerEtAl1993}
Huebner, M., Khasminskii, R., and Rozovskii, B. (1993).
\newblock Two examples of parameter estimation for stochastic partial
  differential equations.
\newblock In {\em Stochastic processes}, pages 149--160. Springer.

\bibitem[Huebner and Rozovskii, 1995]{HuebnerRozovskii1995}
Huebner, M. and Rozovskii, B.~L. (1995).
\newblock On asymptotic properties of maximum likelihood estimators for
  parabolic stochastic {PDE}'s.
\newblock {\em Probab. Theory Related Fields}, 103(2):143--163.

\bibitem[Ibragimov and Rozanov, 1978]{Ibragimov78}
Ibragimov, I. and Rozanov, Y. (1978).
\newblock {\em Gaussian random processes}.
\newblock Springer-Verlag, Berlin Heidelberg New York.

\bibitem[Ibragimov and Has'minskii, 1981]{IbragimovHasminskii1981}
Ibragimov, I.~A. and Has'minskii, R.~Z. (1981).
\newblock {\em Statistical estimation}, volume~16 of {\em Applications of
  Mathematics}.
\newblock Springer-Verlag, New York Berlin.
\newblock Asymptotic theory, Translated from the Russian by Samuel Kotz.

\bibitem[Isserlis, 1918]{Isserlis18}
Isserlis, L. (1918).
\newblock On a formula for the product-moment coefficient of any order of a
  normal frequency distribution in any number of variables.
\newblock {\em Biometrika}, 12:134--139.

\bibitem[Kaino and Uchida, 2019]{Uchida19}
Kaino, Y. and Uchida, M. (2019).
\newblock Parametric estimation for a parabolic linear {SPDE} model based on
  sampled data.
\newblock {\em arXiv preprint arXiv:1909.13557}.

\bibitem[Koski and Loges, 1985]{KoskiLoges85}
Koski, T. and Loges, W. (1985).
\newblock Asymptotic statistical inference for a stochastic heat flow problem.
\newblock {\em Statist. Probab. Lett.}, 3:185--189.

\bibitem[Kriz and Maslowski, 2019]{krizMaslowski2019}
Kriz, P. and Maslowski, B. (2019).
\newblock Central limit theorems and minimum-contrast estimators for linear
  stochastic evolution equations.
\newblock {\em Stochastics}, 0(0):1--32.

\bibitem[Lototsky, 2009]{lototsky2009}
Lototsky, S.~V. (2009).
\newblock Statistical inference for stochastic parabolic equations: a spectral
  approach.
\newblock {\em Publ. Mat.}, 53(1):3--45.

\bibitem[Markussen, 2013]{Markussen03}
Markussen, B. (2013).
\newblock Likelihood inference for a discretely observed stochastic partial
  differential equation.
\newblock {\em Bernoulli}, 9(5):745 -- 762.

\bibitem[Mathai and Provost, 1992]{Mathai92}
Mathai, A.~M. and Provost, S.~B. (1992).
\newblock {\em Quadratic Forms in Random Variables}.
\newblock Marcel Dekker, inc., New York.

\bibitem[Neveu, 1968]{Neveu68}
Neveu, J. (1968).
\newblock {Processus al\'eatoires gaussiens}.
\newblock {Seminaire de mathematiques superieures. Les Presses de
  l'Universit\'e de Montr\'eal}.

\bibitem[Piterbarg and Ostrovskii, 1997]{piterbargOstrovskii1997}
Piterbarg, L.~I. and Ostrovskii, A.~G. (1997).
\newblock {\em Advection and {D}iffusion in {R}andom {M}edia: {I}mplications
  for {S}ea {S}urface {T}emperature {A}nomalies}.
\newblock Springer Science \& Business Media.

\bibitem[Prato and Zabczyk, 2014]{DaPrato14}
Prato, G.~D. and Zabczyk, J. (2014).
\newblock {\em Stochastic Equations in Infinite Dimensions}.
\newblock Cambridge University Press, Cambridge.

\bibitem[Santa~Clara and Sornette, 2000]{santaclaraSornette2000}
Santa~Clara, P. and Sornette, D. (2000).
\newblock The dynamics of the forward interest rate curve with stoachstic
  string shocks.
\newblock {\em Rev. Financial Stud.}, 14(1):149--185.

\bibitem[Shevchenko et~al., 2019]{shevchenkoEtAl2019}
Shevchenko, R., Slaoui, M., and Tudor, C.~A. (2019).
\newblock Generalized $ k $-variations and {H}urst parameter estimation for the
  fractional wave equation via {M}alliavin calculus.
\newblock {\em arXiv preprint arXiv:1903.02369}.

\bibitem[Torres et~al., 2014]{torresEtAl2014}
Torres, S., Tudor, C., Viens, F., et~al. (2014).
\newblock Quadratic variations for the fractional-colored stochastic heat
  equation.
\newblock {\em Electron. J. Probab.}, 19.

\bibitem[Tsybakov, 2010]{Tsybakov10}
Tsybakov, A.~B. (2010).
\newblock {\em Introduction to Nonparametric Estimation}.
\newblock Springer, New York.

\bibitem[Tuckwell, 2013]{tuckwell2013}
Tuckwell, H.~C. (2013).
\newblock Stochastic partial differential equations in neurobiology: Linear and
  nonlinear models for spiking neurons.
\newblock In {\em Stochastic Biomathematical Models}, pages 149--173. Springer.

\bibitem[Whittle, 1953]{Whittle53}
Whittle, P. (1953).
\newblock The analysis of multiple stationary time series.
\newblock {\em J. R. Stat. Soc. Ser. B}, 15:125 – 139.

\end{thebibliography}
\bibliographystyle{apalike}
\end{document}